\newcommand{\zz}{\ensuremath{\mathbb{Z}}}
\newcommand{\qq}{\ensuremath{\mathbb{Q}}}
\newcommand{\del}{\partial}
\newcommand{\la}{\langle}
\newcommand{\ra}{\rangle}
\renewcommand{\cal}[1]{\mathcal{#1}}
\renewcommand{\tilde}[1]{\widetilde{#1}}
\renewcommand{\hat}[1]{\widehat{#1}}
\newcommand{\pmd}{\ensuremath{{\pm\circ}}}
\newcommand{\norm}[1]{\left\lVert#1\right\rVert}
\DeclareMathOperator{\Mod}{Mod}
\DeclareMathOperator{\SMod}{SMod}
\DeclareMathOperator{\Lk}{Lk}
\DeclareMathOperator{\Ind}{Ind}
\theoremstyle{definition}
\newtheorem{thm}{Theorem}[section]
\newtheorem{defn}[thm]{Definition}
\newtheorem{lem}[thm]{Lemma}
\newtheorem{cor}[thm]{Corollary}
\newtheorem{prop}[thm]{Proposition}
\newtheorem{rem}[thm]{Remark}
\newtheorem{que}[thm]{Question}
\newtheorem{ex}[thm]{Example}
\newtheorem*{ack}{Acknowledgements}
\newtheorem{mainthm}{Theorem}
\newtheorem{subthm}{Theorem}
\titleformat{\section}{\Large\bfseries\filcenter}{\thesection.}{1em}{}
\title{Fixed-point-free pseudo-Anosov homeomorphisms, knot Floer homology and the cinquefoil}
\author{Ethan Farber, Braeden Reinoso and Luya Wang}
\date{}
\begin{document}

\maketitle

\begin{abstract}
    Given any genus-two, hyperbolic, fibered knot in $S^3$ with nonzero fractional Dehn twist coefficient, we show that its pseudo-Anosov representative has a fixed point. Combined with recent work of Baldwin--Hu--Sivek, this proves that knot Floer homology detects the cinquefoil knot $T(2,5)$, and that the cinquefoil is the only genus-two L-space knot in $S^3$. Our results have applications to Floer homology of cyclic branched covers over knots in $S^3$, to $\mathit{SU}(2)$-abelian Dehn surgeries, and to Khovanov and annular Khovanov homology. Along the way to proving our fixed point result, we describe a small list of train tracks carrying all pseudo-Anosov homeomorphisms in most strata on the punctured disk. As a consequence, we find a canonical track $\tau$ carrying all pseudo-Anosov homeomorphisms in a particular stratum $\mathcal{Q}_0$ on the genus-two surface, and describe every fixed-point-free pseudo-Anosov homeomorphism in $\mathcal{Q}_0$.
\end{abstract}

\section{Introduction}
Recent developments in Heegaard Floer homology have highlighted intimate connections between link homology theories and the dynamics of surface diffeomorphisms (see e.g. \cite{BHS}, \cite{Ni1}, \cite{Ni2}, \cite{GS}). The aim of this paper is to use tools which may be familiar to some dynamicists, in order to study a particular open question in Heegaard Floer homology: whether knot Floer homology detects the torus knot $T(2,5)$. To that end, we answer this question in the affirmative:

\begin{mainthm}\label{t25thm}
If $\widehat{\mathit{HFK}}(K;\qq)\cong\widehat{\mathit{HFK}}(T(2,5);\qq)$ as bi-graded vector spaces, then $K=T(2,5).$ In particular, $T(2,5)$ is the only genus-two L-space knot in $S^3$.
\end{mainthm}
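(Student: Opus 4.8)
The plan is to reduce the statement to the main dynamical theorem of this paper --- that the pseudo-Anosov representative of the monodromy of any genus-two, hyperbolic, fibered knot in $S^3$ with nonzero fractional Dehn twist coefficient has a fixed point --- together with the structural results of Baldwin--Hu--Sivek \cite{BHS} on knots sharing the knot Floer homology of the cinquefoil.

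First I would extract the topological consequences of the hypothesis $\widehat{\mathit{HFK}}(K;\qq)\cong\widehat{\mathit{HFK}}(T(2,5);\qq)$. Since $\widehat{\mathit{HFK}}(T(2,5);\qq)$ is one-dimensional in each Alexander grading $-2,\dots,2$, vanishes outside this range, and has total dimension $5=2g(T(2,5))+1$: genus detection (Ozsv\'ath--Szab\'o) gives $g(K)=2$; fiberedness detection (Ni \cite{Ni1,Ni2}) gives that $K$ is fibered; and, because the associated complex $\mathit{CFK}^\infty(K)$ is built on five generators with one-dimensional vertical and horizontal homology, it is forced to be the genus-two staircase, so that $K$ is an L-space knot. (This analysis of the complex is carried out in \cite{BHS}.) I would then classify genus-two L-space knots geometrically: such a knot is prime, the only genus-two torus knot is $T(2,5)$, and the cabling genus formula --- together with the fact that a satellite L-space knot is a cable of a nontrivial L-space knot --- shows there is no genus-two satellite L-space knot; hence by geometrization either $K=T(2,5)$ or $K$ is hyperbolic. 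In the latter case the monodromy $\phi$ of the once-punctured genus-two fiber $\Sigma_{2,1}$ is freely isotopic to a pseudo-Anosov homeomorphism $\psi$, and by \cite{BHS} the fractional Dehn twist coefficient $c(\phi)$ is nonzero while $\psi$ is fixed-point-free --- contradicting the paper's fixed-point theorem applied to $K$. So $K=T(2,5)$. For the ``in particular'' clause, a genus-two L-space knot has a genus-two staircase complex, of which there are exactly two --- that of $T(2,5)$ and the one with Alexander polynomial $t^2-1+t^{-2}$ --- and the reduction of \cite{BHS} applies verbatim to both; the fixed-point theorem kills the hyperbolic alternative in each, leaving $T(2,5)$ as the unique genus-two L-space knot (and showing a posteriori that the $t^2-1+t^{-2}$ staircase is not realized by a knot).

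The genuinely hard step, and the technical heart of the paper, is the fixed-point theorem itself, proved by the train-track analysis announced in the abstract: one classifies, up to the relevant symmetries, the finite list of train tracks carrying all pseudo-Anosov maps in the strata that can occur --- producing canonical carrying tracks such as the track $\tau$ for the stratum $\mathcal{Q}_0$ on the genus-two surface --- and then argues by a finite but delicate combinatorial analysis over the maps carried by these tracks that any such monodromy of a knot in $S^3$ with nonzero $c(\phi)$ must fix a point of the surface. Everything in the reduction above is bookkeeping by comparison; the points that require care are confirming that the precise hypotheses of \cite{BHS} hold --- in particular that $c(\phi)\neq 0$ and that ``fixed-point-free'' is read off the canonical pseudo-Anosov representative --- and that the staircase/geometrization dichotomy really leaves no knot besides $T(2,5)$.
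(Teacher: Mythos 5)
Your approach is essentially the same as the paper's. The paper derives Theorem~A in a single sentence---``Theorem \ref{t25thm} follows immediately from Theorem \ref{fpfthm} and the work of Baldwin--Hu--Sivek (\cite{BHS}, Theorem 3.5)''---and you have correctly unpacked what is inside that citation: genus and fiberedness detection, the L-space/staircase analysis, the prime/torus/satellite/hyperbolic trichotomy leaving only $T(2,5)$ or a hyperbolic candidate, and in the hyperbolic case the BHS conclusions that $c\neq 0$ and that the pseudo-Anosov representative is fixed-point-free, which Theorem \ref{fpfthm} then rules out.

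The one soft spot is your handling of the ``in particular'' clause. The assertion that ``the reduction of \cite{BHS} applies verbatim to both'' genus-two staircases is not quite right: that reduction is driven by $\widehat{\mathit{HFK}}(K,1)\cong\qq$, which fails for the staircase with $\Delta_K(t)=t^2-1+t^{-2}$, where $\widehat{\mathit{HFK}}(K,1)=0$. In particular the symplectic-Floer input that BHS use to conclude the monodromy is fixed-point-free is not available in the same form. The clean way to close this is to cite the known constraint (Hedden--Watson) that an L-space knot of genus $g\geq 2$ has $a_{g-1}(\Delta_K)=-1$; this rules out the $t^2-1+t^{-2}$ staircase outright, so every genus-two L-space knot has the $T(2,5)$ Alexander polynomial and hence the $T(2,5)$ knot Floer homology, and the ``in particular'' becomes a literal instance of the main statement. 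This is a bookkeeping imprecision rather than a missing idea, and the paper itself does not spell out the step; but as written your argument for the ``in particular'' would not go through.
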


This is the first knot Floer detection result for a knot of genus two. Prior detection results rely on a classification of fibered knots with genus at most one \cite{Oz_Sz_unknot, Ghiggini}. But, there are infinitely many genus two, hyperbolic, fibered knots with the same Alexander polynomial as that of $T(2,5)$ \cite{Misev}, indicating that close attention must be paid to the structure of the fibration. In this vein, our proof completes a strategy outlined by Baldwin--Hu--Sivek in \cite{BHS}, which uses connections between knot Floer homology and symplectic Floer homology, to reduce the question to a problem about fixed points of pseudo-Anosov maps. Our first key result is a solution to this problem.

\begin{defn}
Let $K\subset Y$ be a hyperbolic, fibered knot in a 3-manifold $Y$. The knot $K$ specifies an open book decomposition $(S,h)$ for $Y$, where $h:S\to S$ is freely isotopic to a pseudo-Anosov $\psi$ on $S$. We say that $K$ is \emph{fixed point free} if $\psi$ has no fixed points in the interior of $S$.
\end{defn}

\begin{mainthm}\label{fpfthm}
Let $K$ be a hyperbolic, genus-two, fibered knot in $S^3$. If the fractional Dehn twist coefficient $c(K)\neq 0$, then $K$ is not fixed point free.
\end{mainthm}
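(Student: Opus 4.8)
The plan is to pass to the hyperelliptic double branched cover, reduce Theorem~\ref{fpfthm} to a statement about pseudo-Anosov braids on the five-punctured disk, and then combine an index/Euler--Poincar\'e bookkeeping with the train track classification promised in the introduction to pin down exactly which monodromies can be fixed point free. First I would set up the covering. The fiber $S$ is a genus-two surface with one boundary component, and it carries a hyperelliptic involution $\iota$ realizing it as the double cover of a disk $D$ branched over five points; collapsing $\partial S$ to a puncture $p$ and capping off, $\iota$ extends to an involution of the closed surface $\Sigma_2$ with $\Sigma_2/\iota=S^2$ branched over six Weierstrass points, one of which is $p$. Since $\iota$ is central in the mapping class group it commutes with the free isotopy class of $h$, so $\psi$ commutes with $\iota$ and descends to a pseudo-Anosov $\bar\psi$ on $D$ with the other five branch points as punctures, i.e.\ a pseudo-Anosov five-braid. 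I would then record the dictionary between the two pictures: an interior fixed point of $\psi$ projects either to an interior fixed point of $\bar\psi$ or to one of the five punctures, and conversely every fixed puncture of $\bar\psi$, together with every interior fixed point of $\bar\psi$ that is not exchanged by the deck transformation, lifts to an interior fixed point of $\psi$; so ``$K$ fixed point free'' says that $\bar\psi$ fixes none of the five punctures and exchanges each of its interior fixed points. The coefficient $c(K)$ is read off from the number $n_p$ of prongs of $\psi$ at $p$ and the amount by which $\psi$ rotates them, and this in turn determines the Lefschetz index of $\psi$ at $p$: it is $1-n_p$ when the prongs are fixed and $+1$ when they are rotated, while $c(K)=0$ forces both this rotation and the integral boundary twisting to vanish.

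Next comes the index bookkeeping. Filling in $p$ and applying the Lefschetz fixed point theorem to $\psi$ on $\Sigma_2$, and simultaneously to $\bar\psi$ on $S^2$ (where the Lefschetz number is automatically $2$), I would show that ``no interior fixed point'' is extremely restrictive: essentially every Euler--Poincar\'e--admissible stratum fails to balance the index count, and the only survivor is a single stratum $\mathcal Q_0$. Concretely, downstairs the invariant foliation must be $1$-pronged at each of the five branch points with exactly two $3$-pronged interior singularities, both fixed by $\bar\psi$ with their prongs genuinely rotated; upstairs this is the principal stratum $\mathcal Q(1,1,1,1)$ on $\Sigma_2$ with $p$ a regular point, and $\psi$ must exchange the two lifts of each interior singularity, so that $\psi$ acts on the four zeros of the quadratic differential exactly as $\iota$ does. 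The further constraint that the open book is $S^3$, which says $\det(h_*-I)=\pm1$ on $H_1(\Sigma_2)$, restricts the homological action; in particular it excludes the one periodic possibility, whose characteristic polynomial is $\Phi_{10}$ -- precisely the (non-hyperbolic, hence already excluded) monodromy of $T(2,5)$ itself.

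It then remains to classify the fixed-point-free maps inside $\mathcal Q_0$, and this is where the train track census enters. A short explicit list of train tracks carries every pseudo-Anosov on the punctured disk in almost every stratum; lifting through the branched cover produces the canonical track $\tau$ on $\Sigma_2$ carrying all of $\mathcal Q_0$. Representing $\psi$ by an efficient train track map $f\colon\tau\to\tau$, I would translate ``$\psi$ has no interior fixed point'' into a combinatorial condition on $f$ -- no fixed vertex and no fixed point in the interior of an edge, with all of the Lefschetz index pushed out to $p$ and to the two rotated singularities -- enumerate the finitely many such $f$ up to the symmetries of $\tau$, and check for each that the monodromy it represents has $c(K)=0$: the only way to keep the index off the interior leaves the prongs at $p$ unrotated and the integral twisting trivial. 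Since this contradicts $c(K)\neq 0$, Theorem~\ref{fpfthm} follows.

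The main obstacle is the train track census and the ensuing enumeration. Passing from ``every pseudo-Anosov is carried by some train track'' to ``a fixed, short list suffices'' requires bounding the combinatorial complexity of an efficient representative in terms of the singularity data and then a finite, likely computer-assisted, verification, and the qualifier ``most strata'' warns that one must be sure the monodromy of a genus-two fibered knot in $S^3$ lands in a controlled stratum -- automatic here once the index count has forced it into $\mathcal Q_0$. A secondary but genuine subtlety is getting the boundary dictionary exactly right so that ``fixed point free'' cleanly yields ``$c(K)=0$'': the distinction between $c(K)=0$ and a nonzero integral Dehn twisting is invisible to the homological action on $\Sigma_2$ and really does require the finer information carried by $\tau$, so the computation of $c(K)$ for the maps on the list is where the theorem genuinely lives rather than a routine check.
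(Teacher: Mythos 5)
Your outline recovers the Birman--Hilden reduction and correctly identifies the general shape of the argument (train track census in a fixed stratum, finite enumeration of efficient maps), but it has two genuine gaps, and both are in the places where the theorem actually lives.

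First, your Lefschetz/Euler--Poincar\'e bookkeeping does not reduce to a single stratum. After imposing $c(K)\neq 0$ and ``fixed point free,'' \emph{two} singularity types survive on $S_2^1$: the one you call $\mathcal{Q}_0$, namely $(4;\emptyset;3^2)$ (note: \emph{one} interior $3$-prong downstairs, lifting to two upstairs; the capped singularity at $p$ is $4$-pronged, so the closed stratum is $\mathcal{Q}(2,1,1)$, not the principal stratum with $p$ regular), and also $(6;\emptyset;\emptyset)$, a single $6$-pronged boundary singularity with no interior singularities at all. The $(6;\emptyset;\emptyset)$ case passes your index count with no contradiction: the lone fixed point on $\Sigma_2$ is the capped $6$-prong, which is rotated since $c\neq 0$, so $\Ind=1$ and $\text{tr}(\widehat\psi_*)=1$, perfectly admissible. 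Ruling this stratum out requires an entirely separate argument that your proposal does not supply: Masur--Smillie orientability of the invariant foliations in this stratum forces the dilatation to be a root of $\Delta_K$, the Lefschetz trace then pins $\Delta_K$ down to $t^4-t^3-t^2-t+1$, so the dilatation is the genus-two minimum $\lambda_2$, Lanneau--Thiffeault's uniqueness forces $\beta$ into the conjugacy class of Ham--Song's $5$-braid $\alpha$ up to $\Delta^{2k}$ and inversion, and one checks (via the Ito--Kawamuro bound $|c(\beta)|<1$ and self-linking numbers) that none of these has unknotted closure.

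Second, and more fundamentally, the conclusion you are trying to wring out of the enumeration in $\mathcal{Q}_0$ is false. You propose to show that every fixed-point-free map carried by the canonical track has $c(K)=0$. It does not. The enumeration in Section \ref{221section} produces an explicit infinite family $\beta_n^{\pm 1}$ with $\beta_n=\sigma_1^{n+2}\sigma_2\sigma_3\sigma_4\sigma_1\sigma_2\sigma_3\sigma_4^2$, and these \emph{are} fixed point free upstairs with $c(\beta_n)\in(0,1]$, hence $c(h)=c(\beta_n)/2\neq 0$. Fixed-point-freeness does not force trivial twisting; what it forces is this specific list of braids. The theorem is then closed not by computing $c$, but by a knot-theoretic step you have no analogue of: showing that $\widehat{\Delta^{2k}\beta_n^{\pm1}}$ is never the unknot (using $|c|<1$ to restrict $k$ to two values, then identifying the closures as $T(2,n+7)$ and the pretzel $P(3,3-n,-2)$). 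Without replacing your ``$c(K)=0$'' endgame with this unknot-exclusion step, the $\mathcal{Q}_0$ branch of your argument cannot reach a contradiction from $c(K)\neq 0$.

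A smaller but worth-flagging slip: the dictionary between fixed points upstairs and downstairs needs care. An interior fixed point of $\bar\psi$ always has at least one fixed lift when the two preimages are exchanged \emph{only if} $\psi\circ\iota$ fixes them, i.e.\ exactly one of $\psi$ and $\iota\circ\psi$ has fixed points over a $\bar\psi$-fixed interior point; this is why the paper tracks both lifts $\tilde f_1,\tilde f_2$ of each train track map and uses the Trace Lemma on the lifted track rather than trying to read off interior fixed points of $\bar\psi$ directly.
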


Pseudo-Anosov maps and fractional Dehn twist coefficients are defined in Section \ref{background}. Theorem \ref{t25thm} follows immediately from Theorem \ref{fpfthm} and the work of Baldwin--Hu--Sivek (\cite{BHS}, Theorem 3.5). We would like to emphasize that the proof of Theorem \ref{fpfthm} is completely geometric in nature, and after the introduction we will only make passing references to Floer homology theories throughout the paper. An unfamiliar reader need not have expertise in any link homology theory to understand the proof of Theorem \ref{fpfthm}.

The proof of Theorem \ref{fpfthm} is broken down into two smaller theorems (Theorems \ref{6case} and \ref{221thm}), based on cases for the singularity type of $\psi$. An outline for the proof is given in subsection \ref{outline}. We will now discuss various applications of our techniques and results.

\subsection{Applications to train tracks and the dilatation spectrum}

One of the central tools in the proof of Theorem \ref{fpfthm} is the theory of train tracks for pseudo-Anosov braids, including a theory of ``tight splitting" developed in Section \ref{sec:split}. We believe the techniques we use are broadly applicable elsewhere within surface dynamics. For example:

\begin{mainthm}[cf. Theorem \ref{thm:221}]\label{carrythm}
Let $\psi$ be a pseudo-Anosov on the genus-two surface with one boundary component, with singularity type $(4;\emptyset;3^2)$. Then, $\psi$ is conjugate to a pseudo-Anosov carried by the train track depicted on the bottom left in Figure \ref{liftedtt}. A similar statement holds for the closed genus-two surface.
\end{mainthm}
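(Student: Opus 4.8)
The plan is to reduce the closed/once-punctured genus-two statement to a statement about pseudo-Anosov braids on the punctured disk, using the hyperelliptic involution. A pseudo-Anosov $\psi$ on the genus-two surface $\Sigma_{2,1}$ with singularity data $(4;\emptyset;3^2)$ has, by an Euler--Poincar\'e count, a $4$-pronged singularity at (or near) the boundary and two interior $3$-pronged singularities; one checks that $\psi$ commutes (up to isotopy) with the hyperelliptic involution $\iota$, so it descends to a pseudo-Anosov $\phi$ on the quotient orbifold, which is a disk with branch points. Counting how the prongs and the six Weierstrass points distribute under the double cover, the downstairs map $\phi$ is a pseudo-Anosov homeomorphism of the $n$-punctured disk (for the appropriate $n$ forced by the Riemann--Hurwitz/Euler-characteristic bookkeeping) in a specific stratum $\mathcal{Q}$; this is exactly the setup handled by the ``small list of train tracks carrying all pseudo-Anosov homeomorphisms in most strata on the punctured disk'' advertised in the abstract and proved earlier in the paper.

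Next I would invoke that classification to conclude that $\phi$ is carried by one of the finitely many model tracks on the punctured disk, and then argue that only one of them is compatible with the prescribed singularity type $(4;\emptyset;3^2)$ — the others either carry maps with the wrong number or valence of prongs, or fail the smoothness/parity conditions needed to lift across the branched cover. Here the ``tight splitting'' machinery of Section \ref{sec:split} is the tool that lets me normalize $\phi$ within its carrying track: after finitely many tight splits the track stabilizes to the canonical invariant track for the stratum, and I can read off that it is the one shown. This pins down $\phi$'s carrying track uniquely up to conjugacy.

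The final step is to lift the track back up through the double branched cover to $\Sigma_{2,1}$: the canonical downstairs track, being $\iota$-symmetric by construction, has a well-defined preimage, and that preimage is precisely the train track drawn on the bottom left of Figure \ref{liftedtt} (this is where the figure's name comes from). Since carrying is preserved under taking preimages along the branched cover, $\psi$ is carried by this lifted track, giving the result for $\Sigma_{2,1}$. The closed case follows by capping off the boundary: the $4$-pronged boundary singularity becomes an interior $4$-pronged singularity (or is absorbed, depending on the convention for $\mathcal{Q}_0$), the track extends over the cap in the evident way, and the same carrying statement persists.

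\textbf{Main obstacle.} The delicate point is the descent-and-lift bookkeeping: verifying that $\psi$ genuinely commutes with $\iota$ up to isotopy (rather than merely that $\mathcal{Q}$-type maps generically do), matching the singularity type $(4;\emptyset;3^2)$ upstairs with the correct punctured-disk stratum downstairs, and checking that the distinguished downstairs track is $\iota$-invariant so that its lift is a genuine train track on $\Sigma_{2,1}$ carrying $\psi$. Ruling out the other model tracks on the punctured disk — showing each of them is incompatible with $(4;\emptyset;3^2)$ — is the part most likely to require careful case analysis, and I expect it to lean directly on the earlier punctured-disk classification and on the tight-splitting normal form.
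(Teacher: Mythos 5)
Your high-level strategy matches the paper's: descend through the hyperelliptic involution to a pseudo-Anosov $5$-braid on the punctured disk, apply the punctured-disk carrying result (Theorem \ref{thm:221}) to conclude it is carried by the Peacock track, and then lift that track back to the genus-two surface as in subsection \ref{liftsec}. That part is exactly right, and you also correctly identify $(2;1^5;3)$-type reasoning as the downstairs stratum to land in.

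The genuine gap is the step you flag yourself as delicate: you assert that ``one checks that $\psi$ commutes (up to isotopy) with the hyperelliptic involution $\iota$'' but never supply the argument, and it does not follow from generic facts about the stratum. The paper's proof hinges on a specific rigidity observation: after capping off $\partial S$, the $4$-pronged singularity $p$ that appears in the capping disk is the \emph{unique} $4$-prong singularity of $\widehat{\psi}$ on the closed surface. Since $\iota$ must permute singularities of a given prong type, $\iota$ fixes $p$; and uniqueness of the pseudo-Anosov representative in its mapping class (together with the fact that $\iota$ is central) then forces $\widehat{\psi}$ to commute with $\iota$, as in Lemma 3.7 of \cite{BHS}. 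The fact that $p$ itself is $\iota$-fixed is then what lets one transport this commutativity back to the bounded surface $S$. Without isolating this uniqueness-of-the-$4$-prong observation, the descent to a $5$-braid is unjustified, and everything downstream is hanging. Additionally, you anticipate having to ``rule out the other model tracks'' by case analysis at this stage, but that work has already been absorbed into the proof of Theorem \ref{thm:221} (which discards the Snail via further tight splits), so once the descent is in place the punctured-disk step is a single citation rather than a new case analysis.
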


This result suggests a strategy for systematically studying the set of dilatations of pseudo-Anosovs in genus two. Indeed, Theorem \ref{carrythm} reduces the study of dilatations in the stratum $(4;\emptyset;3^2)$ to the study of a special collection of maps on a single graph. One would hope for the development of a \textit{kneading theory} generalizing the classical theory of Milnor-Thurston for interval maps (cf. \cite{MT}). Applying such a theory to a small list of tracks in each of the other strata in genus two would provide a much clearer understanding of the full dilatation spectrum.  This line of study was suggested to us by Chenxi Wu. Another step in this direction is the following result:

\begin{mainthm}\label{carrythm2}
Let $\psi$ be a pseudo-Anosov on the punctured disk with at least one $k$-pronged singularity away from the boundary with $k\geq 2$. Then $\psi$ is carried by a standardly embedded train track $\tau$ with no joints.
\end{mainthm}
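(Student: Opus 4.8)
The plan is to construct $\tau$ explicitly via the standard train track machinery for pseudo-Anosov braids, building on Bestvina–Handel's algorithm and its refinements (see \cite{BH} and subsequent work). First I would recall the setup: a pseudo-Anosov $\psi$ on the $n$-punctured disk $D_n$ carries an invariant measured foliation $\mathcal{F}$, and any such $\psi$ is carried by some invariant train track $\tau_0$ obtained by smoothing the singular leaves near the singularities of $\mathcal{F}$ and collapsing complementary regions. A \emph{standardly embedded} track is one sitting in $D_n$ in a prescribed normal form relative to a chosen set of disjoint arcs (or a fibered neighborhood aligned with a horizontal/vertical decomposition), and a \emph{joint} is a switch where three or more smooth branches meet transversally on the same side — equivalently a trivalent switch of "mixed" type that obstructs the track from being a simple union of a main loop with tendrils. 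The goal is to show the hypothesis of an interior $k$-pronged singularity with $k \ge 2$ is exactly what lets us eliminate joints.

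The key steps, in order: (1) Put $\psi$ in Bestvina–Handel normal form to get an efficient invariant track $\tau_0$ carrying $\psi$; every complementary region of $\tau_0$ is either a once-punctured monogon/bigon, a boundary-parallel annulus, or an $m$-gon with $m \ge 3$ corresponding to an interior $m$-pronged singularity. (2) Use the interior $k$-pronged singularity, $k \ge 2$, as an "anchor": its complementary $k$-gon gives a region whose cusps can absorb branches, allowing us to perform a sequence of slides/foldings (isotopies of the track supported near the cusps) that push all the track's complexity toward this singularity and toward the punctures. (3) Show that each joint can be removed by a local move: a joint is a switch where branches from two different "strands" of the track collide; by sliding one strand along the other toward the $k$-pronged region (or toward a puncture-monogon), one trades the joint for an additional cusp on an existing complementary polygon, never increasing the number of complementary regions illegitimately. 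Iterate until no joints remain, yielding a standardly embedded $\tau$ still carrying $\psi$. (4) Verify that the resulting $\tau$ is genuinely standardly embedded in the sense used later in the paper (i.e., matches the normal form needed for Theorem \ref{carrythm}/the $\mathcal{Q}_0$ analysis), by checking the complementary region types and the switch conditions.

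The main obstacle I expect is step (3): showing the joint-removal moves actually terminate and do not create new joints elsewhere. A naive slide that removes one joint can easily create another further along the track, so one needs a complexity function — e.g. a lexicographic pair (number of joints, total length of the "infected" portion of the track measured in branches) — that strictly decreases under the move. Proving such a function exists requires a careful combinatorial analysis of how branches reroute under a slide, and crucially uses that $k \ge 2$: if the only interior singularity were a $1$-pronged point (which cannot happen on a foliation, but could be mimicked by boundary behavior), there would be no polygon with a spare cusp to absorb the rerouted branch, and the move would be blocked. The case $k = 2$ is the delicate boundary case and will probably need to be handled by hand, exploiting that a $2$-pronged "singularity" is really a regular point or a puncture and can be treated as a degenerate vertex that still provides enough room. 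A secondary technical point is making "standardly embedded with no joints" precise enough that the statement is checkable; I would fix that definition early (in Section \ref{background} or the train-track section) and phrase step (4) as a direct verification against it.
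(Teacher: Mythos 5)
Your high-level intuition --- iterate local branch-rerouting moves that push complexity toward the interior $k$-gon, then argue termination via some complexity function --- matches the broad contour of the paper's proof, but there are two substantive gaps.

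First, you are working with a different notion of ``joint'' than the paper. A joint (Definition \ref{defn:joint}) is a \emph{loop switch} (Definition \ref{defn:loopswitch}) --- a switch at an infinitesimal monogon surrounding a puncture --- that is incident to more than one real (expanding) edge, i.e.\ $|R(v)| \geq 2$. The quantity being driven to zero in the paper's proof is $J = \sum_v (|R(v)|-1)$ over loop switches. Your description of a joint as ``a switch where three or more smooth branches meet transversally on the same side, a trivalent switch of mixed type'' is not the same object, and a joint-removal step aimed at that target would be aimed in the wrong direction.

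Second, and more importantly, your proposed complexity function --- the lexicographic pair (number of joints, length of the ``infected'' region) --- is not shown to decrease, and you are right to flag termination as the delicate point. The paper's central technical innovation is the theory of \emph{tight splitting} (Section \ref{sec:split}), which supplies exactly the missing mechanism. Each tight split at a loop switch preserves $J$ or decreases it by exactly $1$; it decreases $J$ precisely when the edge split over is a \emph{stem} (an edge with one end at an infinitesimal $m$-gon, $m \geq 2$). The hypothesis that $\psi$ has an interior $k$-pronged singularity with $k \geq 2$ enters exactly here: by Proposition \ref{Prop:track} and Remark \ref{cuspsingrem}, it guarantees that stems exist. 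But a split need not hit a stem, so $J$ can stay constant for arbitrarily many steps, and there is no obvious bound on how long --- so a crude secondary ``length decreases'' invariant will not close the argument. The paper instead uses Proposition \ref{tsplitconj} and Corollary \ref{cor:SplitPos}: each tight split conjugates the transition matrix by a unipotent $P = I + D_{i,j}$ and strictly decreases one entry of the Perron--Frobenius eigenvector, and Lemma \ref{finitelem} shows there are only finitely many Perron--Frobenius matrices of given size and spectral radius. If the algorithm never split over a stem, it would eventually revisit a transition matrix, forcing every eigenvector entry --- including those on the stems --- to have been decreased, a contradiction. You also need the rigidity analysis culminating in Corollary \ref{cor:maxsplit} to know that a non-rigid maximal-valence loop switch always exists, and you need Proposition \ref{tsplitcarry} to know that the split track is still $\psi$-invariant; your proposed ``slides and foldings'' do not clearly preserve invariance under $\psi$, which is exactly the failure illustrated by the paper in Example \ref{ex:split_oops}. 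Without those pieces your step (3) does not go through.
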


See Definition \ref{defn:stdembed} for the definition of a standardly embedded train track. A \textit{loop switch} of a standardly embedded train track $\tau$ is a switch at a loop surrounding a 1-prong singularity (cf. Definition \ref{defn:loopswitch}), and a \textit{joint} is a loop switch that is incident to more than one expanding edge (cf. Definition \ref{defn:joint}). The track in Theorem \ref{carrythm} is the lift of a joint-less track on the punctured disk, so Theorem \ref{carrythm} may be seen as a specific case of Theorem \ref{carrythm2}. See subsection \ref{splittingsection} for the proofs of Theorems \ref{carrythm} and \ref{carrythm2}.

\subsection{Applications to the Floer homology of branched covers}

For a knot $K\subset S^3$, let $\Sigma_n(K)$ denote the $n$-fold cyclic cover of $S^3$ branched along $K$.
There has been much interest recently in the Floer homology of $\Sigma_n(K)$ in terms of $K$. For example, Boileau--Boyer--Gordon have studied extensively in \cite{BBG} and \cite{BBG2} the set of all integers $n \geq 2$ such that $\Sigma_n(K)$ is an L-space (see also e.g. \cite{IT} and \cite{Pet}). One question that has persisted in this area is the following:

\begin{que}[Boileau--Boyer--Gordon, Moore]
Can $\Sigma_n(K)$ be an L-space for $K$ a hyperbolic L-space knot?
\end{que}

\noindent Combining Theorem \ref{t25thm} with (\cite{BBG}, Corollary 1.4) yields the following complete answer to this question for $n>2$:

\begin{cor}\label{branch}
If $K$ is an L-space knot and $\Sigma_n(K)$ is an L-space for some $n>2$, then $K$ is either $T(2,3)$ or $T(2,5)$. In particular, $K$ is not hyperbolic.
\end{cor}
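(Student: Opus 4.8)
The plan is to obtain Corollary \ref{branch} by combining Theorem \ref{t25thm} with the branched-cover results of Boileau--Boyer--Gordon. The key input is (\cite{BBG}, Corollary 1.4): if $K$ is an L-space knot and $\Sigma_n(K)$ is an L-space for some $n>2$, then $K$ is either the trefoil $T(2,3)$ or a genus-two L-space knot in $S^3$. (The double branched cover is excluded on purpose: its behaviour is genuinely different, which is why the hypothesis is $n>2$ rather than $n\geq 2$.) Granting this, the corollary is immediate in the first case, and in the second case the ``in particular'' clause of Theorem \ref{t25thm} --- that $T(2,5)$ is the only genus-two L-space knot in $S^3$ --- pins down $K=T(2,5)$. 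Finally, both $T(2,3)$ and $T(2,5)$ are torus knots, hence Seifert fibered and not hyperbolic, which yields the last sentence.

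Concretely I would proceed as follows. First, quote (\cite{BBG}, Corollary 1.4), checking that its hypotheses match ours verbatim (an L-space knot in $S^3$ admitting a cyclic branched cover of order $>2$ that is an L-space). Second, invoke Theorem \ref{t25thm} to replace ``genus-two L-space knot'' by ``$T(2,5)$''. Third, record the standard fact that torus knots are not hyperbolic. If instead one works from the formulation of \cite{BBG} that outputs a knot-Floer-level conclusion --- namely that $\widehat{\mathit{HFK}}(K;\qq)$ agrees with that of $T(2,3)$ or of $T(2,5)$ as bigraded vector spaces --- then in the $T(2,3)$ case one additionally cites the detection of the trefoil by knot Floer homology (see \cite{Ghiggini}), and in the $T(2,5)$ case one cites Theorem \ref{t25thm}; the outcome is the same.

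I do not expect any genuine obstacle: all the new content of the corollary is supplied by Theorem \ref{t25thm}, whose whole role here is to collapse the family of genus-two L-space knots down to the single knot $T(2,5)$. The only point deserving a moment's care is to confirm that the precise hypotheses of (\cite{BBG}, Corollary 1.4) are exactly the ones assumed --- in particular the order-$n$ restriction, and that ``L-space knot'' is used in the same (nontrivial, in $S^3$) sense in both places.
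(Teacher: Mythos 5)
Your proposal matches the paper's argument exactly: the paper states Corollary~\ref{branch} as an immediate consequence of combining Theorem~\ref{t25thm} with (\cite{BBG}, Corollary~1.4), precisely the two inputs you identify. Your additional remarks about the $n>2$ hypothesis and the non-hyperbolicity of torus knots are correct and in line with what the paper intends, so there is nothing to add.
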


\subsection{Applications to instanton Floer homology and Dehn surgery}
For a 3-manifold $Y$, let $R(Y)=\text{Hom}(\pi_1(Y),\mathit{SU}(2))$ denote the $\mathit{SU}(2)$-representation variety. We say that a 3-manifold $Y$ is \emph{$\mathit{SU}(2)$-abelian} if $R(Y)$ contains no irreducibles. The name is motivated by the fact that $Y$ is $\mathit{SU}(2)$-abelian if and only if every $\rho\in R(Y)$ has abelian image.

Following work initiated by Kronheimer--Mrowka in their proof of the Property P conjecture \cite{KM1}, Baldwin--Li--Sivek--Ye \cite{BLSY}, Baldwin--Sivek \cite{BS1}, and Kronheimer--Mrowka \cite{KM2} proved that $r$-surgery $S^3_r(K)$ on a nontrivial knot $K\subset S^3$ is not $\mathit{SU}(2)$-abelian for all slopes $r\in[0,3]\cup [4,5)$ with prime power numerator, and for some additional slopes $r\in[3,4)$.

The key theory which facilitates most of these results is the instanton Floer homology of the surgered manifold $S^3_r(K)$ (and related techniques arising from this theory, as in \cite{KM2}). Combining Theorem \ref{fpfthm} with (\cite{BLSY}, Proposition 2.4) allows us to prove an analogue of Theorem \ref{t25thm} for instanton Floer homology:

\begin{cor}\label{instantonLspace}
The cinquefoil $T(2,5)$ is the only genus-two instanton L-space knot, i.e. the only genus-two knot $K$ for which $\text{dim}~I^\#(S^3_r(K))=|H_1(S^3_r(K))|$ for some $r>0$.
\end{cor}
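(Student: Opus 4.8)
The plan is to mirror exactly the deduction of Theorem \ref{t25thm} from Theorem \ref{fpfthm}, but replacing the Baldwin--Hu--Sivek input with the instanton-theoretic statement (\cite{BLSY}, Proposition 2.4). First I would recall what it means for $K$ to be a genus-two instanton L-space knot: by definition $\dim I^\#(S^3_r(K)) = |H_1(S^3_r(K))|$ for some $r > 0$, and a standard argument (as in the Heegaard Floer setting) shows that if this holds for a single positive slope then it holds for all sufficiently large slopes, forcing $K$ to be fibered with the monodromy being freely isotopic to a pseudo-Anosov, and additionally constraining the fractional Dehn twist coefficient. The cinquefoil $T(2,5)$ does satisfy the required dimension equality (it is an L-space knot, hence an instanton L-space knot by \cite{BS1}-type results), so it remains to show it is the \emph{only} such genus-two knot.

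Next I would split into cases according to whether $K$ is hyperbolic. If $K$ is a genus-two instanton L-space knot that is \emph{not} hyperbolic, then since $K$ is fibered of genus two with a pseudo-Anosov-like monodromy (or, in the non-hyperbolic case, the classification of genus-two fibered knots and their monodromies forces $K$ to be a torus knot or a cable), one checks directly against the short list of possibilities that $T(2,5)$ is the only genus-two torus knot, and cables of lower-genus knots either fail to be instanton L-space knots or fail to have genus two; this is the same bookkeeping one does in the Heegaard Floer case. If $K$ \emph{is} hyperbolic, then (\cite{BLSY}, Proposition 2.4) tells us that the existence of such a surgery forces the fractional Dehn twist coefficient $c(K)$ to be nonzero \emph{and} forces $K$ to be fixed point free in the sense of our Definition; this is precisely the scenario excluded by Theorem \ref{fpfthm}. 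Hence no hyperbolic genus-two instanton L-space knot exists, and combined with the non-hyperbolic analysis we conclude $K = T(2,5)$.

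The main obstacle is making sure the hypotheses of (\cite{BLSY}, Proposition 2.4) align exactly with the conclusion of Theorem \ref{fpfthm}: one must verify that the instanton L-space condition on $S^3_r(K)$ genuinely implies both $c(K) \neq 0$ and fixed-point-freeness of the pseudo-Anosov representative, rather than just one of these. This is where I would be most careful, since Proposition 2.4 of \cite{BLSY} is stated in terms of surgeries and representation varieties, and translating it into the open-book/pseudo-Anosov language requires the dictionary between instanton Floer homology of surgeries, sutured instanton homology of the fibered complement, and the dynamics of the monodromy --- the same dictionary Baldwin--Hu--Sivek set up in the Heegaard Floer world. Once that translation is in hand, the rest is a direct appeal to Theorem \ref{fpfthm} together with the (routine, already-known) classification of non-hyperbolic genus-two fibered knots.
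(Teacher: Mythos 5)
Your proposal is correct and follows the same route the paper takes: the paper gives no separate proof beyond the single observation that Theorem~\ref{fpfthm} combined with Proposition~2.4 of \cite{BLSY} yields the instanton detection statement, and your outline (reduce to the hyperbolic case, let the BLSY input supply fiberedness, the constraint $c(K)\neq 0$, and fixed-point-freeness, then invoke Theorem~\ref{fpfthm} for a contradiction, with the non-hyperbolic genus-two knots disposed of by classification) is an unpacking of exactly that one-line argument. The extra caution you flag about translating Proposition~2.4 into the open-book/pseudo-Anosov language is sensible, but it is already the content of that proposition rather than a gap you need to fill.
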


Now, as described in (\cite{BLSY}, Section 1.3), Corollary \ref{instantonLspace} completes the set of slopes $r$ for which $S^3_r(K)$ is not $\mathit{SU}(2)$-abelian, to all rational numbers $r\in[0,5)$ with prime power numerator:

\begin{cor}
Let $K\subset S^3$ be a nontrivial knot, and $r\in[0,5)$ a rational number with prime power numerator. Then, $S^3_r(K)$ is not $\mathit{SU}(2)$-abelian.
\end{cor}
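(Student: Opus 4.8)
The plan is to quote the reduction carried out in (\cite{BLSY}, Section 1.3) and supply Corollary \ref{instantonLspace} as its missing input. By the theorem of \cite{BLSY}, \cite{BS1} and \cite{KM2} recalled above, $S^3_r(K)$ fails to be $\mathit{SU}(2)$-abelian for every nontrivial $K$ and every rational $r\in[0,3]\cup[4,5)$ with prime power numerator, so the entire content lies in the window $r\in[3,4)$. Fix such an $r$ and suppose, for contradiction, that $S^3_r(K)$ is $\mathit{SU}(2)$-abelian.

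First I would run the reduction. An $\mathit{SU}(2)$-abelian rational homology sphere is an instanton L-space, so $\dim I^\#(S^3_r(K))=|H_1(S^3_r(K))|$, and hence $K$ is an instanton L-space knot admitting an L-space surgery of slope $r$. Instanton L-space knots are fibered and obey the surgery-slope bound $2g(K)-1\le r$, and since $r<4$ this forces $g(K)\in\{1,2\}$ (using that $K$ is nontrivial). If $g(K)=1$, then $K$ is a genus-one fibered L-space knot, hence a trefoil (the figure-eight, the only other genus-one fibered knot, admits no L-space surgery). If $g(K)=2$, then $K=T(2,5)$ by Corollary \ref{instantonLspace}. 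In either case $K$ is a torus knot, so $S^3_r(K)$ with $r\in[3,4)$ is a Seifert fibered rational homology sphere over $S^2$ with three genuine exceptional fibers and in particular is not a lens space; such manifolds are known not to be $\mathit{SU}(2)$-abelian — equivalently, the non-hyperbolic case of the corollary is already contained in the cited work — which gives the desired contradiction. This closes the window $r\in[3,4)$, and the corollary then holds throughout $[0,5)\cap\qq$ for slopes with prime power numerator.

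The main obstacle here is organizational rather than mathematical: the single new ingredient is Corollary \ref{instantonLspace}, equivalently Theorem \ref{fpfthm}, which rules out the last hyperbolic possibility in genus two, and the rest of the argument only assembles statements already in the literature. The steps requiring the most care are the low-complexity residual cases — identifying the genus-one L-space knot via the classification of genus-one fibered knots together with the Alexander-polynomial obstruction for the figure-eight, and confirming that surgeries on $T(2,3)$ and $T(2,5)$ in the interval $[3,4)$ are not $\mathit{SU}(2)$-abelian — each of which is classical but appeals to results outside the main line of the paper.
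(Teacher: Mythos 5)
Your proposal takes the same route as the paper: the paper's entire proof is to cite the reduction in \cite{BLSY}, Section 1.3, and feed Corollary \ref{instantonLspace} (equivalently Theorem \ref{fpfthm}) into it to close the remaining gap in $[3,4)$, and your write-up simply unpacks that cited reduction (SU(2)-abelian $\Rightarrow$ instanton L-space $\Rightarrow$ $K$ is a fibered instanton L-space knot with $2g(K)-1\le r<4$ $\Rightarrow$ $K\in\{T(2,3),T(2,5)\}$ $\Rightarrow$ check torus-knot surgeries in $[3,4)$). One small caution: the final step should appeal to the specific treatment of $T(2,3)$- and $T(2,5)$-surgeries in the cited work rather than to a blanket statement that small Seifert fibered spaces with three exceptional fibers are never $\mathit{SU}(2)$-abelian, since no such general theorem is being invoked in the literature you cite — but your parenthetical already acknowledges that these residual cases are handled there, so the argument is sound.
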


\begin{rem}
Note that $S^3_r(K)$ may in general be $\mathit{SU}(2)$-abelian for $r\geq 5$, as $S^3_5(T(2,3))$ is the lens space $L(5,1)$, which has abelian fundamental group. However, Baldwin--Li--Sivek--Ye in \cite{BLSY} have extended the slopes for which $S^3_r(K)$ is not $\mathit{SU}(2)$-abelian to some additional $r\in(5,7)$. It is an open question whether $S^3_r(K)$ is $\mathit{SU}(2)$-abelian for \emph{all} rational numbers $r\in[0,5)$, though it is known to be true for $r\in[0,2]$ by work of Kronheimer--Mrowka in \cite{KM2}.
\end{rem}

\subsection{Applications to Khovanov homology}
In \cite{BHS}, Baldwin--Hu--Sivek proved that Khovanov homology (with coefficients in $\zz/2\zz$) detects the cinquefoil $T(2,5)$. Combining Theorem \ref{t25thm} with previous work of Baldwin--Dowlin--Levine--Lidman--Sazdanovic (\cite{BDLLS}, Corollary 2), we can improve Baldwin--Hu--Sivek's result from $\zz/2\zz$-coefficients to $\qq$-coefficients:
\begin{cor}
If $\mathit{Kh}(K;\qq)\cong \mathit{Kh}(T(2,5);\qq)$ as bi-graded $\qq$-vector spaces, then $K=T(2,5)$.
\end{cor}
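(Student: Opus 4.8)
The plan is to deduce this corollary formally, with essentially no geometric input: it is a combination of Theorem~\ref{t25thm} with the Khovanov-to-knot-Floer comparison of Baldwin--Dowlin--Levine--Lidman--Sazdanovic. The only real content is to verify that the hypothesis on $\mathit{Kh}(K;\qq)$ feeds correctly into (\cite{BDLLS}, Corollary~2), and that the output of that result is precisely the hypothesis of Theorem~\ref{t25thm}.

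First I would, if necessary, pass from unreduced to reduced Khovanov homology. Over $\qq$, unreduced Khovanov homology is a free module over $V=\qq_{(0,1)}\oplus\qq_{(0,-1)}$ (Shumakovitch), so $\mathit{Kh}(K;\qq)\cong\widetilde{\mathit{Kh}}(K;\qq)\otimes V$ as bigraded vector spaces, and likewise for $T(2,5)$; hence the hypothesis $\mathit{Kh}(K;\qq)\cong\mathit{Kh}(T(2,5);\qq)$ is equivalent to $\widetilde{\mathit{Kh}}(K;\qq)\cong\widetilde{\mathit{Kh}}(T(2,5);\qq)$ as bigraded $\qq$-vector spaces. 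Since $T(2,5)$ is alternating, $\widetilde{\mathit{Kh}}(T(2,5);\qq)$ is thin, of total dimension $\det(T(2,5))=5$, supported in a single one-dimensional summand in each of five consecutive bidegrees along its $\delta$-diagonal. Next I would invoke Dowlin's spectral sequence in the form packaged by (\cite{BDLLS}, Corollary~2): it converts the knowledge that $K$ has the same (reduced) Khovanov homology over $\qq$ as $T(2,5)$ into the statement that $\widehat{\mathit{HFK}}(K;\qq)\cong\widehat{\mathit{HFK}}(T(2,5);\qq)$ as bigraded vector spaces, the thinness of the $E_1$-page forcing the spectral sequence to degenerate with the $\delta$-grading matching the Alexander grading (and the analogous degeneration for the L-space knot $T(2,5)$ pinning down the target). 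Feeding this into Theorem~\ref{t25thm} then gives $K=T(2,5)$.

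I do not expect a genuine obstacle: the subtleties are purely bookkeeping, namely (i) confirming that Shumakovitch's splitting is bigraded in the grading convention used, so that ``same $\mathit{Kh}(-;\qq)$'' really does yield ``same $\widetilde{\mathit{Kh}}(-;\qq)$,'' and (ii) confirming that (\cite{BDLLS}, Corollary~2) is stated with $\qq$-coefficients and with grading conventions strong enough to conclude a \emph{bigraded} isomorphism of $\widehat{\mathit{HFK}}$, rather than merely an equality of total dimensions, since Theorem~\ref{t25thm} is stated for the bigraded invariant. Both points are already contained in the cited references, so once they are checked the corollary is immediate.
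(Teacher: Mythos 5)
Your proposal is correct and is the same argument the paper intends: the paper simply cites Theorem~\ref{t25thm} together with (\cite{BDLLS}, Corollary~2) and leaves the bookkeeping (reduced vs.\ unreduced, thinness forcing degeneration of Dowlin's spectral sequence) implicit, which is exactly what you've spelled out. No discrepancy.
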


We also obtain detection results in annular Khovanov homology. One may think of $T(2,5)$ as the lift of the braid axis for the 5-braid $B=\sigma_1\sigma_2\sigma_3\sigma_4$ in $S^3$ seen as the double-branched cover over $\widehat{B}$ under the Birman--Hilden correspondence (see subsection \ref{BHsection} for background). From this perspective, we can adapt techniques of Binns--Martin in (\cite{BM}, Theorems 10.2, 10.4, 10.7) to prove that annular Khovanov homology detects the aforementioned braid closure:
\begin{cor}\label{akhcor}
Let $L\subset A\times I$ be an annular link with $\mathit{AKh}(L;\qq)\cong \mathit{AKh}(\widehat{B};\qq)$. Then, $L$ is isotopic to $\widehat{B}$ in $A\times I$.
\end{cor}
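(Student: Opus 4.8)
The plan is to follow the Binns--Martin strategy for annular Khovanov homology detection results and adapt it to the link $\widehat{B}$, where $B = \sigma_1\sigma_2\sigma_3\sigma_4$ is the positive lift of a $5$-braid. First I would use the graded Euler characteristic and total-dimension constraints: an isomorphism $\mathit{AKh}(L;\qq)\cong\mathit{AKh}(\widehat{B};\qq)$ preserves the triple grading (homological, quantum, and annular/$k$-grading), and taking the graded Euler characteristic in the annular variable recovers the Jones polynomial of the closure together with the annular filtration data, which pins down the number of strands and forces $L$ to be (isotopic in $A\times I$ to) a closed braid on $5$ strands. The key input here is that $\mathit{AKh}(\widehat{B};\qq)$ is ``thin" in the appropriate sense and that $\widehat{B}$ is the closure of a positive braid realizing $T(2,5)$ as its double branched cover.

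Next, I would pass through the Birman--Hilden correspondence: the double branched cover of $S^3$ over $\widehat{B}$ is $S^3$ itself with the lift of the braid axis being $T(2,5)$, so the annular link $L$, once known to be a $5$-braid closure, determines and is determined by a fibered knot in $S^3$ of the appropriate genus via the open-book structure. Here is where Theorem \ref{t25thm} enters: the knot Floer data of this branched-cover knot is forced to agree with that of $T(2,5)$ (using the spectral sequence / rank comparison relating annular Khovanov homology to the relevant Floer invariant of the branched cover, as in the Binns--Martin argument combined with \cite{BDLLS}-type statements), and Theorem \ref{t25thm} then identifies the knot as $T(2,5)$ on the nose. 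Combined with Theorem \ref{fpfthm} ruling out the fixed-point-free case, the monodromy is rigid enough that the braid is determined up to conjugacy and (de)stabilization.

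Finally I would upgrade ``the braid closure abstractly equals $\widehat{B}$" to ``$L$ is isotopic to $\widehat{B}$ \emph{in $A\times I$}'' — that is, as an annular link, not merely in $S^3$. This is the step where I expect the main obstacle to lie: annular isotopy is strictly finer than isotopy in $S^3$, so one must show the braid axis is preserved, which amounts to controlling the number of strands and the conjugacy class of the braid word simultaneously. Following Binns--Martin, I would use the action of the annular grading (the $\mathfrak{sl}_2$-weight-space decomposition coming from the $U(1)\subset SU(2)$ action) to detect the braid index and the wrapping number, showing that $\mathit{AKh}$ in extremal annular gradings forces $L$ to have braid index exactly $5$ and to be ``$5$-stranded in the minimal way,'' and then invoke a Markov-type uniqueness statement for positive braids of small complexity together with the knot Floer rigidity above. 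The technical heart is verifying that the extremal annular summands of $\mathit{AKh}(\widehat{B};\qq)$ have precisely the dimension and grading profile that pins down the braid, which requires an explicit computation of $\mathit{AKh}(\widehat{B};\qq)$ (feasible since $B$ is a short positive braid) and a matching lower-bound argument via the Binns--Martin detection toolkit.
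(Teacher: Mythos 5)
The paper deliberately omits a proof of this corollary, referring the reader to Binns--Martin (Theorems 10.2, 10.4, 10.7 of \cite{BM}); so the benchmark is whether your sketch accurately reconstructs that strategy. At the coarse level you do: view $\widehat{B}$ as the branch locus whose double cover is $S^3$ with the lift of the braid axis equal to $T(2,5)$, feed Theorem \ref{t25thm} through the Birman--Hilden correspondence, and close the loop with annular-grading information. That is the right skeleton.

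However, several of the specific mechanisms you describe are not the ones Binns--Martin actually use, and a couple would fail as stated. First, the claim that ``taking the graded Euler characteristic in the annular variable \ldots forces $L$ to be a closed braid'' is wrong: the annular Jones polynomial does not detect braid closures. What detects them is a rank statement in the extremal annular grading of $\mathit{AKh}$ itself (Grigsby--Licata--Wehrli / Xie--Zhang), i.e.\ a genuinely categorified input, not a decategorified one. Second, \cite{BDLLS} is the wrong tool for the annular corollary: that spectral sequence runs from ordinary $\qq$-Khovanov homology to $\widehat{\mathit{HFK}}$ and is used for the non-annular Khovanov detection corollary in this paper. For the annular statement the relevant spectral sequence is the Roberts / Grigsby--Wehrli one, running from $\mathit{AKh}(L;\qq)$ to $\widehat{\mathit{HFK}}$ of the lift of the braid axis in $\Sigma_2(L)$; that is what lets Theorem \ref{t25thm} be applied. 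Third, invoking Theorem \ref{fpfthm} separately is redundant --- it is already an input to Theorem \ref{t25thm}, and once the lifted braid-axis knot is identified as $T(2,5)$ you do not need it again. Finally, the closing ``Markov-type uniqueness statement'' is not how the argument concludes; once one knows $L$ is a $5$-braid closure (from the extremal annular grading) whose double branched cover is $S^3$ with braid-axis lift $T(2,5)$, the identification $L = \widehat{B}$ in $A\times I$ falls out directly, without an auxiliary Markov move argument.
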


The proof of this corollary is almost identical to those of the analogous results proved by Binns--Martin. So, we omit the proof in the present work, and instead refer the reader to \cite{BM}.

\subsection{Outline of the proof of Theorem \ref{fpfthm}}\label{outline}
Let $K$ be a hyperbolic, genus-two, fibered knot in $S^3$ with associated open book decomposition $(S,h)$ and pseudo-Anosov representative $\psi_h$. If $c(K)\neq 0$ and $K$ is fixed point free, it follows from work of Baldwin--Hu--Sivek (see Theorem \ref{bhsthm}) that:

\begin{enumerate}[label=$\bullet$]
    \item $\psi_h$ has singularity type either:
    \begin{enumerate}[label=\hspace{1cm}Case \arabic*:, left=0pt, itemsep=-1pt, topsep=-5pt]
        \item $(6;\emptyset;\emptyset)$, or
        \item $(4;\emptyset;3^2)$
    \end{enumerate}
    \item $h$ is the lift of a 5-braid $\beta$ with unknotted braid closure $\widehat{\beta}$
    \item The pseudo-Anosov representative $\psi_\beta$ of the braid $\beta$ has singularity type either:
        \begin{enumerate}[label=\hspace{1cm}Case \arabic*:, left=0pt, itemsep=-1pt, topsep=-5pt]
        \item $(3;1^5;\emptyset)$
        \item $(2;1^5;3)$
    \end{enumerate}
\end{enumerate}
For our conventions of the singularity types, see Section \ref{MCGsection}. Cases 1 and 2 are mutually exclusive, and we will use notation $K,h,\beta,\psi_h,\psi_\beta$ as above for the rest of this outline, in either case.

Case 1 is dealt with in Section \ref{6section}. In this case, Masur--Smillie proved in \cite{MS} that the foliations preserved by $\psi_h$ are orientable, so that the dilatation of $\psi_h$ is a root of the Alexander polynomial $\Delta_K(t)$. This is a special fact about the stratum $(6;\emptyset;\emptyset)$ in genus two. Using the Lefschetz fixed point theorem and basic facts about Alexander polynomials of fibered knots in $S^3$, we completely determine the Alexander polynomial of $K$ and conclude that the dilatation of $\psi_h$ coincides with the minimal dilatation $\lambda_2$ for genus-two pseudo-Anosovs. Work of Lanneau--Thifeault in \cite{LT} further implies that $\psi_h$ is the almost unique genus-two pseudo-Anosov realizing $\lambda_2$ as its dilatation. It follows that $\beta$ is (up to inverse and composing with the hyperelliptic involution) conjugate to the dilatation-minimizing 5-braid $\alpha$ from \cite{HS} within the mapping class group of the punctured sphere. We then show that no braid which is conjugate to $\alpha$ within the spherical mapping class group has unknotted closure.

Case 2 is harder: we perform our analysis using a splitting argument and a careful combinatorial analysis of train track maps. In this case, we focus on the braid $\beta$ and its pseudo-Anosov representative $\psi_\beta$. We show (Theorem \ref{thm:221}) that any pseudo-Anosov $\psi_\beta$ on the five-punctured disk in the stratum $(2;1^5;3)$ is carried by a single canonical train track $\tau$ (cf. also Theorems \ref{carrythm} and \ref{carrythm2}). To prove this result, we develop a theory of ``tight splitting" in Section \ref{sec:split}, which allows us to study the splitting of \emph{all} train track maps on any given track within the stratum.

In subsection \ref{unknotsubsection}, we find a collection of braids $\beta_n$ inducing special train track maps $f_n:\tau\to\tau$ on the distinguished track $\tau$ from the previous paragraph. We show, in subsection \ref{traintracksubsection}, that $f_n$ are the \emph{only} maps on $\tau$ which could lift to train track maps for fixed-point-free pseudo-Anosovs $\psi_h$ in the cover. So, if $\beta$ is any braid which lifts to a map $h$ with fixed-point-free pseudo-Anosov representative $\psi_h$, then $\beta$ is conjugate (in the mapping class group of the punctured sphere) to one of $\beta_n$. A similar argument as in Case 1 shows that no braid conjugate to $\beta_n$ has unknotted closure.

\begin{ack}
A special thank you goes to John Baldwin for suggesting this problem to the second author, and for his continued support, encouragement, and generous insights throughout the preparation of this work. We also thank Jacob Caudell, Karl Winsor, Gage Martin, Fraser Binns, J\'er\^ome Los, Steven Sivek, and Chi Cheuk Tsang for very helpful discussions, suggestions, questions, and/or comments. The third author would also like to thank Ian Agol and Paolo Ghiggini for independently suggesting the question to her, and for initial discussions regarding the project. The third author acknowledges support by the NSF GRFP under Grant DGE 2146752.
\end{ack}


\section{Background}\label{background}

\subsection{Mapping classes and fractional Dehn twists}\label{MCGsection}

Let $S=S_{g,n}^r$ be a compact surface of genus $g$ with $n$ marked points and $r$ boundary components. The \textit{mapping class group} of $S$ is the group $\Mod(S)$ of isotopy classes of homeomorphisms $h: S \to S$ which fix $\del S$ point-wise, and permute the marked points of $S$, where the isotopies fix all boundary components and marked points. The \emph{symmetric mapping class group} of $S$ is the analogous group $\SMod(S)$ obtained by additionally requiring that the homeomorphisms commute with the hyperelliptic involution $\iota: S\to S$, i.e. $h\circ\iota=\iota\circ h$.

\begin{figure}
    \centering
    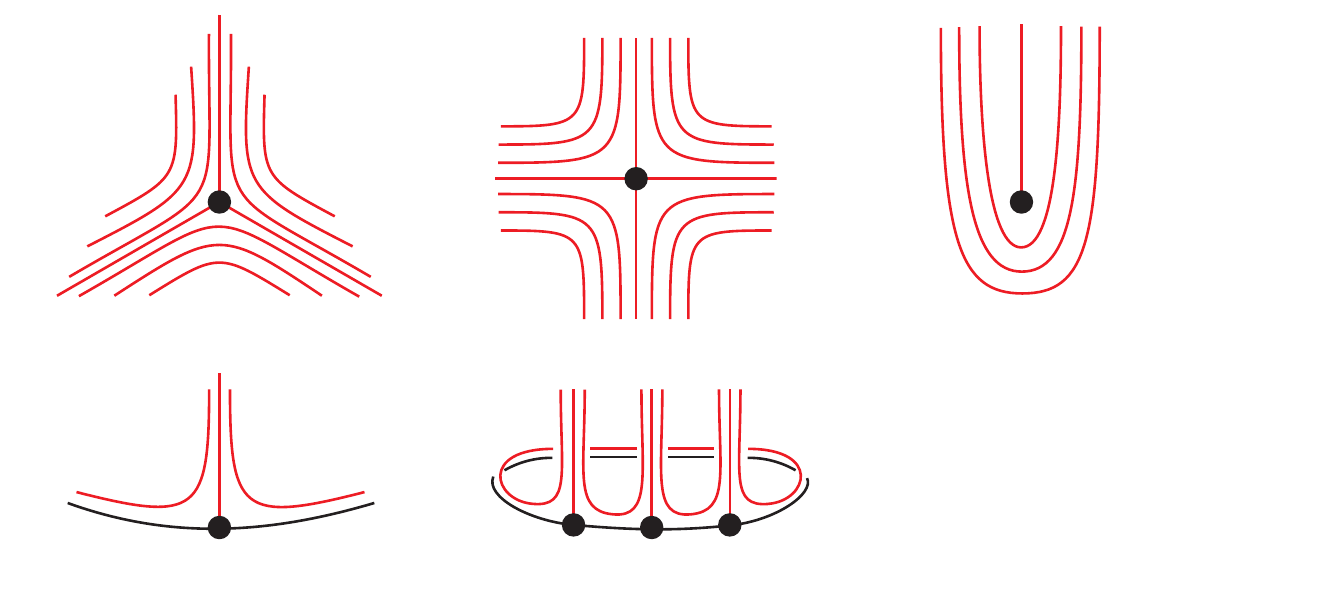
    \caption{Top (left to right): a 3-pronged saddle, a 4-pronged saddle, and a 1-pronged saddle at a marked point. Bottom (left to right): the neighborhood of a boundary singularity, and a 3-pronged boundary which permutes the first prong to the second.}
    \label{singptsfig}
\end{figure}

\begin{defn}
A \textit{pseudo-Anosov} is a homeomorphism $\psi: S \to S$ preserving a pair of transverse singular measured foliations $(\cal{F}^u, \mu^u)$ and $(\cal{F}^s, \mu^s)$ such that
\[
\psi \cdot (\cal{F}^u, \mu^u) = (\cal{F}^u, \lambda \mu^u), \ \text{and} \ \psi \cdot (\cal{F}^s, \mu^s) = (\cal{F}^s, \lambda^{-1} \mu^s)
\]

\noindent for some fixed real number $\lambda>1$, called the \textit{dilatation} of $\psi$.

We further require that each singularity $p$ of $\cal{F}^u$ or $\cal{F}^s$ is a ``$k$-pronged saddle," as in Figure \ref{singptsfig}, where $k\geq 3$ for $p$ in the interior of $S$, or $k\geq 1$ for $p$ a marked point or puncture. Along $\del S$, the singular points must all have a neighborhood of the form shown on the bottom left of Figure \ref{singptsfig}.
\end{defn}

By a \emph{$k$-prong boundary singularity} or a \emph{$k$-prong singularity on the boundary}, we mean that $\psi$ has $k$ singular points on a particular boundary component of $S$. The \emph{singularity type} of $\psi$ is the tuple $(b_1,...,b_r;m_1,...,m_n;k_1,...,k_s)$ where the $i^\text{th}$ boundary component has $b_i$-prongs, the $i^\text{th}$ puncture or marked point has $m_i$-prongs, and the $i^\text{th}$ interior singularity has $k_i$ prongs. We will use $\emptyset$ if $\psi$ has no boundary (or if $\psi$ has no marked points or punctures, or interior singularities), and we will use an exponent to denote a repeated number of prongs. For example, the tuple $(3;1^5;\emptyset)$ indicates that $\psi$ has a 3-pronged boundary, five 1-pronged marked points or punctures, and no interior singularities. A \emph{stratum} on $S$ is the collection of all pseudo-Anosovs on $S$ with a given singularity type.

\begin{thm}[Nielsen--Thurston classification]
Any element $h\in\Mod(S)$ is freely-isotopic rel. punctures (i.e. isotopic through homeomorphisms which fix the punctures, but may rotate a boundary component) to a representative $\psi$ with at least one of the following properties:

\begin{enumerate}[label=(\arabic*)]
    \item $\psi^n=\text{id}$ for some $n$,
    \item $\psi$ preserves the isotopy class of a multicurve $C$ on $S$, or
    \item $\psi$ is pseudo-Anosov. This case is disjoint from the previous two.
\end{enumerate}
\end{thm}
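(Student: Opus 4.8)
The plan is to deduce the trichotomy from a fixed-point argument on Thurston's compactification of Teichmüller space. Since the statement concerns free isotopy rel.\ punctures, I would first replace $S$ by the surface $\hat{S}$ obtained by collapsing each boundary component to a puncture (equivalently, capping with a once-punctured disk), so that it suffices to classify elements of $\Mod(\hat{S})$ up to conjugacy; a representative of the resulting class, pulled back to $S$, is then allowed to rotate boundary components, which is precisely the freedom the theorem permits. Write $\cal{T}(\hat{S})$ for the Teichmüller space of $\hat{S}$, an open ball of dimension $6g-6+2(n+r)$, and recall that Thurston adjoins the sphere $\cal{PML}(\hat{S})$ of projective measured laminations to form a closed ball $\overline{\cal{T}(\hat{S})} = \cal{T}(\hat{S}) \sqcup \cal{PML}(\hat{S})$, on which $\Mod(\hat{S})$ acts by homeomorphisms, extending both its change-of-marking action on $\cal{T}(\hat{S})$ and its push-forward action on laminations. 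Applying Brouwer's fixed-point theorem to the homeomorphism of this closed ball induced by $h$ produces a fixed point, and the proof splits according to where that fixed point lies.

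If some fixed point lies in the interior $\cal{T}(\hat{S})$, then $h$ preserves a marked hyperbolic structure and hence is represented by an isometry of it; as the isometry group of a finite-type hyperbolic surface is finite, $h$ has finite order and we are in case $(1)$. Otherwise every fixed point lies on $\cal{PML}(\hat{S})$, so $h$ fixes the projective class of a measured lamination $\mathcal{F}$, and I would analyze $\mathcal{F}$. If $\mathcal{F}$ is not filling, one extracts from it a canonical $h$-invariant essential multicurve --- for instance the closed leaves of $\mathcal{F}$ together with the boundary of a regular neighborhood of its support --- so that $h$ preserves that isotopy class and we are in case $(2)$.

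The remaining, and hardest, case is when $\mathcal{F}$ is filling and minimal, where the goal is to upgrade $\mathcal{F}$ to the unstable foliation of a pseudo-Anosov. The key ingredients are: (i) a second $h$-fixed filling lamination $\mathcal{G}$ transverse to $\mathcal{F}$, produced by a finer analysis of the dynamics of $h$ on $\overline{\cal{T}(\hat{S})}$ (Thurston's north--south, or source--sink, picture) or, equivalently, by running the argument for $h^{-1}$ and checking that the two fixed points are distinct and jointly filling; (ii) the verification that $(\mathcal{F},\mathcal{G})$, being filling and transverse, determines a pair of transverse singular measured foliations whose complementary regions are polygons or once-punctured polygons, i.e.\ that the prong conditions of the definition hold; and (iii) the fact that $h$ scales the transverse measures by $\lambda^{\pm 1}$ with $\lambda \neq 1$, the point being that $\lambda = 1$ would force $h$ to fix a point of $\cal{T}(\hat{S})$, contradicting that we are not in the interior case. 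I expect (i) and (iii) to be the main obstacle: the cleanest route to both is either Thurston's detailed study of the $\Mod$-action near $[\mathcal{F}]$, or Bers' extremal approach, minimizing $X \mapsto d_{\cal{T}}(X, h\cdot X)$ over $\cal{T}(\hat{S})$ and showing that a positive minimum is attained exactly in the pseudo-Anosov case, with the minimizing Teichmüller geodesic extending to an $h$-invariant axis whose terminal quadratic differential yields $(\mathcal{F},\mathcal{G})$ and $\log\lambda$.

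Finally, I would record why case $(3)$ is disjoint from $(1)$ and $(2)$, which needs only properties of pseudo-Anosovs and not the compactification: if $\psi$ is pseudo-Anosov with dilatation $\lambda>1$, then $\psi^n$ has dilatation $\lambda^n \neq 1$, so $\psi^n \neq \text{id}$ for all $n \geq 1$; and if $\psi$ fixed the isotopy class of a multicurve $C$, then since $\cal{F}^s$ is filling with no closed leaves we have $i(C,\cal{F}^s) > 0$, while $i(\psi^n C, \cal{F}^s) = \lambda^{-n}\, i(C,\cal{F}^s)$, so $\psi^n C$ is never isotopic to $C$. Thus no pseudo-Anosov mapping class is periodic or reducible, which completes the classification.
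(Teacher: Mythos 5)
The paper does not prove this result: it states the Nielsen--Thurston classification as standard background and refers the reader to \cite{Thurston}. Your sketch follows Thurston's original route via the compactification $\overline{\cal{T}(\widehat{S})}$ and Brouwer's fixed-point theorem (with Bers' extremal-length minimization mentioned as an alternative), which is one of the two canonical proofs in the literature. The reduction to the capped, boundaryless surface $\widehat{S}$ is set up correctly and matches the ``freely isotopic'' phrasing of the statement; the trichotomy by fixed-point location (interior versus $\cal{PML}$, and then non-filling versus filling-minimal) is the right decomposition; and your disjointness argument for case (3), via the dilatation $\lambda>1$ and the intersection-number contraction $i(\psi^n C,\cal{F}^s)=\lambda^{-n}\,i(C,\cal{F}^s)$, is standard and complete. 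The portion you flag as hardest --- producing a second transverse $h$-invariant filling lamination, verifying the prong conditions, and ruling out $\lambda=1$ --- is indeed where nearly all of the work lies; Fathi--Laudenbach--Po\'enaru's exposition of Thurston's argument, or Bers' approach, fills this in. Two small cautions: a finite-order class can also fix points of $\cal{PML}(\widehat{S})$, so phrase the first step as ``if the fixed set meets $\cal{T}(\widehat{S})$'' rather than as a clean dichotomy on where the Brouwer fixed point lies; and in the non-filling case one must confirm that the extracted invariant multicurve is nonempty and essential (not null-homotopic and not peripheral), which takes a little care when $\cal{F}$ consists entirely of closed leaves.
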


We say that such a $\psi$ satisfying one of the properties (1)---(3) is \emph{geometric}, and in case (3), we refer to $\psi$ as the \emph{pseudo-Anosov representative} of $h$. The representative $\psi$ is unique for any such $h$, although when $S$ has non-empty boundary, $\psi$ will never be isotopic rel. boundary to an element of $\Mod(S)$, as $\psi$ may rotate $\del S$.

The \emph{fractional Dehn twist coefficient} $c(h)=n+m/k$ is a rational number which measures the rotation of $\psi$ along $\del S$. Here, $n$ is an integer measuring the number of full-rotations of $h$ along $\del S$; $k$ is the number of prongs of $\psi$ on $\del S$; and $\psi$ cyclically permutes the endpoint of the first prong to that of the $(m+1)^{th}$ prong. For example, in the bottom right of Figure \ref{singptsfig}, the ``fractional part" of $c(h)$ is $1/3$, where $h\in\Mod(S)$ has pseudo-Anosov representative $\psi$. In particular $c(h)\in\zz$ if and only if the rotation number of $\psi$ along the boundary is zero, and when $\psi$ has a single boundary prong this is always the case.

The fractional Dehn twist coefficient can be extended analogously to braids, thought of as elements of $\Mod(S_{0,n}^1)$ for some $n>1$. In this case, we denote it by $c(\beta)$, where $\beta$ is a braid. When $K$ is a fibered knot with open book decomposition $(S,h)$, we define the fractional Dehn twist coefficient $c(K)$ to be that of $h$, i.e. $c(K):=c(h)$. It is crucial to note that, in general, $c(K)$ is \emph{not} the same as $c(\beta)$ for $\beta$ a braid representative of $K$. The following theorem details a few well-known properties which we will make use of in this paper:

\begin{thm}[\cite{HKM},\cite{Plam},\cite{IK}]\label{fdtcthm}
Let $h:S\to S$ be a mapping class with $\del S$ connected, and let $\beta:S_{0,n}^1\to S_{0,n}^1$ be a braid.
\begin{itemize}
    \item $c(h)$ and $c(\beta)$ are preserved under conjugation.
    \item $c(D_{\del S}^m\circ h^k)=m+kc(h)$ for any $k,m\in\zz$, where $D_{\del S}$ is a Dehn twist along $\del S$.
    \item $c(\Delta^{2m}\beta^k)=m+kc(\beta)$ for any $k,m\in\zz$, where $\Delta^2=(\sigma_1...\sigma_{n-1})^n$.
    \item If $\beta$ is $\sigma_1$-positive (i.e. $\beta$ can be written with only positive powers of $\sigma_1$) then $c(\beta)\geq 0$.
    \item If $\beta$ is a positive pseudo-Anosov braid, then $c(\beta)>0$.
\end{itemize}
\end{thm}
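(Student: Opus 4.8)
The plan is to treat the five bullets as an assembly of three essentially independent ingredients: a rotation-number description of $c$ for the first three, the Dehornoy order for the fourth, and the right-veering property for the fifth.

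First I would install the basic picture: $\Mod(S)$ acts on the Gromov boundary circle of the universal cover of the interior of $S$, and the fractional Dehn twist coefficient $c(h)$ is exactly the Poincar\'e translation number of the lift of this action attached to the pseudo-Anosov representative $\psi$ of $h$ --- equivalently, $c(h)$ records how far $\psi$ advances the endpoints of its boundary prongs, together with an integral count of full turns (for braids this is Malyutin's twist number). From this the first three bullets are essentially formal. Translation number is a conjugacy invariant of a circle action, and conjugating $h$ in $\Mod(S)$ conjugates $\psi$ and hence the induced action, which gives the first bullet for $c(h)$ and, taking $S=S^1_{0,n}$, for $c(\beta)$. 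The pseudo-Anosov representative of $h^k$ is $\psi^k$, which advances the boundary prongs $k$ times as far, so $c$ is homogeneous. Moreover $c$ is a homogeneous quasimorphism and hence additive on commuting pairs; since $D_{\partial S}$ is central and performs exactly one full rotation of $\partial S$ we have $c(D_{\partial S})=1$, and therefore
\[
c(D_{\partial S}^m\circ h^k)=m\,c(D_{\partial S})+c(h^k)=m+k\,c(h),
\]
which is the second bullet. The third is the case $S=S^1_{0,n}$ of the second, using that the full twist $\Delta^2=(\sigma_1\cdots\sigma_{n-1})^n$ generates the center of $B_n$ and is isotopic to the Dehn twist about $\partial S^1_{0,n}$, so that $c(\Delta^2)=1$.

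For the fourth bullet I would invoke the comparison between $c$ and the Dehornoy floor $\lfloor\beta\rfloor_D$, the largest integer $m$ with $\Delta^{2m}\preceq\beta$ in the Dehornoy order: one has $\lfloor\beta\rfloor_D\le c(\beta)\le\lfloor\beta\rfloor_D+1$ (see \cite{IK}). If $\beta$ can be written using $\sigma_1$ only positively then $\beta$ is $\sigma_1$-positive in Dehornoy's sense, so $\beta\succeq 1$, so $\lfloor\beta\rfloor_D\ge 0$, and hence $c(\beta)\ge 0$.

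For the last bullet I would first note that a positive braid word is right-veering (\cite{HKM}) and that a right-veering mapping class has $c\ge 0$, which already yields $c(\beta)\ge 0$; the real point is to exclude $c(\beta)=0$ when $\beta$ is a positive \emph{pseudo-Anosov} braid. If $c(\beta)=0$, the pseudo-Anosov representative $\psi_\beta$ can be freely isotoped so as to fix $\partial S$ together with the endpoints of its boundary prongs, hence is right-veering with no net boundary rotation; one must then argue that strict positivity of the braid word forces some boundary separatrix to be displaced strictly to the right, contradicting the vanishing of the rotation --- or, equivalently, invoke the classification of right-veering maps with vanishing fractional Dehn twist coefficient and check that a positive pseudo-Anosov braid is never of that form. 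I expect this last step to be the main obstacle: the first three bullets are formal once the rotation-number description is granted, and the fourth reduces to the Dehornoy-floor comparison, but the strict inequality in the fifth must genuinely leverage strict positivity of the word against a rotation number --- the weak monotonicity encoded by ``right-veering'' only delivers $c\ge 0$.
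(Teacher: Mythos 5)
The paper does not prove Theorem~\ref{fdtcthm}; it collects these facts from [HKM], [Plam], and [IK] without argument, so there is no paper proof to compare against --- I will only assess the correctness of your sketch.

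Your treatment of the first three bullets via the translation-number picture is correct: $c$ is a homogeneous quasimorphism, $D_{\partial S}$ (respectively $\Delta^2$) is central with $c=1$, and homogeneous quasimorphisms are additive on commuting pairs. The fourth bullet via the Dehornoy floor comparison from [IK] also goes through, with one small patch: under the paper's phrasing, $\sigma_1$ need not appear at all, in which case Dehornoy positivity can fail (e.g.\ $\beta=\sigma_2^{-1}$); but such a $\beta$ is supported in the subdisk missing puncture $1$, hence is the identity near $\partial S_{0,n}^1$, giving $c(\beta)=0$ directly.

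The fifth bullet is the genuine gap, and your closing paragraph points in a slightly misleading direction by asserting that right-veering ``only delivers $c\ge 0$.'' The missing input is the sharper half of the HKM dichotomy: for $h$ freely isotopic to a pseudo-Anosov $\psi$, right-veering is \emph{equivalent} to $c(h)>0$. Indeed, if $c(h)=0$ then the boundary separatrices are fixed pointwise, so $\psi|_{\partial S}$ has a fixed-point set alternating between sources and sinks; a point $p$ lying between a source and the next (counterclockwise) sink moves counterclockwise under $\psi$, while a point $p'$ between that sink and the next source moves clockwise, so arcs based at $p$ and at $p'$ veer in opposite directions. Hence a pseudo-Anosov monodromy with $c=0$ is \emph{neither} right-veering nor left-veering. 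With this in hand the bullet closes immediately: a positive braid is a product of the right-veering half-twists $\sigma_i$, right-veering is closed under composition, so $\beta$ is right-veering, and since $\beta$ is pseudo-Anosov, $c(\beta)>0$. You gestured at the right statement (``invoke the classification of right-veering maps with vanishing fractional Dehn twist coefficient''), but the fact that no pseudo-Anosov right-veering map has $c=0$ is already in [HKM] and needs no further construction; it is precisely the strict inequality you were looking for.
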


See \cite{Thurston} for more details regarding the Nielsen--Thurston classification, and \cite{HKM} or \cite{KR} for more details regarding fractional Dehn twist coefficients.

\subsection{Knots, braids, and the Birman--Hilden correspondence}\label{BHsection}

We may use branched coverings to understand relationships between mapping class groups of different surfaces. The Birman--Hilden correspondence is a key tool for this study. For our purposes, the correspondence will help us study the mapping class groups of $S_2^1$ and $S_2$ seen as two-fold branched covers over the disk $S_{0,5}^1$ and sphere $S_{0,6}$, respectively, via the hyperelliptic involution $\iota$. Specifically, there is a diagram:

\begin{center}
\begin{tikzcd}
\SMod(S_2^1) \arrow[r, "\text{cap-off}"] \arrow[d, "\Theta_2^1"'] & \SMod(S_{2,1}) \arrow[r, "\text{forget}"] & \Mod(S_2) \arrow[d, "\Theta_2"]\\
\Mod(S_{0,5}^1)\arrow[rr, "\text{cap-off}"] & & \Mod(S_{0,6})
\end{tikzcd}
\end{center}
Here, the map $\Theta_2^1$ is an isomorphism, and the map $\Theta_2$ is surjective with $\ker(\Theta_2)=\la \iota\ra$. The cap-off maps are both given by setting a full-twist about $\del S$ to 0 (geometrically, one may think about capping $\del S_2^1$ with a \emph{marked} disk), and the ``forget" map forgets about the marked point on $S_{2,1}$. The cap-off map forgets about the ``integer part" of the fractional Dehn twist coefficient but preserves the ``fractional part" in each case (i.e. the rotation number of $\psi$ along $\del S$ is the rotation number of the capped-off map $\widehat{\psi}$ at the marked point in the capping disk). See \cite{FM} for more details on the maps involved in this diagram.

Note that the braid $\Delta^2=(\sigma_1\sigma_2\sigma_3\sigma_4)^5\in\Mod(S_{0,5}^1)$ is isotopic to a full-twist about $\del S_{0,5}^1$. It follows that, given a spherical mapping class $f\in\Mod(S_{0,6})$, there are $\zz$-many lifts of $f$ to braids
$$...\Delta^{-4}\beta,~\Delta^{-2}\beta,~\beta,~\Delta^2\beta,~\Delta^4\beta...\in\Mod(S_{0,5}^1)$$
which are all related by powers of $\Delta^2$, and are distinguished by their fractional Dehn twist coefficient $c(\beta)$, as in Theorem \ref{fdtcthm}. For any such $f$, only finitely many such $\beta$ may have braid closure $\widehat{\beta}$ an unknot. This may be seen, for example, from the following theorem of Ito--Kawamuro, which will be a key tool in this paper:

\begin{thm}[Ito--Kawamuro]\label{ikthm}
If the braid closure $\widehat{\beta}$ is an unknot, then $|c(\beta)|<1$.
\end{thm}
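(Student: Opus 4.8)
The first move is to eliminate the sign. Passing from $\beta$ to $\beta^{-1}$ negates the fractional Dehn twist coefficient (the pseudo-Anosov representative of $\beta^{-1}$ is $\psi_\beta^{-1}$, which rotates $\partial S$ in the opposite sense) and replaces $\widehat{\beta}$ by its mirror image, which is again the unknot; so it suffices to prove the one-sided bound $c(\beta)<1$. I would approach this through open book (braid) foliations, in the spirit of Birman--Menasco and Ito--Kawamuro.

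Realize $S^3$ as the open book with disk page and trivial monodromy, with binding the braid axis $A$; then $\widehat{\beta}$ is a closed $n$-braid, and since it is the unknot it bounds an embedded disk $D$. The plan is to isotope $D$ so that its open book foliation $\mathcal{F}_{ob}(D)$ --- the singular foliation cut on $D$ by the pages --- is \emph{essential}: no inessential closed leaves and no destabilizing regions. Such a position is achievable by the standard exchange and destabilization moves on braid foliations. Once essential, $\mathcal{F}_{ob}(D)$ is very constrained because $\chi(D)=1>0$: an index count yields a relation of the shape $\chi(D)=e_++e_--h$ among the numbers of positive elliptic, negative elliptic, and hyperbolic singular points, and the structure near the binding $A$ is tightly controlled. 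A key input will be the open-book-foliation interpretation of the fractional Dehn twist coefficient: roughly, $c(\beta)$ measures how far the monodromy drags the $b$-arcs of $\mathcal{F}_{ob}(D)$ around $A$ over one return trip.

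The heart of the matter, and the step I expect to be the main obstacle, is showing that $|c(\beta)|\geq 1$ is incompatible with $\mathcal{F}_{ob}(D)$ being essential on a \emph{disk}. Heuristically, $c(\beta)\geq 1$ forces the $b$-arcs to wind at least once all the way around $A$; tracing them across the pages should then produce a closed leaf bounding a disk, or a nested family of regions, or a violation of $\chi(D)=e_++e_--h$ --- each of which contradicts essentiality or $\chi(D)=1$. Because the resulting obstruction is a discrete count (numbers of elliptic and hyperbolic points, winding numbers of arcs), the borderline value $c(\beta)=1$ is ruled out along with everything above it, which is what produces the \emph{strict} inequality. Making ``the $b$-arcs wind, hence a forbidden configuration occurs'' precise --- with careful bookkeeping of the signs of the singularities and of the rotation near $A$ --- is the technical core.

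As an independent check, one can instead go through the Dehornoy floor. Using that a $\sigma$-positive braid has nonnegative fractional Dehn twist coefficient (the relevant clause of Theorem \ref{fdtcthm}, in its standard strengthening from $\sigma_1$-positive to $\sigma_i$-positive braids), one gets $\lfloor\gamma\rfloor_D\leq c(\gamma)\leq\lfloor\gamma\rfloor_D+1$ for every braid $\gamma$, where $\lfloor\cdot\rfloor_D$ is the Dehornoy floor; since $c$ and the link type of $\widehat\beta$ are conjugacy invariants, $c(\beta)$ is sandwiched by the maximum and minimum of $\lfloor\gamma\rfloor_D$ over the conjugacy class of $\beta$. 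Then $|c(\beta)|<1$ reduces to the statement that no braid conjugate to $\beta$ is $\succeq\Delta^2$ or $\preceq\Delta^{-2}$, i.e.\ that the closure of a braid which is ``$\Delta^2$-positive up to conjugacy'' is never the unknot --- a Bennequin-type inequality for the Dehornoy floor, provable from the (slice-)Bennequin inequality together with the Morton--Franks--Williams bound on the HOMFLY polynomial, which constrain the exponent sum and the genus of $\widehat\beta$ tightly enough to exclude such a factor. Either way the crux is the same: turning ``$|c(\beta)|\geq 1$'' into concrete complexity of $\widehat\beta$ that contradicts its being the unknot.
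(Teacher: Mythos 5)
This theorem is quoted in the paper as an external result of Ito--Kawamuro and is not proved there, so there is no ``paper's own proof'' to compare against; I will instead assess your sketch on its merits. Your first route, via open book (braid) foliations of the spanning disk, is indeed the one Ito and Kawamuro take in \emph{Essential open book foliation and fractional Dehn twist coefficient}: one isotopes $D$ so that $\mathcal{F}_{ob}(D)$ is essential and then derives an upper bound on $|c(\beta)|$ from an Euler-characteristic count of elliptic and hyperbolic points together with a lower bound on $|c(\beta)|$ in terms of the winding of $b$-arcs near the binding. So the outline is right, but you explicitly flag the ``technical core'' --- turning the hypothesis $|c(\beta)|\geq 1$ into a forbidden configuration in $\mathcal{F}_{ob}(D)$ --- and leave it unfilled. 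Since that step is precisely the content of the theorem, what you have is a correct map of the territory, not yet a proof.

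Your proposed ``independent check'' via the Dehornoy floor does not close the gap, and I think it cannot. The relation you invoke, $\lfloor\gamma\rfloor_D\leq c(\gamma)\leq \lfloor\gamma\rfloor_D+1$ (Malyutin), is only accurate to a window of width $1$; Malyutin's theorem that an unknotted closure forces $\lfloor\beta\rfloor_D\in\{-1,0\}$ then yields $-1\leq c(\beta)\leq 1$, \emph{not} the strict inequality. The endpoint cases $c(\beta)=\pm 1$ are exactly what the theorem needs to exclude, and the Dehornoy-floor sandwich is insensitive to them, so ``reducing'' to a statement about conjugates $\succeq\Delta^2$ or $\preceq\Delta^{-2}$ does not suffice. (Separately, the suggested route to that reduction via MFW/slice--Bennequin also has trouble: Dehornoy-positive braids can have arbitrarily negative exponent sum, so writing $\beta=\Delta^2\gamma$ with $\gamma$ Dehornoy positive does not control $e(\beta)$ or the self-linking number the way you need.) In short, the first approach with the braid-foliation index argument carried out in full is the way through; the Dehornoy-floor shortcut structurally misses the strict bound.
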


Moreover, one may check that $\Delta^2$ lifts to an element of $\SMod(S_2^1)$ which squares to a full twist about $\del S_2^1$. This implies a very useful fact: the lift of $\Delta^2$ is freely isotopic to the hyperelliptic involution $\iota$. One may see this in a number of different ways--- for example, by noting that the full twist about $\del S_2^1$ is freely isotopic to the identity, and that the lift of $\Delta^2$ acts on $H_1(S_2^1)$ by $-\text{id}$.

A genus-two, hyperbolic, fibered knot $K\subset Y$ yields an open book decomposition $(S,h)$ for $Y$, where $S=S_2^1$, and $h\in\Mod(S_2^1)$ is freely isotopic to a pseudo-Anosov $\psi_h:S\to S$. If $\psi_h$ is fixed point free in the interior of $S$, then $h$ is symmetric (i.e. represents an element of $\SMod(S_2^1)$) by \cite{BHS}, so we may think of $h$ as the lift of a 5-braid $\beta\in\Mod(S_{0,5}^1)$. From the perspective of 3-manifolds, this means that $Y$ is the double cover of $S^3$ branched along the braid closure $\widehat{\beta}$. This implies, for example, that if $Y=S^3$ then $\widehat{\beta}$ is the unknot. From the perspective of knots, the original fibered knot $K=\del S_2^1\subset Y$ is the lift of the braid axis $\del S_{0,5}^1\subset S^3$ in the cover.

Baldwin--Hu--Sivek in \cite{BHS} additionally computed the singularity type of $\psi_h$ (and originally observed several of the facts mentioned in the previous paragraph). These observations are recorded as the following result, which is the starting point for many of the ideas in this paper:

\begin{thm}[Baldwin--Hu--Sivek]\label{bhsthm}
Let $K\subset S^3$ be a hyperbolic, genus-two, fibered knot with associated open book decomposition $(S,h)$ satisfying $c(h)\neq 0$. If the pseudo-Anosov representative $\psi_h$ is fixed point free in the interior of $S$, then:
\begin{enumerate}[label=$\bullet$]
    \item $\psi_h$ has singularity type either:
    \begin{enumerate}[label=\hspace{1cm}Case \arabic*:, left=0pt, itemsep=-1pt, topsep=-5pt]
        \item $(6;\emptyset;\emptyset)$, or
        \item $(4;\emptyset;3^2)$
    \end{enumerate}
    \item $h$ is the lift of a 5-braid $\beta$ under the Birman--Hilden correspondence, and
    \item as a knot in $S^3$, the braid closure $\widehat{\beta}$ is the unknot.
\end{enumerate}
\end{thm}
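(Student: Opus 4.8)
The overall strategy combines three classical tools --- the Birman--Hilden correspondence, the Euler--Poincar\'e formula for the invariant foliations of a pseudo-Anosov, and the Lefschetz fixed point theorem --- with one piece of genuinely Floer-theoretic input: the relationship between $\widehat{\mathit{HFK}}(K,g-1)$ and symplectic Floer homology, which is what forces $\psi_h$ to be fixed-point-free (and $c(h)\neq 0$) in the first place and, crucially, forces $h$ to be symmetric. I would take this last point as a black box responsible for the standing hypotheses, and show how the three bullets follow from ``$\psi_h$ pseudo-Anosov, fixed-point-free in the interior, $c(h)\neq 0$, and $h\in\SMod(S_2^1)$ up to free isotopy.''

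\emph{The $5$-braid and the unknot.} Once $h$ is symmetric, the Birman--Hilden isomorphism $\Theta_2^1:\SMod(S_2^1)\xrightarrow{\sim}\Mod(S_{0,5}^1)$ (see \cite{FM}) exhibits $h$ as the lift of a $5$-braid $\beta$, the second bullet. As recalled before Theorem~\ref{bhsthm}, the binding $K=\del S_2^1$ is then the lift of the braid axis, so $Y$ is the double branched cover of $S^3$ over $\widehat\beta$; since $K$ is the \emph{connected} binding of an open book, $\widehat\beta$ is a knot, and $\Sigma_2(\widehat\beta)=Y=S^3$, so by the resolution of the Smith conjecture $\widehat\beta$ is the unknot --- the third bullet.

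\emph{The singularity type.} Here I would pass to the quotient $\psi_\beta$ on $S_{0,5}^1$. Because $\psi_h$ is fixed-point-free it fixes none of the five fixed points of $\iota$, so $\psi_\beta$ permutes its five punctures with no fixed point; every puncture-orbit therefore has size $\geq 2$, and plugging this into the Euler--Poincar\'e formula on $S_{0,5}^1$ forces the common prong-number of each orbit to equal $1$ and leaves $(b'-2)+\sum_l(k_l'-2)=1$ for the boundary prong-number $b'$ and the interior singularities $k_l'$ of $\psi_\beta$. Hence $\psi_\beta$ has type $(3;1^5;\emptyset)$, $(2;1^5;3)$, $(1;1^5;4)$, or $(1;1^5;3^2)$, lifting to $\psi_h$ of type $(6;\emptyset;\emptyset)$, $(4;\emptyset;3^2)$, $(2;\emptyset;4^2)$, or $(2;\emptyset;3^4)$. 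To discard the last two I would run the Lefschetz fixed point theorem on the capped-off map $\widehat{\psi_h}$ on $S_{2,1}$: its only interior fixed point is the $b$-pronged singularity sitting in the capping disk, of index $1$ once its prongs rotate, so $L(\widehat{\psi_h})=1$ and $\operatorname{tr}((\psi_h)_*\mid H_1(S_2^1))=1$; combined with the standard facts that $\Delta_K$ is monic, reciprocal, of degree $4$ (fibered genus two) with $\Delta_K(1)=\pm1$ (knot in $S^3$), this pins $\Delta_K(t)$ down to $t^4-t^3\pm t^2-t+1$, and for the orientable stratum $(6;\emptyset;\emptyset)$ one adds Masur--Smillie orientability \cite{MS} and the Lanneau--Thiffeault classification \cite{LT} to recover the minimal-dilatation polynomial $t^4-t^3-t^2-t+1$. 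The two ``big'' strata $(2;\emptyset;4^2)$ and $(2;\emptyset;3^4)$ are then eliminated using the Floer-homological bound on $\dim\widehat{\mathit{HFK}}(K,1)$, which caps the complexity of $\psi_h$; this is where the full hypothesis-chain is really needed.

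\emph{Main obstacle.} The symmetry of $h$ is the essential difficulty: for the closed surface $S_2$ the hyperelliptic involution is central in the mapping class group, but a genus-two \emph{fibered knot} need not have symmetric monodromy, so there is no elementary reduction, and the honest route goes through $\widehat{\mathit{HFK}}$ and symplectic Floer homology --- this is the heart of the Baldwin--Hu--Sivek input \cite{BHS}. A secondary technical nuisance is the bookkeeping of fixed-point indices along $\del S$ and after capping off when $c(h)$ is a nonzero \emph{integer} (so the boundary prongs do not rotate): one has to check this does not create fixed points that spoil the Lefschetz computation of $\Delta_K$.
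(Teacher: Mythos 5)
The paper does not actually prove this theorem — it is imported verbatim from Baldwin--Hu--Sivek \cite{BHS} — so there is no in-text proof to compare your argument against. Assessing your sketch on its own merits:

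Your treatment of the second and third bullets is correct and matches the paper's surrounding exposition: the Floer-theoretic black box is the symmetry of $h$, the Birman--Hilden isomorphism $\SMod(S_2^1)\cong\Mod(S_{0,5}^1)$ then produces the $5$-braid $\beta$, and unknottedness of $\widehat\beta$ follows since $\Sigma_2(\widehat\beta)=S^3$.

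On the singularity type, two points need repair. First, the Euler--Poincar\'e count on $S_{0,5}^1$ does not by itself force all five punctures to be $1$-pronged: a fixed-point-free permutation of five punctures can have cycle type $(2,3)$, and if the $2$-orbit carries $2$-pronged punctures one obtains the extra stratum $(1;1^3 2^2;\emptyset)$, lifting to $(2;\emptyset;4^2)$ with the two $4$-prongs at branch points. This case also has $b'=1$ and dies by the same mechanism as your $(1;1^5;4)$ and $(1;1^5;3^2)$ cases, but the claim that ``the common prong-number equals $1$'' overstates what the formula gives. Second, and more importantly, you do not need any Floer-homological bound on $\dim\widehat{\mathit{HFK}}(K,1)$ to eliminate the $b'=1$ strata; the ingredients you have already assembled suffice. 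A $1$-pronged boundary singularity on $\psi_\beta$ has zero rotation number, hence $c(\beta)\in\zz$; since you have established that $\widehat\beta$ is an unknot, Theorem~\ref{ikthm} gives $|c(\beta)|<1$, so $c(\beta)=0$, and hence $c(h)=c(\beta)/2=0$, contradicting the standing hypothesis. This is evidently the role of the hypothesis $c(h)\neq 0$, and the paper's remark after Theorem~\ref{221thm} says exactly this about boundary prong counts. Your Lefschetz/Alexander-polynomial computation is correct and is indeed the engine of Proposition~\ref{6dil}, but $\text{tr}(\psi_*)=1$ does not distinguish the strata $(4;\emptyset;3^2)$, $(2;\emptyset;4^2)$, $(2;\emptyset;3^4)$, so it cannot do the eliminating work you ask of it. Finally, your worry about a nonzero integer value of $c(h)$ is moot: since $(S_2^1,h)$ is an open book for $S^3$ we have $|c(h)|\leq 1/2$, so $c(h)\neq 0$ already forces $c(h)\notin\zz$ and the boundary prongs rotate.
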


Theorem \ref{bhsthm} yields strong constraints on the braid $\beta$. For example, because $h$ has pseudo-Anosov mapping class, we know that $\beta$ does, too, and we may determine the singularity type of its pseudo-Anosov representative $\psi_\beta$ from that of $\psi_h$, by appealing to Lemma 3.7 of \cite{BHS}. If $\psi_h$ has singularity type $(6;\emptyset;\emptyset)$ then $\psi_\beta$ has singularity type $(3;1^5;\emptyset)$. And, if $\psi_h$ has singularity type $(4;\emptyset;3^2)$ then $\psi_\beta$ has singularity type $(2;1^5;3)$.

\subsection{Fibered surfaces and train tracks}\label{subsec:fibsurf}

For the remainder of the paper, we will denote by $S'$ the surface $S$ with its marked points deleted, and by $\widehat{S}$ the closed surface obtained by capping-off the boundary components of $S$ with disks and marking a point in the interior of each disk. We will also assume that the surface $S'$ has negative Euler characteristic.

In \cite{BH}, Bestvina and Handel prove that one may associate to any geometric $\psi$ a \textit{fibered surface} $F \subseteq S'$. This fibered surface is decomposed into \textit{strips} and \textit{junctions}, where the strips are foliated by intervals, i.e. \textit{leaves}. See Figure \ref{fig:Decomp}. Together, the leaves and junctions of $F$ are called \textit{decomposition elements}, and $\psi(F) \subseteq F$, sending decomposition elements into decomposition elements and, in particular, junctions into junctions. Collapsing each decomposition element to a point produces a graph $G$ with a graph map $g: G \to G$. The vertices of $G$ correspond to the junctions of $F$, and the edges of $G$ correspond to the strips of $F$. 

\begin{figure}
    \centering
    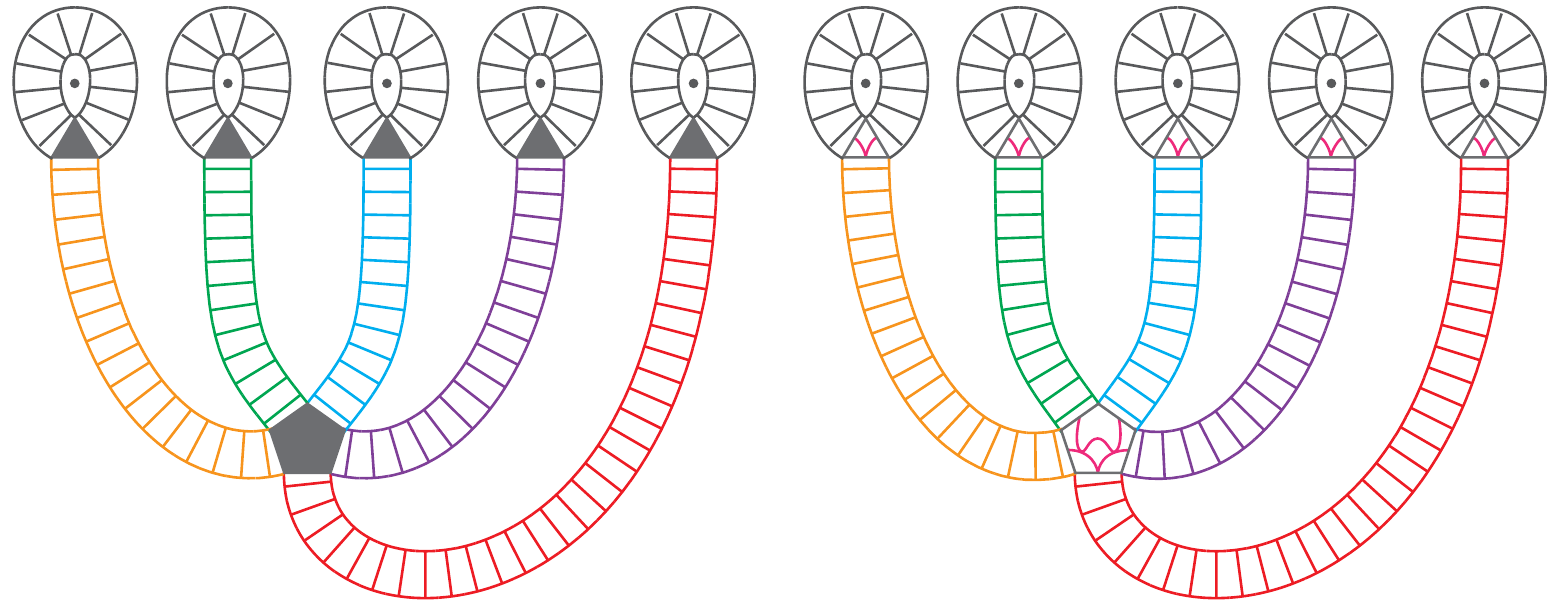
    \caption{Left: the fibered surface for some geometric $\psi$ on $S_{0,5}^1$. The shaded regions are the junctions, and the striped bands connecting them are the strips. Right: following Bestvina-Handel, one inserts additional, ``infinitesimal" edges into the junctions. These will also be inserted into the graph $G$ that one obtains by collapsing all of the decomposition elements. Their inclusion will produce the smooth analog $\tau$ of $G$, which is a train track. See Figure \ref{fig:trackex} below.}
    \label{fig:Decomp}
\end{figure}

Roughly speaking, a graph map $g$ is \textit{efficient} if the image of no edge backtracks under any power of $g$. After adjusting $F$ so that $g$ is efficient, Bestvina and Handel construct a ``smoothed" version of $G$ as follows. Within each junction $J \subseteq F$, one inserts additional edges that smoothly connect the strips of $F$ and encode how images of strips under $\psi$ pass through $J$.

In this way, we obtain a new graph $\tau$ smoothly embedded in the punctured surface $S'$, called a \textit{train track}. At each vertex $s$ of $\tau$, called a \textit{switch}, there is a well-defined tangent line. Two arcs $a, b$ of $\tau$ are \textit{tangent} at $s$ if $a(0)=b(0)=s$ and $a'(0)=b'(0)$. A \textit{cusp} is the data of a pair $(a,b)$ of adjacent arcs tangent at $s$. See Figure \ref{fig:trackex} for an example.

The following proposition appears as Proposition 3.3.5 in \cite{BH}.

\begin{prop}\label{Prop:track}
Suppose $\psi$ is pseudo-Anosov. Then in the capped surface, each component of $\widehat{S} \setminus \tau$ is either:
\begin{enumerate}
\item a disk with $k \geq 3$ cusps on its boundary, or
\item a disk with a single marked point in its interior and $k \geq 1$ cusps on its boundary.
\end{enumerate}
\end{prop}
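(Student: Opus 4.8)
The plan is to read off the complementary regions of $\tau$ directly from the invariant unstable foliation $\cal{F}^u$ of $\psi$. Recall from the Bestvina--Handel construction (see Figure \ref{fig:Decomp}) that the strips of the fibered surface $F$ run parallel to the leaves of $\cal{F}^u$, that the leaves (fibers) of $F$ are arcs of the contracting foliation $\cal{F}^s$, and that the infinitesimal edges inserted into each junction are tangent to $\cal{F}^u$ at the decomposition elements being collapsed. After this smoothing, $\tau$ is a train track carrying $\cal{F}^u$, and --- provided $F$ was chosen efficiently (minimally), which the algorithm guarantees --- it carries it in the strong sense that the complementary regions of $\tau$ in $S'$ are in bijection with the singularities of $\cal{F}^u$: a $k$-pronged singularity $x$ lies inside a complementary region whose boundary has exactly $k$ cusps, the $k$ separatrices at $x$ running one into each smooth boundary arc and the infinitesimal edges around the corresponding junction forming an ``infinitesimal $k$-gon'' whose corners are the $k$ cusps. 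Capping $\del S$ with marked disks converts a $b$-pronged boundary component into a marked point with $b$ emanating prongs, hence into a complementary region of $\tau$ in $\widehat{S}$ with a marked point in its interior and $b$ cusps.

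Granting this correspondence, the proposition is just the local normal form for the singular foliations of a pseudo-Anosov map: interior singularities are $k$-pronged with $k\geq 3$, while singularities at marked points or punctures (and, after capping, at the marked points of the capping disks) are $k$-pronged with $k\geq 1$. A complementary region of $\tau$ with no marked point therefore surrounds an honest interior singularity and has $\geq 3$ cusps; one with a marked point surrounds that marked point and has $\geq 1$ cusps. It remains to know that every complementary region of $\tau$ in $\widehat{S}$ really is a disk or a once-marked disk, so that the description above is exhaustive --- and this is where we use that $\psi$ is pseudo-Anosov rather than merely geometric. Because $\psi$ is pseudo-Anosov, $\cal{F}^u$ is minimal and fills $S'$; equivalently, the lamination carried by $\tau$ with the everywhere-positive Perron--Frobenius weights fills $\widehat{S}$, which by definition means that every complementary region of $\tau$ is a disk or a disk with one marked point (a region of positive genus, with two boundary circles, or with two marked points would contradict filling). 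Alternatively, one may bypass the foliation entirely: the graph map $g\colon G\to G$ returned by the algorithm is efficient with irreducible transition matrix, and these two facts rule out, respectively, complementary monogons and bigons of $\tau$ not surrounding a marked point, and complementary regions failing to be disks --- this is the route actually taken in Section 3.3 of \cite{BH}.

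The main obstacle is the first paragraph: making rigorous that the smoothed graph $\tau$ carries $\cal{F}^u$ with \emph{matching} complementary regions, i.e.\ that the combinatorial output of the Bestvina--Handel algorithm is geometrically compatible with the invariant foliations of the pseudo-Anosov it represents. Once that compatibility is in place, everything else is bookkeeping with the Euler--Poincar\'e relation $\sum_R (2 - c(R)) = 2\chi(\widehat{S})$, summed over the complementary regions $R$ with $c(R)$ cusps, together with the prong bounds above. Since the statement is quoted verbatim from \cite{BH}, in the body of the paper the most economical choice is to cite Proposition 3.3.5 there and to present the foliation picture above only as motivation for the reader.
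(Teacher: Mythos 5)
The paper itself offers no proof of this proposition; it is stated and immediately attributed to Proposition 3.3.5 of \cite{BH}, which is exactly the conclusion you reach in your final paragraph. Your foliation-based sketch is a sound piece of motivation and your alternative purely combinatorial route (efficiency rules out bad small complementary disks, irreducibility rules out non-disk regions) correctly identifies the argument actually carried out in \cite{BH}, so the proposal agrees with the paper's approach.
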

\begin{rem}\label{cuspsingrem}
From this perspective, the cusps of a component $C\subset\widehat{S}\setminus\tau$ correspond precisely to the prongs of a singularity $p\in C$ of the invariant foliations of $\psi$.
\end{rem}

\begin{figure}
    \centering
    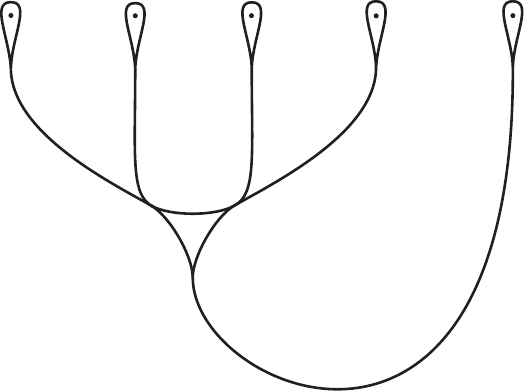
    \caption{A train track $\tau$ on the five-punctured disk $D_5$. The components of the complement of $\tau$ consist of: five once-punctured \textit{monogons}, i.e. disks with a single boundary cusp; a \textit{trigon}, i.e. a disk with three boundary cusps; and an exterior once-punctured bigon. Pseudo-Anosovs carried by this track lie in the stratum $(2;1^5;3)$.}
    \label{fig:trackex}
\end{figure}

\begin{defn}
An \textit{edge path} in $\tau$ is a map $e: I \to \tau$ such that $e(0)$ and $e(1)$ are switches. A \textit{train path} is an edge path that is also a smooth immersion. The \textit{length} of a train path $e$ is defined to be the number of edges traversed by $e(I)$, counting with multiplicity. Let $e(I)=e_1 \cdots e_k$ denote a train path whose directed image traverses first $e_1$, then $e_2$, etc. See Figure \ref{fig:TrainPaths} for examples.
\end{defn}

\begin{figure}
    \centering
    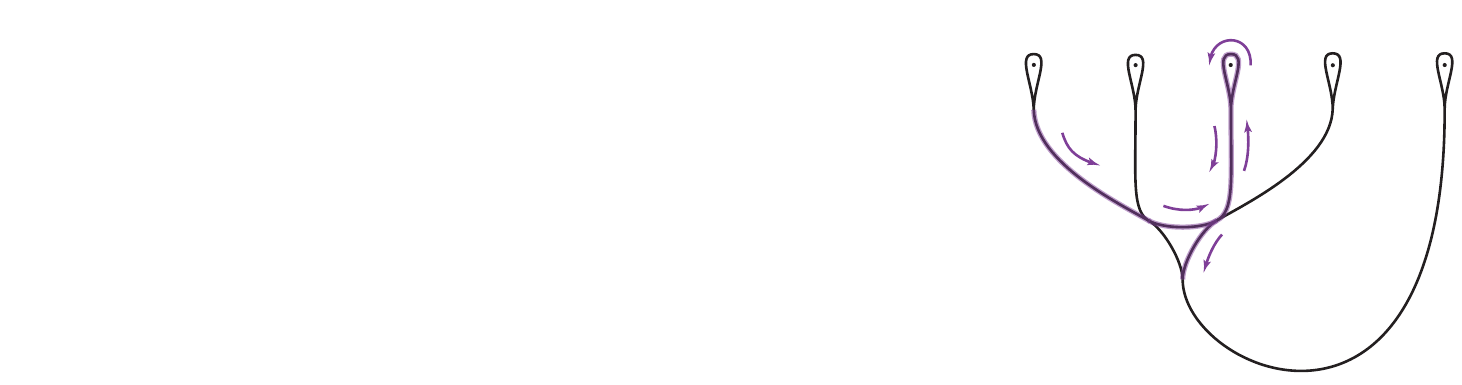
    \caption{Three edge-paths on the train track $\tau$ from Figure \ref{fig:trackex}. \textbf{Left:} an edge path of length 7 which is not a train path, since it makes several sharp turns. \textbf{Middle:} a train path of length 7 which can be ``pushed off" of $\tau$ into a small neighborhood so that it does not intersect itself. \textbf{Right:} a train path of length 6 which cannot be ``pushed off" of $\tau$ so that it becomes injective.}
    \label{fig:TrainPaths}
\end{figure}

\begin{defn}
A \textit{train track map} is a map $f: \tau \to \tau$ such that for any train path $g: I \to \tau$ the composition $f \circ g: I \to \tau$ is a train path.
\end{defn}

\begin{rem}
Note that if $f: \tau \to \tau$ is a train track map, then $f(e)$ is a train path for each edge $e$ of $\tau$. Indeed, from this it follows that $f^k(e)$ is a train path for each $k \geq 1$, and hence $f^k$ is a train track map for all $k \geq 1$. 
\end{rem}

The map $\psi: F \to F$, or equivalently the graph map $g: G \to G$ corresponding to $\psi$ and $F$, defines a map $f: \tau \to \tau$. The fact that $g$ is efficient implies that $f$ is a train track map. In this case, we say that the train track $\tau$ \textit{carries} the map $\psi$, and the map $\psi$ \textit{induces} the train track map $f$. The \textit{data} of a geometric map will then be a triple $(\tau, \psi, f)$ in a commutative diagram:

\[
\begin{tikzcd}
\tau \arrow[r,"\psi"] \arrow{dr}[swap]{f} & \psi(\tau) \arrow[d,"\text{collapse}"]\\
& \tau
\end{tikzcd}
\]

Here, one should imagine $\tau$ being mapped forward by $\psi$ into $F$, meeting the leaves of $F$ transversely. The map $f$ is then defined by collapsing each leaf of $F$ to a point, while inside each junction the arcs of $\psi(\tau)$ are collapsed onto the appropriate edges of $\tau$. See Figure \ref{fig:PAexample}.

\begin{figure}[!h]
    \centering
    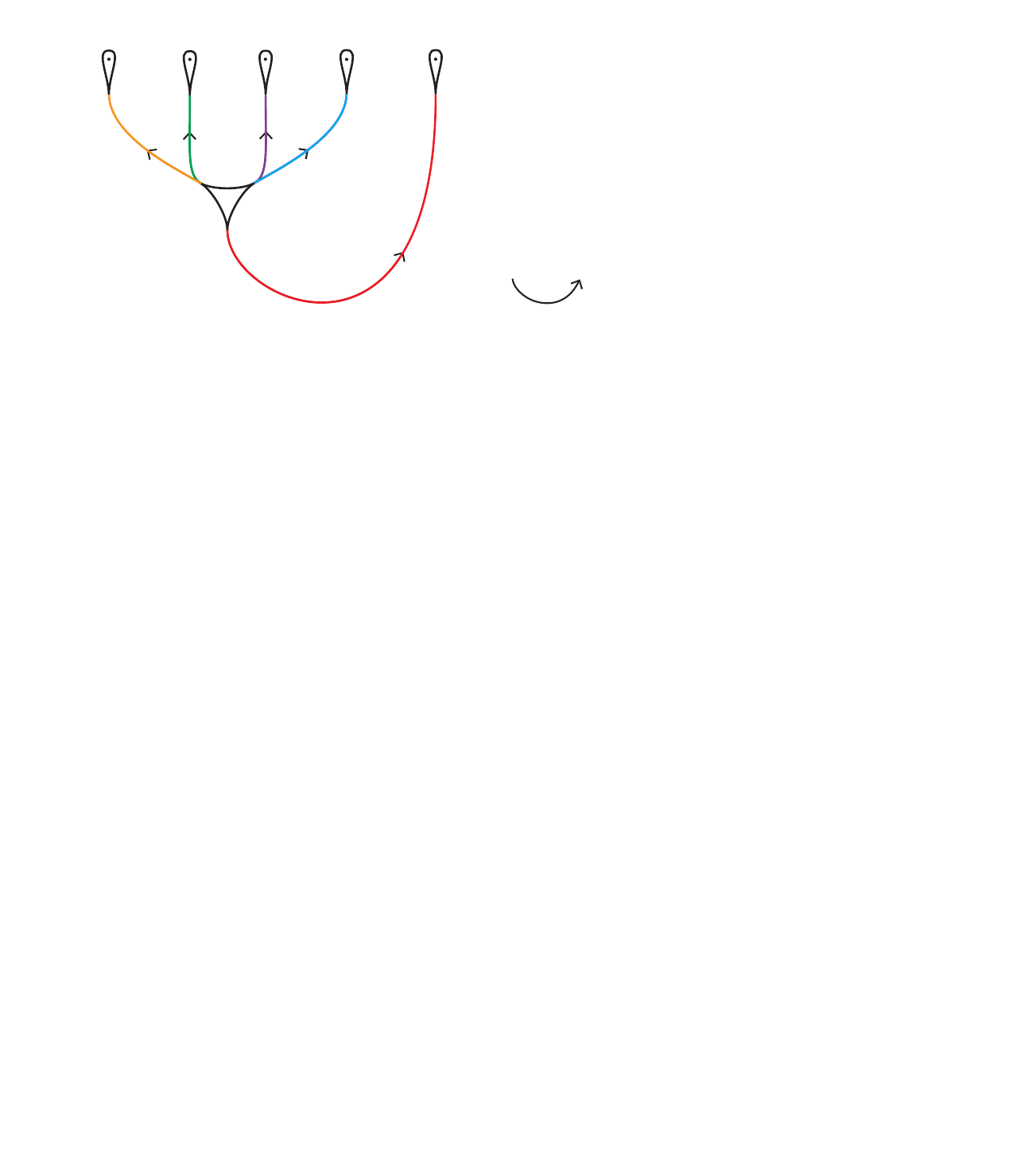
    \caption{\textbf{Top row:} the train track $\tau$ from Figure \ref{fig:trackex} and the action of a pseudo-Anosov $\psi$ that it carries. The real edges of $\tau$ are labeled $e_1, \ldots, e_5$. The shaded regions on the right denote the neighborhoods that deformation retract onto these edges. \textbf{Bottom three rows:} The action of $f=(\text{collapse} \circ \psi)$ on each edge of $\tau$, depicted separately.}
    \label{fig:PAexample}
\end{figure}

The edges of $G$ (other than those loops peripheral to marked points/punctures of $S$) are in bijection with a subset of the edges of $\tau$, which we call the \textit{real} edges. All other edges of $\tau$ are \textit{infinitesimal}. In particular, all edges of $\tau$ contained in a junction of $F$ are infinitesimal. Enumerate the edges of $\tau$ so that $e_1, \ldots, e_k$ are the real edges and $e_{k+1}, \ldots, e_n$ are the infinitesimal edges. For each pair $(i, j)$ with $1 \leq i, j, \leq n$ define the integer
\[
m_{i,j} = \text{the number of times the train path $f(e_j)$ traverses $e_i$.}
\]
\noindent The \textit{extended transition matrix} of $f$ is the matrix $\widetilde{M}$ whose $(i,j)$-entry is the integer $m_{i,j}$. The \textit{transition matrix} of $f$ is the submatrix $M\subset \tilde{M}$ recording the transitions between real edges of $\tau$: in other words,
\[
M=(m_{i,j}) \ \text{where $1 \leq i, j \leq k$.}
\]
\begin{defn}
Let $M$ be a square matrix whose entries are non-negative integers. We say that $M$ is \textit{Perron-Frobenius} if the entries of $M^N$ are strictly positive, for some power $N$. In this case, the Perron-Frobenius theorem states that the eigenvalue of $M$ of largest absolute value is in fact real, simple, and has an eigenvector all of whose entries are positive. We call this eigenvalue the \textit{Perron-Frobenius eigenvalue}, and we say that such an eigenvector is \textit{positive}.
\end{defn}

The next theorem follows from work of Bestvina-Handel in \cite{BH}.

\begin{thm}
Let $(\tau, \psi, f)$ be the data of a geometric map, where $\tau$ satisfies the conclusion of Proposition \ref{Prop:track}. Let $M$ be the transition matrix of $f$. Then 
\[
\text{$\psi$ is pseudo-Anosov} \iff  \text{$M$ is Perron-Frobenius.}
\]
\end{thm}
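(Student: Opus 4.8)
The plan is to prove the two implications separately, in both cases exploiting the dictionary between nonnegative transverse weightings of the real edges of $\tau$ and measured laminations carried by $\tau$, with Proposition \ref{Prop:track} controlling how such a lamination completes to a singular foliation on $\widehat{S}$.

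\textbf{The forward direction.} Assume $\psi$ is pseudo-Anosov with dilatation $\lambda>1$ and unstable foliation $(\cal{F}^u,\mu^u)$. Since $\tau$ carries $\psi$, one recovers $\cal{F}^u$ as a Hausdorff limit of the smoothed images $\psi^m(\tau)$, and $\mu^u$ assigns a strictly positive weight $w_i>0$ to each real edge $e_i$; unwinding the relation $f=\text{collapse}\circ\psi$ shows that the weight vector $w=(w_1,\dots,w_k)$ satisfies $Mw=\lambda w$. In particular $M$ is irreducible, since a block-triangular form would exhibit a proper $f$-invariant subgraph of real edges carrying a proper sublamination of $\cal{F}^u$, contradicting the fact that the pseudo-Anosov lamination is minimal and fills $\widehat{S}$. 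Moreover $M$ is primitive: were $M$ irreducible of period $p\ge 2$, then $M^p$ would be a direct sum of at least two irreducible blocks, hence reducible; but $(\tau,\psi^p,f^p)$ is again the data of a pseudo-Anosov geometric map, so the previous sentence forces $M^p$ to be irreducible --- a contradiction. Thus $M^N>0$ for some $N$, i.e. $M$ is Perron--Frobenius.

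\textbf{The backward direction.} Suppose now that $M$ is Perron--Frobenius, with eigenvalue $\lambda$ and positive eigenvector $w>0$. First, $\lambda>1$: a primitive matrix of nonnegative integers with spectral radius $1$ satisfies $M^N\ge J$ entrywise for large $N$, where $J$ is the all-ones matrix, forcing $1=\lambda^N\ge k$; but $\tau$ has $k\ge 2$ real edges since $\chi(S')<0$, so $\lambda>1$. Now read $w$ as a transverse measure on $\tau$; this produces a measured lamination which, because every component of $\widehat{S}\setminus\tau$ is a cusped polygon or a once-marked cusped polygon (Proposition \ref{Prop:track}), fills $\widehat{S}$ and completes to a measured singular foliation $(\cal{F}^u,\mu^u)$ whose singularities have at least $3$ prongs in the interior and at least $1$ prong at each marked point. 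The relation $Mw=\lambda w$, together with $f=\text{collapse}\circ\psi$, says exactly that $\psi(\cal{F}^u,\mu^u)=(\cal{F}^u,\lambda\mu^u)$. Performing the same construction for $\psi^{-1}$ --- which is carried by an efficient train track with its own Perron--Frobenius transition matrix, of eigenvalue $\lambda^{-1}$ since $\psi$ preserves the symplectic geometric intersection pairing between the two transverse directions --- yields a measured singular foliation $(\cal{F}^s,\mu^s)$ with $\psi(\cal{F}^s,\mu^s)=(\cal{F}^s,\lambda^{-1}\mu^s)$, transverse to $\cal{F}^u$ because near each switch of $\tau$ the stable and unstable leaves follow the two families of smooth branches. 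This is precisely the data exhibiting $\psi$ as a pseudo-Anosov, cf. \cite{BH}, \cite{Thurston}, \cite{FM}.

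\textbf{Main obstacle.} The routine bookkeeping --- that the Hausdorff limit of $\psi^m(\tau)$ is carried by $\tau$ with strictly positive weights, that the foliation built from $\psi^{-1}$ has the same singularity data as and is transverse to $\cal{F}^u$, and that the two dilatations are genuinely reciprocal --- is standard, and I would only sketch it. The substance lies in the backward direction at the two places where Proposition \ref{Prop:track} does real work: showing that the measured lamination determined by the Perron eigenvector actually \emph{fills} $\widehat{S}$, so that it completes to an honest singular foliation with the advertised singularity type rather than to a lamination leaving behind an invariant subsurface, and gluing $\cal{F}^u$ to $\cal{F}^s$ as a genuinely transverse pair. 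Ruling out imprimitivity in the forward direction is the other delicate point, and it is exactly there that one must invoke that $\psi^p$ is still pseudo-Anosov --- not merely geometric --- so that its transition matrix remains irreducible.
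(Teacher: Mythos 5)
The paper does not actually prove this statement: it is asserted to follow from Bestvina--Handel \cite{BH} and is stated without proof. So there is no internal argument to compare against; what follows is an assessment of your proposal on its own terms.

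Your forward direction is essentially sound. The identification of the unstable transverse weights with a positive right $\lambda$-eigenvector of $M$ is correct, and the reduction of primitivity to irreducibility via the observation that $(\tau,\psi^p,f^p)$ is again pseudo-Anosov data --- so that a period $p\ge 2$ would make $M^p$ simultaneously irreducible and a nontrivial block direct sum --- is a clean way to dispose of imprimitivity. The irreducibility step itself is stated a little loosely (an $f$-invariant proper subset of real edges does not immediately hand you a sublamination of $\cal{F}^u$; one should extract an invariant measured lamination from the Perron eigenvector of the invariant block and observe it cannot fill, or argue directly from minimality that every edge's forward image eventually covers every other edge), but that is repairable.

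The backward direction has a real gap at the construction of $\cal{F}^s$. You propose to ``perform the same construction for $\psi^{-1}$,'' asserting that $\psi^{-1}$ is carried by an efficient track whose Perron--Frobenius eigenvalue is $\lambda^{-1}$. Neither half of this is right: $\tau$ is not an invariant track for $\psi^{-1}$, and even if $\psi^{-1}$ is run independently through Bestvina--Handel, the resulting transition matrix has Perron--Frobenius eigenvalue $\lambda$, not $\lambda^{-1}$ --- the dilatation of a pseudo-Anosov and of its inverse coincide, since $\psi^{-1}$ merely exchanges the roles of $\cal{F}^u$ and $\cal{F}^s$, and no primitive nonnegative integer matrix can have spectral radius less than $1$. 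The standard route, and the one taken in \cite{BH}, is to use the positive \emph{left} $\lambda$-eigenvector of the same matrix $M$ to assign tangential lengths to the edges of $\tau$; this tangential measure, together with the cusped-disk form of the complementary regions guaranteed by Proposition \ref{Prop:track}, produces $(\cal{F}^s,\mu^s)$ as the singular foliation transverse to $\cal{F}^u$ along $\tau$, with $\psi$ scaling $\mu^s$ by $\lambda^{-1}$. Your proposal never touches the left eigenvector, so as written it does not produce the stable foliation, and the argument for the stretch factor being genuinely reciprocal is not established.
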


The Perron-Frobenius eigenvalue $\lambda$ of $M$ is called the \textit{dilatation} of $\psi$. There is a unique right $\lambda$-eigenvector $w$ of $M$, up to scale, and its entries $w_i$ for $i=1 \ldots, k$ define \textit{transverse weights} on the real edges $e_i$ of $M$.

\begin{rem}\label{uniquerem}
If a train track map $F$ is induced by some pseudo-Anosov $\psi$, then such a $\psi$ is unique up to conjugacy in $\Mod(\widehat{S})$. Indeed, Bestvina--Handel in \cite{BH} provide an algorithm to determine the measured foliations preserved by $\psi$. This will be a crucial idea in Section \ref{221section}.
\end{rem}

\subsection{Lifted train track maps and fixed points}\label{liftsec}

\begin{figure}
    \centering
    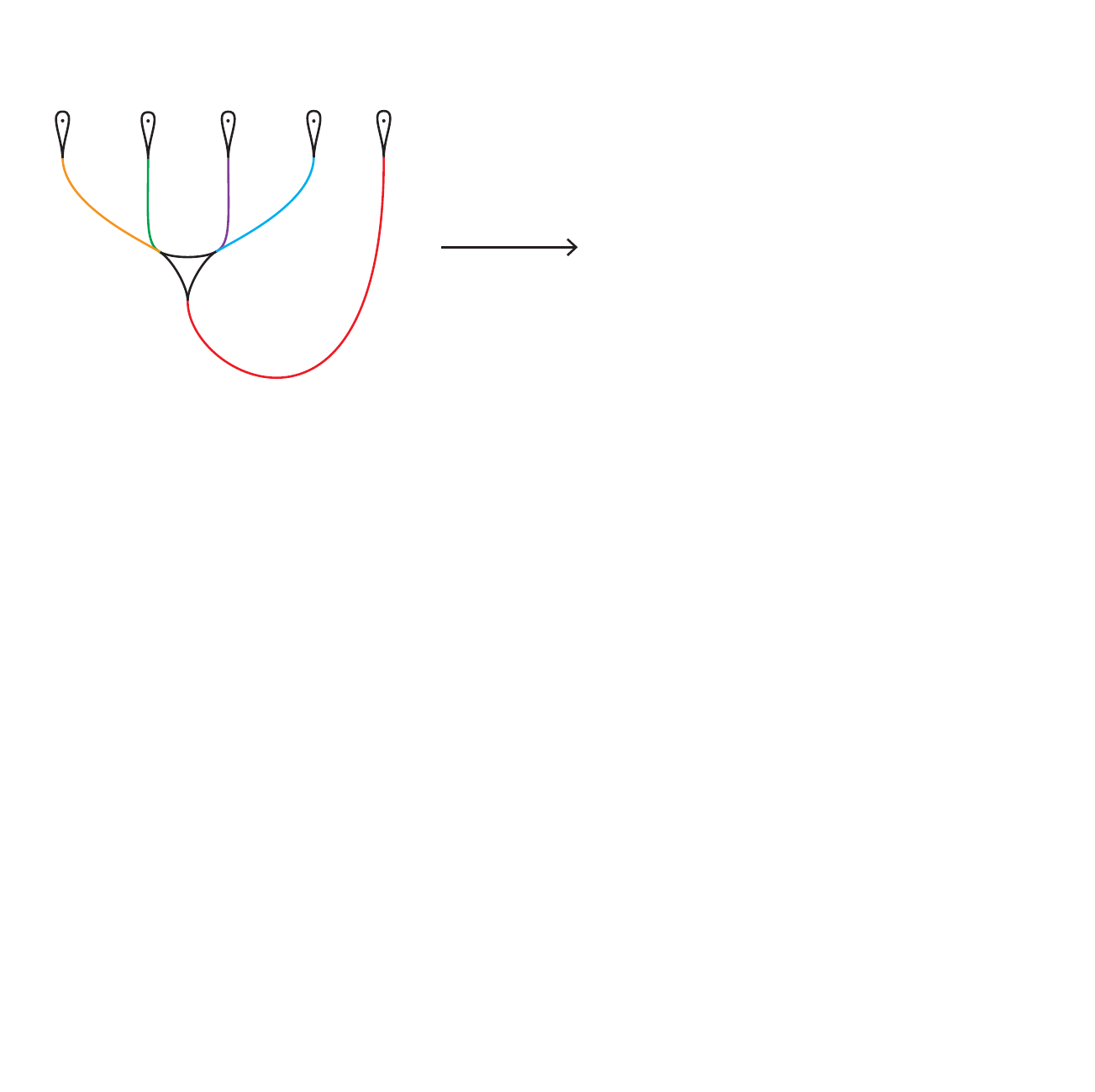
    \caption{Lifting $(\psi_\beta,\tau,f)$ to $(\psi_h,\tilde{\tau},\tilde{f})$. The dotted lines indicate the ``back" of the surface. In this example, $\tau$ is the Peacock track from Figure \ref{fig:TT5} and $\beta$ is conjugate to $\beta_0^{-1}$, from Proposition \ref{braidttprop}. Note here the 1-gons on the disk lift to 2-gons upstairs, which are smoothed out to regular points.}
    \label{liftedtt}
\end{figure}

Let $(\tau, \psi, f)$ be the data of a pseudo-Anosov on $S=S_2^1$. One of the key ideas in this paper is to use the transition matrix $M$ of $f$ to study fixed points of $\psi$ combinatorially. Our main tool to carry out this approach is the following theorem, which follows from work of Los in \cite{Los} and independently Cotton-Clay in \cite{CC}:

\begin{thm}[Los, Cotton-Clay]\label{fpthm}
If $\psi$ is fixed point free in the interior of $S$, then $\text{tr}(M)=0$.
\end{thm}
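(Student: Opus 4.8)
The plan is to prove the contrapositive: if $\text{tr}(M)\neq 0$, then $\psi$ has a fixed point in the interior of $S$. Since $\text{tr}(M)=\sum_{i=1}^{k}m_{i,i}$, the hypothesis $\text{tr}(M)\neq 0$ means that for some real edge $e_i$ the train path $f(e_i)$ traverses $e_i$ at least once; equivalently, among the subintervals into which the $f$-preimages of the switches cut $e_i$, at least one subarc $A\subseteq e_i$ is carried homeomorphically by $f$ onto the whole of $e_i$ (this full-edge structure is a standard feature of train track maps). Identifying $e_i$ with $[0,1]$ and writing $A=[a,b]$ with $0\le a<b\le 1$, the homeomorphism $f|_A$ sends $\{a,b\}$ onto $\{0,1\}$, whether it preserves or reverses orientation; hence the continuous function $t\mapsto f(t)-t$ takes a value $\le 0$ at one endpoint of $A$ and a value $\ge 0$ at the other, so it vanishes at some $x\in A$. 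Then $f(x)=x$, and $x$ lies on the real edge $e_i$ of $\tau$. Since $\tau$ is embedded in the interior of $S$ and is disjoint from every marked point of $\widehat{S}$ --- including the one capping off $\del S$ --- the point $x$ lies in the interior of $S$, away from $\del S$ and the punctures.

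It remains to upgrade the combinatorial fixed point $x$ of $f$ to a genuine fixed point of $\psi$, and this is where one must use the relation $f=(\text{collapse})\circ\psi|_\tau$: it says precisely that $\psi(\tau)$ lies in a fibered neighborhood $N(\tau)$ and that $f$ collapses the fibers, so $f(x)=x$ is exactly the statement that $\psi(x)$ lies on the fiber of $N(\tau)$ through $x$. Choose a small rectangular chart $B\cong(-\epsilon,\epsilon)_u\times(-\delta,\delta)_s$ around $x$, contained in the interior of $S$, with the $u$-axis tangent to $\tau$ (well-defined even if $x$ is a switch) and the $s$-direction tangent to the fibers; because $x$ is a regular point of the invariant foliations, we may take the fibers to be subarcs of stable leaves of $\psi$. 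In these coordinates $\psi$ has the hyperbolic form $\psi(u,s)=\bigl(\varphi(u)+O(s),\ \theta(u,s)\bigr)$, where $\varphi$ is conjugate to the branch $f|_A$, which is uniformly expanding in the metric given by the Perron--Frobenius eigenvector (so $\varphi(0)=0$ and $|\varphi'(0)|>1$), while $\theta$ is a fiberwise contraction ($|\del_s\theta|<1$). The Lefschetz index of $\psi$ at the origin of this model is the sign of $\det(I-D\psi)$, which is $-1$ when $f|_A$ preserves orientation and $+1$ when it reverses it --- in either case nonzero --- and since the model fixed point is isolated we may shrink $B$ so that $\psi$ has no fixed points on $\del B$. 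A self-map of $B$ with nonzero index and no boundary fixed points has a fixed point in $B$, giving a fixed point of $\psi$ in the interior of $S$ and contradicting the hypothesis.

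\textbf{Main obstacle.} The only substantive step is the second one: turning the purely combinatorial fact that $e_i$ occurs in the train path $f(e_i)$ into an honest fixed point of the homeomorphism $\psi$ on the surface. This requires installing the fibered-neighborhood coordinates, carrying out the local Lefschetz-index computation, and verifying that the resulting fixed point genuinely lies in the interior of $S$ (rather than escaping to $\del S$ or to a puncture). This dictionary between $f$ and $\psi$ is exactly the content of the cited results of Los \cite{Los} and Cotton-Clay \cite{CC}; granted it, the first step is elementary. A more global alternative would be to compute the Lefschetz number $L(\psi)$ directly from the graph map and compare it with a global fixed-point count, but the fact that $\psi$ may rotate $\del S$ complicates the boundary contribution, so the local argument above is cleaner.
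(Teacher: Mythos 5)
The paper does not prove this theorem at all; it states it and cites Los \cite{Los} and Cotton-Clay \cite{CC}, so there is no ``paper's proof'' to compare against, only your constructed argument. Your contrapositive framing and Step~1 are correct: a nonzero diagonal entry forces some subarc $A\subseteq e_i$ to be mapped by $f$ onto all of $e_i$, and the intermediate value theorem gives a point $x$ in the interior of $e_i$ with $f(x)=x$. Your instinct about where the real work lies (the $f$-to-$\psi$ dictionary) is also correct and consistent with the paper's treatment.

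The gap is in Step~2, and it is a genuine circularity rather than just omitted detail. You write that ``the Lefschetz index of $\psi$ at the origin of this model is the sign of $\det(I-D\psi)$,'' and then ``since the model fixed point is isolated we may shrink $B$ so that $\psi$ has no fixed points on $\del B$.'' Both sentences presuppose that $\psi$ already has an isolated fixed point at the origin --- but that is exactly what remains to be shown. What you actually know at this stage is that $f(x)=x$, and since $f=\text{collapse}\circ\psi$, this only says that $\psi(x)$ lies on the fiber of $N(\tau)$ through $x$; in your coordinates $\psi(0,0)=(0,s_0)$ with $s_0$ possibly nonzero, so the origin is generically not fixed by $\psi$. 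The index formula $\text{sign}(\det(I-D\psi))$ is a formula for the index \emph{at a fixed point}; applying it before establishing a fixed point is circular. To close the gap you need one of the standard arguments that does not start from an assumed fixed point: either compute the degree of $p\mapsto p-\psi(p)$ on $\del B$ directly from the hyperbolic estimates (using $|\varphi'|\ge\lambda>1$, $|\del_s\theta|\le\mu<1$, and the fact that $\psi(0,0)$ has vanishing $u$-coordinate) and then invoke the index theorem; or run the Markov--horseshoe argument on the sub-rectangle $R'\subseteq R_i$ over $A$, whose image $\psi(R')$ crosses $R_i$ full-width, so $\bigcap_{n\in\zz}\psi^n(R')$ is a single $\psi$-fixed point by uniform hyperbolicity. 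Either route recovers your conclusion, and either is what the cited Los and Cotton-Clay results encode; but as written, your local index computation assumes what it is meant to prove.
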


Our goal will be to use Theorem \ref{fpthm} to restrict the possible train tracks $\tau$ and maps $f:\tau\to\tau$ for a fixed-point-free $\psi$. In the case of genus-two, hyperbolic, fibered knots in $S^3$, the possible types of train tracks $\tau$ are already highly restricted by Theorem \ref{bhsthm} and Remark \ref{cuspsingrem}.

Suppose we start with a genus-two, hyperbolic, fibered knot $K$ with open book decomposition $(S,h)$, where $h$ is the lift of a 5-braid $\beta$ with pseudo-Anosov data $(\psi_\beta, \tau, f)$ on the disk $S_{0,5}^1$. We may lift $\tau$ to a train track $\tilde{\tau}$ on $S$ which carries the pseudo-Anosov representative $\psi_h$ of $h$. As a graph, $\tilde{\tau}$ is constructed by gluing two copies of $\tau$ along the punctures and lifting the infinitesimal $k$-gons around these punctures to infinitesimal $2k$-gons upstairs, as in Figure \ref{liftedtt}.

For this choice of train track $\tilde{\tau}$, we may determine the train track map $\tilde{f}:\tilde{\tau}\to\tilde{\tau}$ induced by $\psi_h$ (so that $h$ has pseudo-Anosov data $(\psi_h,\tilde{\tau},\tilde{f})$) from $f:\tau\to\tau$, up to a binary choice, as follows. For each edge $e\in \tau$, denote by $e^1$ or $e^2$ its two lifts in $\tilde{\tau}$, and write $f(e)=e_1...e_n$ where each $e_i\in\tau$ is an edge. Note that if two edges $e,e'\in\tau$ form a train path $ee'$ then exactly one of the paths $e^ie'^1$ or $e^ie'^2$ is a train path for each $i=1,2$. And, if $e^1e'^i$ is a train path, then so is $e^2e'^j$, where $i\neq j$. Set $\tilde{f_{i_1}}(e^1)=e_1^{i_1}...e_n^{i_n}$, where each $i_j\in\{1,2\}$ and each $e_j^{i_j}e_{j+1}^{i_{j+1}}$ is a train path. Choosing an image for $e^1$ also immediately determines an image for $e^2$, so the maps $\tilde{f_1}$ and $\tilde{f_2}$ are both defined on all of $\tilde{\tau}$. See Figure \ref{liftedtt}.

Note that if $\tilde{f_i}$ is induced by $\psi_i$ for $i=1,2$ then we have $\psi_1=\iota\circ\psi_2$. In particular, at most one of $\tilde{f_1}$ or $\tilde{f_2}$ is induced by $\psi_h$, but this choice may be easily settled by examining $\beta$ as a braid, rather than a mapping class on the punctured sphere. So, we will denote simply by $\tilde{f}$ the well-defined choice of $\tilde{f_i}$ induced by $\psi_h$.

\section{The case with singularity type $(6;\emptyset;\emptyset)$}\label{6section}

The main goal of this section is to prove the following:

\begin{subthm}\label{6case}
Let $K$ be a genus-two, fibered, hyperbolic knot in $S^3$ with associated open book decomposition $(S_2^1,h)$. If $c(h)\neq0$ and the pseudo-Anosov representative $\psi$ of $h$ has singularity type $(6;\emptyset;\emptyset)$, then $K$ is not fixed point free.
\end{subthm}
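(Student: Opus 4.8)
The proof is by contradiction: suppose $K$ is fixed point free. Then Theorem~\ref{bhsthm} applies, so $h$ is the lift under the Birman--Hilden correspondence of a $5$-braid $\beta$ whose closure $\widehat{\beta}$ is the unknot, and by Lemma~3.7 of \cite{BHS} the pseudo-Anosov representative $\psi_\beta$ of $\beta$ has singularity type $(3;1^5;\emptyset)$. The plan is to show that these hypotheses force $\beta$, as an element of $\Mod(S_{0,6})$, to be conjugate to $\alpha^{\pm1}$ (up to composition with $\iota$), where $\alpha$ is the minimal-dilatation $5$-braid of \cite{HS}, and then to observe directly that no such braid has unknotted closure.

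\textbf{Step 1 (pinning down the Alexander polynomial).} By Masur--Smillie \cite{MS}, every pseudo-Anosov in the genus-two stratum $(6;\emptyset;\emptyset)$ has orientable invariant foliations. Hence its dilatation $\lambda=\lambda(\psi_h)$ and $\lambda^{-1}$ occur as eigenvalues of $h_*$ acting on $H_1(S_2^1;\qq)$, and since $K$ is fibered this means $\lambda$ is a root of $\Delta_K(t)=\det(tI-h_*)$ --- a monic, degree-$4$, palindromic polynomial with $\Delta_K(1)=\pm1$. To compute its $t^3$-coefficient I would apply the Lefschetz fixed point theorem to the capped map $\widehat{\psi_h}$ on $\widehat{S}=S_2$. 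Since $\widehat{\beta}$ is the unknot, Theorem~\ref{ikthm} gives $|c(\beta)|<1$; together with $c(h)=\tfrac12 c(\beta)$ --- a consequence of Theorem~\ref{fdtcthm} and the fact that the lift of $\Delta^2$ squares to the boundary Dehn twist --- and $c(h)\neq 0$, this forces $c(h)\notin\zz$, so $\psi_h$ permutes the prongs of its boundary $6$-pronged singularity nontrivially. As $\psi_h$ is fixed-point free in the interior of $S_2^1$, the only fixed point of $\widehat{\psi_h}$ is this singularity, whose Lefschetz index is $+1$ (the prong rotation being nontrivial). Therefore $2-\text{tr}(h_*\!\mid\! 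H_1)=L(\widehat{\psi_h})=1$, so $\text{tr}(h_*)=1$. The only monic palindromic degree-$4$ polynomials with $\text{tr}=1$ and value $\pm1$ at $t=1$ are $t^4-t^3+t^2-t+1=\Phi_{10}(t)$ --- all of whose roots lie on the unit circle, ruling it out since $\lambda>1$ is a root --- and $t^4-t^3-t^2-t+1$, whose unique root greater than $1$ is the Lanneau--Thifeault minimum dilatation $\lambda_2$. Hence $\Delta_K(t)=t^4-t^3-t^2-t+1$ and $\lambda(\psi_h)=\lambda_2$.

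\textbf{Step 2 (rigidity and conclusion).} By the classification of Lanneau--Thifeault \cite{LT}, a genus-two pseudo-Anosov with orientable foliations and dilatation $\lambda_2$ is unique up to conjugacy, inverse, and composition with $\iota$. Transporting this through the Birman--Hilden diagram identifies $\beta$, as an element of $\Mod(S_{0,6})$, with $\alpha^{\pm1}$ up to composition with $\iota$. Finally, the braids in $\Mod(S_{0,5}^1)$ lying over a fixed spherical class differ by powers of $\Delta^2$ and are distinguished by $c(\beta)$ (Theorem~\ref{fdtcthm}), and the bound $|c(\beta)|<1$ leaves only finitely many candidates; computing a classical link invariant of each of their braid closures shows none is the unknot, contradicting Theorem~\ref{bhsthm} and completing the proof.

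I expect the main obstacle to be the rigidity step: converting Lanneau--Thifeault's ``almost unique'' statement about genus-two mapping classes into a clean statement about the braid $\beta$ requires carefully tracking the kernel $\langle\iota\rangle$ of $\Theta_2$, the inverse ambiguity, and the difference between conjugacy in $\SMod(S_2^1)$ and conjugacy in the $5$-strand braid group; and the concluding ``no unknotted closure'' verification, though finite, must be handled with care since spherical conjugates of a braid need not be conjugate as braids.
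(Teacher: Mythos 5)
Your proposal follows essentially the same route as the paper: Masur--Smillie orientability, a Lefschetz computation forcing $\text{tr}(h_*)=1$ and hence $\Delta_K(t)=t^4-t^3-t^2-t+1$ with dilatation $\lambda_2$, then Lanneau--Thiffeault rigidity combined with a fractional Dehn twist coefficient bound to reduce to finitely many braids, ruled out by a knot invariant (the paper uses self-linking number). The subtlety you flag at the end --- that conjugacy of the capped-off classes need not immediately give braid conjugacy --- is exactly what the paper's Lemma~\ref{capconj} resolves, showing that conjugacy of the capped-off maps in $\Mod(S_2)$ descends through the hyperelliptic quotient and lifts back to braid conjugacy up to $\Delta^{2k}$, so your worry is a real step to prove but not an obstruction to the argument.
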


This resolves Case 1 from the outline in subsection \ref{outline}. Together with Theorem \ref{221thm} in Section \ref{221section}, this will complete the proof of Theorem \ref{fpfthm}. Before turning to the proof of Theorem \ref{6case}, it will be helpful to recall the Lefschetz fixed point theorem, which will be a key ingredient in our proof:

\begin{thm}[Lefschetz fixed point theorem]
Let $S$ be a compact surface and $f: S \to S$ a homeomorphism. Let $f_\ast: H_1(S;\mathbb{Z}) \to H_1(S;\mathbb{Z})$ denote the induced map on first homology. Then

\[
2-\text{tr}(f_\ast) = \sum_{p=f(p)} \Ind(f,p).
\]
\end{thm}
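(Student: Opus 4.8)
The plan is to identify the left-hand side with the \emph{Lefschetz number} of $f$ and then prove the classical fact that this number equals the signed count of fixed points. As stated, the identity is the Lefschetz fixed point formula for an \emph{orientation-preserving} homeomorphism of a closed, orientable surface (the case relevant to our applications, where $S$ is the closed genus-two surface, or is first capped off; for a surface with boundary the constant $2$ is replaced by $1$). For such $S$ and $f$, the induced map on $H_0(S;\qq)\cong\qq$ is the identity and the induced map on $H_2(S;\qq)\cong\qq$ is the identity (the latter uses orientation-preservation), so the graded trace is
\[
L(f):=\sum_{i=0}^{2}(-1)^i\,\mathrm{tr}\!\left(f_\ast\mid H_i(S;\qq)\right)=1-\mathrm{tr}(f_\ast\mid H_1)+1=2-\mathrm{tr}(f_\ast).
\]
It therefore suffices to prove $L(f)=\sum_{p=f(p)}\Ind(f,p)$, which I would do by intersection theory on $S\times S$.

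The steps are as follows. First, I would reduce to the case that $f$ has isolated fixed points: by homotopy invariance of $L(f)$ one may replace $f$ by a homotopic $C^{\infty}$ map whose graph is transverse to the diagonal, changing neither side (for the pseudo-Anosov and train-track maps of interest the fixed points are already isolated, so this is essentially a remark, and $\Ind(f,p)$ is then the usual local fixed-point index). Second, inside the oriented $4$-manifold $S\times S$ consider the diagonal $\Delta=\{(x,x)\}$ and the graph $\Gamma_f=\{(x,f(x))\}$, each an embedded copy of $S$; their intersection is exactly the fixed-point set of $f$, and by definition $\Ind(f,p)$ is the local oriented intersection number of $\Gamma_f$ and $\Delta$ at $p$, so $\sum_{p=f(p)}\Ind(f,p)=[\Gamma_f]\cdot[\Delta]$, the global homological intersection pairing in $S\times S$. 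Third, I would evaluate $[\Gamma_f]\cdot[\Delta]$ homologically: via the Künneth isomorphism $H_\ast(S\times S;\qq)\cong H_\ast(S;\qq)^{\otimes 2}$ and Poincaré duality on $S$, choosing a graded basis $\{a_j\}$ of $H_\ast(S;\qq)$ with Poincaré-dual basis $\{a_j^{\vee}\}$, one has $[\Delta]=\sum_j(-1)^{|a_j|}\,a_j\otimes a_j^{\vee}$ and $[\Gamma_f]=(\mathrm{id}\times f)_\ast[\Delta]=\sum_j(-1)^{|a_j|}\,a_j\otimes f_\ast(a_j^{\vee})$; plugging into the intersection pairing and using orthonormality of $\{a_j\}$ against $\{a_j^{\vee}\}$ under the Poincaré pairing collapses the double sum to $\sum_j(-1)^{|a_j|}\langle a_j, f_\ast a_j\rangle=\sum_i(-1)^i\,\mathrm{tr}(f_\ast\mid H_i)=L(f)$. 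Combining the three steps yields $L(f)=\sum_{p=f(p)}\Ind(f,p)$, and with the first paragraph this is the claim.

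I expect the main obstacle to be the sign and orientation bookkeeping, in two places. Locally, one must check that the oriented intersection number of $\Gamma_f$ with $\Delta$ at a transverse fixed point $p$ agrees with the analytic index: transversality there is equivalent to invertibility of $I-df_p$, and the orientation comparison reduces to $\mathrm{sign}\det(I-df_p)$ (using $\dim S=2$, so that the factor $(-1)^{\dim S}$ relating $\det(df_p-I)$ and $\det(I-df_p)$ is $+1$), which is exactly the fixed-point index. Globally, one must verify that the Künneth signs in $[\Delta]$ and in the intersection pairing conspire to produce the \emph{alternating} sum of traces rather than a plain sum; this is the heart of the computation. If these signs prove too fiddly to present cleanly, I would fall back on Hopf's original argument: triangulate $S$, take a simplicial approximation $g\simeq f$ on a sufficiently fine subdivision, and apply the chain-level identity $\sum_i(-1)^i\,\mathrm{tr}(g_{\#}\mid C_i(S;\qq))=\sum_i(-1)^i\,\mathrm{tr}(g_\ast\mid H_i(S;\qq))$, noting that only simplices meeting the fixed-point set contribute to the left-hand side and that their total contribution is $\sum_{p=f(p)}\Ind(f,p)$.
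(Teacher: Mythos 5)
The paper does not prove this statement at all: it is quoted as a classical background result and applied only to the capped-off closed surface $S_2$ (in Propositions \ref{6dil} and \ref{6case0prop}). Your outline is the standard intersection-theoretic proof of that classical theorem and is essentially correct, so there is nothing in the paper to compare it against step by step. Two remarks. First, your observation about hypotheses is the most valuable part: as literally stated for a ``compact surface,'' the constant $2$ is only right when $S$ is closed, orientable, and $f$ is orientation-preserving (so that $f_*$ acts by $+1$ on both $H_0$ and $H_2$); for a surface with boundary one gets $1-\text{tr}(f_*)$ instead, and for orientation-reversing $f$ one gets $0-\text{tr}(f_*)$. Since the paper only ever invokes the formula for $\widehat{\psi}$ on the closed genus-two surface, this is a harmless imprecision in the statement, but your proof correctly identifies where the $2$ comes from. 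Second, the one place your sketch is glossy is the reduction to transverse fixed points: asserting that a homotopy to a graph-transverse map ``changes neither side'' presupposes that $\sum_p \Ind(f,p)$ is itself a homotopy invariant, which is part of what is being proved. The clean route is to define $\Ind(f,p)$ at an isolated fixed point as the local degree of $x\mapsto x-f(x)$ on a small circle, and to check that a perturbation supported near $p$ splits $p$ into nondegenerate fixed points whose signed count equals that degree; this matters for the paper's use case, where pseudo-Anosov singularities are isolated but degenerate (index $1-k$). With that point made precise, either your Künneth/Poincaré-duality computation of $[\Gamma_f]\cdot[\Delta]$ or your fallback to Hopf's simplicial argument completes a correct proof.
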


We will apply the Lefschetz fixed point theorem to read off information about the action of a pseudo-Anosov on homology, from its dynamical properties. The relevant result in this vein is an index calculation due to Lanneau--Thiffeault in \cite{LT}:

\begin{prop}[Lanneau--Thiffeault]
Let $\psi: S \to S$ be pseudo-Anosov with orientable invariant foliations, and let $p$ be a fixed $k$-prong singularity of $\psi$. Denote by $\psi_*:H_1(S)\to H_1(S)$ the action on homology, and denote by $\rho(\psi_*)$ the leading eigenvalue of this action, i.e. the eigenvalue with greatest absolute value.
\begin{enumerate}
    \item If $\rho(\psi_\ast)<0$ then $\Ind(\psi,p)=1$; that is, every fixed point of $\psi$ has index $1$.
    \item If $\rho(\psi_\ast)>0$ then either:
    \begin{enumerate}
        \item $\psi$ fixes each prong and $\Ind(\psi,p)=1-k<0$, or
        \item $\psi$ cyclically permutes the prongs and $\Ind(\psi,p)=1$.
    \end{enumerate}
\end{enumerate}
\end{prop}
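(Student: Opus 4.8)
The plan is to split the argument into a purely local index computation at the fixed singularity $p$ and a global input identifying the sign of $\rho(\psi_\ast)$ with the effect of $\psi$ on the orientations of $\mathcal{F}^u$ and $\mathcal{F}^s$.

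First I would record the local structure of $\psi$ near the fixed $k$-prong singularity $p$. Since $\psi(p)=p$ and $\psi$ is an orientation-preserving homeomorphism, the induced permutation of the $k$ unstable separatrices emanating from $p$ preserves their cyclic order, hence is a rotation of $\zz/k$; so $\psi$ fixes either all $k$ of these separatrices or none of them, and whenever it fixes one it preserves its outward orientation (because $\psi$ pushes points along an unstable separatrix away from $p$). Letting $n\in\{0,k\}$ denote the number of unstable separatrices fixed by $\psi$ at $p$, the standard index computation for pseudo-Anosov fixed points gives
\[
\Ind(\psi,p)=1-n
\]
(equivalently: a fixed point preserving $n$ unstable and $n$ stable separatrices has $2n$ hyperbolic sectors; when $n=k$ one may also read this off from the linear model $\mathrm{diag}(\pm\lambda,\pm\lambda^{-1})$). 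The same formula holds at a regular fixed point, with $n\in\{0,2\}$.

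Next I would bring in orientability to pin down the sign of $\rho(\psi_\ast)$. Orient the leaves of $\mathcal{F}^u$; since $S$ is oriented and $\mathcal{F}^s$ is everywhere transverse to $\mathcal{F}^u$, this also orients the leaves of $\mathcal{F}^s$. As $\psi$ preserves the orientation of $S$ and carries each foliation to itself, it sends the oriented frame $(\text{unstable},\text{stable})$ to $(\varepsilon\cdot\text{unstable},\varepsilon\cdot\text{stable})$ for a single sign $\varepsilon\in\{\pm1\}$ (both signs must coincide, else $\psi$ would reverse the orientation of $S$). The transverse measures of $\mathcal{F}^u$ and $\mathcal{F}^s$, together with these orientations, define classes in $H^1(S;\rr)$ spanning a $2$-dimensional $\psi^\ast$-invariant subspace on which $\psi^\ast$ acts with eigenvalues $\varepsilon\lambda$ and $\varepsilon\lambda^{-1}$; since $\psi$ is pseudo-Anosov this is the largest-modulus part of the action on homology and it is simple, so $\rho(\psi_\ast)=\varepsilon\lambda$. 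Thus $\rho(\psi_\ast)>0$ exactly when $\psi$ preserves the leaf orientations, and $\rho(\psi_\ast)<0$ exactly when it reverses them. (Note orientability also forces $k$ to be even, as transporting a leaf orientation once around a $k$-prong point multiplies it by $(-1)^k$.)

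Finally I would assemble the cases. If $\rho(\psi_\ast)<0$, then $\psi$ reverses the leaf orientations, hence fixes no unstable separatrix at any fixed point; so $n=0$ and $\Ind(\psi,p)=1$ for every fixed point, which is case (1). If $\rho(\psi_\ast)>0$, then $\psi$ preserves the leaf orientations, and at the $k$-prong $p$ the local picture leaves exactly the two options $n=k$ and $n=0$: when $n=k$, $\psi$ fixes every prong and $\Ind(\psi,p)=1-k<0$ (using $k\ge 3$ at an interior singularity), which is case (2a); when $n=0$, $\psi$ permutes the prongs with no fixed prong and $\Ind(\psi,p)=1$, which is case (2b). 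The main obstacle is the sign bookkeeping in the two inputs: verifying that the foliation cohomology classes genuinely realize the largest-modulus homological eigenvalue, with sign $\varepsilon$ and simple --- this is precisely where orientability is used --- and, on the local side, setting up the normal form of $\psi$ near $p$ with enough care to confirm that the index equals $1-n$ with the correct sign in the orientation-reversing case. Neither step is conceptually deep, but both are error-prone.
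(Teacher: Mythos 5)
The paper does not itself prove this proposition; it quotes it as an index calculation from Lanneau--Thiffeault \cite{LT} and uses it as a black box, so there is no in-paper proof to compare against. Your reconstruction is correct and is essentially the argument one finds in \cite{LT}: the local Lefschetz-index formula $\Ind(\psi,p)=1-n$ with $n\in\{0,k\}$ (the induced permutation of prongs is a rotation of $\zz/k\zz$, so it fixes all or none), combined with the global identification of the sign of $\rho(\psi_*)$ with the sign $\varepsilon$ by which $\psi$ acts on a coherent leaf orientation of $\mathcal{F}^u$ and $\mathcal{F}^s$, mediated by the invariant cohomology classes of the oriented transverse measures. The step you single out as the main burden --- that these cohomology classes realize the largest-modulus eigenvalue of $\psi_*$, simply and with sign $\varepsilon$ --- is precisely where orientability is used and is the substantive content of the cited calculation; your account of it is sound. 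One small imprecision: the parenthetical appeal to the linear model $\mathrm{diag}(\pm\lambda,\pm\lambda^{-1})$ only literally applies at a regular ($k=2$) fixed point, though this does not affect the argument since you also give the sector count.
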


We can now use Lanneau--Thiffeault's calculation to restrict the dilatation of the pseudo-Anosov representative of a potential fixed-point-free knot $K$ in Theorem \ref{6case}:

\begin{prop}\label{6dil}
Let $K\subset Y$ be a genus-two fixed-point-free knot with $c(K)\not\in\zz$, and suppose that $Y$ is an integer homology sphere. If the pseudo-Anosov representative $\psi$ of $K$ has singularity type $(6;\emptyset;\emptyset)$ then $\psi$ achieves the minimal dilatation $\lambda_2$ among pseudo-Anosovs in genus two.
\end{prop}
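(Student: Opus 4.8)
The plan is to push the question onto the closed genus-two surface $\widehat{S}=S_2$, extract the characteristic polynomial of $\psi$ acting on $H_1$ from the Lefschetz fixed point theorem, and then pin that polynomial down using the orientability of the invariant foliations together with the homology-sphere hypothesis. Throughout, recall that $\psi$ is freely isotopic to the monodromy $h$, so they induce the same map on $H_1$, and that capping a boundary disk does not change $H_1$.

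\textbf{Step 1: pass to the closed surface and count fixed points.} I would cap off $\partial S_2^1$ with a once-marked disk to obtain a pseudo-Anosov $\widehat\psi\colon S_2\to S_2$ whose only singularity is a $6$-pronged point $p_0$ (the marked point), at which $\widehat\psi$ rotates the prongs by the fractional part of $c(K)$; since $c(K)\notin\zz$ this rotation is nontrivial, i.e. $\widehat\psi$ permutes the six prongs at $p_0$ cyclically and nontrivially. The crucial geometric claim is that $p_0$ is the \emph{only} fixed point of $\widehat\psi$: by hypothesis $\psi$ has no fixed points in the interior of $S_2^1$; it has none on $\partial S_2^1$ because $\widehat\psi$ restricts there to a circle homeomorphism whose rotation number is the nonzero class $c(K)\bmod 1\in\rr/\zz$; and inside the capping disk the standard local model of a pseudo-Anosov near a rotated $k$-pronged singularity (expand one local foliation, contract the transverse one, then possibly rotate) has the singular point as its unique fixed point. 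It is precisely here that the hypothesis $c(K)\notin\zz$ is used.

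\textbf{Step 2: Lefschetz together with the Lanneau--Thiffeault index.} By Masur--Smillie the invariant foliations in the stratum $(6;\emptyset;\emptyset)$ are orientable, so the Lanneau--Thiffeault index computation applies at the fixed $6$-prong $p_0$. Since $\widehat\psi$ permutes those prongs nontrivially, the case in which every prong is fixed (index $1-k<0$) is ruled out, and in each remaining case of their dichotomy $\Ind(\widehat\psi,p_0)=1$. Feeding the single fixed point $p_0$ with index $1$ into the Lefschetz fixed point theorem on $S_2$ gives $2-\text{tr}(\widehat\psi_\ast)=1$, so $\text{tr}(\widehat\psi_\ast)=1$, where $\widehat\psi_\ast$ acts on $H_1(S_2;\zz)\cong\zz^4$. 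As an orientation-preserving homeomorphism of a closed surface $\widehat\psi$ preserves the intersection form, so $\widehat\psi_\ast\in\mathrm{Sp}(4,\zz)$ and its characteristic polynomial is reciprocal; combined with $\text{tr}(\widehat\psi_\ast)=1$ this forces $p(t)=t^4-t^3+bt^2-t+1$ for some $b\in\zz$. Because $Y$ is an integer homology sphere, $H_1(Y)=\coker(I-\widehat\psi_\ast)=0$, i.e. $\det(I-\widehat\psi_\ast)=p(1)=b=\pm1$. If $b=1$ then $p(t)=\Phi_{10}(t)$, all of whose roots are roots of unity, contradicting the fact (from orientability) that the dilatation $\lambda>1$ of $\psi$ is the spectral radius of $\widehat\psi_\ast$. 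Hence $b=-1$ and $p(t)=t^4-t^3-t^2-t+1$; by Lanneau--Thiffeault the largest root of this polynomial is exactly the minimal genus-two dilatation $\lambda_2$, and since $\lambda$ is the spectral radius of $\widehat\psi_\ast$ we conclude $\lambda=\lambda_2$.

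\textbf{Main obstacle.} The delicate part is Step 1: verifying that capping off introduces no spurious fixed points and that $p_0$ is genuinely prong-permuting, so that the Lanneau--Thiffeault dichotomy lands on index $1$ rather than $1-k$. Everything afterward is integer linear algebra over $\mathrm{Sp}(4,\zz)$ combined with the known arithmetic of $\lambda_2$, so I expect no further trouble there.
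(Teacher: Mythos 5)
Your proof is correct and follows essentially the same approach as the paper: cap off to $S_2$, invoke Masur--Smillie orientability, feed the uniquely fixed (and, thanks to $c(K)\notin\zz$, prong-permuting) $6$-prong singularity of index $1$ into Lefschetz to get $\text{tr}(\widehat\psi_\ast)=1$, then pin down the degree-four reciprocal characteristic polynomial using the homology-sphere condition and the fact that the dilatation must appear among its roots. The only cosmetic differences are that the paper phrases the arithmetic via the Alexander polynomial $\Delta_K$ (palindromic, $\Delta_K(1)=\pm1$) rather than via $\widehat\psi_\ast\in\mathrm{Sp}(4,\zz)$ and $\coker(I-\widehat\psi_\ast)=0$, and rules out $b=+1$ by noting $t^4-t^3+t^2-t+1$ has no real roots rather than identifying it as $\Phi_{10}$; these are equivalent observations.
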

\begin{proof}
Let $(S_2^1,h)$ be the open book decomposition of $Y$ associated to $K$, and suppose $\psi$ has no fixed points in the interior of $S$. Because $\psi$ has singularity type $(6;\emptyset;\emptyset)$ by assumption, we may cap-off $\psi$ to a pseudo-Anosov $\widehat{\psi}$ on $S_2$ and extend the foliations preserved by $\psi$ over the capping disk. For this stratum on $S_2$, Masur-Smillie (\cite{MS}) prove that the foliations preserved by $\widehat{\psi}$, and therefore by $\psi$, are necessarily orientable. We will use this fact to apply the Lefschetz fixed point theorem and determine completely the Alexander polynomial of $K$.

Because $K$ is fibered, the Alexander polynomial $\Delta_K$ is equal to the characteristic polynomial $\chi(\psi_*)$ of the action of $\psi$ on homology: $\Delta_K=\chi(\psi_*)$. Because $K$ is a genus-two fibered knot in an integer homology sphere, $\Delta_K$ is a monic, degree-four, palindromic polynomial satisfying $\Delta_K(1)=\pm1$. Moreover, because the fractional Dehn twist coefficient $c(h)\not\in\zz$ by assumption, we know $\widehat{\psi}$ rotates the separatrices of the 6-prong singularity $p$, so that $\Ind(\widehat{\psi},p)=1$ regardless of the sign of $\rho(\psi_*)$. And, because $p$ is the unique fixed point of $\widehat{\psi}$ by assumption, it follows from the Lefschetz fixed point theorem that $\text{tr}(\psi_*)=\text{tr}(\widehat{\psi}_*)=1$.

From the discussion above, we conclude that the coefficients of $t^4$ and $t^0$ in $\Delta_K(t)$ are 1, while the coefficients of $t^3$ and $t$ are $-\text{tr}(\psi_*)=-1$. Now, using the fact that $\Delta_K(1)=\pm1$, we see:
\[
\Delta_K(t) = t^4-t^3 \pm t^2-t+1.
\]
\noindent Because the foliations preserved by $\psi$ are orientable, the dilatation $\lambda(\psi)$ is a root of $\Delta_K$. The polynomial $t^4-t^3+t^2-t+1$, however, has no real roots. We deduce:
\[
\Delta_K(t) = t^4-t^3-t^2-t+1.
\]

\noindent Finally, note that this polynomial has a single root $\lambda_2$ greater than $1$, which is the minimal dilatation achieved by any pseudo-Anosov on the genus-two surface, see e.g. \cite{LT}.
\end{proof}

We will need the following lemma to finish the proof of Proposition \ref{6case}:
\begin{lem}\label{capconj}
Let $h,h'\in\SMod(S_2^1)$ be the lifts of braids $\beta,\beta'$. Suppose that the capped-off maps $\widehat{h}$ and $\widehat{h'}$ on $\widehat{S}$ are conjugate in $\Mod(S_2)$. Then, $\beta$ is conjugate to $\Delta^{2k}\beta'$ for some $k\in\zz$.
\end{lem}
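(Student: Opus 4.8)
The plan is to track how the conjugacy relation between the capped-off maps $\widehat{h}$ and $\widehat{h'}$ on $S_2$ pulls back through the Birman--Hilden diagram to the five-punctured disk. First I would note that since $h, h' \in \SMod(S_2^1)$, the forget-and-cap composition sends them to the classes $\widehat{h}, \widehat{h'} \in \Mod(S_2)$, and under $\Theta_2 : \Mod(S_2) \to \Mod(S_{0,6})$ (surjective with kernel $\la\iota\ra$) they descend to spherical mapping classes $\overline{\beta}, \overline{\beta'}$ obtained from $\beta, \beta'$ by capping off $\del S_{0,5}^1$. The hypothesis that $\widehat{h}$ and $\widehat{h'}$ are conjugate in $\Mod(S_2)$ then gives, after applying $\Theta_2$, that $\overline{\beta}$ and $\overline{\beta'}$ are conjugate in $\Mod(S_{0,6})$ (one has to be slightly careful: a priori the conjugating element of $\Mod(S_2)$ need not be symmetric, but since $\Theta_2$ is surjective we may push it forward regardless, and conjugacy is preserved).

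Next I would invoke the description of lifts already set up in the excerpt: given the spherical class $\overline{\beta} = \overline{\beta'}$ (after replacing $\beta'$ by a conjugate so that $\overline{\beta} = \overline{\beta'}$ on the nose), the $\zz$-many lifts to $\Mod(S_{0,5}^1)$ are exactly $\{\Delta^{2k}\beta'\}_{k\in\zz}$, distinguished by fractional Dehn twist coefficient. Since $\beta$ is itself a lift of $\overline{\beta} = \overline{\beta'}$, it must equal one of these up to the ambiguity in choosing the conjugating element. The one genuine subtlety is the factor $\iota$: the conjugacy $\overline{\beta} \sim \overline{\beta'}$ only determines $\beta$ up to a lift of a conjugate of $\overline{\beta'}$, and lifting a conjugating spherical class $\overline{g}$ to $\SMod(S_2^1)$ and then projecting to $\Mod(S_{0,5}^1)$ is well-defined only up to the image of $\ker\Theta_2 = \la\iota\ra$. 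But the excerpt records precisely the fact we need here: $\iota$ lifts (downstairs) to $\Delta^2$, i.e. the nontrivial kernel element corresponds on the disk to multiplication by $\Delta^2$. So this ambiguity is absorbed into the $\Delta^{2k}$ family and does not produce anything new.

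Assembling these: conjugating $\beta'$ in $\Mod(S_{0,5}^1)$ by (a lift of) the spherical conjugating element brings it to a braid $\beta''$ with $\overline{\beta''} = \overline{\beta}$, hence $\beta'' = \Delta^{2k}\beta$ for some $k\in\zz$; rearranging gives $\beta$ conjugate to $\Delta^{-2k}\beta'$, which is the claim.

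I expect the main obstacle to be the careful bookkeeping in the second step: making precise the statement that ``lifting a conjugacy through $\Theta_2$ is well-defined modulo $\Delta^2$,'' i.e. verifying that when we lift the spherical conjugating class to $\SMod(S_2^1)$, conjugate, cap off, and read the result on $S_{0,5}^1$, the residual indeterminacy is exactly the $\Delta^{2\zz}$ ambiguity and the hyperelliptic involution does not obstruct anything (it is absorbed since its disk-level image is $\Delta^2$). Everything else is a diagram chase through the Birman--Hilden square together with the classification of lifts already quoted in Section \ref{BHsection}.
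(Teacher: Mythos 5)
Your proposal is correct and follows essentially the same argument as the paper: apply $\Theta_2$ to transport the conjugacy from $\Mod(S_2)$ down to $\Mod(S_{0,6})$, then use the fact that any two braid lifts of conjugate spherical classes differ by a power of $\Delta^2$. The only cosmetic difference is that you attribute the descent of the conjugating class to surjectivity of $\Theta_2$, whereas the paper (more precisely) attributes it to the centrality of $\iota$ in $\Mod(S_2)$---this is what makes $\Theta_2$ a homomorphism defined on all of $\Mod(S_2)$ rather than only on symmetric classes, which is the real reason your ``the conjugating element need not be symmetric'' worry is harmless.
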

\begin{proof}
Because $\widehat{h}$ and $\widehat{h'}$ are conjugate in $\Mod(S_2)$, and the hyperelliptic involution $\iota$ on $S_2$ is in the center of $\Mod(S_2)$, the conjugating mapping class in $\Mod(S_2)$ descends to the spherical mapping class group $\Mod(S_{0,6})$. It follows that $\beta$ and $\beta'$ are conjugate after capping-off to $\Mod(S_{0,6})$. In particular, $\beta$ is conjugate to $\Delta^{2k}\beta'$ for some $k\in\zz$.
\end{proof}
Though we will not need the following corollary for our purposes, it follows quickly from Lemma \ref{capconj} and we believe it to be helpful in many other contexts, as well.
\begin{cor}\label{liftconj}
Let $h,h'\in\SMod(S_g^r)$ be the lifts of braids $\beta,\beta'$, for $g,r\in\{1,2\}$. Then, $h$ and $h'$ are conjugate in $\Mod(S_g^r)$ if and only if $\beta$ and $\beta'$ are conjugate as braids.
\end{cor}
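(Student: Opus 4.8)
The plan is to bootstrap from Lemma~\ref{capconj}. That lemma nearly reduces conjugacy of the lifts $h,h'$ to conjugacy of the braids $\beta,\beta'$, leaving only an ambiguity by a power of $\Delta^2$, and I would remove that last ambiguity using the fractional Dehn twist coefficient. First, the easy direction: since $r\ge 1$ in every case at hand, the Birman--Hilden correspondence furnishes an isomorphism $\Theta\colon\SMod(S_g^r)\xrightarrow{\ \sim\ }\Mod(S_{0,2g+r}^1)$ onto the braid group, under which $h=\Theta^{-1}(\beta)$ and $h'=\Theta^{-1}(\beta')$ are the unique lifts. Hence if $\gamma\beta\gamma^{-1}=\beta'$ as braids, then $\Theta^{-1}(\gamma)\in\SMod(S_g^r)\subseteq\Mod(S_g^r)$ conjugates $h$ to $h'$.

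For the converse, suppose $h\sim h'$ in $\Mod(S_g^r)$. Capping off the boundary is a homomorphism, so the capped-off maps $\widehat h,\widehat{h'}$ are conjugate in $\Mod(S_g)$; as $g\in\{1,2\}$ the hyperelliptic involution is central in $\Mod(S_g)$, so Lemma~\ref{capconj} (when $(g,r)=(2,1)$), together with its evident analogue for the remaining cases, which is proved identically, shows that $\beta$ is conjugate as a braid to $\Delta^{2k}\beta'$ for some $k\in\zz$, where $\Delta^2$ is the full twist about the braid axis. Set $\zeta:=\Theta^{-1}(\Delta^2)$; this is a central element of $\SMod(S_g^r)$, namely the hyperelliptic lift described in Section~\ref{background}, which satisfies $\zeta^2=D_{\del S}$ when $r=1$. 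Applying $\Theta^{-1}$ to $\beta\sim\Delta^{2k}\beta'$ shows $h$ is conjugate in $\SMod(S_g^r)$, hence in $\Mod(S_g^r)$, to $\zeta^k h'$; combined with $h\sim h'$ this gives $\zeta^k h'\sim h'$ in $\Mod(S_g^r)$.

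It remains to force $k=0$, which I would do with the fractional Dehn twist coefficient. It is conjugation-invariant (Theorem~\ref{fdtcthm}), so $c(\zeta^k h')=c(h')$. On the other hand $\zeta$ is central, hence commutes with $h'$, so $(\zeta^k h')^2=\zeta^{2k}(h')^2=D_{\del S}^{\,k}(h')^2$, and Theorem~\ref{fdtcthm} then gives
\[
2\,c(\zeta^k h')=c\!\left(D_{\del S}^{\,k}(h')^2\right)=k+2\,c(h'),
\]
so that $c(\zeta^k h')=\tfrac{k}{2}+c(h')$. Comparing the two computations forces $k=0$, and therefore $\beta$ and $\beta'$ are conjugate as braids.

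The one step I expect to demand real care is the fractional Dehn twist bookkeeping in the last two paragraphs: pinning down the precise relation between $\zeta$ and the boundary Dehn twists in each of the cases $S_1^1,S_1^2,S_2^1,S_2^2$ (for $r=1$ one has $\zeta^2=D_{\del S}$ as above, whereas for $r=2$ one should instead take $\zeta$ to be the product of the two boundary twists and read the identities of Theorem~\ref{fdtcthm} componentwise on $\del S$), and likewise verifying that Lemma~\ref{capconj} and its proof carry over to $S_1^1$, $S_1^2$ and $S_2^2$. Both points reduce to standard facts about the hyperelliptic Birman--Hilden correspondence and the centrality of the hyperelliptic involution in $\Mod(S_g)$ for $g\le 2$; granting these, the remainder of the argument is purely formal.
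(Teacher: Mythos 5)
Your proof is correct and follows the same overall architecture as the paper's: the forward direction lifts the conjugating braid via the Birman--Hilden isomorphism, and the converse caps off to $\Mod(S_g)$, invokes Lemma~\ref{capconj} to get $\beta\sim\Delta^{2k}\beta'$, and uses the fractional Dehn twist coefficient to force $k=0$. The one genuine (but minor) difference is in the last step. The paper asserts $c(\beta)=2c(h)$ directly and then compares $c(\beta)=c(\beta')$ against $c(\Delta^{2k}\beta')=c(\beta')+k$. You instead pull everything upstairs: writing $\zeta=\Theta^{-1}(\Delta^2)$, you note $\zeta$ is central in $\SMod(S_g^r)$ with $\zeta^2=D_{\del S}$, so $(\zeta^k h')^2=D_{\del S}^k(h')^2$ gives $c(\zeta^k h')=\tfrac k2+c(h')$, contradicting conjugation-invariance unless $k=0$. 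This is equivalent, and in fact your computation is the derivation of the identity $c(\beta)=2c(h)$ that the paper uses without justification, so your version is a bit more self-contained even if slightly longer. Your closing caveat about the $r=2$ cases is well-taken: there $\zeta$ lifts $\Delta^2$ but $\zeta^2$ is the product of the two boundary twists, and one must track an FDTC per boundary component; the paper's ``minor adjustments'' elides exactly this bookkeeping, and your remark identifies the right issue.
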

\begin{proof}
For simplicity, suppose $g=2$ and $r=1$, though the same proof works for the other cases, with minor adjustments. If $\beta$ and $\beta'$ are conjugate, it is clear that $h$ and $h'$ are conjugate, too: we may simply lift the conjugating map to $S_2^1$. On the other hand, suppose $h$ and $h'$ are conjugate in $\Mod(S_2^1)$. It follows that the capped-off maps $\widehat{h}$ and $\widehat{h'}$ are conjugate in $\Mod(S_2)$. Lemma \ref{capconj} now implies that $\beta$ is conjugate to $\Delta^{2k}\beta'$ for some $k\in\zz$. Because $h$ and $h'$ are conjugate in $\Mod(S_2^1)$, we know that $c(h)=c(h')$ (see Theorem \ref{fdtcthm}). It follows that $c(\beta)=2c(h)=2c(h')=c(\beta')$, whereas $c(\Delta^{2k}\beta')=c(\beta')+k$, so we must have that $\beta$ and $\beta'$ are conjugate as braids.
\end{proof}

We need one last result before turning to the proof of Theorem \ref{6case}.

\begin{prop}\label{6braid}
Let $(S_2^1,h)$ be an open book decomposition with $c(h)\not\in\zz$, such that $h$ is symmetric and freely isotopic to a pseudo-Anosov $\psi$ with singularity type $(6;\emptyset;\emptyset)$ and dilatation $\lambda(\psi)=\lambda_2$. Then, $(S_2^1,h)$ is not an open book decomposition for $S^3$.
\end{prop}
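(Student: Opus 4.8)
The plan is to argue by contradiction: I would suppose $(S_2^1,h)$ is an open book decomposition for $S^3$, so that $K=\del S_2^1$ is a fibered genus-two knot in $S^3$ with monodromy $h$, and—$h$ being symmetric—$S^3$ is the double cover of $S^3$ branched along $\widehat\beta$, forcing $\widehat\beta$ to be the unknot (see Subsection \ref{BHsection}); then I would show this is impossible. The first task is to recover $\Delta_K$. Capping off $\psi$ and forgetting the resulting marked point gives a pseudo-Anosov $\widehat\psi$ on the closed surface $S_2$ with a single $6$-prong singularity $p$ (the image of $\del S_2^1$), dilatation $\lambda_2$, and—by Masur--Smillie \cite{MS}—orientable invariant foliations. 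Then $\widehat\psi$ either preserves or reverses the transverse orientation of these foliations, so its leading homological eigenvalue is $\rho(\widehat\psi_\ast)=\pm\lambda_2$, and $\pm\lambda_2$ is a root of $\chi(\widehat\psi_\ast)=\Delta_K$. Since $K$ is a fibered genus-two knot in $S^3$, the polynomial $\Delta_K$ is monic and palindromic of degree four with $\Delta_K(1)=\pm1$; as $\lambda_2$ has minimal polynomial $t^4-t^3-t^2-t+1$ while $-\lambda_2$ has minimal polynomial $t^4+t^3-t^2+t+1$ (taking the value $3$ at $t=1$), the only possibility is $\Delta_K(t)=t^4-t^3-t^2-t+1$, and $\widehat\psi$ must \emph{preserve} the transverse orientation. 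An orientation-preserving rotation of a $6$-prong singularity is by an even number of prongs, nonzero since $c(h)\notin\zz$; hence the fractional part of $c(h)$ is $\tfrac13$ or $\tfrac23$, and $c(\beta)=2c(h)\equiv\pm\tfrac23\pmod{2\zz}$.

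Next I would pin down $\widehat\psi$ itself. It realizes the minimal genus-two dilatation $\lambda_2$, has orientable invariant foliations, and acts on $H_1$ with leading eigenvalue $+\lambda_2$; by the classification of Lanneau--Thiffeault \cite{LT} it is therefore conjugate in $\Mod(S_2)$ to $\widehat{\psi_\alpha}^{\pm1}$, where $\psi_\alpha$ is the pseudo-Anosov representative of the lift $h_\alpha\in\SMod(S_2^1)$ of the dilatation-minimizing $5$-braid $\alpha$ of \cite{HS} (the residual ``hyperelliptic'' ambiguity in \cite{LT} reverses the transverse orientation and is ruled out by the previous paragraph, and is in any case invisible in $\Mod(S_{0,6})$). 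Since the hyperelliptic involution is central in $\Mod(S_2)$, Lemma \ref{capconj} then shows $\beta$ is conjugate in $\Mod(S_{0,5}^1)$ to $\Delta^{2k}\alpha^{\pm1}$ for some $k\in\zz$, so that $c(\beta)=k\pm c(\alpha)$ by Theorem \ref{fdtcthm}.

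To conclude, I would compute $c(\alpha)$ by a direct analysis on the standard invariant train track for $\alpha$; comparison with the previous paragraphs shows $c(\alpha)>0$ (as $\alpha$ is a positive pseudo-Anosov braid) and $c(\alpha)\equiv\pm\tfrac23\pmod\zz$. Since $\widehat\beta$ is the unknot, Theorem \ref{ikthm} forces $|c(\beta)|=|k\pm c(\alpha)|<1$, and intersecting this with $c(\beta)\equiv\pm\tfrac23\pmod{2\zz}$ leaves only a handful of values of $(k,\pm)$. Thus $\beta$ is conjugate in $\Mod(S_{0,5}^1)$ to one of a short explicit list of $5$-braids (for instance $\alpha$ or $\alpha^{-1}$), and for each such braid $\gamma$ I would verify directly that $\widehat\gamma$ is not the unknot—e.g.\ by computing the Alexander polynomial of the closure $\widehat\gamma$ (via the reduced Burau representation, in which the full twist $\Delta^2$ acts as a scalar), or by recognizing $\widehat\gamma$ as a specific nontrivial knot or link—contradicting Theorem \ref{ikthm}. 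This contradiction shows $(S_2^1,h)$ cannot be an open book for $S^3$.

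The step I expect to be the main obstacle is the one sketched in the second paragraph: extracting from Lanneau--Thiffeault \cite{LT} the precise ``almost uniqueness'' of the minimal genus-two pseudo-Anosov, reconciling it with the braid $\alpha$ of \cite{HS}, and carefully tracking the residual symmetries (inverse and the hyperelliptic involution, the latter being eliminated by the orientability analysis above). Once the list of candidate braids is made explicit, the concluding step is a finite and routine computation.
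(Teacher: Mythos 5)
Your proposal follows essentially the same route as the paper: cap off to a pseudo-Anosov on the closed genus-two surface, invoke the Lanneau--Thiffeault classification of minimal-dilatation pseudo-Anosovs in the stratum $(\emptyset;\emptyset;6)$ to identify $\widehat{\psi}$ with $\widehat{\psi}_A^{\pm 1}$ up to the hyperelliptic involution, then use Lemma~\ref{capconj} to conclude that $\beta$ is conjugate as a braid to $\Delta^{2k}\alpha^{\pm 1}$, apply Ito--Kawamuro (Theorem~\ref{ikthm}) to restrict $k$ to finitely many values, and rule out the remaining candidates by hand. The paper reaches the finite list more directly by observing that $\alpha$ is a positive pseudo-Anosov braid while $\Delta^{-2}\alpha$ is a negative one (giving $0<c(\alpha)<1$ without computation) and then halving the work via amphicheirality of the unknot, whereas you re-derive the orientability/Alexander-polynomial constraints already used in Proposition~\ref{6dil} to pin down the fractional part of $c(\beta)$ and plan to compute $c(\alpha)$ on its train track; and for the final exclusion the paper uses self-linking numbers where you propose Alexander polynomials or recognition. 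These are presentational rather than substantive differences, and your outline is sound.
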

\begin{proof}
Because $\psi$ has singularity type $(6;\emptyset;\emptyset)$, we may cap-off $\psi$ to a pseudo-Anosov on $S_2$ with singularity type $(\emptyset;\emptyset;6)$. Lanneau and Thiffeault \cite{LT} show that the pseudo-Anosov on $S_2$ with foliation type $(\emptyset;\emptyset;6)$ and dilatation $\lambda_2$ is unique, up to conjugacy in $\Mod(S_2)$, inverse, and composition with the hyperelliptic involution $\iota$ on $S_2$. Note that the pseudo-Anosov representative $\psi_\alpha$ of the 5-braid $\alpha=\sigma_1\sigma_2\sigma_3\sigma_4\sigma_1\sigma_2$ studied by Ham--Song in \cite{HS} achieves dilatation $\lambda(\psi_\alpha)=\lambda_2$. In particular, the pseudo-Anosov representative $\psi_{A}$ of the lift $A$ of $\alpha$ to $S_2^1$ achieves minimal dilatation $\lambda(\psi_A)=\lambda_2$ with the proper singularity type. It follows that $\widehat{\psi}$ is conjugate in $\text{Mod}(S_2)$ to one of $\widehat{\psi}_A^{\pm1}$ or $\widehat{\psi}_A^{\pm1}\circ\iota$. This further implies that $\widehat{h}$ is conjugate in $\Mod(S_2)$ to one of $\widehat{A}^{\pm1}$ or $\widehat{A}^{\pm1}\circ\iota$.

Because $\iota$ is freely isotopic to the lift of the 5-braid $\Delta^2$ (as described in subsection \ref{BHsection}), $A^{\pm1}\circ\iota$ is freely isotopic to the lift of $\Delta^2\alpha^{\pm1}$. Since $h$ is symmetric by assumption, it is the lift of a braid $\beta$. $A$ is symmetric by construction, so Lemma \ref{capconj} implies that $\beta$ is conjugate as a braid to $\Delta^{2k}\alpha^{\pm1}$ for some $k\in\zz$. In particular, if $(S_2^1,h)$ is an open book decomposition for $S^3$, we can see that $\Delta^{2k}\alpha^{\pm1}$ has unknotted closure for some $k\in\zz$. Moreover, note that the closure of $\Delta^{2k}\alpha$ is unknotted if and only if the closure of $\Delta^{-2k}\alpha^{-1}$ is also unknotted, because the unknot is amphicheiral.

By Theorem \ref{ikthm}, if $\Delta^{2k}\alpha^{\pm1}$ has unknotted closure, we must have $|c(\Delta^{2k}\alpha ^{\pm1})|<1$. We may deduce that $0<c(\alpha)<1$ because $\alpha$ is a positive pseudo-Anosov braid, and $\Delta^{-2}\alpha$ is a negative pseudo-Anosov braid (see Theorem \ref{fdtcthm}). In particular, $k<c(\Delta^{2k}\alpha)< k+1$. So, it suffices to simply check that $\alpha$ and $\Delta^2\alpha^{-1}$ do not have unknotted closure. One may see this in a number of ways--- for example by appealing to the self-linking number: the maximal self-linking number of the unknot is $-1$, but the self-linking numbers $sl(\alpha)=1$ and $sl(\Delta^2\alpha^{-1})=9$ are both positive.
\end{proof}
\begin{proof}[Proof of Theorem \ref{6case}]
Let $K$, $h$, and $\psi$ be as in the statement of the theorem. Recall that since $K$ is fixed point free by assumption, $h$ is symmetric (see Theorem \ref{bhsthm}). Since $(S_2^1,h)$ is an open book decomposition for $S^3$, $|c(h)|\leq 1/2$ (see e.g. \cite{KR}). So if $c(h)\neq 0$ then $c(h)\not\in\zz$. Proposition \ref{6dil} then implies that the dilatation of $\psi$ is $\lambda(\psi)=\lambda_2$, but this contradicts Proposition \ref{6braid}.
\end{proof}
\begin{rem}
Note that our argument does not show that \emph{no} hyperbolic, genus-two, fibered knot in $S^3$ has the Alexander polynomial $t^4-t^3-t^2-t+1$ from the proof of Proposition \ref{6dil}. Rather, any such knot cannot have singularity type $(6;\emptyset;\emptyset)$. Indeed, the knots $11n_{38}$ and $11n_{102}$ on KnotInfo \cite{knotinfo} are genus-two fibered hyperbolic knots with the given Alexander polynomial. But, their singularity types are $(1;\emptyset;3^5)$ and $(2;\emptyset;3^4)$, respectively, so the foliations preserved by their pseudo-Anosov representatives are not orientable. One may check that their dilatations are 1.916... and 2.751..., respectively, both of which are larger than $\lambda_2=1.722...$, and neither of which are roots of the given Alexander polynomial.
\end{rem}

In the stratum $(6;\emptyset;\emptyset)$, we may additionally lift the assumption that $c(h)\neq 0$:
\begin{prop}\label{6case0prop}
Let $K$ be a hyperbolic, genus-two, fibered knot in $S^3$, with associated open book decomposition $(S_2^1,h)$. If $c(h)=0$ and the pseudo-Anosov representative $\psi$ of $h$ has singularity-type $(6;\emptyset;\emptyset)$, then $K$ is not fixed point free.
\end{prop}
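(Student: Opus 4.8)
The plan is to run the argument of Theorem \ref{6case} again, replacing the index computation of Proposition \ref{6dil} with the one appropriate to $c(h)=0$. Suppose toward a contradiction that $K$ is fixed point free with $c(h)=0$ and $\psi=\psi_h$ of singularity type $(6;\emptyset;\emptyset)$. By the discussion in subsection \ref{BHsection}, which invokes \cite{BHS} with no hypothesis on $c$, the map $h$ is symmetric and is the lift of a $5$-braid $\beta$ with unknotted closure; moreover $c(\beta)=2c(h)=0$, the braid $\psi_\beta$ has singularity type $(3;1^5;\emptyset)$, and $\lambda(\psi_\beta)=\lambda(\psi_h)$ since $\psi_h$ is the branched lift of $\psi_\beta$. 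Cap off $\psi$ to $\widehat\psi$ on $S_2$ with a single $6$-prong singularity $p$; since $\psi$ is fixed point free in the interior of $S_2^1$, the point $p$ is the only fixed point of $\widehat\psi$. By Masur--Smillie the foliations of $\widehat\psi$ are orientable, so $\Delta_K=\chi(\psi_\ast)$ is monic, reciprocal, of degree four with $\Delta_K(1)=\pm1$, and the leading eigenvalue $\rho(\psi_\ast)$ of $\psi_\ast$ on $H_1(S_2)$ equals $\pm\lambda(\psi_h)$ according to whether $\widehat\psi$ preserves or reverses the orientation of its foliations.

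First I would dispose of the case $\rho(\psi_\ast)<0$. Here the Lanneau--Thiffeault index calculation gives $\Ind(\widehat\psi,p)=1$, so the Lefschetz fixed point theorem forces $\text{tr}(\psi_\ast)=1$ and hence $\Delta_K(t)=t^4-t^3+bt^2-t+1$ with $b=\pm1$. But $\rho(\psi_\ast)<0$ means $-\lambda(\psi_h)<-1$ is a real root of $\Delta_K$, while $t^4-t^3+t^2-t+1=\Phi_{10}(t)$ has all its roots on the unit circle and $t^4-t^3-t^2-t+1$ has only the two positive real roots $\lambda_2^{\pm1}$; neither has a real root less than $-1$. So $\rho(\psi_\ast)>0$: the map $\widehat\psi$ preserves the orientation of its foliations, and since $c(h)=0$ it fixes each of the six prongs at $p$. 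Part (2a) of the Lanneau--Thiffeault calculation then gives $\Ind(\widehat\psi,p)=1-6=-5$, so the Lefschetz theorem yields $\text{tr}(\psi_\ast)=7$, and from $\Delta_K(1)=\pm1$ we obtain
\[
\Delta_K(t)=t^4-7t^3+bt^2-7t+1,\qquad b\in\{11,13\}.
\]
As $\rho(\psi_\ast)>0$, the dilatation $\lambda(\psi_\beta)=\lambda(\psi_h)$ is the leading root of $\Delta_K$, so it is one of two explicit quartic algebraic numbers, approximately $4.39$ (when $b=13$) and $5.11$ (when $b=11$).

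The remaining task — ruling out these two possibilities — is where the argument departs from Theorem \ref{6case} and is the step I expect to be the main obstacle. Unlike there, the dilatation is no longer the stratum‑minimal value $\lambda_2$, so Proposition \ref{6braid} together with the Lanneau--Thiffeault uniqueness statement does not apply directly. The plan is to exploit that $\psi_\beta$ lies in the fixed stratum $(3;1^5;\emptyset)$ with a determined dilatation: such a $\psi_\beta$ is carried by a train track on the five‑punctured disk whose number of real edges is bounded, so its transition matrix is a non‑negative integral matrix of bounded size whose Perron--Frobenius eigenvalue is the given $\lambda$, and the entries of such a matrix are then bounded as well; hence only finitely many conjugacy classes of $\psi_\beta$ are possible, and they can be enumerated. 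For each resulting candidate braid $\beta$ — which automatically satisfies $c(\beta)=0$, consistent with $|c(\beta)|<1$ from Theorem \ref{ikthm} — one then checks that $\widehat\beta$ is not the unknot, for instance by the self‑linking number obstruction used at the end of the proof of Proposition \ref{6braid}, since $\text{sl}(\widehat\beta)=w(\beta)-5$ and the unknot has maximal self‑linking number $-1$.

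An appealing alternative that would shorten the last step is to first establish a uniqueness statement, in the style of \cite{LT}, for pseudo-Anosovs on $S_2$ in the stratum $(\emptyset;\emptyset;6)$ realizing each of the two determined Alexander polynomials; this would reduce the problem to testing a single braid, up to inverse and composition with the hyperelliptic involution, in each case, exactly as in the proof of Proposition \ref{6braid}. Either way, the conceptual content is already contained in the first two paragraphs: the Masur--Smillie orientability, the Lefschetz/Lanneau--Thiffeault computation adapted to $c(h)=0$, and the elimination of $\text{tr}(\psi_\ast)=1$ on sign grounds; the finite case analysis for $\text{tr}(\psi_\ast)=7$ is the only genuinely new labor.
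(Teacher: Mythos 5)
The first half of your argument---computing $\text{tr}(\psi_*)=7$ via the Lanneau--Thiffeault index and the Lefschetz formula when $\rho(\psi_*)>0$, and dispatching $\rho(\psi_*)<0$ by observing that neither $\Phi_{10}(t)$ nor $t^4-t^3-t^2-t+1$ has a real root less than $-1$---is correct and is essentially what the paper does (the paper handles $\rho(\psi_*)<0$ by referring back to Theorem \ref{6case}, but your sign-based dismissal is a fine, arguably tighter, alternative). The problem is the endgame, which you yourself flag as ``the main obstacle'' and which is where the proposal has a genuine gap: the finiteness-and-enumeration scheme for pseudo-Anosovs of dilatation $\approx 4.39$ or $\approx 5.11$ in the stratum $(3;1^5;\emptyset)$ is neither carried out nor obviously tractable, and the alternative of establishing a Lanneau--Thiffeault-style uniqueness statement for these dilatations is a substantial unproved claim.

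The paper avoids all of this: once $\text{tr}(\widehat\psi_*)=7$ is known, it invokes a theorem of Rykken \cite{Ryk}, which says that for a pseudo-Anosov with orientable invariant foliations, every eigenvalue of $\widehat\psi_*$ that is not $0$ or a root of unity is also an eigenvalue (with multiplicity) of the train-track transition matrix $M$. Here $M$ is $8\times 8$ and $\widehat\psi_*$ is $4\times 4$, so the remaining four eigenvalues of $M$ are $0$ or roots of unity and therefore have modulus at most $1$; hence $\text{tr}(M)\geq\text{tr}(\widehat\psi_*)-4=3>0$. But by Theorem \ref{fpthm} (Los, Cotton--Clay) a fixed-point-free $\psi$ would force $\text{tr}(M)=0$, a contradiction. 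Note that this argument makes the entire Alexander-polynomial computation in your second paragraph superfluous: you only need $\text{tr}(\psi_*)=7$, not the two candidate polynomials, and you need no claim about braid closures or unknottedness. The moral is that in the $c(h)=0$ case the obstruction is purely dynamical (a lower bound on $\text{tr}(M)$ extracted directly from the Lefschetz number), whereas your proposal tries to route it back through topology (enumerating braids and testing for the unknot), which is much heavier and, as written, incomplete.
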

\begin{proof}
Suppose that $\psi$ has no fixed points in the interior of $S_2^1$. As in the proof of Theorem \ref{6case}, we may cap-off $\psi$ to a pseudo-Anosov $\widehat{\psi}$ on $S_2$ and extend the foliations preserved by $\psi$. Again, we have that these foliations are orientable. Note that if $\rho(\psi_*)<0$, then an argument identical to that of Theorem \ref{6case} will apply.

So, assuming that $\rho(\psi_*)>0$, the Lefschetz fixed point theorem then yields $\text{tr}(\widehat{\psi}_*)=2-(-5)=7$, because the unique fixed point $p$ given by the boundary 6-prong singularity is unrotated (since $c(h)\in\zz$). Consider the Markov matrix $M$ for a train track representative of $\widehat{\psi}$. It follows from a theorem of Rykken \cite{Ryk} that any eigenvalue of $\hat{\psi}_\ast$ is also an eigenvalue of $M$ (counting multiplicity) except for possibly eigenvalues of $0$ or roots of unity. Note here that a train track representative of $\widehat{\psi}$ has $8$ real edges, so that $M$ is an $8\times 8$ matrix, while $\widehat{\psi}_*$ is $4\times 4$. In particular, $M$ has at most four more eigenvalues than $\widehat{\psi}_*$, and each has absolute value at most one. Hence:
\[
\text{tr}(M) \geq \text{tr}(\widehat{\psi}_\ast) -4=7-4=3.
\]
On the other hand, a well-chosen train track carrying $\widehat{\psi}$ also carries $\psi$. In particular, by Theorem \ref{fpthm}, we can see that $\text{tr}(M)=0$ because $\psi$ is assumed to be fixed point free in the interior of $S$, which is a contradiction.
\end{proof}
\section{The case with singularity type $(4;\emptyset;3^2)$}\label{221section}

\begin{figure}[!ht]
    \centering
\begingroup%
  \makeatletter%
  \providecommand\color[2][]{%
    \errmessage{(Inkscape) Color is used for the text in Inkscape, but the package 'color.sty' is not loaded}%
    \renewcommand\color[2][]{}%
  }%
  \providecommand\transparent[1]{%
    \errmessage{(Inkscape) Transparency is used (non-zero) for the text in Inkscape, but the package 'transparent.sty' is not loaded}%
    \renewcommand\transparent[1]{}%
  }%
  \providecommand\rotatebox[2]{#2}%
  \newcommand*\fsize{\dimexpr\f@size pt\relax}%
  \newcommand*\lineheight[1]{\fontsize{\fsize}{#1\fsize}\selectfont}%
  \ifx\svgwidth\undefined%
    \setlength{\unitlength}{311.37000275bp}%
    \ifx\svgscale\undefined%
      \relax%
    \else%
      \setlength{\unitlength}{\unitlength * \real{\svgscale}}%
    \fi%
  \else%
    \setlength{\unitlength}{\svgwidth}%
  \fi%
  \global\let\svgwidth\undefined%
  \global\let\svgscale\undefined%
  \makeatother%
  \begin{picture}(1,0.35721168)%
    \lineheight{1}%
    \setlength\tabcolsep{0pt}%
    \put(0,0){\includegraphics[width=\unitlength,page=1]{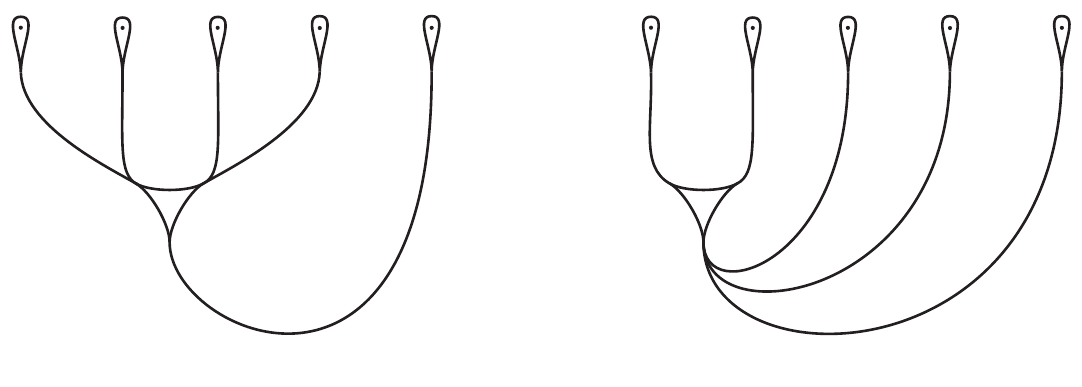}}%
    \put(0.11166779,0.01209173){\color[rgb]{0.1372549,0.12156863,0.1254902}\makebox(0,0)[lt]{\lineheight{1.25}\smash{\begin{tabular}[t]{l}The  Peacock\end{tabular}}}}%
    \put(0.72588881,0.01209173){\color[rgb]{0.1372549,0.12156863,0.1254902}\makebox(0,0)[lt]{\lineheight{1.25}\smash{\begin{tabular}[t]{l}The Snail\end{tabular}}}}%
  \end{picture}%
\endgroup%

    \caption{The two train track classes in the stratum $(2;1^5;3)$ with no joints.}
    \label{fig:TT5}
\end{figure}

The goal of this section is to prove the following theorem, which will resolve Case 2 from the outline in subsection \ref{outline}:

\begin{subthm}\label{221thm}
Let $K$ be a genus-two, fibered, hyperbolic knot in $S^3$ with associated open book decomposition $(S,h)$. If the pseudo-Anosov representative $\psi$ of $h$ has singularity type $(4;\emptyset;3^2)$, then $K$ is not fixed point free.
\end{subthm}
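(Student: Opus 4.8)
The strategy mirrors Case 1 at the end: pass to the braid side, pin down the possible combinatorics, and close up with a fractional-Dehn-twist obstruction. Since $K$ is fixed point free by hypothesis, Theorem \ref{bhsthm} tells us that $h$ is symmetric, hence the lift of a $5$-braid $\beta$ whose pseudo-Anosov representative $\psi_\beta$ lies in the stratum $(2;1^5;3)$, and that $\widehat\beta$ is the unknot; our job is to derive a contradiction. The first step is to control the train track carrying $\psi_\beta$. By Theorem \ref{carrythm2} (see also Theorem \ref{carrythm}) we may assume $\psi_\beta$ is carried by a standardly embedded track with no joints, and among such tracks on the five-punctured disk in this stratum there are only two classes, the Peacock and the Snail of Figure \ref{fig:TT5}. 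Using the theory of tight splitting from Section \ref{sec:split} — which lets one enumerate the possible splittings of an \emph{arbitrary} train track map supported on a given track within the stratum — one shows these two tracks are related through splitting, so one may work with a single canonical track $\tau$; this is the content of Theorem \ref{thm:221}.

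Next I would lift $\tau$ to a track $\tilde\tau$ on $S_2^1$ carrying $\psi_h$, as in Section \ref{liftsec}, so that the lifted map $\tilde f$ is determined by $f\colon\tau\to\tau$ up to the binary choice discussed there. The key constraint now comes from Theorem \ref{fpthm}: since $\psi_h$ is fixed point free in the interior of $S_2^1$, the transition matrix $\tilde M$ of $\tilde f$ satisfies $\mathrm{tr}(\tilde M)=0$. Combining this with the combinatorial restrictions on which train paths $f(e)$ can occur (extracted from the tight-splitting analysis) one cuts the a priori huge collection of Perron--Frobenius maps on $\tau$ down to an explicit one-parameter family $f_n\colon\tau\to\tau$; these are the only candidate maps whose lift can be a train track map for a fixed-point-free $\psi_h$. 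In subsection \ref{unknotsubsection} one exhibits braids $\beta_n$ inducing the maps $f_n$, and by Remark \ref{uniquerem} the pseudo-Anosov inducing a given train track map is unique up to conjugacy in $\Mod(\widehat S)$. Hence $\beta$ is conjugate to $\beta_n$ for some $n$ after capping off to $\Mod(S_{0,6})$, and therefore (lifting, and using Lemma \ref{capconj}) $\beta$ is conjugate as a braid to $\Delta^{2k}\beta_n$ for some $k\in\zz$.

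The final step is to show that no such braid has unknotted closure. By Theorem \ref{ikthm}, an unknotted closure forces $|c(\Delta^{2k}\beta_n)|<1$; since the $\beta_n$ can be taken with controlled fractional Dehn twist coefficient (for instance, exploiting positivity as in Theorem \ref{fdtcthm}, exactly as $\alpha$ is handled in Proposition \ref{6braid}), this pins $k$ down to finitely many — essentially one — value(s), and the remaining finitely many braids are ruled out by a classical invariant such as the self-linking number, whose value is positive while the unknot has maximal self-linking number $-1$. This contradicts $\widehat\beta$ being the unknot and finishes the proof.

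I expect the main obstacle to be the middle step: proving that \emph{every} train track map supported on $\tau$ that is compatible with a fixed-point-free lift reduces to the family $f_n$. This is where the tight-splitting formalism is essential, since one must control arbitrarily long splitting sequences of train track maps in the stratum, together with a delicate case analysis of how the images of the real edges can wrap around the interior $3$-prong singularity and the five punctures while maintaining $\mathrm{tr}(\tilde M)=0$. By comparison, the reduction to $\tau$ and the concluding unknot-detection argument are more routine, the latter being essentially a rerun of the end of the proof of Proposition \ref{6braid}.
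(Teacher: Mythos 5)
Your proposal matches the paper's proof in structure: reduce to a single canonical track (the Peacock, via Theorem \ref{thm:221}), lift and impose the trace-zero constraint from Theorem \ref{fpthm} to narrow the candidate train track maps to the family $f_n$, identify the corresponding braids $\beta_n$ (Proposition \ref{braidttprop}), and rule out unknotted closures by fractional Dehn twist coefficient bounds (Proposition \ref{braidprop}). One correction of emphasis: the paper does \emph{not} use tight splitting in the ``middle step'' you flag as hardest — tight splitting is confined to proving Theorem \ref{thm:221} — while the enumeration of maps carried by the Peacock (Proposition \ref{ttprop}) is a direct combinatorial case analysis resting on the Trace Lemma (Lemma \ref{tracelem}), an absorption argument (Lemma \ref{lem:absorb}), and a horizontal-symmetry reduction (which is why the conclusion is conjugacy to $\Delta^{2k}\beta_n^{\pm 1}$, not just $\Delta^{2k}\beta_n$); the closing unknot obstruction is also obtained by identifying $\widehat{\beta_n}=T(2,n+7)$ and $\widehat{\Delta^2\beta_n^{-1}}=P(3,3-n,-2)$ explicitly rather than via self-linking, though either route suffices.
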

\begin{rem}
Note that we do not need to require $c(h)\neq 0$ here. Between this remark and Proposition \ref{6case0prop}, one may wonder why the assumption $c(h)\neq 0$ is necessary in the statement of Theorem \ref{fpfthm}: it is purely to avoid additional singularity cases. In particular, if $c(h)=0$ then the boundary singularity may be $1$-pronged or $2$-pronged in Theorem \ref{bhsthm}.
\end{rem}

The first step in proving Theorem \ref{221thm} is to observe the following consequence of Theorem \ref{carrythm2}, which we prove at the end of Section \ref{sec:split}:

\begin{thm}\label{thm:221}
Let $\psi$ be a pseudo-Anosov on $S_{0,5}^1$ with singularity type $(2;1^5;3)$. Then $\psi$ is conjugate to a pseudo-Anosov carried by the Peacock train track shown in Figure \ref{fig:TT5}.
\end{thm}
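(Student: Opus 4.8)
The plan is to deduce Theorem \ref{thm:221} from the more general Theorem \ref{carrythm2}, together with a finite enumeration of the standardly embedded train tracks with no joints in the stratum $(2;1^5;3)$. First I would invoke Theorem \ref{carrythm2}: any pseudo-Anosov $\psi$ on $S_{0,5}^1$ having a $k$-pronged interior singularity with $k \geq 2$ — here the single $3$-pronged singularity — is carried by a standardly embedded train track $\tau$ with no joints. So the problem reduces to classifying, up to the action of the mapping class group, the standardly embedded joint-less train tracks $\tau$ on the five-punctured disk whose complementary regions realize the singularity data $(2;1^5;3)$: five once-punctured monogons (the five $1$-prongs at the punctures), one trigon (the interior $3$-prong), and one exterior once-punctured bigon (the $2$-pronged boundary). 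The claim is that there are exactly two such tracks up to isotopy and homeomorphism, namely the Peacock and the Snail of Figure \ref{fig:TT5}, and that moreover every pseudo-Anosov carried by the Snail is conjugate to one carried by the Peacock.

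The main combinatorial step is the enumeration. A standardly embedded joint-less track is built from real (expanding) edges together with infinitesimal polygons and loop switches around the $1$-prong punctures; the condition ``no joints'' means each loop switch meets exactly one expanding edge. Using an Euler-characteristic/counting argument — relating the number of real edges, switches, and complementary cusps to the fixed singularity type via $\chi(S_{0,5}^1) = -4$ and Proposition \ref{Prop:track} — I would pin down the number of real edges and the combinatorics of how they attach to the five peripheral loops and to the infinitesimal trigon. This is a bounded search: with five $1$-prong loops, one trigon, and one exterior bigon, only finitely many attaching patterns are consistent with $\tau$ being bivalent-at-switches, filling, and recurrent, and one checks by hand that these organize into exactly two homeomorphism classes. (Here I would lean on whatever normal-form or uniqueness statement for standardly embedded tracks is established in Section \ref{sec:split}, so that the enumeration is genuinely finite and not merely heuristic.)

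The final step is to eliminate the Snail: I would show that any pseudo-Anosov carried by the Snail is conjugate to one carried by the Peacock. The natural way to do this is to exhibit an explicit splitting or folding sequence — a change of coordinates within the splitting/folding calculus of Section \ref{sec:split} — taking the Snail to the Peacock, or else to observe that the two tracks are carried by one another (a ``common refinement'' argument), so that any $\psi$ carried by one is carried by the other after conjugation. Either way, the upshot is that the Peacock alone suffices.

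The step I expect to be the main obstacle is the enumeration: verifying that the list of joint-less standardly embedded tracks in this stratum really is complete, with no sporadic extra track, requires care about which attaching patterns yield a filling recurrent track with the correct complementary regions, and it is easy to miss a case or to double-count homeomorphic configurations. The Snail-to-Peacock reduction is secondary but also needs an honest argument rather than a picture; if a direct splitting sequence is awkward, the cleanest route is to show the Snail is carried by the Peacock and invoke transitivity of ``is carried by'' together with the uniqueness of the pseudo-Anosov inducing a given train track map (Remark \ref{uniquerem}).
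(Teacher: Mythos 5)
Your overall structure matches the paper's proof: both reduce via Theorem~\ref{carrythm2} to joint-less standardly embedded tracks, both observe that in the stratum $(2;1^5;3)$ there are exactly two such tracks up to homeomorphism (the Peacock and the Snail), and both then reduce the Snail case to the Peacock case. The enumeration step that you flag as the ``main obstacle'' is treated as essentially immediate in the paper (``Note that\dots there are only two classes''), and your instinct that it deserves a check is fine but not where the real content lies.

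The place where your sketch is genuinely incomplete is the Snail-to-Peacock step, and one of the two routes you propose for it would not work. The ``common refinement'' route---observing that the Snail and Peacock carry each other and concluding that any $\psi$ carried by one is conjugate to one carried by the other---is not sound: it is not clear that the Snail and the Peacock carry one another (they have different attaching combinatorics), and even mutual carrying does not by itself produce an invariant track of the other type for a given $\psi$. Your other route, ``an explicit splitting sequence,'' is closer to what the paper does, but you should realize there is no single universal sequence: the paper's argument is that for \emph{each} pseudo-Anosov $\psi$ carried by the Snail, Corollary~\ref{cor:maxsplit} guarantees the unique valence-3 switch at the infinitesimal triangle is tightly splittable, and after one split one lands on a track $\tau'$ with a two-valent puncture switch $v$; Corollary~\ref{cor:maxsplit} again applies at $v$, and each further split either reaches the Peacock or strictly decreases the transverse weight of the edge joining two punctures (Corollary~\ref{cor:SplitPos}). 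This weight-decrease is what guarantees termination. Without that monotonicity argument, ``split until you hit the Peacock'' is only a heuristic, and this is precisely the part of the tight-splitting machinery of Section~\ref{sec:split} that your proposal leans on but does not actually invoke.
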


Now, to prove Theorem \ref{221thm}, it suffices by Theorem \ref{thm:221} to look at pseudo-Anosovs carried by the lift of the Peacock track. See Figure \ref{liftedtt} for an image of the lifted track. We will perform a careful analysis of train track maps on this track, together with topological arguments to study a family of braids $\beta_n$ inducing a special collection of train track maps. We present the relevant family of braids $\beta_n$ and their corresponding train track maps in subsection \ref{unknotsubsection}. Then, in subsection \ref{traintracksubsection}, we study train track maps on the Peacock.

\subsection{A family of braids lifting to fixed-point-free maps}\label{unknotsubsection}

\begin{figure}
    \centering
    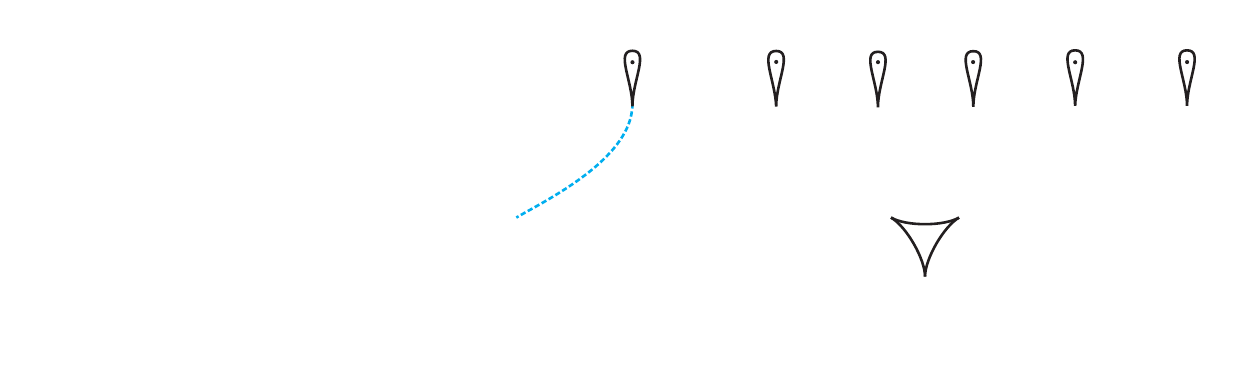
    \caption{\textbf{Left:} The Peacock train track with the labels and orientations we will use in this section. \textbf{Center:} $f(a)$ passes $b$ on the right. \textbf{Right:} $f(o)=g^+r^-g^-b^\circ$.}
    \label{221track}
\end{figure}

For the remainder of this section, $\tau$ will be the Peacock train track depicted on the left in Figure \ref{221track}, with edges and vertices labeled as in the figure (with edges oriented towards the punctures); $\beta$ will be an arbitrary 5-braid with pseudo-Anosov data $(\psi_\beta,\tau,f)$; and $h$ will be the lift of $\beta$ to $S=S_2^1$, with pseudo-Anosov data $(\psi_h,\tilde{\tau},\tilde{f})$, where $\tilde{\tau}$ and $\tilde{f}$ are constructed as in subsection \ref{liftsec}. An image of $\tilde{\tau}$ in this case is shown in Figure \ref{liftedtt}.

Let $a,b$ be any real edges in $\tau$. Note that because each peripheral 1-gon in $\tau$ is adjacent to a unique real edge, the image $f(a)$ is naturally a word of the form $w_1w_2...w_n$, where $w_i\in\{o\bar{o},g\bar{g},p\bar{p},b\bar{b},r\bar{r}\}$ and $w_n\in\{o,g,p,b,r\}$.

\begin{defn}
For real edges $a,b$ of $\tau$, we say that $f(a)$ passes $b$ \emph{on the right} if, before collapsing down to $\tau$, $b$ is to the left of $\psi_\beta(a)$ as in the middle of Figure \ref{221track}. In this case, we write the letter $b^+$ in place of $b\bar{b}$ in the word $f(a)$. We define passing \emph{on the left} analogously, and denote it by $b^-$. If $b$ is the last letter in $f(a)$, we write $b^\circ$. When we allow for multiple possible options, we will write e.g. $b^\pmd$, $b^{+\circ}$, etc. See the right of Figure \ref{221track} for an example.
\end{defn}

Here is the family of braids which we will study:

\begin{figure}
    \centering
    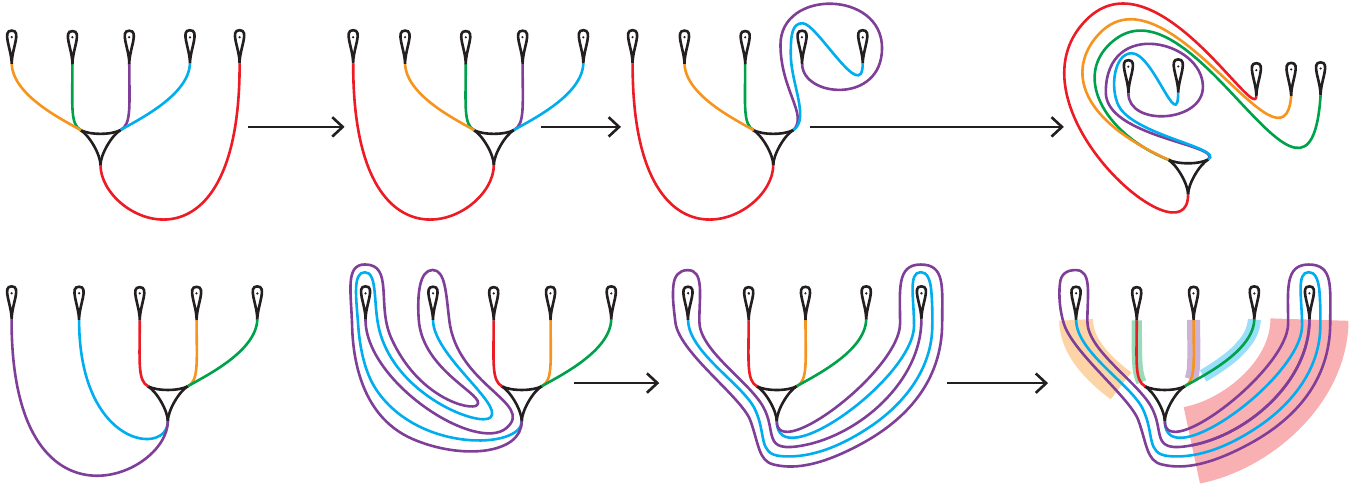
    \caption{The train track map induced by $H\beta_n^{-1}H^{-1}$, where $H$ is an orientation-preserving homeomorphism which swings $r$ around the rest of the track.}
    \label{BnMapfig}
\end{figure}

\begin{prop}\label{braidttprop}
Set $\beta_n=\sigma_1^{n+2}\sigma_2\sigma_3\sigma_4\sigma_1\sigma_2\sigma_3\sigma_4^2$ for $n\geq 0$. Then, $\beta_n^{-1}$ is pseudo-Anosov, and conjugate to a braid carried by $\tau$, which induces the train track map $f_n:\tau\to\tau$ defined by:
\begin{align*}
    f_n(o)&=p^\circ & \hspace{-1cm}f_n(g)&=b^\circ & \hspace{-1cm}f_n(r)&=g^\circ\\
    f_n(p)&=\begin{cases}(r^-o^-)^{(\frac{n}{2}+1)}r^\circ & n \text{ even}\\ (r^-o^-)^{\frac{n+1}{2}}r^-o^\circ & n \text{ odd}\end{cases}
    &\hspace{-1cm}f_n(b)&=\begin{cases}(r^-o^-)^{\frac{n}{2}} r^-o^\circ & n\text{ even}\\ (r^-o^-)^{\frac{n+1}{2}} r^\circ & n\text{ odd}\end{cases} & &
\end{align*}
\end{prop}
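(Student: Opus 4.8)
The plan is to prove the proposition by a direct computation of the action of a suitable conjugate of $\beta_n^{-1}$ on the Peacock track $\tau$, following Figure \ref{BnMapfig}. Write $\beta_n^{-1}=\beta_0^{-1}\sigma_1^{-n}$, where $\beta_0^{-1}=\sigma_4^{-2}\sigma_3^{-1}\sigma_2^{-1}\sigma_1^{-1}\sigma_4^{-1}\sigma_3^{-1}\sigma_2^{-1}\sigma_1^{-2}$, and let $H$ be the orientation-preserving homeomorphism (``swing $r$ around the rest of the track'') depicted in Figure \ref{BnMapfig}; set $g_n=H\beta_n^{-1}H^{-1}$. Since conjugation preserves the Nielsen--Thurston type and, for pseudo-Anosov maps, the topological conjugacy class of the invariant foliations (hence the singularity type), it suffices to show that $g_n$ is freely isotopic to a pseudo-Anosov whose invariant track is $\tau$ and whose induced train track map is the stated $f_n$. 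I would do this by applying the generators of $g_n$ to $\tau$ one at a time, isotoping the image into a fibered neighborhood of $\tau$, and then reading off the collapse map together with the passing decorations ($+$, $-$, $\circ$) of Figure \ref{221track}.

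The computation splits into two pieces. The subword $\sigma_1^{-n}$ acts by $n$ half-twists about punctures $1$ and $2$; because a single half-twist interchanges those punctures, iterating it forces the image of an edge running between the corresponding infinitesimal monogons to acquire one extra full wind for every \emph{two} half-twists. This is precisely what produces the powers $(r^-o^-)^k$ and the parity dichotomy in the formulas for $f_n(p)$ and $f_n(b)$: as $n$ increases by $2$ the exponent $k$ increases by $1$, while passing from $n$ to $n+1$ swaps the roles of $p$ and $b$ and toggles the trailing letters ($r^\circ \leftrightarrow r^-o^\circ$). The remaining subword $\beta_0^{-1}$ is a fixed finite word whose effect on $\tau$ is the base-case computation drawn in Figure \ref{BnMapfig}, and composing gives the claimed images. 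Rather than redoing the winding for every $n$, I would argue inductively: from $\beta_{n+1}^{-1}=\beta_n^{-1}\sigma_1^{-1}$ we get $g_{n+1}=g_n\cdot(H\sigma_1^{-1}H^{-1})$, so it is enough to check that post-composing $f_n$ with the train track map of the single half-twist $H\sigma_1^{-1}H^{-1}$ transforms the even formulas into the odd ones and vice versa, exactly as stated.

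Next I would verify that $g_n(\tau)$ is genuinely \emph{carried} by $\tau$, not merely transverse to it: one isotopes $g_n(\tau)$ into a fibered neighborhood of $\tau$ so that it meets the ties transversely, collapsing to the edge-word map $f_n$, and one checks that $f_n$ is a bona fide train track map (the image of every train path is a train path, with no cancellation at switches and the cusps respected), so that $(\tau, g_n, f_n)$ is the data of a geometric map and $f_n^k$ is a train path map for all $k$. The superscripts $r^-$, $o^-$, $r^\circ$, $o^\circ$ are determined by recording on which side of the edges $r$ and $o$ the strands of $g_n(\tau)$ lie before the collapse, as visible in Figures \ref{221track} and \ref{BnMapfig}; the key point is that this data is uniform in $n$.

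Finally, since $f_n$ is a train track map on $\tau$, which satisfies the conclusion of Proposition \ref{Prop:track}, I would invoke the Bestvina--Handel criterion quoted in the excerpt: $g_n$ is pseudo-Anosov if and only if the transition matrix $M_n$ of $f_n$ on the real edges $o,g,p,b,r$ is Perron--Frobenius. From the formulas, $M_n$ records the stated traversals, and one checks directly that it is irreducible (e.g.\ $r\to g\to b\to o$, $o\to p$, and both $f_n(p)$ and $f_n(b)$ traverse $o$ and $r$, so every real edge reaches every other) and aperiodic (there is a closed orbit $o\to p\to o$ of length $2$ and an orbit $o\to p\to r\to g\to b\to o$ of length $5$, with $\gcd(2,5)=1$); hence $M_n$ is Perron--Frobenius. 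Therefore $g_n$ is pseudo-Anosov with invariant track $\tau$ and induced map $f_n$, and since $\beta_n^{-1}$ is conjugate to $g_n$ it too is pseudo-Anosov, carried (up to conjugacy) by $\tau$, in the stratum $(2;1^5;3)$ of the Peacock. The main obstacle is the carrying/efficiency step of the third paragraph — turning the pictures into a rigorous argument, and in particular controlling the $n$-dependent winding and its passing data so that a single pair of even/odd formulas is valid for all $n\geq 0$.
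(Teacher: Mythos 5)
Your proposal is correct and follows essentially the same route as the paper: read off $f_0$ from the direct computation in Figure~\ref{BnMapfig}, then observe that each additional $\sigma_1^{-1}$ in $\beta_n^{-1}=\beta_0^{-1}\sigma_1^{-n}$ (applied before $H^{-1}$ and the collapse) adds one more half-twist between the two left-most punctures, which grows $f_n(p)$ and $f_n(b)$ by $(r^-o^-)$'s and toggles their terminal letters, and finally verify the Perron--Frobenius property. The one cosmetic difference is in the last step: you establish Perron--Frobenius via irreducibility of the transition digraph (the cycle $o\to p\to r\to g\to b\to o$) plus aperiodicity ($\gcd(2,5)=1$ from the shorter loop $o\to p\to o$), whereas the paper simply notes that $M_n^7$ is strictly positive for all $n\ge 0$; both are fine, and your version has the small advantage of not requiring a matrix power to be displayed. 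One caveat on your inductive bookkeeping: phrasing the step as ``post-compose $f_n$ with the train track map of $H\sigma_1^{-1}H^{-1}$'' tacitly assumes this periodic half-twist is itself carried by $\tau$ (i.e.\ maps $N(\tau)$ into $N(\tau)$), which is exactly what lets you compose collapse maps; the paper sidesteps this by tracking the picture at the intermediate stage just before $H^{-1}$ and observing the extra winding there. The two formulations are equivalent, but if you go the inductive route you should say a word about why the half-twist respects the fibered neighborhood.
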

\begin{proof}
Figure \ref{BnMapfig} verifies that $H\beta_0^{-1}H^{-1}$ is carried by $\tau$ and induces the train track map $f_0:\tau\to\tau$, where the orientation-preserving homeomorphism $H$ is given by swinging the real edge $r$ around the train track to the other side. For $n\geq 1$, note that $\beta_n=\sigma_1\beta_{n-1}$, and the additional $\sigma_1$ simply adds more twists between the left-most edges before composing with $H^{-1}$ in the last step. This additional twisting adds words of the form $(r^-o^-)$ to $f_n(p)$ and $f_n(b)$, and swaps which edges $p$ and $b$ end on, as in the map in the proposition statement.

It now remains to verify that $\beta_n^{-1}$ is pseudo-Anosov. This can be seen by checking that the transition matrix $M_n$ associated to the train track map $f_n$ is Perron--Frobenius. When determining the matrix $M_n$ from the map $f_n$ given above, it may be helpful to recall that, for each real edge $a,b$ of $\tau$, each instance of $b^\pm$ in $f(a)$ records the word $b\bar{b}$, and each instance of $b^\circ$ records just $b$. Regardless of the parity of $n$, the transition matrix is:

\[
M_n = \begin{pNiceMatrix}[columns-width=auto]
0 & 0 & n+2 & n+1 & 0\\
0 & 0 & 0 & 0 & 1\\
1 & 0 & 0 & 0 & 0\\
0 & 1 & 0 & 0 & 0\\
0 & 0 & n+3 & n+2 & 0
\end{pNiceMatrix}
\]
It is straightforward to check that $M_n^7$ is strictly positive for all $n \geq 0$, so $M_n$ is Perron-Frobenius.
\end{proof}

It follows from Proposition \ref{braidttprop} that any braid inducing the train track map $f_n:\tau\to\tau$ must be conjugate to $\beta_n^{-1}$ within the spherical mapping class group (see Remark \ref{uniquerem}). The following proposition then implies that no braid inducing the map $f_n$ has unknotted closure:

\begin{figure}
    \centering
    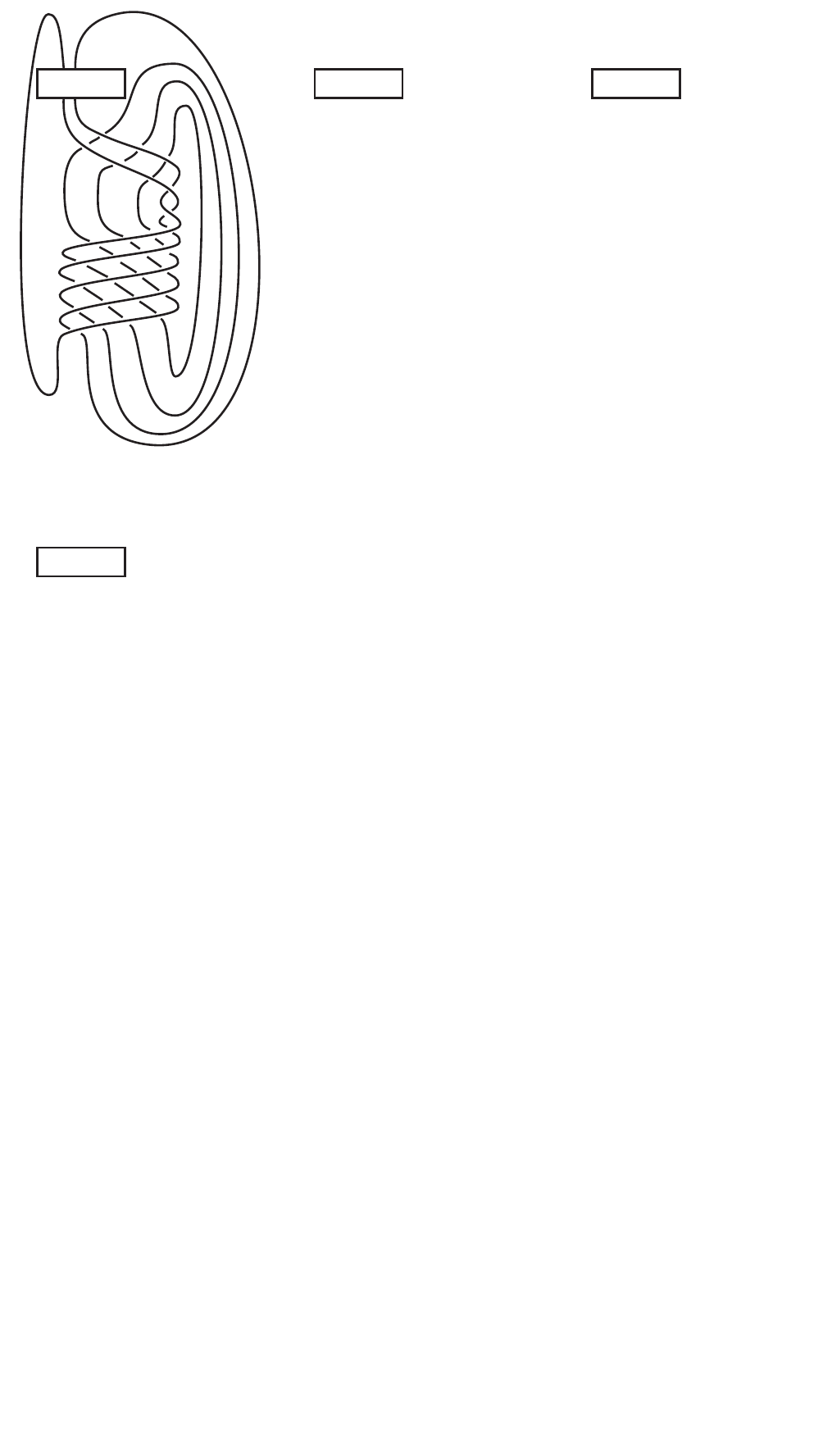
    \caption{Isotopies of $\widehat{\Delta^2\beta_n^{-1}}$ to $P(3,3-n,-2)$}
    \label{isotopies}
\end{figure}

\begin{prop}\label{braidprop}
The braid closure $\widehat{\Delta^{2k}\beta_n^{\pm1}}$ is not an unknot for any $k,n\in\zz$, where $n\geq 0$.
\end{prop}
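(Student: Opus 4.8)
The plan is to run the argument of Proposition~\ref{6braid} in this new setting: combine the fractional Dehn twist coefficient with the Ito--Kawamuro inequality (Theorem~\ref{ikthm}) to cut the problem down to finitely many exponents $k$, and then kill the surviving closures with classical link invariants (self-linking number and determinant).

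First I would pin down $c(\beta_n)$. Since $\beta_n$ is a positive word and $\beta_n^{-1}$, hence $\beta_n$, is pseudo-Anosov by Proposition~\ref{braidttprop}, the last bullet of Theorem~\ref{fdtcthm} gives $c(\beta_n)>0$. For the upper bound I would verify that $\Delta^{-2}\beta_n$ is a negative (e.g. $\sigma_1$-negative) pseudo-Anosov braid, so that $c(\Delta^{-2}\beta_n)<0$; by the $\Delta^2$-shift formula of Theorem~\ref{fdtcthm} this gives $c(\beta_n)<1$. Thus $0<c(\beta_n)<1$ for all $n\geq 0$, so $c(\Delta^{2k}\beta_n^{\pm1})=k\pm c(\beta_n)\in(k-1,k+1)$, and Theorem~\ref{ikthm} forces $\widehat{\Delta^{2k}\beta_n}$ to be a potential unknot only for $k\in\{-1,0\}$, and $\widehat{\Delta^{2k}\beta_n^{-1}}$ only for $k\in\{0,1\}$.

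Next I would clear the ``positive'' cases directly. For $k\geq 0$ the braid $\Delta^{2k}\beta_n$ is a positive word on $5$ strands with exponent sum $20k+n+10$, so $\widehat{\Delta^{2k}\beta_n}$ has self-linking number $20k+n+5>-1$ (equivalently, its Bennequin surface is a fiber surface of Euler characteristic $5-20k-n-10\neq 1$) and hence is not the unknot; in particular $\widehat{\beta_n}$ is not the unknot. The automorphism $\sigma_i\mapsto\sigma_i^{-1}$ together with word reversal (using that $\Delta^{-2k}$ is central and that reversing a braid word does not change its closure) shows that $\widehat{\Delta^{2k}\beta_n^{-1}}$ is the mirror of $\widehat{\Delta^{-2k}\beta_n}$; since the unknot is amphichiral, this also rules out $\widehat{\Delta^{2k}\beta_n^{-1}}$ for every $k\leq 0$. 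Combining with the previous paragraph, the only closures left to treat are $\widehat{\Delta^{-2}\beta_n}$ and $\widehat{\Delta^{2}\beta_n^{-1}}$, which are mirror images of each other.

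Finally I would invoke the isotopies of Figure~\ref{isotopies} to identify $\widehat{\Delta^{2}\beta_n^{-1}}$ with the pretzel link $P(3,3-n,-2)$, whose determinant is $|3(3-n)+(3-n)(-2)+(-2)\cdot 3|=n+3\geq 3>1$ for every $n\geq 0$; since the determinant is a mirror invariant, $\widehat{\Delta^{-2}\beta_n}$ is not the unknot either. This exhausts all four infinite families. The step I expect to be the main obstacle is the fractional Dehn twist estimate: because the $\sigma_1$-exponent of $\beta_n$ grows with $n$, one must argue carefully (either by computing the boundary rotation number directly from the train track map $f_n$ of Proposition~\ref{braidttprop}, or by exhibiting $\Delta^{-2}\beta_n$ as $\sigma_1$-negative) that this interior twisting does not push $c(\beta_n)$ up by a full unit; a secondary point is checking that the pretzel identification in Figure~\ref{isotopies} is valid uniformly in $n$.
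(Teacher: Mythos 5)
Your proposal follows essentially the same strategy as the paper's proof of this proposition: bound the fractional Dehn twist coefficient $c(\beta_n)$, apply Ito--Kawamuro (Theorem~\ref{ikthm}) and amphichirality to cut the exponents $k$ down to a mirror pair, and then certify that the surviving closures are not unknots. Your alternative certificates in the endgame are valid: the Bennequin/self-linking bound handles the positive closures just as well as the paper's identification $\widehat{\beta_n}=T(2,n+7)$, and the determinant computation $\det P(3,3-n,-2)=|3(3-n)+(3-n)(-2)+(-2)\cdot 3|=n+3>1$ is a perfectly good substitute for the paper's appeal to the pretzel knot being ``known to not be unknotted.''

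There is, however, one genuine gap, which you have correctly flagged as the main obstacle but not resolved: the upper bound $c(\beta_n)\leq 1$. Asserting directly that $\Delta^{-2}\beta_n$ is $\sigma_1$-negative is unjustified, since $\beta_n=\sigma_1^{n+2}\sigma_2\sigma_3\sigma_4\sigma_1\sigma_2\sigma_3\sigma_4^2$ carries a $\sigma_1$-power that grows unboundedly with $n$ and there is no evident cancellation against the $\sigma_1$'s in $\Delta^{-2}$. The paper's fix is a normalizing step you omit: one first observes that $\beta_n$ is braid-isotopic to $\beta_n'=(\sigma_1\sigma_2\sigma_3\sigma_4)^2\sigma_4^{n+2}$, which shifts all $n$-dependence onto $\sigma_4$; then $\Delta^{-2}\beta_n'$ is visibly $\sigma_1$-negative, yielding $c(\beta_n)=c(\beta_n')\leq 1$ via Theorem~\ref{fdtcthm}. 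Without such a normalization, the FDTC estimate is only a plausible-looking claim. Your alternative suggestion of extracting the boundary rotation number from the train track map $f_n$ of Proposition~\ref{braidttprop} is also a reasonable route, but it too is left unexecuted. Once either is carried out, the remainder of your argument is sound.
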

\begin{proof}
Because $\beta_n$ is a positive pseudo-Anosov braid, we must have $c(\beta_n)>0$. On the other hand, one can check that $\beta_n$ is braid-isotopic to $\beta_n'=(\sigma_1\sigma_2\sigma_3\sigma_4)^2\sigma_4^{n+2}$. One can easily check that $c(\beta_n')\leq 1$ in a variety of ways--- for example by verifying that $\Delta^{-2}\beta_n'$ is $\sigma_1$-negative (see Theorem \ref{fdtcthm}). And, because $\beta_n$ and $\beta_n'$ are braid-isotopic, it follows that $c(\beta_n)\leq 1$, too. So, we have:
$$k<c(\Delta^{2k}\beta_n)\leq k+1$$

Now, by Theorem \ref{ikthm} and the fact that the unknot is amphicheiral, it suffices to check that neither $\widehat{\beta_n}$ nor $\widehat{\Delta^2\beta_n^{-1}}$ is an unknot for any $n\in\zz$. The closure $\widehat{\beta_n}$ is easily seen to be the torus knot $T(2,n+7)$. Figure \ref{isotopies} verifies that the closure $\widehat{\Delta^2\beta_n^{-1}}$ is the 3-stranded pretzel knot $P(3,3-n,-2)$. These knots are all known to not be unknotted.
\end{proof}

\subsection{Train track maps on the Peacock}\label{traintracksubsection}

In this subsection, we retain the notation from the previous subsection. Our remaining goal is to prove the following proposition, which, together with Propositions \ref{braidttprop} and \ref{braidprop}, will imply Theorem \ref{221thm} and complete the proof of Theorem \ref{fpfthm}:

\begin{prop}\label{ttprop}
Let $\psi_\beta$ be a pseudo-Anosov carried by $\tau$, which lifts to a map $\psi_h$ in the cover. If $\psi_h$ is fixed point free, then $\beta$ is conjugate in the spherical mapping class group to $\beta_n$  or $\beta_n^{-1}$ for some $n\in\zz$. In particular, $\beta$ is conjugate as a braid to $\Delta^{2k}\beta_n^{\pm1}$ for some $n,k\in\zz$.
\end{prop}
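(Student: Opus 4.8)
First I would use Theorem \ref{thm:221}, the lifting formalism of Section \ref{liftsec}, and the fixed-point criterion Theorem \ref{fpthm} to turn the statement into a purely combinatorial classification of train track maps on the Peacock, and then solve that classification by hand.

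\textbf{Reduction to a trace condition.} By Theorem \ref{thm:221}, after conjugating in the spherical mapping class group I may assume $\psi_\beta$ is carried by the Peacock $\tau$, inducing a train track map $f\colon\tau\to\tau$. As in Section \ref{liftsec}, $f$ lifts to a train track map $\tilde f\colon\tilde\tau\to\tilde\tau$ inducing $\psi_h$, where $\tilde f$ is one of the two $\iota$-related choices $\tilde f_1,\tilde f_2$. Since $\psi_h$ is fixed point free in the interior of $S_2^1$, Theorem \ref{fpthm} gives $\text{tr}(\tilde M)=0$, where $\tilde M$ is the transition matrix of $\tilde f$ on the ten real edges $e^1,e^2$ ($e\in\{o,g,p,b,r\}$) of $\tilde\tau$. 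I would also record that $\psi_h$ is fixed point free if and only if $\psi_h^{-1}$ is, and that $\psi_h^{-1}$ is the lift of $\beta^{-1}$ (whose closure is the mirror of $\widehat\beta$, hence an unknot iff $\widehat\beta$ is); this inversion symmetry, together with the binary choice of lift, is what ultimately produces the two signs $\beta_n^{\pm1}$ in the statement.

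\textbf{Translating the trace condition into a condition on $f$.} Because $\tilde f(e^2)=\iota(\tilde f(e^1))$ and $\iota$ interchanges $e^1$ and $e^2$, the diagonal entries $\tilde M_{e^1e^1}$ and $\tilde M_{e^2e^2}$ agree; call their common value $r_e\geq 0$. The lift to $\tilde\tau$ of a train path in $\tau$ changes sheets precisely when it runs once around one of the five once-punctured monogons (which cover the branch points), and never changes sheets across the trigon (which lies over a regular value). Tracking this alternation, each ``doubled'' occurrence $x\bar x$ of a real edge $x$ inside $f(e)$ contributes exactly one $x^1$ and one $x^2$ to $\tilde f(e^1)$, while the terminal letter of $f(e)$ contributes a single lift of prescribed sheet-parity; hence $\text{tr}(\tilde M)=2\sum_e r_e$, and $\text{tr}(\tilde M)=0$ is equivalent to requiring that for every real edge $e$ the train path $f(e)$ traverse $e$ only possibly as its terminal letter (subject then to a parity condition depending on whether $\psi_h$ induces $\tilde f_1$ or $\tilde f_2$). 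In particular $\text{tr}(M)=0$, consistent with the matrices $M_n$ of Proposition \ref{braidttprop}.

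\textbf{Classifying the train track maps --- the main obstacle.} It then remains to enumerate all train track maps $f\colon\tau\to\tau$ induced by pseudo-Anosovs of type $(2;1^5;3)$ and satisfying the above constraint. I would carry this out from the combinatorial structure of the Peacock (its three switches and their gate structure, its five once-punctured monogons, its trigon, its outer bigon), using that $f$ must be a legal, non-backtracking train track map which permutes the monogons, fixes the trigon, and has Perron--Frobenius transition matrix, in order to write down the finitely many possible ``shapes'' of the images $f(o),f(g),f(p),f(b),f(r)$; imposing the trace constraint should then leave exactly the maps $f_n$ of Proposition \ref{braidttprop}, together with a mirror family (obtained from the reflection symmetry of the set-up, with terminal occurrences allowed) responsible for the $\beta_n$-versus-$\beta_n^{-1}$ alternative. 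The hard part will be showing that this enumeration really is exhaustive; I would organize it using the tight-splitting analysis of Section \ref{sec:split}.

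\textbf{Conclusion.} Given $f=f_n$, Proposition \ref{braidttprop} exhibits a braid conjugate to $\beta_n^{-1}$ inducing $f_n$, so by Remark \ref{uniquerem} the capped pseudo-Anosov $\widehat{\psi_\beta}$ is conjugate in $\Mod(S_{0,6})$ to $\widehat{\psi_{\beta_n^{-1}}}$; thus $\beta$ is conjugate in the spherical mapping class group to $\beta_n^{-1}$ (or, in the mirror case, to $\beta_n$). Since the capping homomorphism $\Mod(S_{0,5}^1)\to\Mod(S_{0,6})$ is surjective with kernel generated by the central element $\Delta^2$, this conjugacy lifts to a braid conjugacy between $\beta$ and $\Delta^{2k}\beta_n^{\pm1}$ for some $n,k\in\zz$, as claimed. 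Combined with Propositions \ref{braidttprop} and \ref{braidprop}, which show that no such $\Delta^{2k}\beta_n^{\pm1}$ has unknotted closure, this completes the proof of Theorem \ref{221thm} and hence of Theorem \ref{fpfthm}.
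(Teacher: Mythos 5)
The overall framing is right — reduce via Theorem \ref{thm:221} to the Peacock, translate the fixed-point-free hypothesis into a constraint on the train track map $f$, enumerate the surviving $f$'s, and then invoke Proposition \ref{braidttprop} and Remark \ref{uniquerem} to produce the conjugacy — but the proposal leaves the central step as a placeholder and gets one translation wrong, so as written it does not constitute a proof.

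First, the translation of ``$\psi_h$ is fixed point free'' into a combinatorial condition is weaker than what the paper needs. You derive only $\text{tr}(\tilde M)=0$ from Theorem \ref{fpthm} and conclude that a real edge $e$ may appear in $f(e)$ as a terminal letter ``subject to a parity condition.'' But a terminal occurrence $e^\circ\in f(e)$ already forces a fixed point: the puncture at the end of $e$ is fixed by $\psi_\beta$, its preimage (a single branch point, smoothed to a regular interior point of $S_2^1$) is then fixed by $\psi_h$, regardless of any sheet-parity, and this fixed point is invisible to the transition-matrix trace because that branch point is not a switch of $\widetilde\tau$. The paper's Trace Lemma (Lemma \ref{tracelem}) captures precisely this: it excludes $e^\pm$ \emph{and} $e^\circ$ from $f(e)$, using the fixed-point-free hypothesis directly rather than only through Theorem \ref{fpthm}. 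Without the $e^\circ$ exclusion the subsequent enumeration does not close up, and the ``parity'' caveat you append papers over a real issue rather than resolving it.

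Second, the crux of the argument — showing that the Trace Lemma pins $f$ down to exactly the $f_n$ of Proposition \ref{braidttprop} — is the part you explicitly defer (``The hard part will be showing that this enumeration really is exhaustive''). The paper carries this out by a genuinely intricate hand analysis (Lemmas \ref{rotationlem}, \ref{yellowlem}, \ref{pleftlem}, \ref{bleftlem}, \ref{dfrlem}, and the long case split in the proof of Proposition \ref{ttprop}) whose two working tools are the Trace Lemma and the absorption Lemma \ref{lem:absorb}; the latter is an essential rigidity ingredient you do not mention, and it is what rules out most of the a priori legal images $Df(o), Df(g),\dots$. You instead suggest organizing the enumeration ``using the tight-splitting analysis of Section \ref{sec:split},'' but that machinery is used in the paper only to prove Theorem \ref{thm:221} (i.e., to get onto the Peacock in the first place); it is not how the train track maps on the Peacock are classified, and it is not clear that it could substitute for the direct case analysis. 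So the proposal both omits the load-bearing lemma and points to the wrong tool for the step it does not carry out.

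The final conclusion — passing from $f=f_n$ to conjugacy of $\beta$ with $\Delta^{2k}\beta_n^{\pm1}$ via Remark \ref{uniquerem} and the $\zz$-family of lifts through the capping map — is correct and matches the paper (cf.\ Lemma \ref{capconj}).
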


We begin with some helpful lemmas to simplify the case analysis.

\begin{lem}[Trace Lemma]\label{tracelem}
If $\psi_h$ is fixed point free, then for any real edge $a\in \tau$, we have that $a^\pmd\not\in f(a)$.
\end{lem}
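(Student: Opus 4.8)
The plan is to use the fixed-point-free hypothesis through the Los and Cotton-Clay trace formula (Theorem \ref{fpthm}), and then translate the vanishing of the trace of the lifted transition matrix into the desired combinatorial statement about the word $f(a)$.

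First I would apply Theorem \ref{fpthm} to the data $(\psi_h,\tilde{\tau},\tilde{f})$: since $\psi_h$ is fixed point free in the interior of $S$, we obtain $\text{tr}(\tilde{M})=0$, where $\tilde{M}$ is the transition matrix of the lifted train track map $\tilde{f}\colon\tilde{\tau}\to\tilde{\tau}$ on the real edges of $\tilde{\tau}$. As the entries of $\tilde{M}$ are nonnegative integers, this forces every diagonal entry of $\tilde{M}$ to vanish; equivalently, for every real edge $\tilde{e}$ of $\tilde{\tau}$ the train path $\tilde{f}(\tilde{e})$ does not traverse $\tilde{e}$.

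The core of the argument is then to show that an occurrence of $a^{\pmd}$ in $f(a)$ would produce a real edge $\tilde{e}$ of $\tilde{\tau}$ with $\tilde{f}(\tilde{e})$ traversing $\tilde{e}$, contradicting the above. For this I would use the explicit description of $(\tilde{\tau},\tilde{f})$ from subsection \ref{liftsec} (cf. Figure \ref{liftedtt}): every real edge $a$ of $\tau$ has two lifts $a^1,a^2$ in $\tilde{\tau}$, and $\tilde{f}$ is obtained by lifting the word $f(a)$ edge by edge subject to the train-path condition. The key geometric point is that the once-punctured monogons of $\tau$ lift to bigons in $\tilde{\tau}$ (smoothed to regular points), so passing once around the infinitesimal loop encircling a puncture $c$ --- which is precisely what each letter $c^{+}$ or $c^{-}$ in a word on $\tau$ records --- interchanges the two sheets of the cover. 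Hence a subword $a\bar{a}$ occurring in $f(a)$, i.e.\ a letter $a^{+}$ or $a^{-}$, lifts inside $\tilde{f}(a^1)$ to a segment that runs out along one of $a^1,a^2$, around the collapsed bigon, and back along the other; in particular this segment traverses $a^1$, so $\tilde{m}_{a^1,a^1}\geq 1$, a contradiction. For the remaining decoration $a^{\circ}$ (which records that $f(a)$ terminates on the edge $a$) I would run the same analysis on the terminal letter, using in addition that $h$ is symmetric, so that $\psi_h$ commutes with the hyperelliptic involution and $\tilde{f}$ is equivariant; tracking which sheet the terminal lift of $a$ lies on, the equivariance together with the parity of the length of $f(a)$ again forces a nonzero diagonal entry of $\tilde{M}$.

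The step I expect to be the main obstacle is exactly this last translation --- correctly lifting the word $f(a)$ through the collapsed bigons, and verifying that each appearance of $a$ (with any of the three decorations) in $f(a)$ is accounted for by a genuine self-traversal of a lifted real edge rather than merely a partial one. Once that bookkeeping is in place the lemma is immediate: any $a^{\pmd}\in f(a)$ contradicts $\text{tr}(\tilde{M})=0$.
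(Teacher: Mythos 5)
Your argument for the $a^{\pm}$ case is essentially the paper's: a subword $a\bar a$ in $f(a)$ lifts inside $\tilde f(a^1)$ to $a^i\bar a^j$ with $i\neq j$ (the monogon around the puncture at the end of $a$ lifts to a smoothed bigon, so passing around it exchanges the two lifts of $a$), and therefore $\tilde f(a^1)$ traverses $a^1$ or $\bar a^1$, giving a nonzero diagonal entry of $\tilde M$ and contradicting Theorem \ref{fpthm}.

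The $a^{\circ}$ case, however, does not go through by the parity argument you sketch. Write $f(a)=c_1\bar c_1\cdots c_{n-1}\bar c_{n-1}\,a$. Each excursion $c_k\bar c_k$ switches sheets, so the sheet of the terminal lift of $a$ in $\tilde f(a^1)$ is governed by the parity of $n-1$ together with whether the chosen lift $\psi_h$ preserves or exchanges the two lifted triangles --- and the latter is precisely the binary choice discussed in subsection \ref{liftsec}, settled by $\beta$ rather than constrained by the hypotheses. When the wrong combination occurs, $\tilde f(a^1)$ terminates in $a^2$, and by equivariance $\tilde f(a^2)$ terminates in $a^1$; neither contributes a diagonal entry of $\tilde M$, so no contradiction with $\text{tr}(\tilde M)=0$ is produced. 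The fixed point created by $a^{\circ}$ sits at the vertex $q_a$ of $\tilde\tau$ where $a^1$ and $a^2$ meet (the smoothed bigon), and a fixed vertex need not register in the trace: Theorem \ref{fpthm} supplies only the one implication from fixed point freeness to $\text{tr}(\tilde M)=0$, so it only detects a fixed point when a nonzero diagonal entry is actually present, which this configuration does not always force.

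The paper handles $a^{\circ}$ without the trace formula at all: if $a^{\circ}\in f(a)$ then $\psi_\beta$ fixes the marked point at the terminus of $a$, and since that marked point is a branch point of the double cover it has a unique preimage, a regular interior point of $S$ that $\psi_h$ must therefore fix. This directly contradicts fixed point freeness and sidesteps the parity issue entirely; you should replace the sheet-tracking step for $a^{\circ}$ with this observation.
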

\begin{proof}
First, if $a^\circ\in f(a)$, then the marked point at the end of $a$ is fixed by $\psi_\beta$, and the lift of this marked point is fixed by $\psi_h$. Next, suppose that $a^\pm\in f(a)$, and recall that this means that $f(a)$ contains a word of the form $a\bar{a}$. In the lift, it follows that $\tilde{f}(a^1)$ contains a word of the form $a^i\bar{a}^j$ for some $i,j\in\{1,2\}$ (see the construction in subsection \ref{liftsec}). Because the edges $a^i$ and $a^j$ are not adjacent to infinitesimal loops in the cover (the infinitesimal loops in $\tau$ lift to regular points in the cover), we can see that $i\neq j$. So, $\tilde{f}(a^1)$ contains either $a^1$ or $\bar{a}^1$ as a letter. In either case, the transition matrix of $\tilde{f}$ has non-zero trace, so $\psi_h$ is not fixed point free by Theorem \ref{fpthm}.
\end{proof}

The Trace Lemma also holds in general, with the same proof, for any jointless train track with only 1-pronged punctures. Because we will use the Trace Lemma with great frequency in this section, when we invoke this lemma we will often use the shorthand ``by trace."

\begin{lem}\label{rotationlem}
If $\psi_h$ is fixed point free, then $f(v_i)\neq v_i$ for $i=1,2,3$.
\end{lem}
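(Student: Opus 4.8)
The plan is to suppose, for contradiction, that $f(v_i)=v_i$ for some $i$, and to force $\psi_h$ to have an interior fixed point. Recall that $v_1,v_2,v_3$ are the three switches on the boundary of the trigon $T$ of the Peacock (Figure \ref{221track}), i.e. the unique complementary region carrying the interior $3$-prong singularity $q$ of $\psi_\beta$. Since $\psi_\beta$ is pseudo-Anosov with $q$ as its only interior singularity, $\psi_\beta(q)=q$, so $f(T)=T$ and $f$ permutes $\{v_1,v_2,v_3\}$, equivalently it permutes the three cusps of $T$. Because $\psi_\beta$, hence $f$, is orientation preserving, this permutation is a rotation of the trigon — either the identity or a $3$-cycle. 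Thus if $f(v_i)=v_i$ for even one $i$, then $f$ fixes all three $v_j$ and all three cusps of $T$; equivalently $\psi_\beta$ fixes each prong of $q$. Assume this henceforth.

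Next I would pass to the double cover $\widetilde\tau\subset S_2^1$. Since $T$ is unpunctured it has two disjoint lifts $T^1,T^2$, which are trigons carrying the two interior $3$-prong singularities $q^1,q^2$ of $\psi_h$, and each $v_j$ lifts to $v_j^1\in\partial T^1$ and $v_j^2\in\partial T^2$. As $\psi_h$ is fixed point free it fixes neither $q^1$ nor $q^2$; since $\psi_h$ permutes $\{q^1,q^2\}$ it must interchange them, so $\widetilde f$ interchanges $T^1$ and $T^2$. Combined with the fact that $f$ fixes every cusp of $T$, and that $T^k\to T$ is a homeomorphism near the trigons, this forces $\widetilde f(v_j^1)=v_j^2$ and $\widetilde f(v_j^2)=v_j^1$ for every $j$.

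The contradiction then comes from the Trace Lemma (Lemma \ref{tracelem}), applied to $\widetilde f$. Let $a$ be a real edge of $\tau$ incident to one of the $v_j$, oriented away from $v_j$. Since $f(v_j)=v_j$, the train path $f(a)$ emanates from $v_j$; inspecting the local structure of the Peacock near the trigon, one shows $f(a)$ is forced to traverse $a$, i.e. $a^{+}$, $a^{-}$, or $a^{\circ}$ occurs in $f(a)$. Now lift: the initial segment of $\widetilde f(a^1)$ runs along $\partial T^2$, and a parity count along $f(a)$ using the sheet-switching rule from subsection \ref{liftsec} shows the relevant occurrence of $a$ appears in $\widetilde f(a^1)$ as $a^1$ (or, symmetrically, in $\widetilde f(a^2)$ as $a^2$). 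Either way the transition matrix $M$ of $\widetilde f$ has a positive diagonal entry, so $\text{tr}(M)\neq 0$, which is exactly the conclusion the Trace Lemma forbids when $\psi_h$ is fixed point free (Theorem \ref{fpthm}).

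I expect the main obstacle to be the combinatorial step in the last paragraph: verifying from the explicit edge-maps on the Peacock that fixing the trigon cusps forces some real edge $a$ incident to the trigon to appear in its own image $f(a)$, and then pinning down the parity so that the occurrence lifts to $a^1$ inside $\widetilde f(a^1)$ rather than to $a^2$. If a uniform argument is unwieldy, the fallback is to enumerate the finitely many germs of $f$ at the trigon that fix all three cusps and are compatible with the tangent structure at $v_1,v_2,v_3$, and to eliminate each using the Perron-Frobenius condition together with the Trace Lemma (applied either to $f$ on $\tau$ or to $\widetilde f$ on $\widetilde\tau$).
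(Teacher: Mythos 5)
Your first paragraph is exactly right and matches the paper's (implicit) reasoning: since the trigon $T$ is the unique complementary region carrying the unique interior $3$-prong singularity of $\psi_\beta$, the map $f$ permutes $\{v_1,v_2,v_3\}$ by an orientation-preserving rotation, so fixing one $v_i$ forces fixing all three. The rest of your argument, however, has a genuine gap and an unnecessary detour.

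The gap is precisely the one you flag yourself at the end: you never actually produce a real edge $a$ incident to the trigon for which $f(a)$ is forced to contain $a^{\pmd}$. For $v_1$ and $v_3$, which each carry two real edges ($R(v_1)=\{o,g\}$ and $R(v_3)=\{p,b\}$), fixing the vertex only tells you $Df(o)\in\{o,g\}$, $Df(p)\in\{p,b\}$, etc., and there is no immediate contradiction. The clean observation, which your proposal misses, is that $v_2$ has real valence one, $R(v_2)=\{r\}$: once $f(v_2)=v_2$, the train path $f(r)$ must emanate from $v_2$ along its unique real edge $r$, so $Df(r)=r^{\pmd}$ with no enumeration needed. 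This is the whole content of the paper's one-line proof.

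The double-cover parity argument is a detour you should delete entirely. Lemma \ref{tracelem} (the Trace Lemma) is already stated at the level of $f$ on $\tau$ --- ``for any real edge $a\in\tau$, $a^{\pmd}\notin f(a)$'' --- and its own proof is exactly the lift-and-parity analysis you sketch. Re-deriving it inside this lemma is redundant and, as written, incomplete (you assert but do not verify the sheet-switching parity). Once you have $Df(r)=r^{\pmd}$, a direct citation of Lemma \ref{tracelem} finishes the proof.
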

\begin{proof}
If $f(v_i)=v_i$ for some $i$, then $Df(r)=r^\pmd$, which is forbidden by the Trace Lemma.
\end{proof}

The above lemma implies that $f(v_1)\in\{v_2,v_3\}$, and this choice also determines the images $f(v_i)$ for $i=2,3$. Note that there is a natural horizontal symmetry of $\tau$ induced by reversing the orientation of the disk. Composing with this symmetry takes a braid to its reverse inverse, and a braid lifts to a fixed-point-free map if and only if its reverse inverse does. Hence, it suffices to choose either one of the images $f(v_1)$ as above. Therefore, without loss of generality, $f(v_1)=v_3$.

\begin{figure}
    \centering
    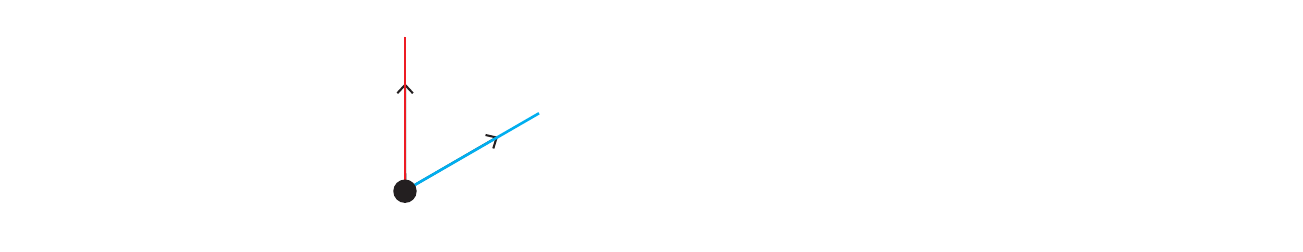
    \caption{$c$ will be absorbed into $f(v)$.}
    \label{fig:absorb}
\end{figure}

\begin{lem}\label{lem:absorb}
Let $a,b$ be any two real edges of $\tau$ with $a$ and $b$ adjacent at initial vertex $v$. Then, for any real edge $c$ of $\tau$, $\psi_\beta(c)$ does not intersect the convex cone determined by the initial segments of $\psi_\beta(a)$ and $\psi_\beta(b)$. See Figure \ref{fig:absorb} for reference.
\end{lem}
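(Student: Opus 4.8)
The plan is to prove Lemma~\ref{lem:absorb} using only that $\psi_\beta$ is a homeomorphism together with the fact that the initial segments of $\psi_\beta(a)$ and $\psi_\beta(b)$ bound a cusp of a complementary region of the image track $\psi_\beta(\tau)$ — a region whose interior contains no part of $\psi_\beta(\tau)$, and in particular no part of $\psi_\beta(c)$.

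First I would recall that, since $a$ and $b$ are real edges adjacent at the switch $v$, their initial segments are tangent at $v$ and bound a cusp; by Proposition~\ref{Prop:track} this cusp is a corner of some complementary component $R$ of $\tau$. Applying the homeomorphism $\psi_\beta$, the arcs $\psi_\beta(a)$ and $\psi_\beta(b)$ emanate from $\psi_\beta(v)$, and $\psi_\beta$ carries the cusp of $R$ at $v$ homeomorphically onto the germ at $\psi_\beta(v)$ of the complementary component $\psi_\beta(R)$ of $\psi_\beta(\tau)$, which is bounded there by the initial segments of $\psi_\beta(a)$ and $\psi_\beta(b)$. Choosing these initial segments short enough that they meet no switch of $\psi_\beta(\tau)$ other than $\psi_\beta(v)$, the convex cone $C$ they determine has $\operatorname{int} C \subseteq \operatorname{int} \psi_\beta(R)$.

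Then I would conclude as follows: $\psi_\beta(c)$ is contained in the embedded graph $\psi_\beta(\tau)$, which is disjoint from the interiors of all of its complementary regions, so $\psi_\beta(c)$ cannot enter $\operatorname{int} C$. Moreover $\psi_\beta(c)$ can meet $\partial C$ — the union of the two chosen initial segments — only at a point where $c$ meets $a$ or $b$ in $\tau$, hence only at a common switch; the only switch lying on these short initial segments is $\psi_\beta(v)$, and since $c$ is distinct from $a$ and $b$ while $a,b$ are adjacent at $v$, the edge $\psi_\beta(c)$ leaves $\psi_\beta(v)$ outside the cusp. Thus $\psi_\beta(c)$ meets $C$ at most at its apex $\psi_\beta(v)$, which is the content of the lemma (cf. Figure~\ref{fig:absorb}).

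The main obstacle — really the only point requiring care — is making precise the phrase ``convex cone determined by the initial segments'' and verifying that it lies inside the closed complementary region $\overline{\psi_\beta(R)}$ near $\psi_\beta(v)$: this amounts to taking the initial segments short enough and observing that at a cusp the two branches curve away from one another, so that the wedge between them is exactly the germ of the cusped region. Everything else is immediate from injectivity of $\psi_\beta$ and the fact that an embedded graph is disjoint from the interiors of its complementary regions.
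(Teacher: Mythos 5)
Your proof is correct, and it is a genuinely different argument from the one in the paper. The paper's proof works inside the fibered surface $F$: it first rules out $\psi_\beta(c)$ terminating inside the cone $X$ (because $c$ ends at an infinitesimal monogon vertex), so that $\psi_\beta(c)$ would have to enter and exit $X$; it then observes that $\psi_\beta(c)$ is transverse to the leaves of $F$ and that exiting $X$ would force $\psi_\beta(c)$ to retraverse one of the strips $A$ or $B$ through the cusp, making it non-smooth. Your proof bypasses the fibered surface entirely: you note that the cusp at $v$ between $a$ and $b$ is a corner of a complementary region $R$ of $\tau$; since $\psi_\beta$ is a surface homeomorphism, $\psi_\beta(R)$ is a complementary region of $\psi_\beta(\tau)$ with a cusp at $\psi_\beta(v)$ bounded by the initial segments of $\psi_\beta(a)$ and $\psi_\beta(b)$; and $\psi_\beta(c)\subseteq\psi_\beta(\tau)$ cannot meet the interior of any complementary region of $\psi_\beta(\tau)$. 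This is arguably cleaner, as it uses only that $\psi_\beta$ is a homeomorphism rather than the Bestvina--Handel fibered surface structure. The one place worth flagging is your choice to shrink the initial segments until they avoid all switches of $\psi_\beta(\tau)$ other than $\psi_\beta(v)$: this guarantees non-circularity (the small cone really is the cusp germ of $\psi_\beta(R)$ and hence a priori misses $\psi_\beta(\tau)$), but it also fixes the ``convex cone'' to be a germ-scale object. The paper's argument handles the cone bounded by the full initial segments of $\psi_\beta(a)$ and $\psi_\beta(b)$ — this is why it needs the separate check that $\psi_\beta(c)$ cannot \emph{terminate} inside $X$, a case that vanishes when the cone is small enough to contain no puncture. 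For the way the lemma is actually invoked in Section 4 (a local ``absorption'' obstruction at $f(v)$), your germ-scale version suffices, so the discrepancy is one of formulation rather than a gap.
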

\begin{proof}
By injectivity of $\psi_\beta$, we know that $\psi_\beta(c)$ may not cross $\psi_\beta(b)$ or $\psi_\beta(a)$. If $\psi_\beta(c)$ enters the convex cone $X$ on the initial segments of $\psi_\beta(a)$ and $\psi_\beta(b)$, then $\psi_\beta(c)$ must either have its endpoint inside $X$ or must leave $X$. The first case is not possible because $c$ ends at a vertex of an infinitesimal monogon by assumption. Therefore $\psi_\beta(c)$ must enter and leave $X$. Let $A$ and $B$ denote the strips of the fibered surface $F$ that collapse onto $a$ and $b$, respectively. The arc $\psi_\beta(c)$ lies transverse to the fibers of $F$. Assume without loss of generality that $\psi_\beta(c)$ enters $X$ along $A$. Since it must exit $X$, $\psi_\beta(c)$ must subsequently traverse either $A$ or $B$. Neither case is possible, however, since $a$ and $b$ form a cusp: the arc $\psi_\beta(c)$ is forced to be non-smooth.
\end{proof}

When a situation as in Lemma \ref{lem:absorb} arises, we say that the edge $c$ is \emph{absorbed into} $f(v)$. In practice, an edge being absorbed into a vertex is much easier to spot visually than by formal definition: the typical picture is the one depicted in Figure \ref{fig:absorb}.

For the following arguments, recall that we assume without loss of generality that $f(v_1)=v_3$. The figures provided in each proof below are single scenarios appearing in each main case, not exhaustive images of every possibility. We \emph{strongly} encourage the reader to draw by hand the train track maps which are written in words, as they read through each argument.

\begin{lem}\label{yellowlem}
If $\psi_h$ is fixed point free, then $Df(r)\not\in\{o^+,g^+\}$.
\end{lem}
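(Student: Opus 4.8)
The plan is to argue by contradiction: assume $\psi_h$ is fixed point free but $Df(r)\in\{o^+,g^+\}$. Write $v$ for the non‑puncture switch incident to $r$ and $w$ for the non‑puncture switch incident to both $o$ and $g$ (these are the distinguished switches of the Peacock; recall that we have normalized $f(v_1)=v_3$, and that by Lemma~\ref{rotationlem} the map $f$ has no fixed vertex, so it acts on $\{v_1,v_2,v_3\}$ as a $3$‑cycle). The first real letter of the train path $f(r)$ must be incident to $f(v)$; since by hypothesis this letter is $o^+$ or $g^+$, and both $o$ and $g$ emanate from $w$, we conclude $f(v)=w$. Thus $f(r)$ starts at $w$, runs out along $o$ (resp. $g$) to the peripheral monogon $M$ at the far puncture, wraps once around $M$ \emph{on the right}, and returns to $w$ before continuing.

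The crux is to show that this returning strand has nowhere smooth to go. Before collapsing, $\psi_\beta(r)$ exits $M$ running alongside $\psi_\beta(o)$ (resp. $\psi_\beta(g)$), and the sign recorded by ``$+$'' is, in the conventions of Figure~\ref{221track}, exactly the side on which $\psi_\beta(r)$ returns pinched against (the image of) $o$ (resp. $g$) toward the cusp of the Peacock at $w$ bounded by the initial segments of $o$ and $g$. To continue, $f(r)$ must either re‑traverse $o$ (resp. $g$) — forbidden, since a train path does not backtrack — or enter the region on the far side of that cusp. The latter is impossible by Lemma~\ref{lem:absorb}: $\psi_\beta(r)$ would enter the convex cone determined by the initial segments of $\psi_\beta(o)$ and $\psi_\beta(g)$ at $w$, and, since it does not terminate at a monogon until the very end of the word $f(r)$, it would have to leave that cone by crossing $\psi_\beta(o)$ or $\psi_\beta(g)$, contradicting injectivity of $\psi_\beta$. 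Hence $f(r)$ cannot continue past the wrap, so $Df(r)$ is in fact $o^\circ$ or $g^\circ$ — not $o^+$ or $g^+$ — a contradiction. The two cases $o^+$ and $g^+$ are interchanged merely by relabeling the two punctures, so a single figure of the forced local picture at $w$ and $M$ covers both.

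The principal obstacle is this middle step: the whole argument rests on getting the local picture right — identifying which cusp at $w$ is the image cusp, and checking that ``$+$'' is precisely the side on which the returning arc of $\psi_\beta(r)$ is pinned against that cusp — after which the absorption lemma does the rest. Everything else is bookkeeping forced by the vertex $3$‑cycle coming from $f(v_1)=v_3$ and Lemma~\ref{rotationlem}. For completeness I would also remark that $f$ genuinely permutes the non‑puncture switches, since it is induced by the homeomorphism $\psi_\beta$ post‑composed with the collapse of leaves of an efficient fibered surface, so distinct junctions have distinct images.
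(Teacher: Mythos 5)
Your argument reaches a contradiction by a genuinely different mechanism than the paper's, and the mechanism does not hold up. The paper's proof is a one-liner: if $Df(r)=o^+$, the geometry of the Peacock forces the continuation $f(r)=o^+r^\pmd\cdots$, and then the Trace Lemma applied to $r$ gives the contradiction. You instead claim that after returning from the monogon, $\psi_\beta(r)$ is pinched into the cusp at $v_1$ between $o$ and $g$ with nowhere smooth to go, so that $f(r)$ must terminate and the marking would really be $o^\circ$, not $o^+$.

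This is the opposite of what happens, and the gap is geometric rather than cosmetic. When $\psi_\beta(r)$ passes $o$ on the right, it wraps around the monogon and returns on the side of $o$ adjacent (at $v_1$) to the infinitesimal trigon edge leading to the switch $v$ (your $v$, the paper's $v_2$), which carries only the real edge $r$. The returning strand therefore continues smoothly across the trigon and onto $r$; there is no dead end. This forced $r^\pmd$ is precisely what the paper needs, and it is not special to this lemma: the continuation $o^+ r^\pmd$ is used several times later in the section (for instance in Case~2 of Lemma~\ref{bleftlem}, where $f(p)=r^-o^+r^+b^\pmd\cdots$ appears), so a proof concluding that $o^+$ simply cannot occur would render those later cases vacuous. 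Your appeal to Lemma~\ref{lem:absorb} is also misplaced: that convex cone is bounded by the initial segments of $\psi_\beta(o)$ and $\psi_\beta(g)$, which sit at $f(v_1)=v_3$, not at $v_1$ where the returning piece of $\psi_\beta(r)$ actually lies; to run an absorption argument at $v_1$ you would need two real edges at $f^{-1}(v_1)=v_2$, but $v_2$ carries only $r$, so the lemma does not apply. The correct route is the paper's: let the arc continue to $r$ and invoke the Trace Lemma.
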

\begin{proof}
If $Df(r)=o^+$, then we must have $f(r)=o^+r^\pmd...$, which is forbidden by the Trace Lemma. The same argument applies to $g^+$.
\end{proof}

\begin{lem}\label{pleftlem}
If $\psi_h$ is fixed point free, then $Df(p)=r^-$.
\end{lem}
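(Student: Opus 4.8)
The plan is to pin down the direction $Df(p)$ by eliminating all other possibilities using the structural data we have accumulated. Recall that at the vertex $v_2$ the edge $p$ is adjacent to two other real edges — call them $o$ and $b$ in the labeling of Figure \ref{221track} — and that $f(v_1) = v_3$, which (by the remark following Lemma \ref{rotationlem}) determines $f(v_2)$ and $f(v_3)$ as well. Since $f$ is a train-track map, the first edge of $f(p)$ must be one of the five real edges $o,g,p,b,r$, decorated with a sign $\pm$ (or $\circ$ if $f(p)$ has length one, which is impossible for a pseudo-Anosov with a Perron--Frobenius matrix of this size, as the dilatation forces positive growth). So I would organize the argument as a case analysis on $Df(p) \in \{o^\pmd, g^\pmd, p^\pmd, b^\pmd, r^\pmd\}$.

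First, $Df(p) = p^\pmd$ is immediately forbidden by the Trace Lemma (Lemma \ref{tracelem}): if $p^\pm \in f(p)$ we contradict fixed-point-freeness of $\psi_h$. Next, for the directions $o^\pmd$ and $g^\pmd$ I would use Lemma \ref{lem:absorb} together with the known images of the remaining edges: since $f(v_1) = v_3$, the images $\psi_\beta(r)$ (equivalently $f(v_2), f(v_3)$) constrain where $\psi_\beta(p)$ can start, and an incorrect initial direction for $p$ would either force $p$ to be absorbed into a vertex image — contradicting that $f(p)$ has positive length in the transition matrix — or force $\psi_\beta(p)$ to cross another edge's image, contradicting injectivity of $\psi_\beta$. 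The ``absorbed into $f(v)$'' picture of Figure \ref{fig:absorb} is exactly the tool here: one checks that starting $\psi_\beta(p)$ along the $o$- or $g$-side of the cusp at $v_2$ traps it in the convex cone on the initial segments of the images of the two edges bounding the relevant junction. This rules out $Df(p) \in \{o^\pmd, g^\pmd\}$.

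It then remains to distinguish $r^-$ from $r^+$, and $r^-$ from $b^\pmd$. To rule out $b^\pmd$: if $f(p)$ began with $b$, then tracing around the track (using the cusp structure of the Peacock and the fact that $p$ and $b$ share the vertex $v_2$), $\psi_\beta(p)$ and $\psi_\beta(b)$ would be forced to run parallel out of $v_2$ in a way incompatible with the cusp between them, again by the reasoning of Lemma \ref{lem:absorb} (the arc is forced to be non-smooth). To rule out $r^+$: if $Df(p) = r^+$, then $f(p) = r^+ \cdots$, so $f(p)$ contains the subword $r\bar r$, and the next letter must be one of $o,g$ (those being the edges reachable from $v_3$ in the appropriate direction); chasing this forward and lifting to $\tilde\tau$ as in the Trace Lemma argument, one finds a contradiction with $\operatorname{tr}(M) = 0$, or else the word fails to be a legal train path. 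By elimination, $Df(p) = r^-$.

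The main obstacle I expect is the case-by-case geometric bookkeeping: several of these eliminations are not purely combinatorial but require carefully drawing $\psi_\beta$ applied to the relevant edges near the vertices $v_1, v_2, v_3$ and verifying the ``absorption'' or ``forced crossing'' phenomenon, exactly as the authors warn (``We strongly encourage the reader to draw by hand...''). In particular, distinguishing $r^+$ from $r^-$ is the delicate point, since both are a priori consistent with the cusp data at $v_2$ and $v_3$; it is precisely the interaction with the lifted map $\tilde f$ and Theorem \ref{fpthm} (not just local track combinatorics on $\tau$) that breaks the symmetry, and getting that lift argument right is where the real content lies.
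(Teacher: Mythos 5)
Your main gap is the dismissal of the length-one case $f(p) = r^\circ$. You claim this is ``impossible for a pseudo-Anosov with a Perron--Frobenius matrix of this size, as the dilatation forces positive growth,'' but this is false: a train track map for a pseudo-Anosov may certainly send individual real edges to single-edge train paths -- only the full transition matrix need be Perron--Frobenius, not each image. Indeed, the maps $f_n$ of Proposition~\ref{braidttprop} (the very maps the analysis is trying to isolate) have $f_n(o)=p^\circ$, $f_n(g)=b^\circ$, $f_n(r)=g^\circ$, all of length one. The case $f(p)=r^\circ$ is in fact where most of the work in the paper's proof lives: from $f(p)=r^\circ$ one deduces $Df(b)=r^+$, then by trace $f(b)=r^+p^\pmd\ldots$, and this in turn forces $Df(o)=p^\pmd$; the three sub-cases $Df(o)\in\{p^\circ,p^-,p^+\}$ are then each ruled out via trace, absorption, or failure of the transition matrix to be Perron--Frobenius. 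None of this is recoverable from your hand-wave.

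Two further points. First, you misidentify the local structure: on the Peacock, $p$ and $b$ are incident to $v_3$, $r$ to $v_2$, and $o,g$ to $v_1$. With the normalization $f(v_1)=v_3$ (hence $f(v_3)=v_2$), the first letter of $f(p)$ is already forced to come from $R(v_2)=\{r\}$, so $Df(p)\in\{r^+,r^-,r^\circ\}$; there is no need to rule out $Df(p)\in\{o^\pmd,g^\pmd,b^\pmd\}$ at all, and your absorption argument for those is not what is happening here. Second, your treatment of $Df(p)=r^+$ is too vague to be checked; the paper's argument is concrete and short: $Df(p)=r^+$ forces the second letter of $f(p)$ to be $p$ or $b$, the former is trace, and the latter forces $f(b)=r^+b^\pmd\ldots$ (because $p$ lies to the left of $b$), again ruled out by trace.
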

\begin{proof}
\begin{figure}
    \centering
\begingroup%
  \makeatletter%
  \providecommand\color[2][]{%
    \errmessage{(Inkscape) Color is used for the text in Inkscape, but the package 'color.sty' is not loaded}%
    \renewcommand\color[2][]{}%
  }%
  \providecommand\transparent[1]{%
    \errmessage{(Inkscape) Transparency is used (non-zero) for the text in Inkscape, but the package 'transparent.sty' is not loaded}%
    \renewcommand\transparent[1]{}%
  }%
  \providecommand\rotatebox[2]{#2}%
  \newcommand*\fsize{\dimexpr\f@size pt\relax}%
  \newcommand*\lineheight[1]{\fontsize{\fsize}{#1\fsize}\selectfont}%
  \ifx\svgwidth\undefined%
    \setlength{\unitlength}{525bp}%
    \ifx\svgscale\undefined%
      \relax%
    \else%
      \setlength{\unitlength}{\unitlength * \real{\svgscale}}%
    \fi%
  \else%
    \setlength{\unitlength}{\svgwidth}%
  \fi%
  \global\let\svgwidth\undefined%
  \global\let\svgscale\undefined%
  \makeatother%
  \begin{picture}(1,0.26121428)%
    \lineheight{1}%
    \setlength\tabcolsep{0pt}%
    \put(0,0){\includegraphics[width=\unitlength,page=1]{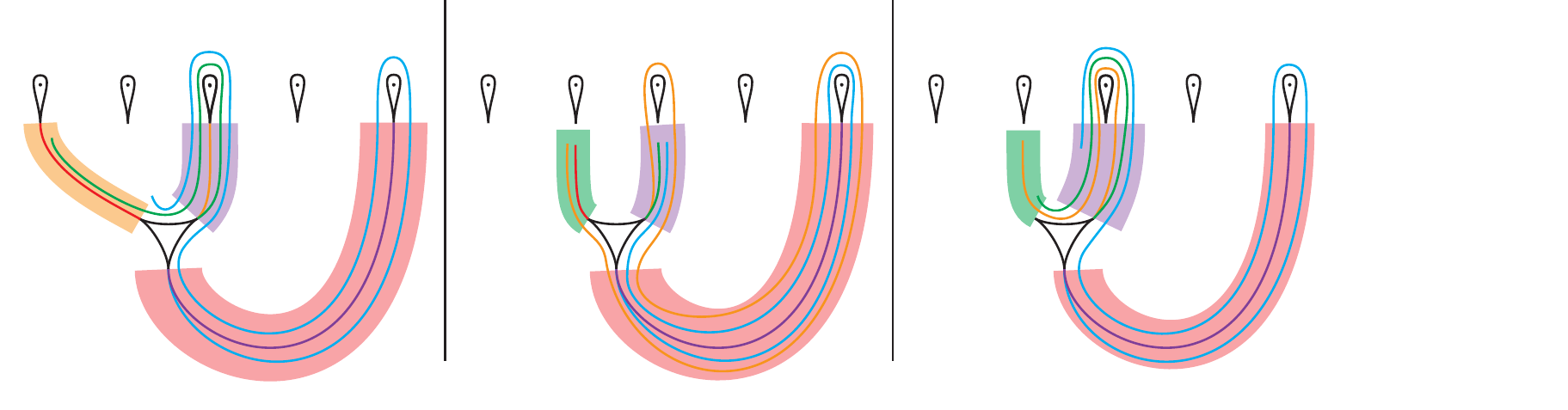}}%
    \put(0.01942857,0.2387){\color[rgb]{0.1372549,0.12156863,0.1254902}\makebox(0,0)[lt]{\lineheight{1.25}\smash{\begin{tabular}[t]{l}$Df(p) = r^\circ$\end{tabular}}}}%
    \put(0.30514286,0.2387){\color[rgb]{0.1372549,0.12156863,0.1254902}\makebox(0,0)[lt]{\lineheight{1.25}\smash{\begin{tabular}[t]{l}$Df(p) = r^\circ$\end{tabular}}}}%
    \put(0.59085714,0.2387){\color[rgb]{0.1372549,0.12156863,0.1254902}\makebox(0,0)[lt]{\lineheight{1.25}\smash{\begin{tabular}[t]{l}$Df(p) = r^\circ$\end{tabular}}}}%
    \put(0.19085714,0.2387){\color[rgb]{0.1372549,0.12156863,0.1254902}\makebox(0,0)[lt]{\lineheight{1.25}\smash{\begin{tabular}[t]{l}Case 1\end{tabular}}}}%
    \put(0.47657143,0.2387){\color[rgb]{0.1372549,0.12156863,0.1254902}\makebox(0,0)[lt]{\lineheight{1.25}\smash{\begin{tabular}[t]{l}Case 2\end{tabular}}}}%
    \put(0.70644286,0.23825714){\color[rgb]{0.1372549,0.12156863,0.1254902}\makebox(0,0)[lt]{\lineheight{1.25}\smash{\begin{tabular}[t]{l}Case 3\end{tabular}}}}%
  \end{picture}%
\endgroup%

    \caption{Cases for the proof of Lemma \ref{pleftlem}.}
    \label{pleftfig}
\end{figure}

First suppose that $Df(p)=r^+$. Then, the second letter in $f(p)$ is either $p$ or $b$. The former is not allowed by trace. In the latter case, note that then $f(b)=r^+b^\pmd...$ since $p$ is to the left of $b$, which is again ruled out by trace.

Now, suppose that $f(p)=r^\circ$. Note that then $Df(b)=r^+$, and this further implies by trace that $f(b)=r^+p^\pmd...$ It then follows that $Df(o)=p^\pmd$, so we will check these three possible cases individually. See Figure \ref{pleftfig}.

\textbf{Case 1:} $f(o)=p^\circ$. In this case, $f(b)=r^+p^+...$ and $Df(g)=p^+$. By trace, it follows that $f(g)=p^+o^\pmd...$, and this in turn forces $Df(r)=o^\pmd$. By Lemma \ref{yellowlem}, we know $Df(r)\neq o^+$, so $Df(r)=o^{-\circ}$. If $Df(r)=o^\circ$, then the real edges $o$, $p$, and $r$ are permuted, implying that the transition matrix of $f$ is not Perron-Frobenius. Finally, if $Df(r)=o^-$, we can see that $f(r)=o^-p^-r^\pmd...$, which is ruled out by trace.

\textbf{Case 2:} $Df(o)=p^-$. In this case, we must have $f(o)=p^-r^-g^\pmd...$ by trace (or else $f(o)$ is absorbed into either $f(v_3)$ or $f(v_1)$, if it goes ``inside" $b$ or $g$, respectively). This further implies that $f(g)=p^-r^-g^\pmd...$, which is ruled out by trace.

\textbf{Case 3:} $Df(o)=p^+$. Here, we have $f(o)=p^+g^\pmd...$, and therefore $f(b)=r^+p^+g^\pmd...$ In particular, this forces $f(g)=p^+g^\pmd...$, which is ruled out by trace.
\end{proof}

\begin{lem}\label{bleftlem}
If $\psi_h$ is fixed point free, then $Df(b)=r^-$.
\end{lem}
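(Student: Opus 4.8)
The proof will run exactly parallel to that of Lemma~\ref{pleftlem}: a case analysis on the first (real) letter of $f(b)$, eliminating all options but $r^-$ using the Trace Lemma (Lemma~\ref{tracelem}), the absorption Lemma~\ref{lem:absorb}, Lemma~\ref{yellowlem}, the already-established equality $Df(p)=r^-$, and the Perron-Frobenius condition on the transition matrix. First I would record that $f(v_3)=v_2$ (this is the image determined by our normalization $f(v_1)=v_3$), and that $r$ is the unique real edge incident to $v_2$; since $b$ emanates from $v_3$, the same reasoning that underlies Lemma~\ref{pleftlem} gives $Df(b)\in\{r^+,\,r^-,\,r^\circ\}$. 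It then remains to rule out $Df(b)=r^+$ and $Df(b)=r^\circ$.

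\textbf{Eliminating $Df(b)=r^+$.} In this case $f(b)=r^{+}X\cdots$, where $X\in\{p,b\}$ is the next real edge traversed. If $X=b$ then $b^{\pmd}\in f(b)$, contradicting trace. If $X=p$, I would argue as follows: $p$ and $b$ both emanate from $v_3$, so $\psi_\beta(p)$ and $\psi_\beta(b)$ both emanate from $v_2$ and both begin by running alongside $r$; since $\psi_\beta(b)$ runs along $r$ on the right and then turns into $p$, injectivity of $\psi_\beta$ (together with Lemma~\ref{lem:absorb}, which forbids $\psi_\beta(p)$ from entering the cone cut off by the initial segments of $\psi_\beta(b)$ and the adjacent image) pins $\psi_\beta(p)$ to the right side of $r$ as well, i.e.\ $Df(p)=r^+$ --- contradicting Lemma~\ref{pleftlem}. (This is the analogue of the opening paragraph of the proof of Lemma~\ref{pleftlem}.)

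\textbf{Eliminating $Df(b)=r^\circ$.} Here $f(b)=r$, i.e.\ $b\mapsto r$. Since $f(v_1)=v_3$ we have $Df(o),Df(g)\in\{p^{\pmd},b^{\pmd}\}$; since $f(v_2)=v_1$ and by Lemma~\ref{yellowlem} we have $Df(r)\in\{o^{-\circ},g^{-\circ}\}$; and $Df(p)=r^-$ is known. Now one chases the next one or two letters of each image exactly as in Cases~1--3 of Lemma~\ref{pleftlem}. In the ``most degenerate'' branch the maps $f(o),f(g),f(r)$ are all single-edge maps, so together with $f(b)=r$ they make $f$ permute a proper sub-collection of the real edges (for instance $b\mapsto r\mapsto g\mapsto b$), forcing $M$ to be block-decomposable and hence \emph{not} Perron-Frobenius --- contradicting that $\psi_\beta$ is pseudo-Anosov. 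In every other branch, pushing one of $f(o),f(g),f(r),f(b)$ one or two letters further either produces a subword $a\bar a$, i.e.\ $a^{\pmd}\in f(a)$ for a real edge $a$ (contradicting trace, via Lemma~\ref{fpthm}), or forces an image to be absorbed into a switch by Lemma~\ref{lem:absorb}. Either way we reach a contradiction, leaving $Df(b)=r^-$.

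\textbf{Main obstacle.} As with Lemma~\ref{pleftlem}, there is no single clean trick: the $Df(b)=r^\circ$ case fans out into several sub-cases, and the real work is the disciplined bookkeeping of second- and third-letter constraints --- keeping track of which side of which edge each image passes, and spotting when an edge gets absorbed --- while juggling the four tools above. A useful consistency check at the end is that the surviving possibility $Df(b)=r^-$ is precisely what the maps $f_n$ of Proposition~\ref{braidttprop} exhibit.
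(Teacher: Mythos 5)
Your setup and tool-kit are right, and the short elimination of $Df(b)=r^+$ is acceptable (the paper does it in one line via Lemma~\ref{lem:absorb}: with $Df(p)=r^-$ already in hand, $Df(b)=r^+$ would force $\psi_\beta(p)$ and $\psi_\beta(b)$, both emanating from the cusp at $\psi_\beta(v_3)$, to pass on opposite sides of $r$, which immediately triggers an absorption --- you do not actually need to split on the second letter $X$ at all). The real gap is in the $Df(b)=r^\circ$ case, which you reduce to ``one chases the next one or two letters \dots exactly as in Cases 1--3 of Lemma~\ref{pleftlem}.'' That is not what happens, and it is not a proof. The paper's proof branches on the \emph{second letter of} $f(p)$ (not on $Df(o)$ as in Lemma~\ref{pleftlem}): after ruling out $f(p)=r^-g^{+\circ}\cdots$ via Lemma~\ref{yellowlem}, one must handle four distinct main cases $f(p)=r^-g^-\cdots$, $r^-o^+\cdots$, $r^-o^-\cdots$, $r^-o^\circ$, the second and fourth of which further split into two sub-cases, and each closes out by a different combination of trace, absorption, orientation, and Perron--Frobenius arguments carried several letters deep.

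Your ``most degenerate branch'' ($b\mapsto r\mapsto g\mapsto b$, forcing a reducible transition matrix) is indeed one terminal sub-sub-case of Case~1, but presenting it as representative is misleading: in most branches the contradiction comes from a subword $a\bar a$ (trace) or from a forced absorption, and which one fires depends delicately on the side ($+$/$-$) on which earlier letters were passed. Since the proof of this lemma \emph{is} that bookkeeping, the hand-wave leaves the substantive work undone; you would need to actually walk through the four main cases (and their sub-cases) to have a proof.
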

\begin{proof}
\begin{figure}
    \centering
    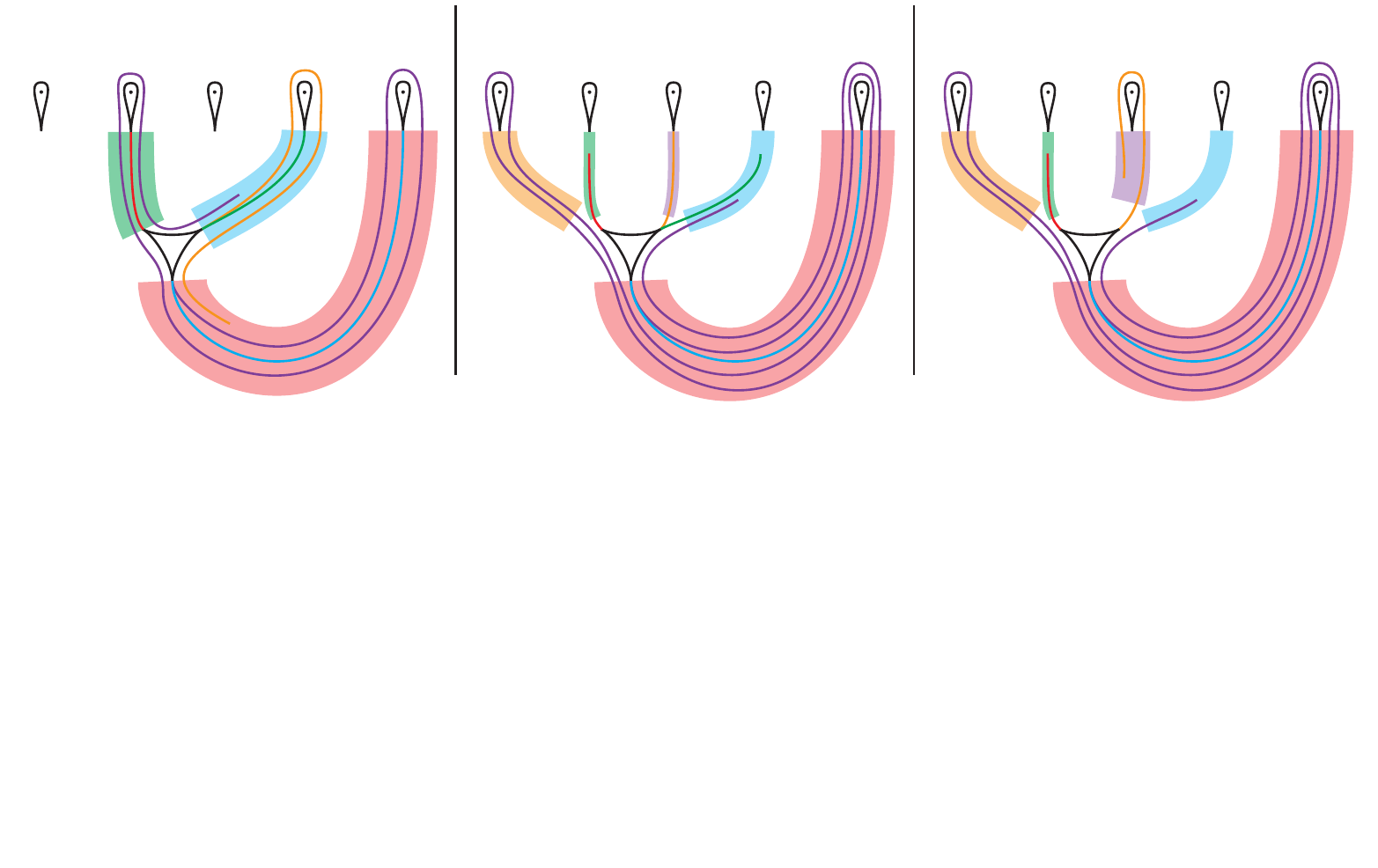
    \caption{Cases for the proof of Lemma \ref{bleftlem}.}
    \label{bleftfig}
\end{figure}

By Lemma \ref{pleftlem}, we may assume $Df(p)=r^-$. Note that $Df(b)=r^+$ is not possible, because then one of $b$ or $p$ will be absorbed into $f(v_3)$.

So, suppose $f(b)=r^\circ$. We branch along cases for the second letter in $f(p)$. Note that if $f(p)=r^-g^{+\circ}...$, then $Df(r)=g^+$, which contradicts Lemma \ref{yellowlem}. So, there are four cases left to consider, shown in Figure \ref{bleftfig}.

\textbf{Case 1:} $f(p)=r^-g^-...$ In this case, we must have $f(p)=r^-g^-b^\pmd...$, because otherwise $p$ is absorbed into $f(v_3)$ (if it follows $r$ next) or contributes trace (if it follows $p$ next). Now note that $Df(o)=b^\pmd$. If $Df(o)=b^{+\circ}$ then $f(g)=b^+g^\pmd...$, which is ruled out by trace. So, $Df(o)=b^-$ and we must have $f(o)=b^-r^-g^\pmd...$ by trace. Note that $Df(g)\neq b^+$ by Lemma \ref{lem:absorb}, so $Df(g)=b^{-\circ}$. If $Df(g)=b^-$, then we have $f(g)=b^-r^-g^\pmd...$, which is ruled out by trace. So, $Df(g)=b^\circ$. Finally, consider cases for $Df(r)$. We know $Df(r)=g^\pmd$, and by Lemma \ref{yellowlem}, we must have $Df(r)=g^{-\circ}$. If $Df(r)=g^\circ$, then the transition matrix is not Perron-Frobenius. If $Df(r)=g^-$, then $f(r)=g^-b^-r^\pmd...$, which is ruled out by trace. 

\textbf{Case 2:} $f(p)=r^-o^+...$ In this case, $f(p)=r^-o^+r^+b^\pmd...$ by trace and, by Lemma \ref{yellowlem}, it follows that $Df(r)=g^{-\circ}$. Now, look at cases for $Df(o)$.

If $Df(o)=b^{+\circ}$ then $f(g)=b^+g^\pmd...$, which is ruled out by trace. And, if $Df(o)=b^-$ then $f(o)=b^-r^-o^\pmd...$, also ruled out by trace. If $Df(o)=p^-$ then similarly $f(o)=p^-r^-o^\pmd...$, which is again ruled out by trace. There are two remaining subcases to consider:

\textbf{Subcase 2A:} $Df(o)=p^\circ$. Here, consider cases for $Df(g)$: either $Df(g)=p^+$ or $Df(g)=b^\pmd$. If $Df(g)=p^+$ then $f(g)=p^+g^\pmd...$ which is ruled out by trace. We cannot have $Df(g)=b^+$ because then $g$ is absorbed into $f(v_1)$. Finally, we cannot have $Df(g)=b^{-\circ}$ because then either $p$ is absorbed into $f(v_1)$ (if $Df(g)=b^\circ$ or $Df(g)=b^-$ with $p$ outside $g$) or $g$ is absorbed into $f(v_3)$ (if $Df(g)=b^-$ and $g$ is outside $p$).

\textbf{Subcase 2B:} $Df(o)=p^+$. Here, consider cases for $Df(r)$: either $Df(r)=g^\circ$ or $Df(r)=g^-$, by Lemma \ref{yellowlem}. If $Df(r)=g^\circ$ then $o$ will be absorbed into $f(v_3)$. If $Df(r)=g^-$, then either $o$ is inside $r$, in which case $f(r)=g^-p^-r^\pmd...$ (which is ruled out by trace); or, $r$ is inside $o$, in which case $o$ will be absorbed into $f(v_3)$.

\textbf{Case 3:} $f(p)=r^-o^-...$ In this case, we must have $f(p)=r^-o^-b^\pmd...$ (otherwise $p$ will be absorbed into $f(v_3)$ or contribute trace), which forces $Df(o)=b^\pmd$ and $Df(g)=b^\pmd$, as well. If $Df(o)=b^+$, then either $f(o)=b^+o^\pmd...$ (which is ruled out by trace), or $f(o)=b^+g^\pmd...$ in which case $f(g)=b^+g^\pmd...$, too (which is again ruled out by trace). And, if $Df(o)=b^-$, then $f(o)=b^-r^-o^\pmd...$, which is ruled out by trace.

So, we must have $f(o)=b^\circ$. Note here that we must have $Df(r)=o^{-\circ}$, by Lemma \ref{yellowlem}, and $Df(g)=b^+$. Now, look at $r$: if $Df(r)=o^-$, then $f(r)=o^-b^-r^\pmd...$ which is ruled out by trace. Finally, if $Df(r)=o^\circ$, then either $g$ is inside $p$, in which case $g$ is absorbed into $f(v_3)$, or $p$ is inside $g$, in which case $p$ will be absorbed into $f(v_1)$.

\textbf{Case 4:} $f(p)=r^-o^\circ$ In this case, note that $Df(r)=g^{-\circ}$ by Lemma \ref{yellowlem}, so consider subcases for $Df(r)$.

\textbf{Subcase 4A:} $Df(r)=g^-$. Here, consider cases for $Df(o)$. If $Df(o)=b^{+\circ}$ then $f(g)=b^+g^\pmd...$, which is ruled out by trace. If $Df(o)=b^-$ then $f(o)=b^-r^-o^-...$, again ruled out by trace. If $Df(o)=p^{-\circ}$ then $f(r)=g^-p^-r^\pmd...$, ruled out by trace. And finally, if $Df(o)=p^+$ then either $o$ is inside $r$ in which case $f(r)=g^-p^-r^\pmd...$ (which is ruled out trace), or $o$ is outside $r$ in which case it will be absorbed into $f(v_3)$.

\textbf{Subcase 4B:} $Df(r)=g^\circ$. Look first at $Df(o)$. If $Df(o)=p^+$ or $Df(o)=b^+$ then $o$ will be absorbed into $f(v_3)$. If $Df(o)=p^-$ or $Df(o)=b^-$ then $f(o)=p^-r^-o^\pmd...$ or $f(o)=b^-r^-o^\pmd...$, both of which are ruled out trace. So, we must have either $Df(o)=p^\circ$ or $Df(o)=b^\circ$ and then it follows that $Df(g)=b^\circ$ or $Df(g)=p^\circ$, respectively, as well, after some simple analysis on $Df(g)$. But, note that $Df(o)=b^\circ$ and $Df(g)=p^\circ$ is not possible, since $\psi_\beta$ is orientation-preserving. And, $Df(o)=p^\circ$ and $Df(g)=b^\circ$ is not possible, because then the transition matrix is not Perron-Frobenius.
\end{proof}

\begin{lem}\label{dfrlem}
If $\psi_h$ is fixed point free, then $Df(r)=g^{-\circ}$.
\end{lem}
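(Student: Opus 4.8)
The plan is to pin down $Df(r)$ by the same elimination method used in Lemmas \ref{pleftlem} and \ref{bleftlem}. Since $r$ is incident to the switch $v_1$ and we have arranged $f(v_1)=v_3$, the train path $f(r)$ issues from $v_3$, so a priori $Df(r)$ is one of the real-edge directions $o^{\pmd}$, $g^{\pmd}$, $p^{\pmd}$, $b^{\pmd}$, $r^{\pmd}$. The Trace Lemma (Lemma \ref{tracelem}) immediately removes $r^{\pmd}$, and Lemma \ref{yellowlem} removes $o^+$ and $g^+$. The remaining task is to eliminate the eight possibilities $o^-$, $o^\circ$, $p^+$, $p^-$, $p^\circ$, $b^+$, $b^-$, $b^\circ$, which will leave exactly $Df(r)=g^-$ or $Df(r)=g^\circ$, i.e. $Df(r)=g^{-\circ}$.

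First I would dispose of the cases where $f(r)$ begins along $p$ or $b$. Recall that $Df(p)=r^-$ and $Df(b)=r^-$ are already forced (Lemmas \ref{pleftlem}, \ref{bleftlem}); feeding these constraints into the word $f(r)$ and branching on its second letter, one finds in each subcase that either some edge is absorbed into one of the vertex images $f(v_i)$ (forbidden by Lemma \ref{lem:absorb}), or a subword of the form $r^{\pmd}$, $p^{\pmd}$, or $b^{\pmd}$ appears inside $f$ of that very edge, which lifts to a diagonal contribution in the transition matrix of $\tilde f$ and contradicts the Trace Lemma. The options $Df(r)=o^-$ and $Df(r)=o^\circ$ are handled the same way: $o^-$ forces $f(r)=o^-\ldots$, which again is either absorbed or produces a trace contribution, while $o^\circ$ — after one reads off the forced initial segments of $f(o)$ and $f(g)$ — leads either to such a contradiction or to an $f$-invariant proper subset of the real edges, making the transition matrix reducible and contradicting Perron--Frobenius. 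The fact that $\psi_\beta$ is orientation-preserving is also used to rule out a couple of the degenerate subcases, exactly as in Case 4B of Lemma \ref{bleftlem}. What survives all of this is precisely $Df(r)=g^{-\circ}$.

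As with the previous two lemmas, the genuine obstacle is not any single deduction but the combinatorial bookkeeping: each of the eight cases splits into several subcases according to the second letter of $f(r)$ (and, where relevant, the already-constrained images $f(o),f(g),f(p),f(b)$), and in each subcase one must correctly identify which of the three obstructions — trace, absorption, or irreducibility — applies. As the paper recommends for the surrounding arguments, the safest way to verify each subcase is to draw the partial train track map by hand rather than to reason purely from the words.
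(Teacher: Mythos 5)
Your proposal misidentifies the switch that $r$ is incident to, and as a consequence overcounts the a~priori possibilities for $Df(r)$ by a factor of five. In the labelling of Figure~\ref{221track}, the edges $o,g$ are incident to $v_1$, the edges $p,b$ to $v_3$, and $r$ is the unique real edge at $v_2$. Since a train track map carries $\Lk(v_2)$ to $\Lk(f(v_2))$ and we have set $f(v_1)=v_3$, $f(v_3)=v_2$, $f(v_2)=v_1$, the first real letter of $f(r)$ is forced to be a real edge in $\Lk(v_1)$, i.e.\ $Df(r)\in\{o^{\pmd},g^{\pmd}\}$ before any dynamical input. (Your sentence ``since $r$ is incident to the switch $v_1$ and we have arranged $f(v_1)=v_3$, the train path $f(r)$ issues from $v_3$'' is wrong on both points, and even on its own terms would give $Df(r)\in\{p^\pmd,b^\pmd\}$, not the five-element set you list.) The bulk of your proposed case analysis --- eliminating $p^\pmd$, $b^\pmd$, $r^\pmd$, and much of $o^\pmd$ by absorption, trace, or irreducibility --- is therefore about cases that never arise, and the sketch you give for those cases (``one finds in each subcase that\dots'') is not actually carried out. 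Once you start from the correct two-element list, Lemma~\ref{yellowlem} removes $o^+$ and $g^+$, and the only thing left to show is $Df(r)\neq o^{-\circ}$; the paper does this in three lines by observing that $Df(p)=Df(b)=r^-$ (Lemmas~\ref{pleftlem} and~\ref{bleftlem}) together with $Df(r)=o^{-\circ}$ forces $f(p)$ and $f(b)$ to both begin $r^-o^-\cdots$, after which a single application of the Trace Lemma to $f(b)$ and then $f(p)$ gives the contradiction. So the gap is not merely one of efficiency: the underlying picture of which switch $r$ sits at, and hence which transitions are combinatorially legal, needs to be corrected before the elimination can be made to work.
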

\begin{proof}
By Lemma \ref{yellowlem}, we know that $Df(r)\not\in\{g^+,r^+\}$, so we just need to show that $Df(r)\neq o^{-\circ}$. Suppose otherwise, i.e. $Df(r)=o^{-\circ}$. By Lemmas \ref{pleftlem} and \ref{bleftlem}, we know that $Df(p)=Df(b)=r^-$, and then because $Df(r)=o^{-\circ}$ by assumption, it follows that $f(p)=r^-o^-...$ and $f(b)=r^-o^-...$ From here, we must have $f(b)=r^-o^-p^\pmd...$ by trace, but then $f(p)=r^-o^-p^\pmd$, which is ruled out by trace.
\end{proof}

We are finally ready to prove Proposition \ref{ttprop}.

\begin{proof}[Proof of Proposition \ref{ttprop}]

\begin{figure}
    \centering
\begingroup%
  \makeatletter%
  \providecommand\color[2][]{%
    \errmessage{(Inkscape) Color is used for the text in Inkscape, but the package 'color.sty' is not loaded}%
    \renewcommand\color[2][]{}%
  }%
  \providecommand\transparent[1]{%
    \errmessage{(Inkscape) Transparency is used (non-zero) for the text in Inkscape, but the package 'transparent.sty' is not loaded}%
    \renewcommand\transparent[1]{}%
  }%
  \providecommand\rotatebox[2]{#2}%
  \newcommand*\fsize{\dimexpr\f@size pt\relax}%
  \newcommand*\lineheight[1]{\fontsize{\fsize}{#1\fsize}\selectfont}%
  \ifx\svgwidth\undefined%
    \setlength{\unitlength}{450.61500549bp}%
    \ifx\svgscale\undefined%
      \relax%
    \else%
      \setlength{\unitlength}{\unitlength * \real{\svgscale}}%
    \fi%
  \else%
    \setlength{\unitlength}{\svgwidth}%
  \fi%
  \global\let\svgwidth\undefined%
  \global\let\svgscale\undefined%
  \makeatother%
  \begin{picture}(1,0.29243367)%
    \lineheight{1}%
    \setlength\tabcolsep{0pt}%
    \put(0,0){\includegraphics[width=\unitlength,page=1]{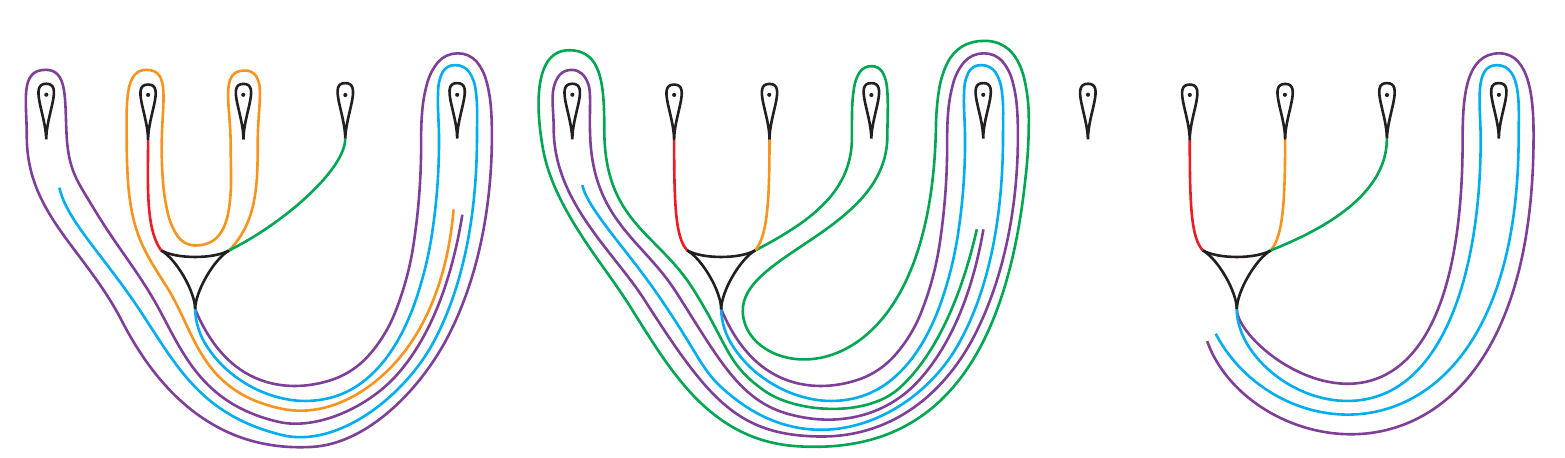}}%
    \put(0.02225292,0.26868279){\color[rgb]{0.1372549,0.12156863,0.1254902}\makebox(0,0)[lt]{\lineheight{1.25}\smash{\begin{tabular}[t]{l}Final\end{tabular}}}}%
    \put(0.35513132,0.26868279){\color[rgb]{0.1372549,0.12156863,0.1254902}\makebox(0,0)[lt]{\lineheight{1.25}\smash{\begin{tabular}[t]{l}Final\end{tabular}}}}%
    \put(0.68800971,0.26868279){\color[rgb]{0.1372549,0.12156863,0.1254902}\makebox(0,0)[lt]{\lineheight{1.25}\smash{\begin{tabular}[t]{l}Final\end{tabular}}}}%
    \put(0.22197996,0.26868279){\color[rgb]{0.1372549,0.12156863,0.1254902}\makebox(0,0)[lt]{\lineheight{1.25}\smash{\begin{tabular}[t]{l}Case 1\end{tabular}}}}%
    \put(0.55485835,0.26868279){\color[rgb]{0.1372549,0.12156863,0.1254902}\makebox(0,0)[lt]{\lineheight{1.25}\smash{\begin{tabular}[t]{l}Case 2A\end{tabular}}}}%
    \put(0.88773675,0.26868279){\color[rgb]{0.1372549,0.12156863,0.1254902}\makebox(0,0)[lt]{\lineheight{1.25}\smash{\begin{tabular}[t]{l}Case 2B\end{tabular}}}}%
    \put(0,0){\includegraphics[width=\unitlength,page=2]{final221cases_svg-tex.pdf}}%
  \end{picture}%
\endgroup%

    \caption{Cases for the proof of Proposition \ref{ttprop}. In this figure, we have chosen to omit the shaded collapsing regions for readability.}
    \label{final221fig}
\end{figure}

Note by the discussion after Lemma \ref{rotationlem}, it suffices to assume $f(v_1)=v_3$. By Lemmas \ref{pleftlem} and \ref{bleftlem}, we know $Df(p)=Df(b)=r^-$, and by Lemma \ref{dfrlem} we know $Df(r)=g^{-\circ}$. From here, we branch along cases for $Df(o)$. The cases $Df(o)= b^\pmd$ can be ruled out quickly as follows.

If $Df(o)=b^+$, then $f(o)=b^+g^\pmd...$ by trace. But, then $f(g)=b^+g^\pmd...$, which is ruled out by trace. If $Df(o)=b^\circ$, then $f(g)=b^+g^\pmd...$ because $Df(r)=g^{-\circ}$, and this is ruled out by trace. Finally, if $Df(o)=b^-$, then $f(o)=b^-r^-g^\pmd...$ by trace. Because $Df(r)=g^{-\circ}$, it follows that $f(p)=r^-g^-...$ and $f(b)=r^-g^-...$ From here, we must have $f(b)=r^-g^-p^\pmd...$ by trace. But, then $f(p)=r^-g^-p^\pmd$, too, which is ruled out by trace.

And, note that if $Df(o)=p^-$, then $f(o)=p^-r^-g^\pmd...$ by trace. Here, we must have $f(p)=r^-g^-p^\pmd...$ because $Df(r)=g^{-\circ}$, which is ruled out by trace. So, we have two cases left to consider, shown in Figure \ref{final221fig}.

\textbf{Case 1:} $Df(o)=p^+$. In this case, we have $f(o)=p^+g^\pmd...$ by trace. If $f(o)=p^+g^{-\circ}...$ then $f(r)$ is either absorbed into $f(v_1)$ or passes over $r$. So $f(o)=p^+g^+r^{\pmd}...$.

Next, consider $Df(g)$. If $Df(g)=b^+$ then $g$ is absorbed into $f(v_1)$, and the case $Df(g)=p^+$ is ruled out quickly by trace. So, we have $Df(g)=b^{-\circ}$. In either case, note that for both of $f(p)$ and $f(b)$, the second letter is in the set $\{o^\pmd, g^-\}$ because $Df(r)=g^{-\circ}$. If $f(p)=r^-g^-...$ then $f(p)=r^-g^-p^\pmd...$, which is ruled out by trace. So, $f(p)=r^-o^\pmd...$

Similarly, if $f(b)=r^-g^-...$ then either $b$ is outside $o$, in which case $b$ will be absorbed into $f(v_1)$, or $o$ is outside $b$, in which case $o$ will be absorbed into $f(v_3)$. So, we must have $f(b)=r^-o^\pmd...$, too.

Now, if $f(p)=r^-o^{+\circ}...$ then $f(b)=r^-o^+r^+b^\pmd...$ which is ruled out by trace. So, we must have $f(p)=r^-o^-r^\pmd...$ And, if $f(p)=r^-o^-r^{-\circ}...$ then $f(o)=b^+g^+r^-o^\pmd...$, which is ruled out by trace. So, we have instead $f(p)=r^-o^-r^+g^-p^\pmd...$ because $Df(r)=g^{-\circ}$, which is again ruled out by trace.

\textbf{Case 2:} $Df(o)=p^\circ$. Here, we branch along subcases for $Df(g)$. Note that if $Df(g)=b^+$ then $g$ is absorbed into $f(v_1)$, and if $Df(g)=p^+$ then $f(g)=p^+g^\pmd...$, which is ruled out by trace. So we have two remaining subcases to consider:

\textbf{Subcase 2A:} $Df(g)=b^-$. Because $Df(r)=g^{-\circ}$, note that if $f(p) = r^- g^-...$ then $p$ is absorbed into $f(v_1)$. A similar argument applies to $f(b)$, so we must have both $f(p)=r^-o^\pmd...$ and $f(b)=r^-o^\pmd...$ Now, if $f(b)=r^-o^+...$ then either $b$ is absorbed into $f(v_3)$, or $f(b)=r^-o^+r^+b^\pmd$, which is ruled out by trace. So, either $f(b)=r^-o^-r^\pmd...$ or $f(b)=r^-o^\circ$.

In the first case, note that if $f(b)=r^-o^-r^+...$, then either $b$ is absorbed into $f(v_3)$, absorbed into $f(v_1)$, or $f(b)=r^-o^-r^+o^+r^+b^\pmd...$, which is ruled out by trace. So, we must have either $f(b)=r^-o^-r^-...$ or $f(b)=r^-o^-r^\circ$.

We can then see that $f(b)=(r^-o^-)^kr^-o^\circ$ or $f(b)=(r^-o^-)^{k+1}r^\circ$ for some $k\geq 0$. The argument that follows will not depend on $k$ (with large $k$, all remaining strands will just turn more times along $r$ and $o$), so for simplicity suppose either $f(b)=r^-o^\circ$ or $f(b)=r^-o^-r^\circ$.

First, suppose $f(b)=r^-o^\circ$. Then, we must have $f(p)=r^-o^-r^\pmd...$ and $f(g)=b^-r^-o^-...$ Note here that if $f(p)=r^-o^-r^{+\circ}...$ then $g$ will be absorbed into $f(v_3)$. But, if $f(p)=r^-o^-r^-...$, then $p$ will be absorbed into $f(v_3)$. This same argument will work for arbitrary $k$ after several twists around $r$ and $o$. 

Next, suppose $f(b)=r^-o^-r^\circ$. The argument is very similar in this case. We must have $f(p)=r^-o^-r^-o^\pmd...$ and $f(g)=b^-r^-o^-r^-o^\pmd...$ If $f(p)=r^-o^-r^-o^{-\circ}...$ then $g$ will be absorbed into $f(v_3)$. And, if $f(p)=r^-o^-r^-o^+$ then either $g$ will be absorbed into $f(v_3)$ or $p$ will be absorbed into $f(v_1)$. As before, the same argument will work for arbitrary $k$ after several additional twists around $r$ and $o$.

\textbf{Subcase 2B:} $Df(g)=b^\circ$. We cannot have $Df(r)=g^-$ since then $r$ will be absorbed into $f(v_1)$. So, we must have $Df(r)=g^\circ$. From here, note that $f(b)=r^-o^{-\circ}...$ because otherwise $b$ will be absorbed into either $f(v_1)$ or $f(v_3)$. In either case, it follows that $f(p)=r^-o^-r^{-\circ}...$ because otherwise $p$ will be absorbed into either $f(v_1)$ or $f(v_3)$. Iterating the same argument, it is now easy to see that $f(b)=(r^-o^-)^{\frac{n}{2}}r^-o^\circ$ or $f(b)=(r^-o^-)^{\frac{n+1}{2}}r^\circ$, and $f(p)=(r^-o^-)^{(\frac{n}{2}+1)}r^\circ$ or $f(p)=(r^-o^-)^{\frac{n+1}{2}}r^-o^\circ$ for some $n\geq 0$.

One may observe that these final train track maps match identically with the ones given in Proposition \ref{braidttprop}. Proposition \ref{braidttprop} and the subsequent discussion then implies that the braid $\beta$ is conjugate in the spherical mapping class group to $\beta_n^{-1}$ for some $n$.
\end{proof}

\section{The tight splitting}\label{sec:split}

This section is devoted to developing a tool which will be integral to the proof of Theorem \ref{thm:221}: a specialized form of ``splitting," which will allow us to restrict our attention to pseudo-Anosovs carried by a single train track.

\subsection{Standardly embedded tracks}

We first describe a particular class of train tracks on the punctured disk, called \textit{standardly embedded} tracks, which will aid in the description of our splitting procedure. Standardly embedded train tracks have previously appeared in the work of Ko--Los--Song (\cite{KLS}), Cho--Ham (\cite{CH}), and Ham--Song (\cite{HS}), who used them to study pseudo-Anosovs on $S_{0,n}^1$ for small $n$.

\begin{defn}\label{defn:stdembed}
An \textit{infinitesimal polygon} of a train track $\tau$ is a connected component of $S_{0,n}^1 \setminus \tau$ whose boundary consists of finitely many infinitesimal edges of $\tau$. A train track $\tau$ on $S_{0,n}^1$ is \textit{standardly embedded} if the following conditions hold:

\begin{enumerate}
\item Every component of $S_{0,n}^1 \setminus \tau$ is an infinitesimal polygon, except for the one containing $\partial S_{0,n}^1$.
\item If two edges of $\tau$ are tangent at a switch, then either both are real or both are infinitesimal.
\item Cusps only occur at vertices of infinitesimal polygons.
\end{enumerate}
\end{defn}

Figure \ref{fig:trackex} is an example of a standardly embedded track, and Figure \ref{fig:PAexample} shows a pseudo-Anosov carried by this track, as well as the induced train track map. Every train track may be adjusted to a standarly embedded one, and this adjustment does not affect which pseudo-Anosovs the track carries. So, we have:

\begin{prop}
Every pseudo-Anosov on $S_{0,n}^1$ is carried by a standardly embedded train track.
\end{prop}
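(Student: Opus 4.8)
The plan is to run the Bestvina--Handel machinery of subsection \ref{subsec:fibsurf} and then put the resulting train track into standard form by elementary moves that do not disturb the carried pseudo-Anosov. By subsection \ref{subsec:fibsurf} (i.e. \cite{BH}), $\psi$ is carried by some train track $\tau_0$ on $S_{0,n}^1$, and by Proposition \ref{Prop:track} each component of $\widehat{S}\setminus\tau_0$ is either a disk with $k\ge 3$ cusps or a disk with a single marked point and $k\ge 1$ cusps. Exactly one such component, call it $R$, contains $\partial S_{0,n}^1$: after capping off it is the disk carrying the marked point of the capping disk, and it is the only complementary region of $\tau_0$ in $S_{0,n}^1$ that is exempted by Definition \ref{defn:stdembed}(1). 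Thus to obtain Definition \ref{defn:stdembed}(1) it suffices to arrange every component $C\ne R$ to be bounded entirely by infinitesimal edges, after which the remaining work is to verify Definition \ref{defn:stdembed}(2) and (3), i.e. to localize all cusps and to separate real-versus-infinitesimal tangencies at switches.

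The standardization step is as follows. For each complementary region $C\ne R$, a $k$-cusped disk (retaining the puncture if $C$ has one), isotope $\tau_0$ in a neighborhood of $\overline{C}$ so as to replace $C$ by a slightly smaller infinitesimal $k$-gon $P_C\subset C$: one ``combs'' each real sub-arc of $\partial C$ toward an adjacent cusp, so that afterwards $\partial P_C$ consists of $k$ infinitesimal edges meeting at $k$ cusp-vertices (one per cusp of $C$), with the real branches of $\tau_0$ reattached at the cusp-vertices of $P_C$ on the side opposite the cusp; if $C$ contained a puncture it now lies in the interior of $P_C$, so that $P_C$ is a once-punctured monogon when $k=1$. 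Each such move is a finite composition of train track isotopies and shifts supported near $\overline{C}$. Carrying this out for every $C\ne R$ produces a track $\tau$, for which (1) holds by construction, (3) holds because every cusp of $\tau$ is a cusp-vertex of some $P_C$, and (2) holds because at a cusp-vertex of $P_C$ the branches on one side of the switch are the two infinitesimal polygon edges forming the cusp and those on the other side are the reattached real branches, while all remaining switches of $\tau$ involve only real edges.

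Finally, each move used above --- isotopy of the track in $S_{0,n}^1$ minus its punctures, and the local combing of a complementary region down to an infinitesimal polygon --- is a standard train track operation that does not change the set of laminations (equivalently, measured foliations) carried, so $\psi$ remains carried by $\tau$, which is therefore the desired standardly embedded track. The main obstacle is making the combing step precise and uniform: one must check that the real branches meeting a complementary region $C$ can always be slid off $C$ simultaneously within a neighborhood of $\overline{C}$ --- using that each cusp of $C$ is a smooth tangency of exactly two arcs --- that this neither merges nor creates complementary regions (so Proposition \ref{Prop:track}'s conclusion persists), and that the cusps of the exterior region $R$ end up at cusp-vertices of the polygons $P_C$ adjacent to them, as (3) demands. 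An alternative that sidesteps this bookkeeping is to build $\tau$ directly from the invariant unstable foliation of $\psi$ --- a $k$-gon around each $k$-prong singularity, joined by real branches running transverse to the foliation --- which is standardly embedded by construction; the price is re-deriving from the Bestvina--Handel correspondence that such a $\tau$ carries $\psi$. Either way, this standardization is also effectively carried out, in the small-$n$ cases relevant here, in \cite{KLS}, \cite{CH}, and \cite{HS}.
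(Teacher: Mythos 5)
The paper does not actually prove this proposition: it is asserted in the single sentence immediately preceding it (``Every train track may be adjusted to a standardly embedded one, and this adjustment does not affect which pseudo-Anosovs the track carries''), with the standardization procedure left implicit in the cited references \cite{KLS}, \cite{CH}, \cite{HS}. Your combing argument is the standard way of realizing that assertion starting from a Bestvina--Handel track, so you are taking the same approach rather than a different one; the bookkeeping concerns you flag at the end (coherence of the combing across adjacent regions, realizing it via carrying-preserving elementary moves, and landing the exterior cusps on vertices of the adjacent infinitesimal polygons) are genuine but routine and exactly the sort of detail the paper is content to leave to the reader, and the alternative you sketch --- building the track directly from $k$-gons around the singularities of the invariant foliation --- is equally valid.
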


We adapt the following definition from Ham--Song's notion of an elementary folding map \cite{HS}.

\begin{defn}\label{defn:fold}
Let $\tau, \tau_1 \hookrightarrow S_{0,n}^1$ be standardly embedded train tracks. A \textit{Markov map} is a graph map $p: \tau_1 \to \tau$ that maps vertices to vertices, and is locally injective away from the preimages of vertices. An \textit{elementary folding map} is a smooth Markov map such that for exactly one real edge $\alpha$, the image $p(\alpha)$ has word length 2, while the images of all other edges have word length 1. We require that the distinguished edge $\alpha$ belong to a cusp $(\alpha, \beta)$ of $\tau_1$, and that $p(\alpha)$ be of the form

\[
p(\alpha) = p(\beta) \cdot a,
\]

\noindent where $a$ is a real edge joined to $p(\beta)$ by an infinitesimal edge.
\end{defn}

For the purposes of this paper, an elementary folding map $p: \tau_1 \to \tau$ will be the identity map away from the distinguished real edge $\alpha$. See Figure \ref{fig:FoldEx}.

\begin{figure}
    \centering
    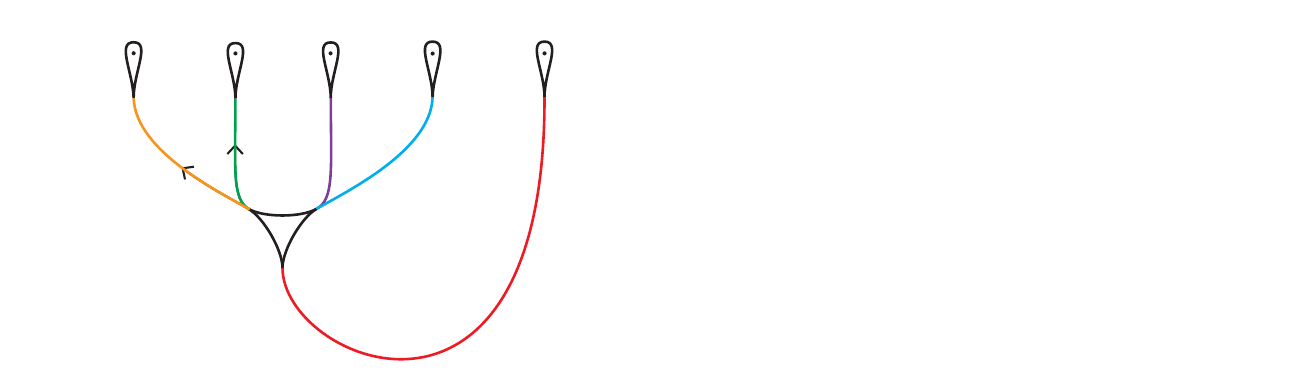
    \caption{An example of an elementary folding map. The map $p$ is the identity except at the edge $\alpha$, which is mapped as a directed path to $\beta \cdot e \cdot a$.}
    \label{fig:FoldEx}
\end{figure}

\begin{rem}
An elementary folding map in our terminology is the composition of two elementary moves in Ham-Song's terminology \cite{HS}.
\end{rem}

\subsection{The tight splitting}

We are ready to introduce a specialized variant of a classical operation on train tracks, known as \textit{splitting} (cf. Section 2.1 of \cite{HP}). Our variant, which we call \textit{tight splitting}, takes as input the data $(\tau, \psi, f)$ of a pseudo-Anosov $\psi$ with an invariant standardly embedded train track $\tau$ and outputs another train track $\tau_1$ that is also invariant under $\psi$. 

More precisely, suppose that $(\tau, \psi, f)$ is the data of a pseudo-Anosov $\psi$ on $S_{0,n}^1$ carried by the standardly embedded train track $\tau$:

\[
\begin{tikzcd}
& \tau\\
\tau \arrow{r}[swap]{\psi} \arrow{ur}{f} & \psi(\tau) \arrow{u}[swap]{\text{collapse}}
\end{tikzcd}
\]

\noindent Suppose further that $\tau_1 \hookrightarrow S_{0,n}^1$ is another standardly embedded train track such that there exists an elementary folding map $p: \tau_1 \to \tau$. Then there is a well-defined elementary folding map $p_\psi: \psi(\tau_1) \to \psi(\tau)$ such that the following diagram commutes:

\[
\begin{tikzcd}
& \tau \\
\tau \arrow{r}[swap]{\psi} \arrow{ur}{f} & \psi(\tau) \arrow{u}[swap]{\text{collapse}}\\
\tau_1 \arrow{r}{\psi} \arrow{u}{p} & \psi(\tau_1) \arrow{u}[swap]{p_\psi}
\end{tikzcd}
\]

\noindent If $\tau_1$ were invariant under $\psi$, we would then be able to complete the above commutative diagram as follows: 

\[
\begin{tikzcd}
& \tau \\
\tau \arrow{r}[swap]{\psi} \arrow{ur}{f} & \psi(\tau) \arrow{u}[swap]{\text{collapse}}\\
\tau_1 \arrow{r}{\psi} \arrow{u}{p} \arrow{dr}[swap]{f_1} & \psi(\tau_1) \arrow{u}[swap]{p_\psi} \arrow{d}{\text{collapse}}\\
& \tau_1
\end{tikzcd}
\]

\noindent Unfortunately, $\tau_1$ need not be invariant under $\psi$, as Example \ref{ex:split_oops} shows. We introduce tight splitting to deal with this problem by taking into account the train track map $f: \tau \to \tau$. In particular, we will show that if $(\tau, \psi, f)$ is the data of a pseudo-Anosov acting on a standardly embedded train track $\tau$, then there is always a split $p: \tau_1 \to \tau$ such that $\tau_1$ is still invariant under $\psi$. See Proposition \ref{tsplitcarry}.

\begin{ex}\label{ex:split_oops}
    Consider the pseudo-Anosov $\psi: S_{0,5}^1 \to S_{0,5}^1$ represented in Figure \ref{fig:split_ex_before}. By computing a right eigenvector for the Perron-Frobenius eigenvalue of the transition matrix $M$, we see that edge $e_2$ has smaller transverse measure than edge $e_3$. Following Harer-Penner, we can perform a split by peeling $e_2$ away from $e_3$ (truthfully, this is two splits since we need to also peel $e_2$ away from the non-expanding edge connecting it to $e_3$). The resulting train track is shown on the left of Figure \ref{fig:split_ex_after}. Unhappily, $\tau_1$ is not invariant under $\psi$.
    
    After some thought, one might notice that a different split does produce an invariant train track for $\psi$. Indeed, we can see from Figure \ref{fig:split_ex_before} that $e_4$ has smaller transverse measure than $e_5$, and performing the corresponding pair of splits (first over the black edge connecting $e_4$ to $e_5$, then over $e_5$) produces an invariant train track $\tau_2$, as desired. See Figure \ref{fig:split_ex_after_2}. This second split differs from the first in that it is compatible with the action of $\psi$ on $\tau$: all paths in the image $\psi(\tau)$ that collapse onto $e_4$ also collapse onto $e_5$. It is this property that allows us to easily isotope strands in $\psi(\tau)$ to lie transverse to the leaves of the fibered neighborhood of $\tau_2$. See Definition \ref{defn:tsplit} and Proposition \ref{tsplitcarry} for more.
\end{ex}

\begin{figure}
    \centering
    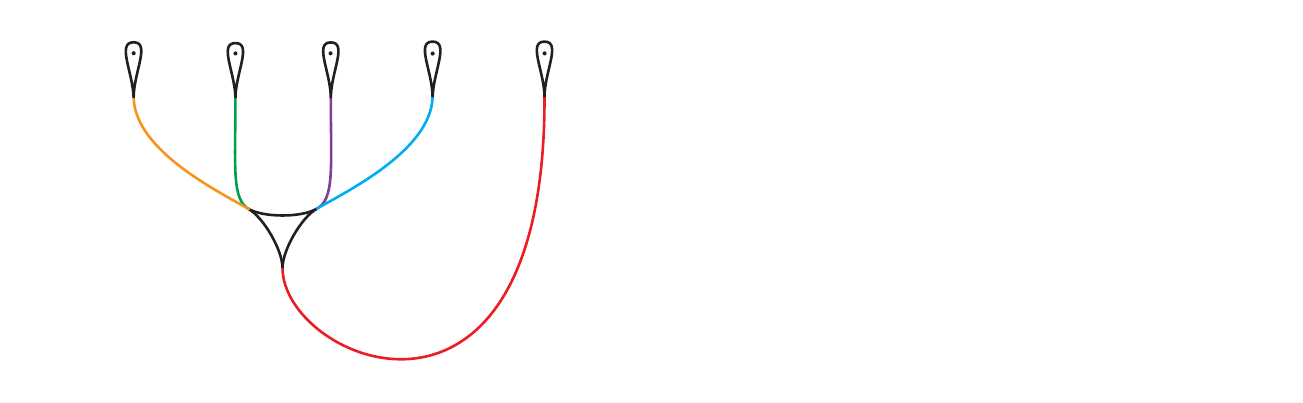
    \caption{The action of a particular pseudo-Anosov $\psi: S_{0,5}^1 \to S_{0,5}^1$ on an invariant train track. Each of the loop edges contains a puncture.}
    \label{fig:split_ex_before}
\end{figure}

\begin{figure}
    \centering
    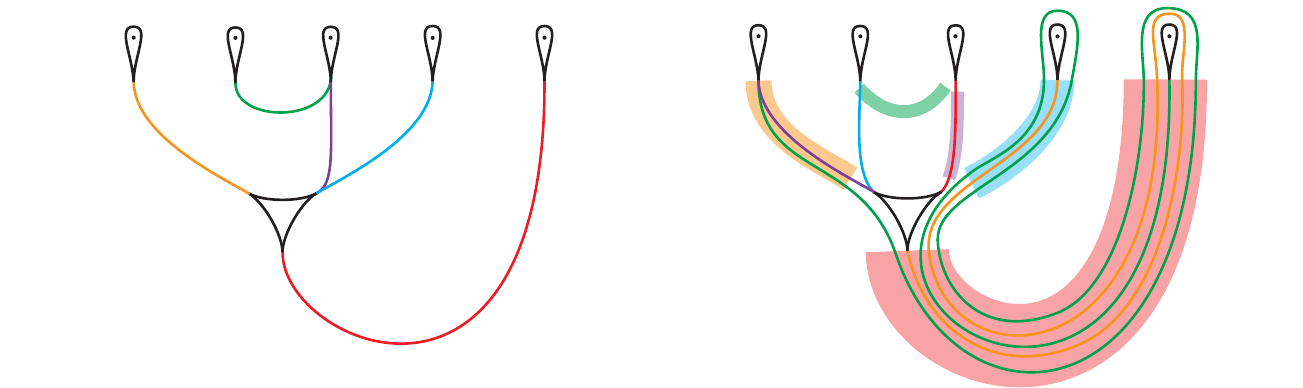
    \caption{The action of $\psi$ after na\"ively splitting $e_2$ over $e_3$. On the left is the new track $\tau_1$, and on the right is the image $\psi(\tau_1)$, up to isotopy. The only difference between the righthand images of this figure and Figure \ref{fig:split_ex_before} is that the image of of $e_2$ has been peeled back along that of $e_3$ and now starts at the leftmost loop. As we can see, $\tau_1$ is not invariant under $\psi$.}
    \label{fig:split_ex_after}
\end{figure}

\begin{figure}
    \centering
\begingroup%
  \makeatletter%
  \providecommand\color[2][]{%
    \errmessage{(Inkscape) Color is used for the text in Inkscape, but the package 'color.sty' is not loaded}%
    \renewcommand\color[2][]{}%
  }%
  \providecommand\transparent[1]{%
    \errmessage{(Inkscape) Transparency is used (non-zero) for the text in Inkscape, but the package 'transparent.sty' is not loaded}%
    \renewcommand\transparent[1]{}%
  }%
  \providecommand\rotatebox[2]{#2}%
  \newcommand*\fsize{\dimexpr\f@size pt\relax}%
  \newcommand*\lineheight[1]{\fontsize{\fsize}{#1\fsize}\selectfont}%
  \ifx\svgwidth\undefined%
    \setlength{\unitlength}{312.84066162bp}%
    \ifx\svgscale\undefined%
      \relax%
    \else%
      \setlength{\unitlength}{\unitlength * \real{\svgscale}}%
    \fi%
  \else%
    \setlength{\unitlength}{\svgwidth}%
  \fi%
  \global\let\svgwidth\undefined%
  \global\let\svgscale\undefined%
  \makeatother%
  \begin{picture}(1,0.35056527)%
    \lineheight{1}%
    \setlength\tabcolsep{0pt}%
    \put(0,0){\includegraphics[width=\unitlength,page=1]{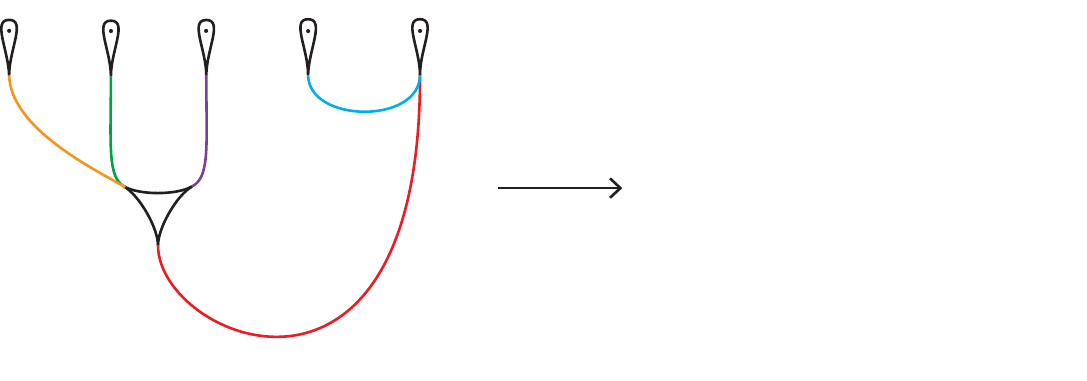}}%
    \put(0.50808025,0.19140277){\color[rgb]{0.1372549,0.12156863,0.1254902}\makebox(0,0)[lt]{\lineheight{1.25}\smash{\begin{tabular}[t]{l}$\psi$\end{tabular}}}}%
    \put(0,0){\includegraphics[width=\unitlength,page=2]{split_ex_after_2_svg-tex.pdf}}%
    \put(0.02246563,0.17920007){\color[rgb]{0.1372549,0.12156863,0.1254902}\makebox(0,0)[lt]{\lineheight{1.25}\smash{\begin{tabular}[t]{l}$e_1$\end{tabular}}}}%
    \put(0.11502872,0.22700396){\color[rgb]{0.1372549,0.12156863,0.1254902}\makebox(0,0)[lt]{\lineheight{1.25}\smash{\begin{tabular}[t]{l}$e_2$\end{tabular}}}}%
    \put(0.19893725,0.22700396){\color[rgb]{0.1372549,0.12156863,0.1254902}\makebox(0,0)[lt]{\lineheight{1.25}\smash{\begin{tabular}[t]{l}$e_3$\end{tabular}}}}%
    \put(0.30279204,0.21767813){\color[rgb]{0.1372549,0.12156863,0.1254902}\makebox(0,0)[lt]{\lineheight{1.25}\smash{\begin{tabular}[t]{l}$e_4$\end{tabular}}}}%
    \put(0.35706887,0.06942374){\color[rgb]{0.1372549,0.12156863,0.1254902}\makebox(0,0)[lt]{\lineheight{1.25}\smash{\begin{tabular}[t]{l}$e_5$\end{tabular}}}}%
  \end{picture}%
\endgroup%

    \caption{The action of $\psi$ after more carefully splitting $\tau$. On the left is the train track $\tau_2$, and on the right is $\psi(\tau_2)$. We obtain a new train track that is still invariant under $\psi$. This is an example of a ``tight split."}
    \label{fig:split_ex_after_2}
\end{figure}

\bigskip

Let $\tau \hookrightarrow S_{0,n}^1$ be standardly embedded, and let $v \in \tau$ be a switch. The \textit{link} of $v$ is the collection $\Lk(v)$ of edges of $\tau$ incident to $v$. The elements of $\Lk(v)$ inherit a natural counterclockwise cyclic order $e_1, \ldots, e_k$. A subset $C \subseteq \Lk(v)$ is \textit{connected} if whenever $e_i, e_j \in C$ and $i<j$, then either

\begin{enumerate}
\item \text{$e_{i+1}, \ldots, e_{j-1} \in C$, or}
\item \text{$e_{j+1}, \ldots, e_k, e_1, \ldots, e_{i-1} \in C$.}
\end{enumerate}

\noindent The collections
\[
R(v) = \{\text{real edges in $\Lk(v)$}\}, \hspace{5mm} I(v) = \{\text{infinitesimal edges in $\Lk(v)$}\}
\]
\noindent are connected. We index the elements of $\Lk(v)$ so that the real edges are $e_1, \ldots, e_m$ under the cyclic order. In other words, from the perspective of $v$ facing its real edges, $e_1$ is the real edge furthest to the right and $e_m$ is the edge furthest to the left.

\begin{defn}
The \textit{right extremal edge} of $v$ is $r(v)=e_1$, and the \textit{left extremal edge} is $l(v)=e_m$. If $R(v)=\{e\}$ is a singleton, then we set $e=l(v)=r(v)$.
\end{defn}

If $v$ is a switch at an infinitesimal loop of $\tau$, we treat each end of the loop as a distinct element of $\Lk(v)$. Hence $I(v)$ always consists of two elements, $i_l$ and $i_r$. These are defined so that, under the cyclic order, we have

\[
l(v) < i_l < i_r < r(v).
\]

\begin{defn}
We denote by $v_l$ the switch of $\tau$ at the other end of $i_l$ from $v$. Similarly, we denote by $v_r$ the switch of $\tau$ at the other end of $i_r$ from $v$. In the case that $v$ is at a loop of $\tau$, we set $v_l=v_r=v$.
\end{defn}

From now on, we set the convention that, for a given switch $v$ of $\tau$, all edges in $R(v)$ are oriented into $v$ as paths.

\begin{defn}\label{defn:tsplit}
Let $\tau \hookrightarrow S_{0,n}^1$ be a standardly embedded train track. Let $v$ be a switch of $\tau$. Fix a train track map $f: \tau \to \tau$. We say that $v$ \textit{splits tightly to the left} or \textit{l-splits} if for every real edge $x \subseteq \tau$ the following two conditions hold:

\begin{enumerate}
    \item Whenever $l(v)$ appears in the train path $f(x)$, it is followed by $\overline{r(v_l)}$, and
    \item whenever $\overline{l(v)}$ appears in the train path $f(x)$, it is preceded by $r(v_l)$.
\end{enumerate}

Similarly, we say that $v$ \textit{splits tightly to the right} or \textit{r-splits} if for every real edge $x \subseteq \tau$ the following two conditions hold:

\begin{enumerate}
    \item Whenever $r(v)$ appears in the train path $f(x)$, it is followed by $\overline{l(v_r)}$, and
    \item whenever $\overline{r(v)}$ appears in the train path $f(x)$, it is preceded by $l(v_r)$.
\end{enumerate}

\noindent In either case, we say that $v$ \textit{splits tightly}. See Figures \ref{fig:Tsplit1} and \ref{fig:Tsplit2}.
\end{defn}

\begin{figure}
    \centering
    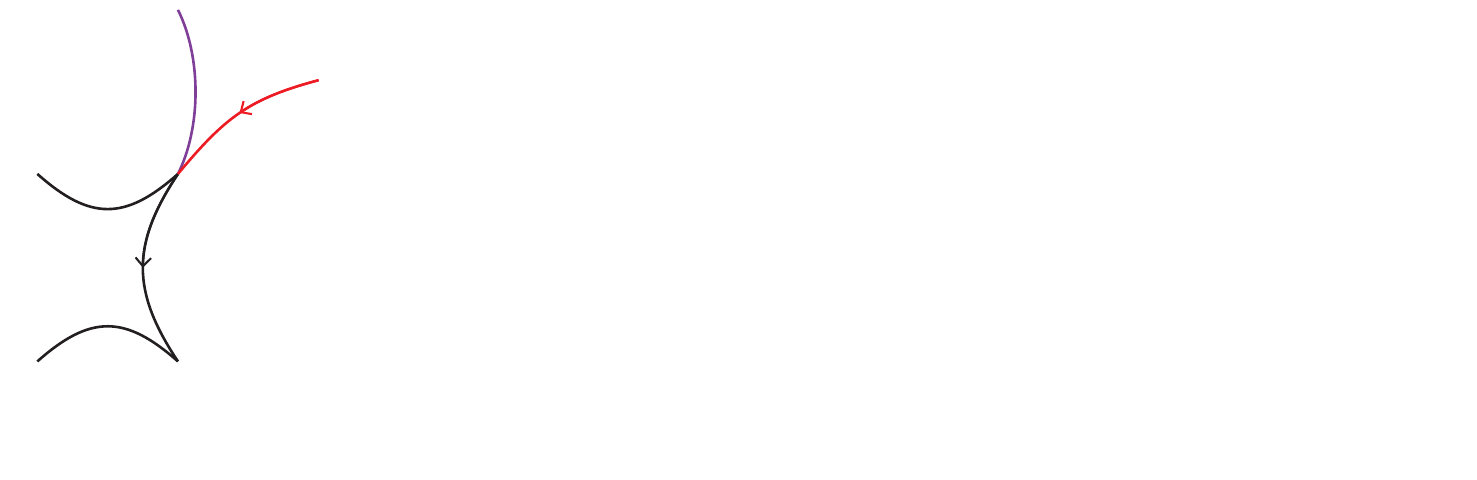
    \caption{\textbf{Left:} part of a train track $\tau$ and the image of a pseudo-Anosov $\psi$ carried by $\tau$. Here $\psi$ induces a train track map $f: \tau \to \tau$ for which $v$ splits tightly to the right. \textbf{Right:} The train track $\tau_1$ after $r$-splitting $v$, and the action of $\psi$ on $\tau_1$. Note in particular that $\psi$ has not changed, only $\tau$ and its fibered neighborhood $N(\tau)$. In each subfigure, the highlighted regions are collapsed by a deformation retraction onto the corresponding edges.}
    \label{fig:Tsplit1}
\end{figure}

\begin{figure}
    \centering
    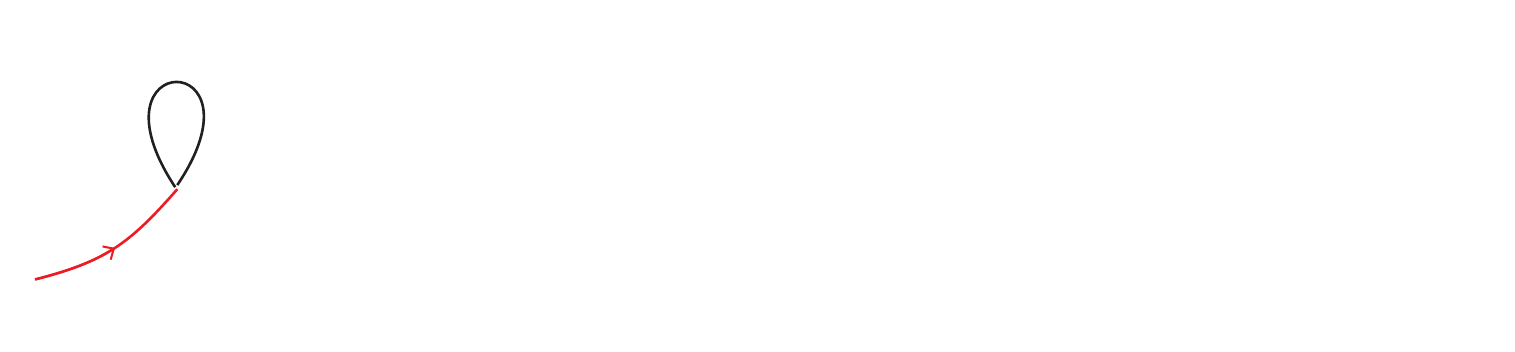
    \caption{\textbf{Left:} another train track $\tau$ and map $f: \tau \to \tau$ for which $v$ splits tightly to the right. \textbf{Right:} the train track $\tau_1$ after $r$-splitting $v$.}
    \label{fig:Tsplit2}
\end{figure}


If $v$ splits tightly, we define a new train track that maps to $\tau$ by an elementary folding map. In this way, we view splitting as an inverse operation to folding. In what follows we will restrict our attention to the case that $v$ tightly splits to the left: all definitions are analogous if $v$ splits tightly to the right. To obtain these analogous statements and proofs, one need only replace all $l$'s with $r$'s and vice versa.

Suppose $v$ $l$-splits. Define $\tau^l_v$ to be the standardly embedded train track obtained by deleting $l(v)$ and replacing it with a real edge $\alpha$ such that

\begin{enumerate}
\item As a directed edge, $\alpha(0)=l(v)(0)$ and $\alpha(1)=r(v_l)(1)$.
\item The edge $\alpha$ forms a bigon (i.e. a two-cusped disk) with the train path $l(v) \cdot \overline{r(v_l})$, and there is an isotopy rel the punctures of $S_{0,n}^1$ so that $\alpha$ lies transverse to the leaves of the fibered neighborhood of $\tau$.
\end{enumerate}

The standardly embedded track $\tau^l_v$ comes equipped with a natural elementary folding map $p: \tau^l_v \to \tau$, defined by

\[
p(x) = \begin{cases}
x & \text{if $x \neq \alpha$}\\
l(v) \cdot \overline{r(v_l)} & \text{if $x = \alpha$.}
\end{cases}
\]

\begin{defn}
If $v$ splits tightly to the left, then the map $p: \tau^l_v \to \tau$ is called a \textit{tight left split} or an \textit{l-split} of $\tau$. We analogously define the \textit{tight right split} or \textit{r-split} $p: \tau^r_v \to \tau$.
\end{defn}

\begin{prop}\label{tsplitcarry}
Suppose $(\tau, \psi, f)$ is the data of a pseudo-Anosov carried by the standardly embedded train track $\tau$:

\[
\begin{tikzcd}
& \tau\\
\tau \arrow{r}[swap]{\psi} \arrow{ur}{f} & \psi(\tau) \arrow{u}[swap]{\text{collapse}}
\end{tikzcd}
\]

\noindent If $v$ $l$-splits, then $\tau^l_v$ carries $\psi$. Hence the above diagram may be completed to the commutative diagram

\[
\begin{tikzcd}
& \tau \\
\tau \arrow{r}[swap]{\psi} \arrow{ur}{f} & \psi(\tau) \arrow{u}[swap]{\text{collapse}}\\
\tau^l_v \arrow{r}{\psi} \arrow{u}{p} \arrow{dr}[swap]{f^l_v} & \psi(\tau^l_v) \arrow{u}[swap]{p_\psi} \arrow{d}{\text{collapse}}\\
& \tau^l_v
\end{tikzcd}
\]

\noindent where $f^l_v$ is a train track map.
\end{prop}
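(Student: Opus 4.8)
The plan is to verify that the edge $\alpha$ replacing $l(v)$ can be chosen so that $\psi(\alpha)$ lies transverse to the leaves of a fibered neighborhood of $\tau^l_v$, which is exactly the statement that $\tau^l_v$ carries $\psi$; once this is done, the train track map $f^l_v$ is defined as the composite $\text{collapse} \circ \psi$ restricted to $\tau^l_v$, and the commutativity of the diagram will follow by construction. First I would fix a fibered neighborhood $N(\tau)$ so that $\psi(N(\tau)) \subseteq N(\tau)$, and recall that the defining data of the $l$-split is the bigon bounded by the train path $l(v) \cdot \overline{r(v_l)}$ and the new edge $\alpha$; the content of condition (2) in the definition of $\tau^l_v$ is that this bigon is embedded and $\alpha$ is carried by $N(\tau)$. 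I would push $\alpha$ into $N(\tau)$ and let $N(\tau^l_v)$ be obtained from $N(\tau)$ by splitting the fibered neighborhood along $\alpha$ — i.e., peeling the strip over $l(v)$ away from the strip over $r(v_l)$ in the manner of a Harer--Penner split. The key point is then: where does $\psi(\alpha)$ go, and does it stay transverse to the leaves of $N(\tau^l_v)$?

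The main step is to analyze $\psi(\alpha)$ using the hypothesis that $v$ $l$-splits. Since $p(\alpha) = l(v) \cdot \overline{r(v_l)}$ and $p$ is an elementary folding map that is the identity elsewhere, the image $\psi(\alpha)$ differs from $\psi(l(v)) \cdot \overline{\psi(r(v_l))}$ only inside junctions, up to isotopy; after collapsing, $f(p(\alpha)) = f(l(v)) \cdot \overline{f(r(v_l))}$, which a priori need not be a train path — there could be backtracking or a sharp turn where $f(l(v))$ meets $\overline{f(r(v_l))}$. This is precisely where the two $l$-splitting conditions enter: condition (1) says that wherever $l(v)$ appears in $f(x)$ it is immediately followed by $\overline{r(v_l)}$, and condition (2) says wherever $\overline{l(v)}$ appears it is immediately preceded by $r(v_l)$. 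Consequently, in every train path $f(x)$, the letter $l(v)$ occurs only as part of the subword $l(v)\overline{r(v_l)}$ (or its reverse). This means: for every edge $x \ne \alpha$, the train path $f(p(x)) = f(x)$ can be rewritten as a train path in $\tau^l_v$ by substituting each occurrence of $l(v)\overline{r(v_l)}$ with $\alpha$; and $f(p(\alpha)) = f(l(v))\cdot\overline{f(r(v_l))}$ is itself a train path because the cancellation/turn that would be illegal is exactly ruled out by the $l$-splitting conditions — the two halves of $\alpha$'s image fit together smoothly. Translating back through $p_\psi$, this shows $\psi(\alpha)$ is carried transversally by $N(\tau^l_v)$, and hence $\tau^l_v$ carries $\psi$.

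Having established that $\psi(\tau^l_v) \subseteq N(\tau^l_v)$ transverse to the leaves, I would define $f^l_v := \text{collapse} \circ \psi : \tau^l_v \to \tau^l_v$, where the collapse is along the leaves of $N(\tau^l_v)$. That $f^l_v$ is a train track map follows from the previous paragraph: on each edge it sends a train path to a train path (by the rewriting argument above), and this property is inherited by arbitrary train paths since they are concatenations of edges with compatible tangent directions at switches. The commutativity of the lower square $p_\psi \circ \psi = \psi \circ p$ is immediate from the definition of $p_\psi$ as the elementary folding map induced on the images; the commutativity of the triangle $f^l_v = \text{collapse} \circ \psi$ is the definition; and compatibility with the top square (so that $p \circ f^l_v$ agrees with $f \circ p$ after collapsing) follows from the fact that collapsing $N(\tau^l_v)$ and then folding gives the same result as folding $N(\tau)$ and then collapsing, which is a diagram-chase using that $p$ and $p_\psi$ are both the same elementary folding map composed with $\psi$. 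The main obstacle I anticipate is the careful bookkeeping in the rewriting step — precisely verifying that the two $l$-splitting conditions are \emph{exactly} what is needed to guarantee $f(l(v))\cdot\overline{f(r(v_l))}$ has no illegal turn at its midpoint, and that no other edge's image develops an illegal turn after the substitution $l(v)\overline{r(v_l)} \mapsto \alpha$ — this requires tracking tangent directions at $v$ and at $v_l$ before and after the split, which is the genuinely delicate local analysis.
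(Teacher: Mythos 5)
Your proposal takes essentially the same geometric route as the paper: work with a fibered neighborhood/fibered surface of $\tau$, replace the strip over $l(v)$ with a strip over $\alpha$, and use the $l$-splitting conditions to argue that every strip of $\psi(F)$ passing through $L$ actually passes through $L$, $I$, $R$ in order and hence can be rerouted through the new strip. Your combinatorial rephrasing --- ``in every $f(x)$ the letter $l(v)$ occurs only as part of $l(v)\overline{r(v_l)}$, so substitute $\alpha$ for that subword'' --- is precisely the same observation the paper expresses in the language of strips, so the core argument matches.

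One misattribution is worth flagging. You claim that $f(p(\alpha)) = f(l(v)) \cdot \overline{f(r(v_l))}$ is a train path \emph{because} of the $l$-splitting conditions, with the potential illegal turn at the midpoint ``ruled out'' by the hypothesis. That is not where the hypothesis is used: the concatenation $l(v) \cdot i_l \cdot \overline{r(v_l)}$ is already a train path in $\tau$ (the infinitesimal edge $i_l$ joins $v$ to $v_l$ smoothly, and real and infinitesimal edges lie on opposite sides of each switch), so its image under the train track map $f$ is automatically a train path, whether or not $v$ tightly splits. The $l$-splitting hypothesis does the work you correctly identified in the previous sentence of your argument --- guaranteeing the well-definedness of the substitution $l(v)\overline{r(v_l)} \mapsto \alpha$ in the images of all the \emph{other} edges --- not the smoothness of $\psi(\alpha)$ itself. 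Your conclusion is correct; just be careful about which step actually invokes the splitting hypothesis.
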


\begin{proof}
Let $F \subseteq S_{0,n}^1$ be a fibered surface for $\psi$ from which the Bestvina-Handel algorithm produces $\tau$. Let $L, I$, and $R$ denote the strips of $F$ collapsing to the (unoriented) edges $l(v), i_l$, and $r(v_l)$ of $\tau$, respectively. Deleting $L$ and replacing it with a strip $A$ collapsing to $\alpha$ produces a new fibered surface $F'$ from which the algorithm produces $\tau^l_v$. The fact that $F'$ is a fibered surface for $\psi$ follows from the fact that $v$ $l$-splits: any strip of $\psi(F)$ passing through $L$ in fact passes through all three of $L, I$, and $R$ in order, and hence after an isotopy we may arrange for the strip to pass through $A$ instead. Furthermore, since $\alpha$ is isotopic to $l(v) \cdot i_l \cdot \overline{r(l_v)}$ and $\psi(L)$, $\psi(I)$, and $\psi(R)$ may be isotoped into $F'$, it follows that $\psi(A)$ may be isotoped into $F'$ as well.
\end{proof}

\begin{prop}\label{tsplitconj}
Suppose that $v$ $l$-splits and let $M$ and $M_v$ be the transition matrices of $f: \tau \to \tau$ and $f^l_v: \tau^l_v \to \tau^l_v$, respectively. Then 

\[
M_v = P^{-1} M P,
\]

\noindent where $P$ is the transition matrix of the elementary folding map $p: \tau^l_v \to \tau$: that is, if $l(v)$ is the $j$th edge and $r(v_l)$ is the $i$th edge, then we have

\[
P = I_n+D_{i,j},
\]

\noindent where $\tau$ has $n$ real edges, $I_n$ is the identity, and $D_{i,j}$ is the square matrix with a 1 in the $(i,j)$-entry and $0$'s elsewhere.
\end{prop}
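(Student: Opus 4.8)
The plan is to establish the equivalent identity $PM_v = MP$ and then invert $P$. First I would pin down $P$ directly from the definition of the elementary folding map $p\colon\tau^l_v\to\tau$: since $p(x)=x$ for every real edge $x\neq\alpha$, each such column of $P$ is a standard basis vector, while $p(\alpha)=l(v)\cdot\overline{r(v_l)}$ is a train path traversing the real edge $l(v)$ once and the real edge $r(v_l)$ once (the infinitesimal edge $i_l$ it also crosses is irrelevant to the transition matrix), so the $j$th column of $P$ is $\mathbf{e}_i+\mathbf{e}_j$. Thus $P=I_n+D_{i,j}$, exactly as claimed. Because $i\neq j$ we have $D_{i,j}^2=0$, so $P$ is invertible over $\zz$ with $P^{-1}=I_n-D_{i,j}$; in particular the identity $PM_v=MP$ will give $M_v=P^{-1}MP$ at once.

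The core step is to relate $f^l_v$ to $f$ by an \emph{unfolding along $p$}: for every real edge $x$ of $\tau^l_v$, I claim $f^l_v(x)$ is obtained from the word $f(p(x))$ by replacing every occurrence of the block $l(v)\cdot\overline{r(v_l)}$ by $\alpha$, and every occurrence of $r(v_l)\cdot\overline{l(v)}$ by $\overline{\alpha}$. This is precisely where the \emph{tight}-splitting hypothesis enters: conditions (1) and (2) of Definition \ref{defn:tsplit} say exactly that in each word $f(y)$ every letter $l(v)$ or $\overline{l(v)}$ lies inside one of these two blocks, hence the same is true of $f(p(x))$ (which is $f(x)$ when $x\neq\alpha$, and $f(l(v))\cdot\overline{f(r(v_l))}$ when $x=\alpha$), so the unfolding is well defined; and by the description of the fibered neighborhood of $\tau^l_v$ from the proof of Proposition \ref{tsplitcarry} — where the strip collapsing to $l(v)$ is replaced by the strip collapsing to $\alpha$, and any strand of $\psi(F)$ through the old strip ran through $L,I,R$ in order — collapsing onto $\tau^l_v$ produces exactly this unfolded word, which is therefore a genuine train path $f^l_v(x)$ in $\tau^l_v$. (Equivalently, this is the assertion that $p$ semiconjugates $f^l_v$ to $f$, i.e. $p\circ f^l_v=f\circ p$, which one reads off directly from the commutative diagram of Proposition \ref{tsplitcarry}.)

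Finally I would translate the unfolding into linear algebra by recording, for each edge-path, its vector of real-edge traversal counts. The count vector of $p(x)$ is $P\mathbf{e}_x$ (immediate from the two cases: $\mathbf{e}_x$ if $x\neq\alpha$, and $\mathbf{e}_i+\mathbf{e}_j$ if $x=\alpha$); applying the train track map $f$ multiplies count vectors of train paths by $M$; and the unfolding, which per block deletes one traversal of $r(v_l)$ and one of $l(v)$ and installs one traversal of $\alpha$ in the coordinate slot vacated by $l(v)$, acts on count vectors exactly as $P^{-1}=I_n-D_{i,j}$, namely it subtracts the $j$th coordinate from the $i$th and fixes all others. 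Chaining these three operations, the count vector of $f^l_v(x)$ equals $P^{-1}MP\,\mathbf{e}_x$; since that count vector is by definition $M_v\mathbf{e}_x$ and $x$ was arbitrary, $M_v=P^{-1}MP$.

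The main obstacle is the middle paragraph: making rigorous that the unfolded word is literally $f^l_v(x)$ — i.e. that tight splitting forces $l(v)^{\pm1}$ to occur only in the designated blocks of $f(p(x))$, and that collapsing the rebuilt fibered neighborhood of $\tau^l_v$ realizes exactly this substitution — together with the routine bookkeeping that $p(x)$, $f(p(x))$, and $f^l_v(x)$ are all reduced (backtrack-free) edge-paths, so that traversal counts do compose by matrix multiplication. Once that is in hand (much of it already packaged in Proposition \ref{tsplitcarry}), the equality $M_v=P^{-1}MP$ is a one-line computation, and the explicit shape $P=I_n+D_{i,j}$ is just the first paragraph.
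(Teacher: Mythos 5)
Your proposal is correct and follows essentially the same route as the paper: both establish the semiconjugacy $p\circ f^l_v = f\circ p$ by defining the ``unfolded'' word obtained from $f(p(x))$ via the block substitutions $l(v)\cdot\overline{r(v_l)}\mapsto\alpha$ and $r(v_l)\cdot\overline{l(v)}\mapsto\overline{\alpha}$ (whose well-definedness rests on the tight-splitting hypothesis), and then pass to transition matrices of Markov maps to read off $PM_v=MP$, hence $M_v=P^{-1}MP$. Your count-vector bookkeeping in the last paragraph is just an explicit unpacking of the paper's one-line remark that ``each of the arrows is a Markov map, and so upon passing to transition matrices we obtain the relation $PM_v=MP$,'' and your closing worry about rigorously identifying the unfolded word with $f^l_v(x)$ is a real subtlety that the paper likewise treats lightly, citing the fibered-surface construction of Proposition \ref{tsplitcarry}.
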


\begin{proof}
We will argue that we have the following commutative diagram:

\[
\begin{tikzcd}
\tau^l_v \arrow{d}{p} \arrow{r}{f^l_v} & \tau^l_v \arrow{d}{p}\\
\tau \arrow{r}{f} & \tau
\end{tikzcd}
\]

\noindent From this the claim will follow, since each of the arrows is a Markov map, and so upon passing to transition matrices we obtain the relation

\[
PM_v = MP.
\]

Suppose $x$ is an edge of $\tau^l_v$. By the definition of $p$ we have

\[
(f \circ p)(x) = \begin{cases}
f(x) & \text{if $x \neq \alpha$} \\
f(l(v)) \cdot f \left (\overline{r(v_l)} \right ) & \text{if $x =\alpha$.}
\end{cases}
\]

On the other hand, we must understand the map $f^l_v : \tau^l_v \to \tau^l_v$ in order to analyze the composition $p \circ f^l_v$. For any edge $y \in \tau$, define $f'(y)$ to be the word obtained from the train path $f(y)$ by replacing each instance of $l(v) \cdot \overline{r(v_l)}$ with $\alpha$ and each instance of $r(v_l) \cdot \overline{l(v)}$ with $\overline{\alpha}$. In other words, $f'(x)$ is the unique word such that

\[
p(f'(x)) = f(x).
\]

\noindent If $x \neq \alpha$ is an edge of $\tau^l_v$, then $f^l_v(x) = f'(x)$. If $x =\alpha$, then $f^l_v(x)=f^l_v(\alpha) = f'(l(v)) \cdot f'\left (\overline{r(v_l)} \right )$. In either case, we obtain the formula

\[
(p \circ f^l_v)(x) = \begin{cases}
f(x) & \text{if $x \neq \alpha$.}\\
f(l(v)) \cdot f \left (\overline{r(v_l)} \right ) & \text{if $x = \alpha$.}
\end{cases}
\]

\noindent This agrees with the formula for $f \circ p$, so the proof is complete.
\end{proof}

Recall that by the Perron-Frobenius theorem, the dilatation of $\psi$ is a simple eigenvalue of the transition matrix $M$, and there exists a positive right $\lambda$-eigenvector $\mu$ of $M$. For a fixed choice of $\mu$ we will denote by $\mu(x)$ the entry of $\mu$ corresponding to the real edge $x$.

\begin{cor}\label{cor:SplitPos}
Let $(\tau, \psi, f)$ be the data of a pseudo-Anosov carried by a standardly embedded train track. Let $M$ be the transition matrix for $f: \tau \to \tau$, and let $\lambda$ be the dilatation of $f$. Fix a positive right $\lambda$-eigenvector $\mu$ of $M$. If $v$ $l$-splits then $\mu_v=P^{-1} \mu$ is a positive right $\lambda$-eigenvector of $M_v$. Consequently, 
\[
\mu(l(v)) < \mu(r(v_l)).
\]
\end{cor}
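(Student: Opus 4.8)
The plan is to combine Proposition~\ref{tsplitconj}, which identifies $M_v$ as the conjugate $P^{-1}MP$, with the Perron--Frobenius structure of $M_v$ supplied by Proposition~\ref{tsplitcarry}, and then to read off the inequality coordinate-wise.

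First I would record the elementary linear algebra. Since $l(v)$ and $r(v_l)$ are distinct edges, the indices $i,j$ of Proposition~\ref{tsplitconj} satisfy $i\neq j$, so $D_{i,j}^2=0$ and $P^{-1}=I_n-D_{i,j}$. From $M\mu=\lambda\mu$ and $M_v=P^{-1}MP$ we get
\[
M_v\mu_v = P^{-1}MPP^{-1}\mu = P^{-1}M\mu = \lambda P^{-1}\mu = \lambda\mu_v,
\]
so $\mu_v=P^{-1}\mu$ is a right $\lambda$-eigenvector of $M_v$. Writing $\mu_v=\mu-D_{i,j}\mu$ out coordinate-wise shows that $\mu_v$ agrees with $\mu$ in every coordinate except the one indexed by $r(v_l)$ (the $i$th), where $\mu_v(r(v_l))=\mu(r(v_l))-\mu(l(v))$; in particular the $l(v)$-coordinate of $\mu_v$ is still $\mu(l(v))>0$.

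Next I would invoke Proposition~\ref{tsplitcarry}: $\tau^l_v$ carries $\psi$ with induced train track map $f^l_v$, so by the Perron--Frobenius characterization of pseudo-Anosov train-track maps (the theorem in Subsection~\ref{subsec:fibsurf}) the transition matrix $M_v$ of $f^l_v$ is Perron--Frobenius, with Perron--Frobenius eigenvalue equal to the dilatation of $\psi$, namely $\lambda$. Hence the $\lambda$-eigenspace of $M_v$ is one-dimensional and spanned by a positive vector $w$, and $\mu_v=cw$ for some scalar $c$. Because $\mu_v$ has the strictly positive coordinate $\mu(l(v))$, we must have $c>0$, so $\mu_v$ itself is positive. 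Positivity of $\mu_v$ then forces $\mu_v(r(v_l))=\mu(r(v_l))-\mu(l(v))>0$, which is exactly $\mu(l(v))<\mu(r(v_l))$.

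The argument is essentially bookkeeping once Propositions~\ref{tsplitcarry} and~\ref{tsplitconj} are in hand; the only point needing a little care is the sign of $c$ — ruling out that $\mu_v$ is a \emph{negative} multiple of the Perron--Frobenius eigenvector — which is handled by the observation that $\mu_v$ and $\mu$ share the positive $l(v)$-coordinate (and one may alternatively note that $M_v$ is similar to $M$, hence has $\lambda$ as a simple eigenvalue, before running the same sign argument). I would also double-check that the map $f^l_v$ analyzed combinatorially in Proposition~\ref{tsplitconj} is genuinely the train-track map induced by $\psi$ on $\tau^l_v$ from Proposition~\ref{tsplitcarry}, so that the Perron--Frobenius input applies to the same matrix $M_v$.
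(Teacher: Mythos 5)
Your proof is correct and follows essentially the same route as the paper's: identify $\mu_v = P^{-1}\mu$ as a $\lambda$-eigenvector, use the Perron--Frobenius property of $M_v$ (via Proposition~\ref{tsplitcarry}) to pin down the one-dimensional eigenspace with a positive eigenvector, rule out the negative multiple by checking a single coordinate of $\mu_v$, and read off the inequality from positivity of $\mu_v(r(v_l))$. Your explicit appeal to Proposition~\ref{tsplitcarry} and the matching of $M_v$ across the two propositions are a bit more careful than the paper's phrasing, but the argument is the same.
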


\begin{proof}
Since $M_v = P^{-1}MP$, it immediately follows that $\mu_v = P^{-1} \mu$ is a right $\lambda$-eigenvector of $M_v$. At least one entry of $\mu_v$ is positive, since $\mu_v(\alpha)=\mu(l(v))>0$. Therefore $\mu_v$ is positive, since the Perron-Frobenius theorem states that $\lambda$ is a simple eigenvalue of $M_v$ and has a positive eigenvector.

To see that $\mu(l(v)) <\mu(r(v_l))$, observe that
\[
0 < \mu_v(r(v_l)) = \mu(r(v_l)) - \mu(l(v)).
\]
\end{proof}

\begin{ex}

\begin{figure}
\centering
    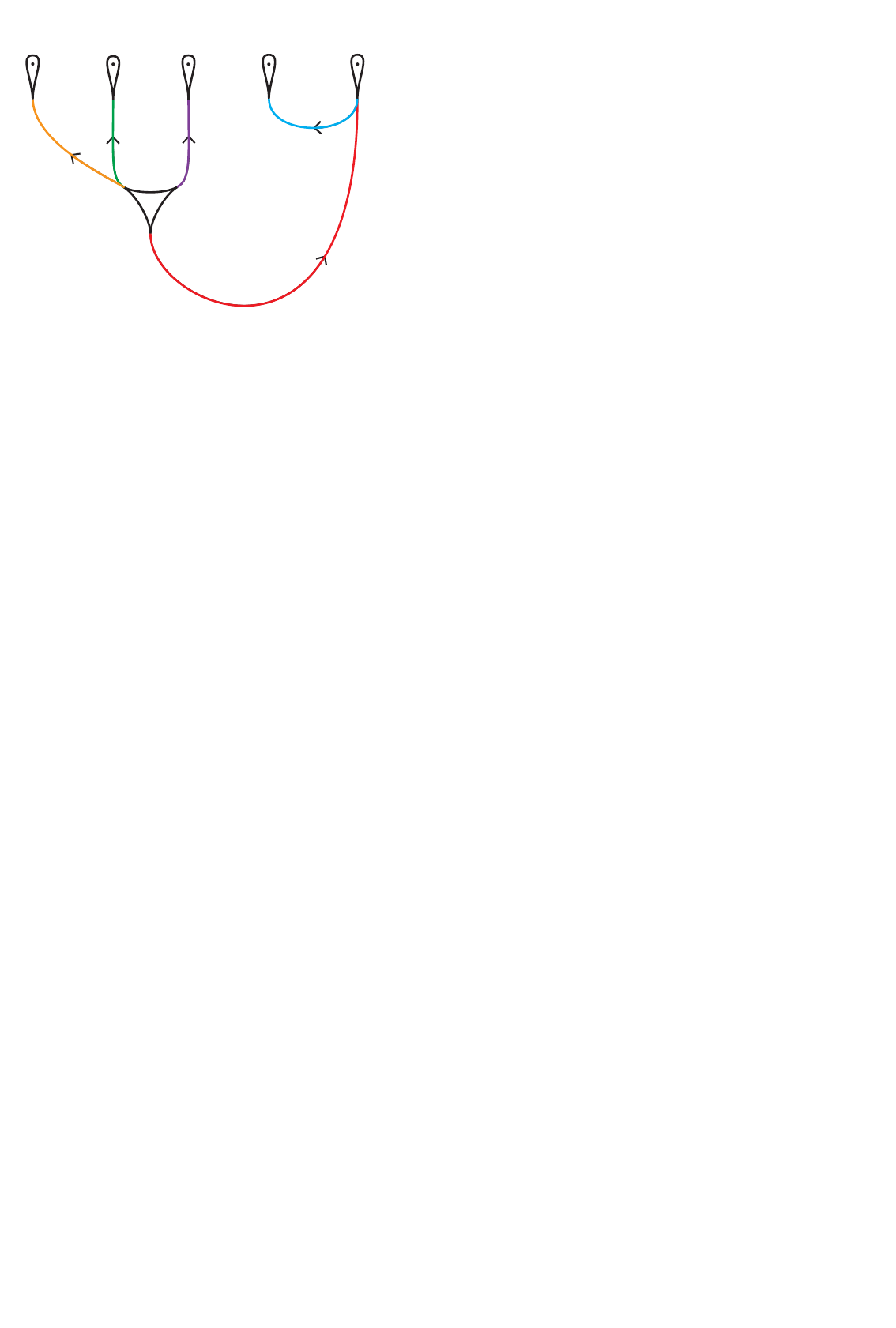
\caption{The track $\tau_1$, $\tau_2$ carries $\psi$. The track $\tau_2'=\sigma_4^{-1}(\tau_2)$ carries $\sigma_4^{-1}\circ\psi\circ\sigma_4$. The track $\tau_3$ carries $\sigma_4^{-1} \circ \psi \circ \sigma_4$.}\label{fig:tight_combined}
\end{figure}

Here is an extended example of a sequence of tight splittings. The maps appearing in this example are closely related to the maps studied in Section \ref{221section}. Let $(\tau, \psi, f)$ be the data of the pseudo-Anosov represented in Figure \ref{fig:tight_combined}. The transition matrix for $f: \tau \to \tau$ is

\[
M _1= \begin{pmatrix}
0 & 0 & 1 & 0 & 0 \\
0 & 0 & 0 & 1 & 0 \\
0 & 0 & 0 & 1 & 1\\
1 & 2 & 0 & 0 & 0\\
1 & 1 & 0 & 0 & 0
\end{pmatrix}.
\]

\noindent The characteristic polynomial of $M_1$ is $\chi(t) = (t+1)(t^4-t^3-t^2-t+1)$. The dilatation of $\psi$ is the root $\lambda$ of this polynomial with largest absolute value, as in subsection \ref{subsec:fibsurf}. A positive right $\lambda$-eigenvector for $M_1$ is 

\[
\mu_1 = \begin{pmatrix}
\mu_1(e_1) \\
\mu_1(e_2) \\
\mu_1(e_3) \\
\mu_1(e_4) \\
\mu_1(e_5)
\end{pmatrix} = \begin{pmatrix}
2+5\lambda-\lambda^2-\lambda^3\\
-2-2\lambda+\lambda^2+\lambda^3\\
1+\lambda+4\lambda^2-2\lambda^3\\
-1-\lambda-\lambda^2+2\lambda^3\\
3\\
\end{pmatrix} = \begin{pmatrix}
2.537...\\
2.628...\\
4.370...\\
4.526...\\
3
\end{pmatrix}
\]

One can see that the vertex at loop 5 splits tightly to the left. Performing this $l$-split produces the track $\tau_2$, which also carries $\psi$. See Figure \ref{fig:tight_combined}. The transition matrix of the $l$-split $p_1: \tau_2 \to \tau_1$ is

\[
P_1 = \begin{pmatrix}
1 & 0 & 0 & 0 & 0 \\
0 & 1 & 0 & 0 & 0 \\
0 & 0 & 1 & 0 & 0 \\
0 & 0 & 0 & 1 & 1 \\
0 & 0 & 0 & 0 & 1
\end{pmatrix} = I_5 + D_{4,5}
\]

\noindent and the transition matrix for $f_2: \tau_2 \to \tau_2$ is

\[
M_2 = P_1^{-1} M_1 P_1 = \begin{pmatrix}
0 & 0 & 1 & 0 & 0 \\
0 & 0 & 0 & 1 & 1 \\
0 & 0 & 0 & 1 & 2 \\
0 & 1 & 0 & 0 & 0 \\
1 & 1 & 0 & 0 & 0
\end{pmatrix}
\]


\noindent which has right $\lambda$-eigenvector

\[
\mu_2 = P_1^{-1}\mu_1 = \begin{pmatrix}
\mu_2(e_1) \\
\mu_2(e_2) \\
\mu_2(e_3) \\
\mu_2(e_4) \\
\mu_2(e_5)
\end{pmatrix} = \begin{pmatrix}
\mu_1(e_1) \\
\mu_1(e_2) \\
\mu_1(e_3) \\
\mu_1(e_4)-\mu_1(e_5)\\
\mu_1(e_5)
\end{pmatrix} = \begin{pmatrix}
2.537...\\
2.628...\\
4.370...\\
1.526...\\
3
\end{pmatrix}
\]

We may conjugate by $\sigma_4^{-1}$ to obtain the track $\tau_2'$, which is slightly easier to read. See Figure \ref{fig:tight_combined}. This move is a standardizing braid move in the language of \cite{KLS}. It is not a tight splitting and is purely cosmetic. It does not alter the transition matrix or any other relevant dynamical information.

We can now see that the switch at loop 4 splits tightly to the right. Performing this $r$-split produces the track $\tau_3$, which also carries $\sigma_4^{-1} \circ \psi \circ \sigma_4$. See Figure \ref{fig:tight_combined}. The transition matrix of the $r$-split $p_2: \tau_3 \to \tau_2$ is 
\[
P_2 = \begin{pmatrix}
1 & 0 & 0 & 0 & 0 \\
0 & 1 & 0 & 0 & 0 \\
0 & 0 & 1 & 0 & 0 \\
0 & 0 & 0 & 1 & 0 \\
0 & 0 & 0 & 1 & 1
\end{pmatrix}=I_5+D_{5,4}
\]
\noindent and the transition matrix for $f_3: \tau_3 \to \tau_3$ is
\[
M_3 = P_2^{-1} M_2 P_2 = \begin{pmatrix}
0 & 0 & 1 & 0 & 0 \\
0 & 0 & 0 & 2 & 1 \\
0 & 0 & 0 & 3 & 2 \\
0 & 1 & 0 & 0 & 0 \\
1 & 0 & 0 & 0 & 0
\end{pmatrix}
\]
\noindent which has right $\lambda$-eigenvector
\[
\mu_3 = P_2^{-1} \mu_2 = \begin{pmatrix}
\mu_2(e_1) \\
\mu_2(e_2) \\
\mu_2(e_3) \\
\mu_2(e_4) \\
\mu_2(e_5)-\mu_2(e_4)
\end{pmatrix} = \begin{pmatrix}
\mu_1(e_1) \\
\mu_1(e_2) \\
\mu_1(e_3) \\
\mu_1(e_4)-\mu_1(e_5)\\
2\mu_1(e_4)-\mu_1(e_4)
\end{pmatrix} = \begin{pmatrix}
2.537...\\
2.638...\\
4.370...\\
1.526...\\
1.473...
\end{pmatrix}
\]
\end{ex}

\subsection{Switch rigidity}

In this section we investigate when a tight splitting is possible at a given switch, identifying the essential obstruction. We call this obstruction \textit{switch rigidity} and show that it is uncommon. Indeed, the orbit of every switch contains a switch that is tightly splittable (cf. Proposition \ref{prop:NoRigidCycles}).

Let $v$ be a switch of the train track $\tau$. Recall that $\Lk(v)$ is the set of edges of $\tau$ incident to $v$. A Markov map $f: \tau \to \tau$ induces a map $Df: \Lk(v) \to \Lk(f(v))$ as follows. Orient all edges in $\Lk(v)$ and $\Lk(f(v))$ away from $v$ and $f(v)$, respectively. Then
\[
Df(a) = b \ \text{if $f(a)$ begins with $b$.}
\]
As a consequence of the Bestvina-Handel algorithm, all elements of $R(v)$ belong to the same \textit{gate}: that is, there exists an integer $k \geq 1$ such that $(Df)^k = D(f^k)$ is constant on $R(v)$. 

\begin{defn}
Let $\tau \hookrightarrow S_{0,n}^1$ be standardly embedded, and let $f: \tau \to \tau$ be a train track map. Let $v$ be a switch of $\tau$ such that $R(v)$ is not a singleton, and set $R(v)=\{e_1, \ldots, e_k\}$. Let $w$ be the switch of $\tau$ such that $f(w)=v$. We say that $v$ is \textit{rigid} if there exist $x_1, \ldots, x_k \in R(w)$ such that
\[
Df(x_i) = e_i \ \text{for all $i$.}
\]
\end{defn}

\begin{figure}
    \centering
    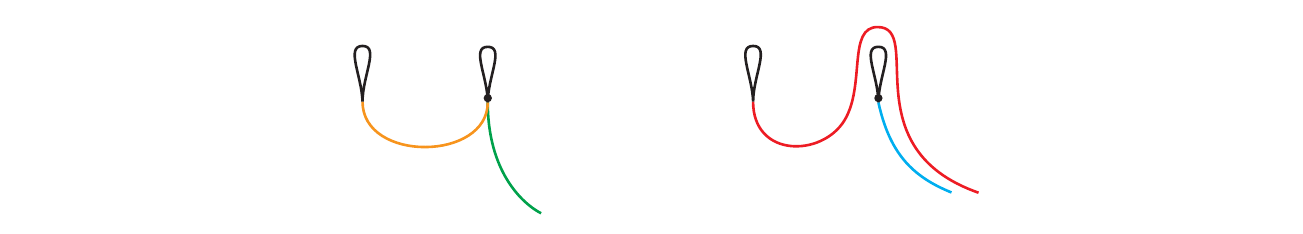
    \caption{An example of a rigid switch. On the left is the switch, on the right the image of the map near the switch.}
    \label{fig:Rigid}
\end{figure}

\begin{lem}\label{lem:connected}
Let $(\tau, \psi, f)$ be the data of the pseudo-Anosov $\psi$ on $S_{0,n}^1$ carried by the standardly embedded $\tau$. Let $w$ be a switch of $\tau$. Write $\alpha=r(w)$, $\beta=l(w)$, and $v=f(w)$. For any $c \in R(v)$ between $Df(\alpha)$ and $Df(\beta)$, there exists $\gamma \in R(w)$ such that $Df(\gamma)=c$. In other words, the set $Df(\Lk(w)) \subseteq \Lk(v)$ is connected.
\end{lem}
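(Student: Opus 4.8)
The plan is to track how the real edges at $w$ ``fan out'' under $f$, using the fibered surface picture from Section \ref{subsec:fibsurf}. Write $R(w)=\{e_1,\ldots,e_m\}$ in counterclockwise cyclic order, so that $e_1=\alpha=r(w)$ and $e_m=\beta=l(w)$, and set $c_i:=Df(e_i)$. Since the $e_i$ are real edges, each train path $f(e_i)$ begins with a real edge, so $c_1,\ldots,c_m\in R(v)$, and by construction $c_1=Df(\alpha)$ and $c_m=Df(\beta)$. The lemma then amounts to the claim that $\{c_1,\ldots,c_m\}$ is precisely the set of real edges of $v$ lying weakly between $c_1$ and $c_m$ in the cyclic order on the connected set $R(v)$.

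To prove this, I would first pass to a fibered surface $F\subseteq S_{0,n}^1$ for $\psi$ from which the Bestvina--Handel algorithm produces $\tau$. The switch $w$ comes from collapsing a junction $J_w$, and each real edge $e_i$ from collapsing a strip $A_i$ attached to $\partial J_w$ along the expanding side; under collapsing, the strips $A_1,\ldots,A_m$ appear in the cyclic order matching $e_1,\ldots,e_m$. Applying $\psi$, the images $\psi(A_1),\ldots,\psi(A_m)$ are pairwise disjoint embedded strips emanating from the disk $\psi(J_w)$, which lies inside the junction $J_v$ with $v=f(w)$; moreover $\psi(A_i)$ is the strip whose collapse is the train path $f(e_i)$, and the first strip of $F$ that $\psi(A_i)$ traverses after leaving $J_v$ collapses to $c_i$.

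The two facts I need are (i) monotonicity and (ii) no skipping. For (i): since $\psi$ is a homeomorphism, the strips $\psi(A_i)$ leave $\psi(J_w)$ in the cyclic order inherited from the $A_i$, and being pairwise disjoint arcs in the disk $J_v$ their exit points along $\partial J_v$ occur in that same cyclic order; since all these exit points lie in the sub-arc of $\partial J_v$ along which the real strips of $J_v$ attach (because each $c_i\in R(v)$), the sequence $c_1,\ldots,c_m$ is weakly monotone in the cyclic order on $R(v)$, with $c_1,c_m$ its two extremes. For (ii): consecutive edges $e_i,e_{i+1}$ form a cusp of $\tau$ at $w$ lying on the boundary of a complementary region $C$; since $\psi$ is a homeomorphism it carries this cusp to a single cusp on the boundary of the complementary region $\psi(C)$ of $\psi(\tau)$, which collapses either to the cusp formed by $c_i$ and $c_{i+1}$ at $v$ --- forcing $c_i,c_{i+1}$ to be consecutive in $R(v)$ --- or, if $f(e_i)$ and $f(e_{i+1})$ share a nontrivial initial train path, to a cusp strictly beyond $v$, forcing $c_i=c_{i+1}$. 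Combining (i) and (ii), $\{c_1,\ldots,c_m\}$ is exactly the arc of $R(v)$ from $c_1=Df(\alpha)$ to $c_m=Df(\beta)$, so any $c\in R(v)$ between $Df(\alpha)$ and $Df(\beta)$ equals $Df(e_i)$ for some $e_i\in R(w)$, as desired.

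I expect step (ii), ruling out any ``skipped'' real edge of $v$ between $c_i$ and $c_{i+1}$, to be the main obstacle, since it requires carefully tracking the image of the cusp between $A_i$ and $A_{i+1}$ through the collapse map $\psi(\tau)\to\tau$; a secondary point to handle with care is that $Df(e_i)$ is genuinely the real edge $c_i$ rather than an infinitesimal edge at $v$, which follows from the expanding behaviour of $f$ on the real gate at $w$ together with the standardly embedded hypothesis on $\tau$.
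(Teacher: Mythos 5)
Your decomposition into (i) monotonicity and (ii) no-skipping is a genuinely different route from the paper. The paper's proof is one line per $c$: since the transition matrix is Perron--Frobenius, $f$ is surjective, so there is a real edge $\gamma$ with $c$ appearing in $f(\gamma)$; then $\psi(\gamma)$ traverses the strip of $c$ into $J_v$ and is trapped in the region cut out by $\psi(\alpha)$ and $\psi(\beta)$, forcing $\gamma$ to be incident to $w$ and $Df(\gamma)=c$. Your step (i) is fine, and is in fact a nice observation that the paper never needs to state.

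The gap is in step (ii), which you correctly flag as the crux. You assert that the cusp of $\psi(C)$ at $\psi(w)$ must collapse either to \emph{the cusp of $\tau$ at $v$ formed by $c_i$ and $c_{i+1}$} (forcing them consecutive) or to a cusp strictly beyond $v$ (forcing $c_i=c_{i+1}$). But when $c_i\neq c_{i+1}$ are not consecutive, the first option does not exist --- there is no cusp of $\tau$ between a non-adjacent pair in $R(v)$ --- so the stated dichotomy is precisely the conclusion you want, not an argument for it. Nothing in what you wrote rules out the collapse landing on the pair $(c_i,c_{i+1})$ with other real edges of $R(v)$ sitting in between. The ingredient you are missing is the remark made in the paper immediately before the lemma: all of $R(v)$ lies in a single gate. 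Granting that, suppose some real edge $c$ lies strictly between $c_i$ and $c_{i+1}$. By irreducibility of the transition matrix some arc of $\psi(\tau)$ traverses the strip of $c$ and enters $J_v$, trapped between $\psi(e_i)$ and $\psi(e_{i+1})$. It cannot exit $J_v$ via any other strip $c'$ of $R(v)$, since that would force a subword of the form $\bar{c}\cdots c'$ into some $f(\gamma)$ with $c,c'$ in the same gate, which is not a train path. So the arc must terminate in $\psi(J_w)$, i.e.\ it is $\psi(e)$ for some $e\in R(w)$ lying between $e_i$ and $e_{i+1}$, contradicting that $e_i,e_{i+1}$ are consecutive. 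This is essentially the paper's argument, just repackaged as a direct construction of the preimage rather than a no-skipping statement; your scaffolding (i)+(ii) works, but (ii) cannot be closed by the cusp-matching appeal alone.
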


\begin{proof}
Suppose $c \in R(v)$ is between $Df(\alpha)$ and $Df(\beta)$. Since $\psi$ is pseudo-Anosov, $f$ is surjective. Hence there exists a real edge $\gamma$ such that $f(\gamma)$ collapses onto $c$. But since $\psi$ is a homeomorphism, $\psi(\gamma)$ cannot intersect $\psi(\alpha \cup \beta)$, so $\gamma$ must be incident to $w$. In other words, $c=Df(\gamma)$. 
\end{proof}

\begin{defn}\label{defn:loopswitch}
We say a switch $v$ of $\tau$ is a \textit{loop switch} if it is incident to an infinitesimal loop.
\end{defn}

The next lemma says that switch rigidity is the only barrier to the existence of a tight splitting at a loop switch. Note that if $v$ is a loop switch, then $v_l=v_r=v$.

\begin{lem}\label{lem:TunlessRigid}
Let $(\tau, \psi, f)$ be the data of a pseudo-Anosov $\psi$ on $S_{0,n}^1$ carried by the standardly embedded $\tau$. Let $v$ be a loop switch, and suppose that $R(v)$ is not a singleton. Then exactly one of the following three possibilities is true.

\begin{enumerate}
\item The switch $v$ splits tightly to the left.
\item The switch $v$ splits tightly to the right.
\item The switch $v$ is rigid.
\end{enumerate}
\end{lem}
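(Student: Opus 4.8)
$\textbf{Proof plan.}$ The statement is a trichotomy for a loop switch $v$ with $R(v)$ not a singleton: either $v$ $l$-splits, or $v$ $r$-splits, or $v$ is rigid, and exactly one holds. The plan is to analyze the structure of $Df^{-1}$ near $v$ using Lemma \ref{lem:connected}, which says that $Df(\Lk(w)) \subseteq \Lk(v)$ is connected, where $w$ is the switch with $f(w) = v$. Since $v$ is a loop switch, $v_l = v_r = v$, so the conditions for $l$-splitting and $r$-splitting simplify: $v$ $l$-splits iff whenever $l(v)$ (resp. $\overline{l(v)}$) appears in some $f(x)$, it is immediately followed (resp. preceded) by $\overline{r(v)}$ (resp. $r(v)$); similarly for $r$-splitting with the roles of $l$ and $r$ swapped.

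$\textbf{Step 1: Local picture of the image.}$ First I would unpack what it means for a train path $f(x)$ to traverse the edge $l(v)$ at $v$. Since $v$ is a loop switch, the infinitesimal loop presents both ends $i_l, i_r$ at $v$, with cyclic order $l(v) < i_l < i_r < r(v)$. A train path entering $v$ along $l(v)$ and continuing smoothly must exit along an edge that does not form a cusp with $l(v)$; the only possibilities are to continue along $i_l$ (around the loop, exiting as $\overline{i_r}$ and then along something on the other side, or ultimately traversing $\overline{r(v)}$ if it leaves the loop region) — I would make precise that the only smooth continuation past $l(v)$ that leaves a neighborhood of the loop is via $i_l \cdot \overline{i_r} \cdot \overline{r(v)}$, i.e. in the collapsed track, $l(v)$ must be followed by $\overline{r(v)}$. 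Thus condition (1) of $l$-splitting is $\emph{automatic}$ for train paths $f(x)$ — no, wait: the subtlety is that $l(v)$ could appear as the $\emph{last}$ letter of $f(x)$, or $f(x)$ could turn sharply — but $f(x)$ is a train path, so no sharp turns; and if $l(v)$ is the last letter then $x$ maps onto a peripheral monogon, which cannot happen for a real edge $x$ whose image is $l(v) \cdots$ unless... I would need to check this case carefully, perhaps ruling it out or incorporating it. The upshot I expect: whether $v$ $l$-splits is governed entirely by which real edges $\gamma \in R(w)$ satisfy $Df(\gamma) = l(v)$, and analogously for $r(v)$.

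$\textbf{Step 2: Reduce to the combinatorics of $Df|_{R(w)}$.}$ By Lemma \ref{lem:connected}, $Df(\Lk(w))$ is a connected subset of $\Lk(v)$. Since $R(v)$ is not a singleton (say $R(v) = \{e_1, \dots, e_k\}$ with $e_1 = r(v)$, $e_k = l(v)$), and since all of $R(w)$ eventually maps into a single gate, I would argue that the set $\{Df(\gamma) : \gamma \in R(w)\} \cap R(v)$ is a connected (hence contiguous) subinterval $\{e_a, e_{a+1}, \dots, e_b\}$ of $R(v)$. Now: $v$ is rigid precisely when this subinterval is all of $R(v)$, i.e. $a = 1$ and $b = k$ — this is exactly the condition that there exist $x_1, \dots, x_k \in R(w)$ with $Df(x_i) = e_i$. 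If $v$ is $\emph{not}$ rigid, then the subinterval misses at least one end: either $e_1 = r(v)$ is not in the image, or $e_k = l(v)$ is not in the image (or both). I claim: if $r(v) \notin Df(R(w))$ then $v$ $r$-splits, and if $l(v) \notin Df(R(w))$ then $v$ $l$-splits. The reasoning: if no real edge at $w$ maps (under $Df$) onto $r(v)$, then in every train path $f(x)$, whenever $r(v)$ is traversed, the "incoming side" at $v$ must come from around the loop, forcing $r(v)$ to be preceded by $l(v)$ (its image) — wait, I need to be careful about direction; let me instead say: a train path traversing $r(v)$ at $v$ and continuing smoothly must, by the cusp structure and the loop, be of the form $\cdots r(v) \cdot i_r \cdot \overline{i_l} \cdot \overline{l(v)} \cdots$, i.e. $r(v)$ followed by $\overline{l(v)}$; and the reverse-direction statement about $\overline{r(v)}$ preceded by $l(v)$ follows symmetrically — but these are precisely conditions (1),(2) for $v$ $r$-splitting $\emph{provided}$ no $f(x)$ has $r(v)$ NOT followed by $\overline{l(v)}$, which would require some $f(x)$ to traverse $r(v)$ and then leave $v$ in a way blocked by the loop, i.e. to double back along $R(w)$'s image; this is exactly excluded by $r(v) \notin Df(R(w))$ together with the fact that a train path through $r(v)$ at $v$ that doesn't go around the loop must continue along an edge forming a cusp with $r(v)$, impossible.

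$\textbf{Step 3: Mutual exclusivity.}$ Finally I would verify that no two of the three options hold simultaneously: if $v$ both $l$-splits and $r$-splits, then both $l(v)$ and $r(v)$ would be "end-absorbed" into the loop in every image, which would force $R(v)$'s image at $w$ to be empty in a way contradicting surjectivity of $f$ (Lemma \ref{lem:connected}'s hypothesis, or Perron--Frobenius) — concretely, some train path $f(x)$ must traverse $r(v)$ and also some must traverse $l(v)$ as genuine (non-loop-forced) transitions, contradicting one of the split conditions; and rigidity is incompatible with either split since rigidity gives an edge $\gamma$ with $Df(\gamma) = l(v)$ incident to $w$ such that $f(\gamma) = l(v) \cdot (\text{something other than } \overline{r(v)})$, violating $l$-splitting, and symmetrically for $r$. $\textbf{The main obstacle}$ I anticipate is Step 1/Step 2's careful bookkeeping of train-path smoothness at a loop switch — precisely pinning down that a smooth passage over $l(v)$ (resp. $r(v)$) at a loop switch is forced to continue as $\overline{r(v)}$ (resp. $\overline{l(v)}$) unless it doubles back into another real edge of $R(v)$, and handling the boundary case where $l(v)$ or $r(v)$ is the terminal letter of some $f(x)$. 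Getting the orientation conventions (edges oriented into $v$) consistent with the "followed by / preceded by" phrasing in Definition \ref{defn:tsplit} will require care but no deep idea.
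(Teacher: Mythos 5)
Your overall plan tracks the paper closely -- use Lemma~\ref{lem:connected} to reduce to the combinatorics of $Df(\Lk(w))$, then analyze the local behavior of train paths at the loop -- but Step~2 has a genuine gap. You claim the clean implications ``$r(v)\notin Df(R(w))\Rightarrow v$ $r$-splits'' and ``$l(v)\notin Df(R(w))\Rightarrow v$ $l$-splits,'' justified by the assertion that a train path traversing $r(v)$ into $v$ and going around the loop is \emph{forced} by the cusp structure to exit along $\overline{l(v)}$. This is false once $|R(v)|\geq 3$: after circling the infinitesimal loop, a smooth train path may exit along $\overline{e_i}$ for \emph{any} real edge $e_i\neq r(v)$, including intermediate ones. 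Indeed, in the paper's argument the case $l(v)\notin Df(\Lk(w))$ splits into two subcases -- either every appearance of $l(v)$ is followed by $\overline{r(v)}$ (then $v$ $l$-splits), or some $f(y)$ contains $l(v)\cdot\overline{x}$ with $x\neq r(v)$, in which case $v$ actually $r$-splits, \emph{not} $l$-splits. That second subcase directly contradicts your claimed implication, and in it both $l(v)$ and $r(v)$ are missed by $Df(\Lk(w))$, so if both of your implications held you would conclude $v$ simultaneously $l$- and $r$-splits, contradicting the trichotomy you are trying to prove.

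The missing ingredient is the homeomorphism argument: once some $\psi(y)$ passes $l(v)\cdot\overline{x}$ with $x\neq r(v)$, that arc encloses a region of the punctured disk, and (i) Lemma~\ref{lem:connected} forces $Df(\Lk(w))$ to lie in the interval of real edges between $l(v)$ and $x$, hence $r(v)\notin Df(\Lk(w))$; and (ii) any $\psi(z)$ passing over $r(v)$ cannot cross $\psi(y)$, so after circling the loop it is forced to exit on the far side, i.e.\ along $\overline{l(v)}$. That is what pins down the $r$-split in this subcase; it is not a consequence of the cusp structure alone. Your Step~3 (mutual exclusivity) is also vaguer than it needs to be -- the clean route is Corollary~\ref{cor:SplitPos}: $l$-splitting gives $\mu(l(v))<\mu(r(v))$ and $r$-splitting gives the reverse inequality, so they cannot hold simultaneously; and rigidity produces $\gamma\in R(w)$ with $Df(\gamma)=l(v)$, which immediately violates the $l$-split condition on terminal letters (and symmetrically for $r$).
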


\begin{proof}
Let $w$ be the loop switch of $\tau$ such that $f(w)=v$. If either (1) or (2) holds then $v$ cannot be rigid: for example, if $v$ $l$-splits then there does not exist $x \in R(w)$ such that $Df(x)=l(v)$. On the other hand, if $v$ is not rigid then Lemma \ref{lem:connected} implies that at least one of $l(v), r(v)$ is not in the image $Df(\Lk(w))$.

Assume without loss of generality that $l(v) \not \in Df(\Lk(w))$. Then any appearance of $l(v)$ in an image train path is in fact an appearance of $l(v) \cdot \overline{x}$, up to orientation. Here $x$ is some edge in $R(v)$ that might vary. If $x$ is always $r(v)$ then $v$ $l$-splits. Otherwise, we claim that $v$ $r$-splits.

Indeed, suppose that there exists a real edge $y \subseteq \tau$ such that $f(y)$ contains $l(v) \cdot \overline{x}$, up to orientation, for some real edge $x \neq r(v)$. Lemma \ref{lem:connected} implies that $Df(\Lk(w))$ is a subset of the real edges between $l(v)$ and $x$. In particular, $r(v) \not \in Df(\Lk(w))$. Let $z$ be a real edge such that $f(z)$ contains $r(v)$, up to orientation. Since $\psi$ is a homeomorphism and $f(z)$ is a train path, the appearance of $r(v)$ in $f(z)$ must be followed by $\overline{l(v)}$, due to the existence of $\psi(y)$. In other words, $v$ $r$-splits.

Thus we have established that (1) or (2) holds if and only if (3) does not hold. It remains to show that (1) and (2) are mutually exclusive. Corollary \ref{cor:SplitPos} says that if $v$ $l$-splits then $\mu(l(v)) < \mu(r(v))$. It follows that if (1) holds then (2) cannot. The proof is complete.
\end{proof}

The same argument gives the following proposition for a switch not at a loop.

\begin{prop}
Let $(\tau, \psi, f)$ be the data of a pseudo-Anosov $\psi$ on $S_{0,n}^1$ carried by the standardly embedded $\tau$. Let $v$ be a switch of $\tau$, and suppose that $R(v)$ is not a singleton. Suppose additionally that $R(v_l)$ and $R(v_r)$ are singletons. Then at least one of the following three possibilities is true.

\begin{enumerate}
    \item The switch $v$ splits tightly to the left.
    \item The switch $v$ splits tightly to the right.
    \item The switch $v$ is rigid.
\end{enumerate}

\noindent Moreover, case (3) is disjoint from cases (1) and (2).
\end{prop}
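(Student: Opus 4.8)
The plan is to reduce this proposition to Lemma \ref{lem:TunlessRigid} by showing that the hypotheses on $R(v_l)$ and $R(v_r)$ make the "$v_l = v_r = v$" assumption in that lemma's proof inessential. The key observation is that the proof of Lemma \ref{lem:TunlessRigid} uses the loop condition only to guarantee that $v_l = v_r = v$, so that "$l(v)$ is followed by $\overline{r(v)}$" and the tight-splitting conditions in Definition \ref{defn:tsplit} coincide. When $R(v_l)$ and $R(v_r)$ are singletons, the edges $r(v_l)$ and $l(v_r)$ are well-defined single real edges (since in a singleton $R(v_l) = \{e\}$ we have $e = l(v_l) = r(v_l)$, and similarly for $v_r$), and the same combinatorial argument goes through verbatim.

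First I would restate the setup: let $w$ be the switch with $f(w) = v$, and orient all edges in $\Lk(w)$, $\Lk(v)$ away from their switches. By the Bestvina--Handel gate structure all of $R(v)$ lies in a single gate. If $v$ is not rigid, then by Lemma \ref{lem:connected} the connected set $Df(\Lk(w)) \subseteq \Lk(v)$ omits at least one of $l(v)$, $r(v)$; without loss of generality say $l(v) \notin Df(\Lk(w))$. Then every appearance of $l(v)$ in an image train path $f(y)$ is actually an appearance of $l(v) \cdot \overline{x}$ (up to orientation) for some $x \in R(v)$, because a train path through the switch $v$ that arrives along $l(v)$ cannot stop there (no edge of $R(v)$ is the last letter unless it ends at a puncture monogon, which $l(v)$ does not since $R(v)$ is not a singleton) and cannot continue into an infinitesimal edge and return, by efficiency. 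If $x$ is always $r(v_l)$, then $v$ $l$-splits by Definition \ref{defn:tsplit}(1); and condition (2) of that definition — $\overline{l(v)}$ preceded by $r(v_l)$ — follows by taking orientation-reverses of the train paths already considered.

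Otherwise, there is a real edge $y$ with $f(y)$ containing $l(v) \cdot \overline{x}$ for some real $x \neq r(v_l)$. Here I would invoke Lemma \ref{lem:connected} again: $Df(\Lk(w))$ is connected and sits among the real edges strictly between $l(v)$ and $x$, so $r(v) \notin Df(\Lk(w))$. Then for any real edge $z$ with $f(z)$ containing $r(v)$ (up to orientation), since $\psi$ is a homeomorphism and $\psi(z)$ cannot cross $\psi(y)$, the appearance of $r(v)$ in $f(z)$ must be immediately followed by $\overline{l(v_r)}$ — the singleton hypothesis on $R(v_r)$ is what makes $l(v_r)$ a single well-defined edge here — so $v$ $r$-splits. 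Combined with the first case this shows: $v$ is rigid, or $v$ $l$-splits, or $v$ $r$-splits (not necessarily exclusively among (1),(2)).

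For the disjointness of (3) from (1) and (2): if $v$ $l$-splits, then by definition there is no $x \in R(w)$ with $Df(x) = l(v)$, so $l(v) \notin Df(\Lk(w))$; since rigidity requires $Df$ to hit every element of $R(v)$ including $l(v)$, rigidity fails. The symmetric argument handles $r$-splitting. The main obstacle I anticipate is bookkeeping the orientation conventions carefully — in particular making sure that "followed by $\overline{r(v_l)}$" versus "preceded by $r(v_l)$" in Definition \ref{defn:tsplit} are both obtained, and that the singleton condition on $R(v_l), R(v_r)$ is genuinely the only place the loop hypothesis was used in Lemma \ref{lem:TunlessRigid}. I would double-check this by re-reading that proof line-by-line and confirming no other appeal to $v$ being at a loop is made; everything else is formally identical, so I would phrase the write-up as "the proof is identical to that of Lemma \ref{lem:TunlessRigid}, replacing each occurrence of $v$ in the roles of $v_l$ and $v_r$ by the switches $v_l$ and $v_r$, which is justified because $R(v_l)$ and $R(v_r)$ are singletons."
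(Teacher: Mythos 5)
Your proof is correct and matches the paper's approach: the paper proves this proposition with the single sentence ``The same argument gives the following proposition for a switch not at a loop,'' and your write-up simply spells out that argument, correctly identifying that the singleton hypotheses on $R(v_l)$ and $R(v_r)$ are what replace the loop condition $v_l=v_r=v$, and correctly noting that only disjointness of (3) from (1)/(2) (not of (1) from (2)) is claimed here.
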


Lemma \ref{lem:TunlessRigid} says that if we cannot split at a particular switch $v$, then it is rigid. The natural next step is to consider the preimage switch $v_1$ causing $v$ to be rigid. If $v_1$ is also rigid, we look at its preimage $v_2$. It might happen that we never find a splittable switch. In this case, the periodic orbit of $v$ consists of a cycle of rigid switches.

\begin{defn}
A \textit{rigid cycle} of length $k$ is a collection of rigid switches $v_1, \ldots, v_k \in \tau$ such that $f(v_j)=v_{j-1}$ for all $j$, where the indices are taken modulo $k$.
\end{defn}

\begin{prop}\label{prop:NoRigidCycles}
Rigid cycles do not exist.
\end{prop}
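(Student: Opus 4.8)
The plan is to derive a contradiction from the existence of a rigid cycle $v_1, \dots, v_k$ with $f(v_j) = v_{j-1}$ by tracking the transverse measures $\mu(r(v_j))$ and $\mu(l(v_j))$ along the cycle. Since each $v_j$ is rigid, $R(v_j)$ is not a singleton and there exist distinct real edges $x_1, \dots, x_m \in R(v_{j+1})$ with $Df(x_i) = e_i$ running over all of $R(v_j) = \{e_1, \dots, e_m\}$. In particular, because $f$ is a train track map and distinct real edges at $v_{j+1}$ are mapped to distinct real edges at $v_j$ covering the whole link, the images $f(x_1), \dots, f(x_m)$ together must ``spread across'' all of $R(v_j)$; by Lemma \ref{lem:connected} the set $Df(\Lk(v_{j+1}))$ is connected and equals all of $R(v_j)$ (as an ordered set). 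The key numerical consequence I would extract: rigidity forces $f(r(v_{j+1}))$ to begin with $r(v_j)$ and $f(l(v_{j+1}))$ to begin with $l(v_j)$ — i.e., the extremal edges are permuted along the cycle — and moreover each $f(x_i)$ has word length at least $1$, with the total ``width'' of the fan at $v_j$ being built from the fans at $v_{j+1}$.

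The main step is a monotonicity estimate. First I would show that rigidity at $v_{j+1}$, together with the Perron--Frobenius eigenvector $\mu$, forces
\[
\mu(r(v_j)) + \mu(l(v_j)) \leq \lambda^{-1}\bigl(\mu(r(v_{j+1})) + \mu(l(v_{j+1}))\bigr) + (\text{correction from interior edges}),
\]
but more usefully, I would argue in the opposite direction: because $v_{j+1}$ is rigid, no folding/splitting occurs there, so the fan of real edges at $v_{j+1}$ maps injectively into the fan at $v_j$ and the associated submatrix is a permutation-like block. Tracking $\mu$ around the cycle, the product of the local contraction/expansion factors must be $1$ (since we return to the start), yet rigidity at every switch of the cycle means no edge is ever ``absorbed'' and each real edge in the cycle contributes a factor of $\lambda$ under $f^k$ strictly, via the Perron--Frobenius positivity of $M^N$. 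Concretely: iterate $f^k$; the edges $r(v_1), l(v_1)$ are (up to the permutation) fixed as a set, so some power $f^{kN}$ sends $r(v_1)$ to a train path beginning with $r(v_1)$ whose length is at least $\lambda^{kN}$ in transverse measure — but then the transition matrix has a positive diagonal entry corresponding to $r(v_1)$, which contradicts the Perron--Frobenius structure only if... no — instead the contradiction is cleaner: a rigid cycle means the edges $\{r(v_j), l(v_j)\}$ form an $f$-invariant subgraph on which $f$ acts (up to isotopy) as a graph automorphism, hence with dilatation $1$, contradicting that every real edge of a train track carrying a pseudo-Anosov lies in the single Perron--Frobenius block with dilatation $\lambda > 1$.

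I would therefore structure the argument as: (1) from rigidity of all $v_j$ and Lemma \ref{lem:connected}, show $Df$ restricts to a bijection $R(v_{j+1}) \to R(v_j)$, so $\sum_j |R(v_j)|$ edges form an $f$-periodic family; (2) show that under this bijection the word length of $f(x)$ is exactly $1$ for every such edge $x$ (no edge in a rigid cycle gets stretched across another), using that rigidity precludes any $x \in R(v_{j+1})$ with $Df(x) \notin R(v_j)$ and that $f(x)$ cannot re-enter the fan at $v_j$ by injectivity of $\psi$; (3) conclude the union of these edges (together with the relevant infinitesimal edges) is an $f$-invariant sub-train-track $\sigma \subseteq \tau$ on which $f$ is a homeomorphism, so its transition matrix is a permutation matrix with spectral radius $1$; (4) invoke that $M$ is Perron--Frobenius (so $M^N > 0$) to conclude $\sigma$ cannot be proper and $f$-invariant with a permutation transition block — any nonempty set of real edges closed under $f$ must be all of them and must inherit spectral radius $\lambda > 1$ — giving the contradiction. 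The hard part will be step (2): ruling out that some $f(x_i)$ has length $\geq 2$, i.e. that the fan at $v_j$ could be genuinely wider than the fan at $v_{j+1}$ while still being covered bijectively. I expect to handle this by a transverse-measure count: rigidity gives $\mu(e_i) \geq \mu(x_i)$ pointwise summed against $\lambda$, and if any image had length $\geq 2$ the total measure of the fan at $v_j$ would exceed $\lambda$ times that at $v_{j+1}$, which iterated around the cycle forces $\lambda^k > \lambda^k$, absurd — so all images have length exactly $1$ and $f$ is a local homeomorphism along the cycle, completing the argument.
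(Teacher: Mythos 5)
Your proposal takes a genuinely different route from the paper, and it contains a real gap at exactly the point you flag as ``the hard part.'' Step (2) --- that every real edge $x$ in the rigid cycle satisfies that $f(x)$ has word length exactly $1$ --- is not established, and the transverse--measure count you sketch does not close it. Rigidity controls only the \emph{first} letter of $f(x)$, namely $Df(x)=e_i$; it places no bound on the length of the train path $f(x)$, which may perfectly well continue past $e_i$ and run over edges far from the cycle. With $\mu$ the positive right $\lambda$--eigenvector of $M$ and $M_{i,j}$ counting how many times $f(e_j)$ traverses $e_i$, the relation you can extract from $Df(x_i)=e_i$ is $\lambda\,\mu(e_i)=\sum_b M_{(e_i),b}\,\mu(b)\ge \mu(x_i)$. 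Summing over the fan and iterating $k$ times around the cycle gives only the trivial $\lambda^k\ge 1$; and if some $f(x_i)$ had length $\ge 2$ the inequality just becomes strict, which is \emph{also} trivially satisfied. Your claimed implication ``length $\ge 2$ forces the fan measure at $v_j$ to exceed $\lambda$ times that at $v_{j+1}$, hence $\lambda^k>\lambda^k$'' has the inequality pointing the wrong way. Without step (2), step (3) collapses: the edges appearing in the cycle, together with their infinitesimal neighbors, do \emph{not} form an $f$--invariant sub--train--track, since $f$ can stretch a cycle edge across edges outside the cycle. So the permutation--block contradiction with the Perron--Frobenius property of $M$ never materializes.

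The paper's argument is much shorter and relies on a structural fact you do not invoke: in a standardly embedded track, all real edges in $R(v)$ belong to a single \emph{gate} at $v$, so $(Df)^N$ is eventually constant on $R(v)$. A rigid cycle of length $n$ through $v$ would make $(Df)^{mn}\colon R(v)\to R(v)$ a bijection for every $m$, which is never constant when $|R(v)|\ge 2$ --- an immediate contradiction. This is a combinatorial/dynamical obstruction (gates collapse in finite time) rather than a metric one, and it is precisely the information that transverse measures alone cannot see. If you want to pursue a measure--theoretic proof you would first need to establish step (2) by some independent combinatorial argument, but at that point you are essentially reproving the gate statement; the cleanest path is to use the gate structure directly.
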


\begin{proof}
Let $v \in \tau$ be a switch. Since $\tau$ is standardly embedded, every element of $R(v)$ belongs to the same gate of $v$, hence there exists $k \geq 1$ such that $(Df)^k$ is constant on $R(v)$. In fact, for all $n \geq k$ we have that $(Df)^n$ is constant on $R(v)$. On the other hand, if $v$ belonged to a rigid cycle of length $n$ then $(Df)^n: R(v) \to R(v)$ would be the identity map, a contradiction.
\end{proof}

\begin{cor}\label{cor:NoRigid}
Let $v \in \tau$ be a switch such that $R(v)$ is not a singleton. Then some iterated preimage switch $w$ of $v$ is not rigid.
\end{cor}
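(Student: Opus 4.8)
The plan is to prove Corollary \ref{cor:NoRigid} as a direct consequence of Proposition \ref{prop:NoRigidCycles}, treating the corollary as essentially a restatement in terms of orbits. First I would observe that since $\tau$ is standardly embedded and $\psi$ is pseudo-Anosov, every switch $v$ has finitely many iterated preimages under $f$: each preimage switch $w$ satisfies $f(w)=v$, and because $\tau$ has finitely many switches, the sequence of (choices of) preimages $v, v_1, v_2, \ldots$ with $f(v_{j})=v_{j-1}$ either terminates or must eventually cycle. Here I need to be slightly careful that $f$ is onto on switches (which follows from the surjectivity of $f$ used already in Lemma \ref{lem:connected}), so a preimage always exists; thus the only way the backward orbit of $v$ could fail to contain a non-rigid switch is if it settles into a cycle of rigid switches.

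Next I would argue that any such cycle would be a rigid cycle in the sense of the definition preceding Proposition \ref{prop:NoRigidCycles}. Concretely: suppose for contradiction that every iterated preimage switch of $v$ is rigid. Starting from $v$ itself (which has $R(v)$ not a singleton, and rigidity is only defined for such switches — I should note rigid switches automatically have $R(\cdot)$ not a singleton, so this is consistent), pick preimages $v_1, v_2, \ldots$; since the switch set is finite, there are indices $i<j$ with $v_i = v_j$, and then $v_{i+1}, \ldots, v_j$ forms a collection of switches cyclically permuted by $f$, each of which is rigid by hypothesis. That is exactly a rigid cycle, contradicting Proposition \ref{prop:NoRigidCycles}. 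Hence some iterated preimage switch $w$ of $v$ is not rigid, which is the claim.

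I expect the main (and really only) subtlety to be bookkeeping: making sure that the backward orbit is well-defined and that "iterated preimage switch" matches the hypothesis of the relevant lemmas — in particular that when we say $f(v_j) = v_{j-1}$ we may always choose $v_j$ to be a switch (not just a point), which holds because $f$ maps switches to switches and is surjective, so preimages among switches exist. Everything else is a finiteness/pigeonhole argument feeding directly into Proposition \ref{prop:NoRigidCycles}. Since the corollary is short, I would keep the proof to a few sentences rather than belabor the orbit combinatorics.

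\begin{proof}
Suppose toward a contradiction that every iterated preimage switch of $v$ is rigid. Since $\psi$ is pseudo-Anosov, $f$ is surjective and maps switches of $\tau$ to switches of $\tau$, so we may recursively choose switches $v=v_0, v_1, v_2, \ldots$ with $f(v_j)=v_{j-1}$ for all $j\geq 1$. Because $\tau$ has only finitely many switches, there exist indices $i<j$ with $v_i=v_j$. Then $v_{i+1}, v_{i+2}, \ldots, v_j$ is a collection of switches that is cyclically permuted by $f$ (with $f(v_j)=v_{j-1}$, indices taken modulo $j-i$), and each of these switches is an iterated preimage switch of $v$, hence rigid by assumption. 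This is a rigid cycle, contradicting Proposition \ref{prop:NoRigidCycles}. Therefore some iterated preimage switch $w$ of $v$ is not rigid.
\end{proof}
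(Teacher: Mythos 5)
Your proof is correct and is precisely the pigeonhole argument the paper leaves implicit: the paper states this corollary without a separate proof, as an immediate consequence of Proposition \ref{prop:NoRigidCycles}, and your writeup supplies exactly that reasoning (backward orbit is eventually periodic among finitely many switches, a periodic backward orbit of rigid switches would be a rigid cycle). The side remarks you flag — that preimage switches exist by surjectivity, and that a rigid switch forces its preimage to also have non-singleton $R(\cdot)$ so the recursion stays in the domain where rigidity is defined — are the right points to check and you handle them correctly.
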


It is well-known that if $(\tau, \psi, f)$ is the data of a pseudo-Anosov, then $f$ permutes the infinitesimal $k$-gons for each $k$ (cf. \cite{BH}). We obtain the following corollary, which will be of central importance in the following section. The \textit{real valence} of a switch $v$ is the cardinality of $R(v)$.

\begin{cor}\label{cor:maxsplit}
Let $n_k$ denote the maximal real valence of a switch at an infinitesimal $k$-gon of $\tau$, where $k \geq 1$. If $n_k > 1$ then there exists a switch of valence $n_k$ at such a $k$-gon which is not rigid. 
\end{cor}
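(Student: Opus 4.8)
The plan is to combine the fact that $f$ permutes the infinitesimal $k$-gons with Corollary \ref{cor:NoRigid}. Fix $k \geq 1$ with $n_k > 1$, and let $P$ be an infinitesimal $k$-gon of $\tau$ whose switches achieve this maximal real valence $n_k$; pick a switch $v$ at $P$ with $|R(v)| = n_k$. Since $n_k > 1$, $R(v)$ is not a singleton, so Corollary \ref{cor:NoRigid} produces some iterated preimage $w = f^{-m}(v)$ (more precisely, some switch $w$ with $f^m(w) = v$ for some $m \geq 0$) that is not rigid. The first step is to observe that $w$ also has real valence $n_k$ and also sits at an infinitesimal $k$-gon, which finishes the proof.

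First I would record why $w$ lies at an infinitesimal $k$-gon. The map $f$ sends decomposition elements to decomposition elements and, as noted in the excerpt (cf. \cite{BH}), $f$ permutes the infinitesimal $k$-gons among themselves for each $k$. Hence the $k$-gon $P'$ of $\tau$ containing the switch $w$ is mapped by $f^m$ onto the $k$-gon $P$ containing $v$; in particular $P'$ is genuinely an infinitesimal $k$-gon (same number of cusps, equivalently same number of sides), so $w$ is a switch at an infinitesimal $k$-gon.

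Next I would argue that $w$ has real valence exactly $n_k$. Because $f$ is a train track map induced by a pseudo-Anosov, $f$ (and hence $f^m$) restricts to a bijection on the set of switches of the $k$-gons, carrying the switches of $P'$ to the switches of $P$; in particular $w$ maps to $v$ under this bijection. The real valence cannot increase: by Lemma \ref{lem:connected}, $Df^m$ carries $R(w)$ into a connected subset of $\Lk(v)$, and since the cusps of $P'$ map to cusps of $P$, the image $Df^m(R(w))$ lands inside $R(v)$; injectivity of $Df^m$ on a single gate is not automatic, but surjectivity of $f$ gives the reverse inequality — every real edge of $\tau$ incident to $v$ is in the image $f^m$ of some real edge, and by the homeomorphism argument in Lemma \ref{lem:connected} that preimage edge must be incident to $w$. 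Thus $|R(w)| \geq |R(v)| = n_k$, and since $n_k$ is the \emph{maximal} real valence at a $k$-gon and $w$ sits at a $k$-gon, $|R(w)| = n_k$. So $w$ is a switch of real valence $n_k$ at an infinitesimal $k$-gon which is not rigid, as desired.

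The main obstacle I anticipate is pinning down the bookkeeping in the real-valence comparison: specifically, making airtight the claim that $f^m$ induces an honest bijection between the switches of $P'$ and those of $P$ that is compatible with the cyclic (left-to-right) ordering of real edges, so that "$w$ maps to $v$" is unambiguous and the valence count is preserved rather than merely bounded below. This should follow cleanly from the structure of the Bestvina–Handel algorithm (the map on junctions, hence on infinitesimal edges, is combinatorial and respects the polygon boundaries), together with the surjectivity and homeomorphism arguments already used in the proof of Lemma \ref{lem:connected}; one just has to be careful that no two real edges at $w$ collapse to the same real edge at $v$, which is again the pseudo-Anosov/homeomorphism condition. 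Everything else is a direct citation of Corollary \ref{cor:NoRigid} and the permutation-of-$k$-gons fact.
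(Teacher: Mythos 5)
The overall shape of your argument — produce a non-rigid iterated preimage $w$ via Corollary~\ref{cor:NoRigid}, check it lies at a $k$-gon, and check it has real valence $n_k$ — is in the right spirit, and the paper's proof has the same underlying engine (Proposition~\ref{prop:NoRigidCycles}). But the step where you conclude $|R(w)| \geq |R(v)| = n_k$ has a genuine gap, and it is precisely the point where the paper's argument differs from yours.

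You argue that every $c \in R(v)$ is hit by $f^m$ applied to some real edge, and that ``by the homeomorphism argument in Lemma~\ref{lem:connected} that preimage edge must be incident to $w$.'' That homeomorphism argument, however, only applies to edges $c$ lying \emph{between} $Df(\alpha)$ and $Df(\beta)$ (with $\alpha = r(w)$, $\beta = l(w)$), i.e.\ to edges in the image of $Df^m|_{R(w)}$, which by Lemma~\ref{lem:connected} is a connected but possibly \emph{proper} subset of $R(v)$. For a $c$ outside that range, surjectivity of $f^m$ produces a real edge $\gamma$ whose image train path merely \emph{traverses} $c$ somewhere — as a middle or terminal edge — and such a $\gamma$ need not be incident to $w$ at all. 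So the inequality does not follow.

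Indeed, real valence \emph{can} drop when passing to the preimage of a non-rigid switch: in the Peacock track of Figure~\ref{221track}, the switch $v_1$ at the trigon has real valence $3$, while its $f$-preimage (one of $v_2$ or $v_3$) has real valence $1$. What rescues the conclusion there is that $v_1$ is not rigid, so it is itself the witness; but your surjectivity mechanism, as stated, would give the false bound $|R(v_2)| \geq 3$. What the paper uses instead, and what you should use, is that \emph{rigidity} of $v$ directly furnishes $|R(v)|$ distinct edges $x_i \in R(w)$ with $Df(x_i) = e_i$, whence $|R(w)| \geq |R(v)|$. The paper's proof then proceeds by contradiction — if every maximal-valence switch at a $k$-gon were rigid, rigidity would propagate the bound $n_k$ backward along the whole preimage chain, producing a rigid cycle. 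To repair your direct version: take the \emph{smallest} $m$ for which $v_m = f^{-m}(v)$ fails to be rigid; then the rigidity of $v_0, \ldots, v_{m-1}$ gives $|R(v_m)| \geq \cdots \geq |R(v_0)| = n_k$, and maximality forces equality. Replacing your surjectivity claim with this rigidity-propagation step makes the argument correct.
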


\begin{proof}
The infinitesimal $k$-gons are permuted by $f$. If every such maximal valence switch is rigid, then they must form a rigid cycle, since real valence cannot decrease when passing to the preimage of a rigid switch. This is impossible, since rigid cycles do not exist.
\end{proof}

\subsection{The proofs of Theorems \ref{thm:221}, \ref{carrythm}, and \ref{carrythm2}}\label{splittingsection}\label{sec:enumerate}

In this subsection, we will use the theory of \emph{tight splitting} developed above to prove Theorems \ref{carrythm2}, and see \ref{thm:221} as a consequence. Though Theorem D itself is more general than necessary to prove Theorem \ref{thm:221}, we believe it has wider-reaching applications to surface dynamics.

\begin{defn}
Let $\tau \hookrightarrow S_{0,n}^1$ be a standardly embedded train track. We say a real edge $e$ of $\tau$ is a \textit{stem} if at least one end of $e$ is incident to an infinitesimal $k$-gon, where $k \geq 2$.
\end{defn}

\begin{defn}\label{defn:joint}
Let $\tau \hookrightarrow S_{0,n}^1$ be a standardly embedded train track. We say a loop switch $v \in \tau$ is a \textit{joint} if $|R(v)| \geq 2$.
\end{defn}

\addtocounter{mainthm}{-1}

\begin{mainthm}
Let $\psi$ be a pseudo-Anosov on $S_{0,n}^1$ with at least one $k$-pronged singularity away from the boundary with $k\geq 2$. Then $\psi$ is carried by a train track $\tau$ with no joints.
\end{mainthm}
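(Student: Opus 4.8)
## Proof plan for Theorem D (Theorem \ref{carrythm2})

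The plan is to start with an arbitrary standardly embedded train track $\tau_0$ carrying $\psi$ (which exists by the proposition following Definition \ref{defn:fold}), and then to repeatedly perform tight splittings to remove joints one at a time, while never creating new joints of strictly larger ``complexity''. First I would fix a complexity measure: since $\psi$ has a $k$-pronged singularity with $k \geq 2$ away from the boundary, $\widehat{S}\setminus\tau_0$ has an infinitesimal $k$-gon with $k\geq 2$. For each standardly embedded track $\tau$ carrying $\psi$, let $N(\tau) = \sum_v (|R(v)| - 1)$, where the sum runs over all loop switches $v$ of $\tau$; this is a non-negative integer, and $N(\tau) = 0$ precisely when $\tau$ has no joints. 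The goal is to show $N$ can always be strictly decreased as long as it is positive, using a tight split.

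The key step is the descent: suppose $N(\tau) > 0$, so there is a joint, i.e. a loop switch $v$ with $|R(v)| \geq 2$. By Corollary \ref{cor:maxsplit}, among all loop switches of maximal real valence (at infinitesimal $k$-gons for the relevant $k$), we may choose one, call it $v$, which is \emph{not} rigid. Since $v$ is a loop switch, $v_l = v_r = v$, so Lemma \ref{lem:TunlessRigid} applies: as $v$ is not rigid and $R(v)$ is not a singleton, $v$ either $l$-splits or $r$-splits. Say it $l$-splits (the other case is symmetric). By Proposition \ref{tsplitcarry}, the tightly-split track $\tau^l_v$ still carries $\psi$, and it is standardly embedded by construction. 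I then need to verify that $N(\tau^l_v) < N(\tau)$. The split replaces the left extremal real edge $l(v)$ at the joint $v$ by a real edge $\alpha$ whose terminal endpoint is at $r(v_l)(1) = r(v)(1)$; the effect on real valences of loop switches is: $|R(v)|$ drops by one (the edge $l(v)$ is no longer incident to $v$), while the valence at the switch $r(v)(1)$ may increase by one. One must check that this latter switch, if it is a loop switch at all, had valence strictly less than the maximum — which is exactly why we chose $v$ to have \emph{maximal} valence among loop switches in Corollary \ref{cor:maxsplit}. This ensures $N$ strictly decreases, or at worst the total count of maximal-valence joints strictly decreases; refining the complexity to the pair (maximal loop-switch valence, number of loop switches achieving it) under lexicographic order makes the descent clean.

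Then I would finish by induction: since the complexity takes values in a well-ordered set and strictly decreases with each tight split, after finitely many tight splits we arrive at a standardly embedded track $\tau$ carrying $\psi$ with no loop switch of valence $\geq 2$, i.e. $\tau$ has no joints, proving the theorem. For Theorem \ref{thm:221}, one then specializes: a pseudo-Anosov on $S_{0,5}^1$ in the stratum $(2;1^5;3)$ is carried by a jointless standardly embedded track; a direct enumeration (using that the complement must consist of five once-punctured monogons, one trigon, and the exterior once-punctured bigon, as in Figure \ref{fig:trackex}) shows the only such tracks are the Peacock and the Snail of Figure \ref{fig:TT5}, and a further splitting/folding argument (or direct inspection) reduces the Snail case to the Peacock. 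The main obstacle I anticipate is the bookkeeping in the descent step: precisely tracking how the real valences of \emph{all} loop switches change under a single tight split, and confirming that the maximality choice from Corollary \ref{cor:maxsplit} genuinely prevents the complexity from stalling — in particular handling the degenerate situations where the edge $\alpha$ lands back at a loop switch, or where $v$ and $r(v)(1)$ coincide or are joined in an awkward configuration. Everything else is a routine application of the tight-splitting machinery (Propositions \ref{tsplitcarry}, \ref{tsplitconj}, Corollary \ref{cor:SplitPos}) and the no-rigid-cycles result (Proposition \ref{prop:NoRigidCycles}).
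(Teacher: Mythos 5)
Your overall setup — standardly embedded invariant track, Corollary \ref{cor:maxsplit} to locate a non-rigid maximal-valence loop switch, Lemma \ref{lem:TunlessRigid} plus Proposition \ref{tsplitcarry} to perform a carried tight split, and the complexity $N(\tau)=\sum_v(|R(v)|-1)$ over loop switches — matches the paper's proof. But the descent step is where the argument actually lives, and the mechanism you propose does not close.

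The specific gap is in the claim that choosing $v$ of maximal real valence forces the receiving switch $v_b$ (the switch at the far end of the edge $b=r(v_l)$ being split over) to have strictly smaller valence. Nothing in Corollary \ref{cor:maxsplit} or the maximality of $v$ prevents $b$ from running between $v$ and \emph{another} loop switch of the same maximal valence $m$; in that case, after the split, $|R(v)|$ drops to $m-1$ while $|R(v_b)|$ rises to $m+1$, so the maximum loop-switch valence strictly \emph{increases} and your lexicographic pair goes up, not down. The plain count $N$ (which the paper calls $J$) is weakly monotone — a split moves a cusp from $v$ to $v_b$, so $N$ stays fixed when $v_b$ is a loop switch and drops by one otherwise — but weak monotonicity alone does not give termination, since in principle the splitting could stall with $N$ constant forever. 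You also never use the hypothesis that $\psi$ has an interior $k$-pronged singularity with $k\geq 2$; you record it at the start but it plays no role in your descent, which is a warning sign.

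The paper's proof closes precisely these two gaps with two ingredients you are missing. First, it introduces the notion of a \emph{stem}: a real edge incident to an infinitesimal $k$-gon with $k\geq 2$. The far endpoint of a stem is not a loop switch, so splitting over a stem is guaranteed to strictly decrease $J$ — and the theorem's hypothesis is exactly what guarantees that stems exist. Second, to show that a stem is eventually split over, the paper invokes Lemma \ref{finitelem}: there are only finitely many Perron--Frobenius matrices of a given size with a given spectral radius, so if $J$ never drops the sequence of transition matrices must recur, say $M_j=M_i$. Since the dilatation is a simple eigenvalue, the corresponding weight vectors $\mu_i,\mu_j$ are proportional, and Corollary \ref{cor:SplitPos} shows each split strictly lowers some entry; recurrence therefore forces \emph{every} real edge (in particular a stem) to have been split over along the way, contradicting the assumption that $J$ never decreased. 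This combination — stems as the guaranteed-progress edges, and the Perron--Frobenius finiteness argument to force a stem split — is the content that your descent is missing, and without it the algorithm's termination is not established.
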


The central argument in the proof of Theorem \ref{carrythm2} hinges on finding a maximal-valence vertex $v$ near a puncture, and then using Corollary \ref{cor:maxsplit} to tightly split at $v$. Before diving into the proof, we observe one crucial lemma. Although well-known to experts, the authors could not find a complete proof of Lemma \ref{finitelem} in the literature. For the sake of completeness, we have included a proof which arose from a helpful conversation with Karl Winsor.

\begin{lem}\label{finitelem}
For any fixed $n$ and $B>0$, there is a finite number of Perron-Frobenius matrices of size $n$ and spectral radius at most $B$. In particular, there is a finite number of Perron-Frobenius matrices of a given size with a particular Perron-Frobenius eigenvalue.
\end{lem}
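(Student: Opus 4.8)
The plan is to show that a Perron-Frobenius matrix $M$ of size $n$ with spectral radius $\lambda \leq B$ has entries that are bounded by a constant depending only on $n$ and $B$; since the entries are non-negative integers, finiteness is then immediate. First I would recall that because $M$ is Perron-Frobenius, there is a power $N$ with $M^N$ strictly positive, and one can take $N \leq n^2$ (once a primitive non-negative integer matrix has an all-positive power, the exponent is at most $(n-1)^2 + 1$, by a standard bound on the index of primitivity — the Wielandt bound — so $N$ may be chosen independently of $M$). The key point is that the entries of $M^N$ cannot be too large: the spectral radius of $M^N$ is $\lambda^N \leq B^N$, and for a non-negative matrix the spectral radius dominates every entry. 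Indeed, if $(M^N)_{ij} = c$, then iterating gives $(M^{2N})_{ii} \geq c^2$, and more generally $(M^{kN})_{ii} \geq c^k$, whence $\lambda^{kN} = \rho(M^{kN}) \geq (M^{kN})_{ii} \geq c^k$; letting $k \to \infty$ forces $c \leq \lambda^N \leq B^N$. So every entry of $M^N$ is at most $B^N$.

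Next I would deduce a bound on the entries of $M$ itself. Since $M \geq 0$ has non-negative integer entries and $M^N$ is strictly positive, each entry of $M$ appears (with a positive integer coefficient coming from a product of other non-negative entries, all $\geq 0$) in the expansion of a sufficiently high power; more directly, if some entry $M_{ij} \geq 1$ while the all-positivity of $M^N$ guarantees paths of every length between every pair of indices, one gets $(M^{N+1})_{i'j'} \geq M_{ij}$ for suitable $i', j'$ by prepending and appending such paths. Hence $M_{ij} \leq (M^{N+1})_{i'j'} \leq \rho(M^{N+1}) \leq B^{N+1}$ by the same spectral-radius-dominates-entries argument applied to $M^{N+1}$. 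Therefore every entry of $M$ lies in $\{0, 1, \ldots, \lfloor B^{N+1} \rfloor\}$, and there are at most $(\lfloor B^{N+1}\rfloor + 1)^{n^2}$ such matrices — a finite number depending only on $n$ and $B$. For the final sentence of the lemma, note that a Perron-Frobenius eigenvalue $\lambda$ is in particular a spectral radius, so taking $B = \lambda$ (or any upper bound for it) yields finiteness of the set of Perron-Frobenius matrices of size $n$ with that eigenvalue.

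The main obstacle is making the "entries of $M$ are controlled by the spectral radius" step fully rigorous without circularity: the clean statement is that for a non-negative matrix $A$ one has $A_{ij} \leq \rho(A)$ is \emph{false} in general (e.g. a nilpotent matrix), which is exactly why one must pass to a power $N$ large enough that $M^N > 0$ and exploit the diagonal-iteration trick $(M^{kN})_{ii} \geq (M^N)_{ij}^{\,k}$ that genuinely does force $(M^N)_{ij} \leq \rho(M^N)^{1} = \lambda^N$. One should be slightly careful that the Wielandt-type bound on the primitivity index is uniform in $M$ (it is — it depends only on $n$), so that a single $N = N(n)$ works for all $M$ under consideration; alternatively one can sidestep uniformity of $N$ by arguing that whatever $N$ works, $\lambda^N \leq B^N$ still bounds the entries of $M^N$, and then that the entries of $M$ divide into the combinatorics of positive paths of bounded total weight — but the uniform-$N$ route is cleanest to write.
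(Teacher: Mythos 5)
The overall plan --- bound the entries of $M$ via a power $M^N > 0$ and the relation $\rho(M^N)=\lambda^N$ --- is related to what the paper does and is in principle salvageable, but the central inequality you rely on is false as stated. You claim that for $c = (M^N)_{ij}$ one has $(M^{kN})_{ii} \geq c^k$; for $i\neq j$ this does not hold, because a closed walk from $i$ back to $i$ through $j$ in the graph of $M^N$ must also traverse an $(M^N)_{ji}$-type factor, and $(M^N)_{ji}$ is only guaranteed to be $\geq 1$ (since $M^N>0$ with integer entries), not $\geq c$. So the exponential growth in $k$ you need does not materialize: what you actually get is $(M^{2N})_{ii}\geq (M^N)_{ij}(M^N)_{ji}\geq c$, which gives the weaker but still adequate bound $c\leq \rho(M^{2N})=\lambda^{2N}\leq B^{2N}$. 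The same issue recurs in your second step, where you invoke $(M^{N+1})_{i'j'}\leq\rho(M^{N+1})$ for an off-diagonal pair $(i',j')$ --- again false in general, for exactly the reason you flag with the nilpotent example. A correct repair along your lines: $(M^{N+1})_{ij}\geq M_{ij}(M^N)_{jj}\geq M_{ij}$, then $(M^{2N+2})_{ii}\geq (M^{N+1})_{ij}(M^{N+1})_{ji}\geq M_{ij}$, hence $M_{ij}\leq\rho(M)^{2N+2}\leq B^{2N+2}$, which suffices.

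The paper sidesteps these subtleties entirely by bounding the total entry sum $\|M\|_1=\sum_{i,j}M_{ij}$ rather than individual entries. Writing $C_j(M^k)=\sum_i(M^{k-1})_{ij}\,C_i(M)$ and using Wielandt's bound $M^{n^2-2n+2}>0$, one sees that every column sum of $M^{n^2-2n+3}$ is at least $\|M\|_1$; since the smallest column sum of a Perron-Frobenius matrix bounds its spectral radius from below, $\lambda^{n^2-2n+3}\geq\|M\|_1$. This is cleaner precisely because column sums, unlike off-diagonal entries, always bound $\rho$ from below, so no iteration to land on diagonal entries is needed.
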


\begin{proof}
Fix $n \geq 2$, and let $M$ be an $n \times n$ Perron-Frobenius matrix. Write $M_{i,j}$ for the $(i,j)$th entry of $M$, and $C_j(M)$ for the $j$th column of $M$. An exercise in matrix algebra shows that for each integer $k \geq 1$, 
\[
C_j(M^k) = \sum_{i=1}^n (M^{k-1})_{i, j} \cdot C_i(M).
\]
It is well-known (cf. \cite{W}) that $M^{n^2-2n+2}$ has all entries positive. Hence the smallest column sum of $M^{n^2-2n+3}$ is at least the sum $\norm{M}_1$ of all the entries of $M$. It is not hard to see that the smallest column sum of a Perron-Frobenius matrix is a lower bound on its spectral radius $\rho(M)$. We now have

\[
\rho(M)^{n^2-2n+3} = \rho \left (M^{n^2-2n+3} \right ) \geq \norm{M}_1.
\]

In particular, $\rho(M) \geq \norm{M}_1^{\frac{1}{n^2-2n+3}}$. Since there are only finitely many integer-valued matrices $M$ with $\norm{M}_1$ below a given bound, the result follows.
\end{proof}

\begin{proof}[Proof of Theorem \ref{carrythm2}]
Let $\tau_0 \hookrightarrow S_{0,n}^1$ be a standardly embedded train track carrying $\psi$. We will algorithmically perform a finite sequence of tight splittings on $\tau_0$ to produce the desired track $\tau$ with no joints.

Let $J$ denote the number of cusps at the loop switches of $\tau$, i.e. $J=\sum_v(|R(v)|-1)$, where $v$ ranges over the loop switches of $\tau$. If $J=0$ then there is nothing to prove, so assume $J \geq 1$. By Corollary \ref{cor:maxsplit} there exists a loop switch of $\tau_0$ of maximal valence that can be tightly split. Therefore, we introduce the following simple algorithm.

\begin{enumerate}
    \item Initialize $\tau=\tau_0$ and $\mathcal{M}=\{M_0\}$, where $M_0$ is the transition matrix associated to the data $(\tau_0, \psi, f_0)$.
    \item Find a loop switch of $\tau$ of maximal valence that is not rigid, and split it, obtaining the data $(\tau_1, \psi, f_1)$ with transition matrix $M_1$. Set $\tau=\tau_1$.
    \item If $J$ has decreased by one, return the data $(\tau_1, \psi, f_1)$.
    \item If $J$ has not decreased, add $M_1$ to $\mathcal{M}$ and repeat Steps 2 and 3 with $(\tau_1, \psi, f_1)$.
\end{enumerate}

We claim that this algorithm terminates in finitely many steps, and returns a train track $\tau$ with one fewer joint than $\tau_0$. First, note that splitting at a loop switch $v_0$ either preserves $J$ or decreases it by one. Indeed, let $b$ be the real edge that is split over, i.e. the edge whose transverse weight is reduced (cf. Corollary \ref{cor:SplitPos}). Let $v_b$ denote the switch at the other end of $b$. The tight splitting transfers a cusp from the splitting switch $v_0$ to $v_b$. Thus, in the formula

\[
J = \sum_{\text{$v$ a loop switch}} (|R(v)|-1),
\]

\noindent the contribution from $v_0$ decreases by one, whereas the contribution from $v_b$ either (1) increases by one, if $v_b$ is itself a loop switch; or (2) does not change, if $v_b$ is not a loop switch. In particular, if $b$ is a stem, then splitting over $b$ at the loop switch $v_0$ will always reduce $J$ by one.

It remains to show that, by repeatedly applying the above algorithm, we will eventually split over a stem. Indeed, by Lemma \ref{finitelem} there are only finitely many possible transition matrices that can appear, hence we will eventually produce a matrix $M_j = M_i \in \mathcal{M}$. Since this matrix is Perron-Frobenius, the dilatation $\lambda$ of $\psi$ is an eigenvalue with strictly positive eigenvectors $\mu_i$ and $\mu_j$. Moreover, $\lambda$ is simple, so in fact $\mu_j$ is a scalar multiple of $\mu_i$. According to Corollary \ref{cor:SplitPos}, each tight splitting reduces one of the entries of this eigenvector, so recurring to a matrix in $\mathcal{M}$ implies that every entry of $\mu$ has been reduced, i.e. that every real edge of $\tau_0$ has been split over. In particular, the stems of $\tau_0$ have been split over. The preceding paragraph now implies that the algorithm must terminate in finite time.

Repeating this algorithm sufficiently many times will eventually reduce $J$ to $0$, proving the theorem.
\end{proof}

\begin{proof}[proof of Theorem \ref{thm:221}]
Note that in the stratum $(2;1^5;3)$, there are only two classes of standardly-embedded train tracks without joints: those shown in Figure \ref{fig:TT5}. By Theorem \ref{carrythm2}, any pseudo-Anosov in this stratum is conjugate to one carried by either the Peacock or the Snail. We will argue that any pseudo-Anosov $\psi$ carried by the Snail tightly splits to one carried by the Peacock.

First, observe that $\psi$ must split at the unique valence-3 switch of the infinitesimal triangle in the Snail, by Corollary \ref{cor:maxsplit}. Either a left or right split at this vertex yields a pseudo-Anosov $\psi'$ conjugate to $\psi$, and carried by a track $\tau'$ with a unique two-valent vertex $v$ at a puncture. This vertex $v$ is again splittable by Corollary \ref{cor:maxsplit}. At $v$, note that $\psi'$ splits either to another map carried by $\tau'$, with strictly smaller edge weight on the edge running between two punctures, or to a map carried by the Peacock. In particular, after sufficiently many splits, $\psi'$ splits to a pseudo-Anosov carried by the Peacock.
\end{proof}

\begin{proof}[proof of Theorem \ref{carrythm}]
Note that if $\psi:S\to S$ has the given singularity type, we may cap-off $\psi$ to a pseudo-Anosov $\widehat{\psi}$ on the closed genus-two surface $\widehat{S}$ and extend the foliations preserved by $\psi$ along the capping disk. In this case, the 4-prong singularity $p$ in the capping disk is the unique 4-prong singularity of $\widehat{\psi}$. In particular, $\widehat{\psi}$ commutes with the hyperelliptic involution $\iota$ on $\widehat{S}$ and $p$ is fixed by $\iota$, as in e.g. Lemma 3.7 of \cite{BHS}. And, because $p$ is fixed by $\iota$, we see that $\psi$ commutes with the hyperelliptic involution on $S$, as well. We may then quotient $\psi$ to a pseudo-Anosov 5-braid $\beta$, and from here the techniques of Section \ref{sec:split} apply. Theorem \ref{thm:221} implies that $\beta$ is carried by the ``Peacock" train track depicted in Figure \ref{fig:TT5}, and we can then lift this track to $S$ as described in subsection \ref{liftsec}.
\end{proof}

\printbibliography
\end{document}